\newtheorem{theorem}{Theorem}[section]
\newtheorem*{theorem*}{Theorem}
\newtheorem{prop}[theorem]{Proposition}
\newtheorem*{prop*}{Proposition}
\newtheorem{definition}[theorem]{Definition}
\newtheorem{example}[theorem]{Example}
\newtheorem{lemma}[theorem]{Lemma}
\newtheorem{assum}[theorem]{Assumption}
\theoremstyle{definition}
\newtheorem{notation}[theorem]{Notations}
\newtheorem{remark}[theorem]{Remark}
\numberwithin{equation}{section}
\newtheorem*{remark*}{Remark}
\DeclareMathOperator{\Hom}{Hom}
\DeclareMathOperator{\Ker}{Ker}
\DeclareMathOperator{\Spec}{Spec}
\newcommand{\real}{\mathbb{R}}
\newcommand{\comp}{\mathbb{C}}
\newcommand{\inte}{\mathbb{Z}}
\newcommand{\dd}[1]{\frac{\partial}{\partial #1}}
\newcommand{\half}{\frac{1}{2}}
\newcommand{\ddd}[2]{\frac{\partial#1}{\partial{#2}}}
\newcommand{\pdpd}[3]{\frac{\partial^2 #1}{\partial #2\partial #3}}
\newcommand{\facs}{\varphi}
\newcommand{\incoming}{\Pi}
\newcommand{\rtree}[1]{\mathtt{R}\mathtt{T}_{#1}}
\newcommand{\tr}{T}
\newcommand{\rtr}{\mathbf{T}}
\newcommand{\wtr}{\Gamma}
\newcommand{\wtree}[1]{\mathtt{W}\mathtt{T}_{#1}}
\newcommand{\wrtr}{\mathcal{T}}
\newcommand{\wrtree}[1]{\mathtt{W}\mathtt{R}_{#1}}
\newcommand{\ltr}{L}
\newcommand{\ltree}[1]{\mathtt{L}\mathtt{T}_{#1}}
\newcommand{\lrtr}{\mathcal{L}}
\newcommand{\lrtree}[1]{\mathtt{L}\mathtt{R}_{#1}}
\newcommand{\mtr}{J}
\newcommand{\mtree}[1]{\mathtt{M}\mathtt{T}_{#1}}
\newcommand{\mrtr}{\mathcal{J}}
\newcommand{\mrtree}[1]{\mathtt{M}\mathtt{R}_{#1}}
\newcommand{\hp}{\hslash}  %-- h everywhere
\newcommand{\sem}{p}
\newcommand{\Supp}{\text{\textnormal{Supp}}}
\newcommand{\Joints}{\text{\textnormal{Joints}}}
\newcommand{\Id}{\text{\textnormal{Id}}}
\newcommand{\sgn}{\text{\textnormal{sgn}}}
\newcommand{\ad}{\text{\textnormal{ad}}}
\newcommand{\Aut}{\text{\textnormal{Aut}}}
\newcommand{\Iso}{\text{\textnormal{Iso}}}
\newcommand{\Int}{\text{\textnormal{Int}}}
\providecommand\@dotsep{5}
\renewcommand{\listoftodos}[1][\@todonotes@todolistname]{%
  \@starttoc{tdo}{#1}}
\begin{document}

\title[Refined scattering diagrams, theta functions from MC equations]{Refined scattering diagrams and theta functions from asymptotic analysis of Maurer--Cartan equations}

\author[Leung]{Naichung Conan Leung}
\address{The Institute of Mathematical Sciences and Department of Mathematics\\ The Chinese University of Hong Kong\\ Shatin \\ Hong Kong}
\email{leung@math.cuhk.edu.hk}

\author[Ma]{Ziming Nikolas Ma}
\address{The Institute of Mathematical Sciences and Department of Mathematics\\ The Chinese University of Hong Kong\\ Shatin \\ Hong Kong}
\email{nikolasming@outlook.com}

\author[Young]{Matthew B. Young}
\address{Max Planck Institute for Mathematics\\
Vivatsgasse 7\\
53111 Bonn, Germany}
\email{myoung@mpim-bonn.mpg.de}

\begin{abstract}
We further develop the asymptotic analytic approach to the study of scattering diagrams. We do so by analyzing the asymptotic behavior of Maurer--Cartan elements of a differential graded Lie algebra constructed from a (not-necessarily tropical) monoid-graded Lie algebra. In this framework, we give alternative differential geometric proofs of the consistent completion of scattering diagrams, originally proved by Kontsevich--Soibelman, Gross--Siebert and Bridgeland. We also give a geometric interpretation of theta functions and their wall-crossing. In the tropical setting, we interpret Maurer--Cartan elements, and therefore consistent scattering diagrams, in terms of the refined counting of tropical disks. We also describe theta functions, in both their tropical and Hall algebraic settings, in terms of flat sections of the Maurer--Cartan-deformed differential. In particular, this allows us to give a combinatorial description of Hall algebra theta functions for acyclic quivers with non-degenerate skew-symmetrized Euler forms.
\end{abstract}

\maketitle

\section*{Introduction}
\subsection*{Motivation}

The notion of a scattering diagram was introduced by Kontsevich--Soibelman \cite{kontsevich-soibelman04} and Gross--Siebert \cite{gross2011real} in their studies of the reconstruction problem in Strominger--Yau--Zaslow mirror symmetry \cite{syz96}. In this setting, scattering diagrams encode and control the combinatorial data required to consistently glue local pieces of the mirror manifold. Since their introduction, scattering diagrams have found important applications to integrable systems \cite{kontsevich2014wall}, cluster algebras \cite{gross2018canonical}, enumerative geometry \cite{gross2010tropical} and combinatorics \cite{reading2017}, amongst other areas. Motivated by Fukaya's approach to the reconstruction problem \cite{fukaya05}, an asymptotic analytic perspective on scattering diagrams was developed in \cite{kwchan-leung-ma}. In this paper, we further develop this approach to give a differential geometric approach to refined and Hall algebra scattering diagrams.

The most basic form of scattering diagrams is closely related to the Lie algebra of Poisson vector fields on a torus. For many applications, it is necessary to study quantum, or refined, variants of scattering diagrams, in which the torus Lie algebra is replaced by the so-called quantum torus Lie algebra or, more generally, by an abstract monoid-graded Lie algebra satisfying a tropical condition \cite{kontsevich2014wall, gross2018canonical, mandel2015refined}. For example, a number of conjectures in the theory of quantum cluster algebras were proved using scattering diagram techniques in \cite{gross2018canonical}. Refined scattering diagrams were shown to be related to the refined tropical curve counting of Block--G\"{o}ttsche \cite{block2016refined} by Filippini--Stoppa \cite{filippini2015block} and Mandel \cite{mandel2015refined}, which also appear in study of $K3$ in \cite{lin2019refined}. These refined curve counts are also related to the refined enumeration of real plane curves by Mikhalkin \cite{mikhalkin2017quantum}, to higher genus Gromov--Witten invariants by Bousseau \cite{bousseau2018tropical}. These connections could be anticipated from the central role of scattering diagrams in the reconstruction problem.

A further generalization of scattering diagrams was introduced by Bridgeland \cite{bridgeland2017scattering} under the name $\mathfrak{h}$-complex. Here $\mathfrak{h}$ is a (not-necessarily tropical) monoid-graded Lie algebra. The flexibility of allowing non-tropical Lie algebras allows one to define, for example, scattering diagrams based on the motivic Hall--Lie algebra of a three dimensional Calabi--Yau category. Bridgeland showed that each quiver with potential $(\mathsf{Q},W)$ defines a consistent $\mathfrak{h}$-complex with values in the motivic Hall--Lie algebra, the wall-crossing automorphisms of the $\mathfrak{h}$-complex encoding the motivic Donaldson--Thomas invariants of $(\mathsf{Q},W)$. Under mild assumptions, the (refined) cluster scattering diagram of $(\mathsf{Q},W)$ is then obtained by applying a Hall algebra integration map to this $\mathfrak{h}$-complex. Using these ideas, Bridgeland was able to connect scattering diagrams to the geometry of the space of stability conditions on the triangulated category associated to $(\mathsf{Q},W)$.

In \cite{fukaya05}, Fukaya suggested that much of the combinatorial behavior of instanton corrections to the $B$-side complex structure which arise near the large volume limit could be described in terms of the asymptotic limit of Maurer--Cartan elements of the Kodaira--Spencer differential graded (dg) Lie algebra. In the context of scattering diagrams, this idea was made precise and put into practice in \cite{kwchan-leung-ma}, where it was shown that the asymptotic behavior of Maurer--Cartan elements of a certain dg Lie algebra admits an alternative interpretation in terms of consistent classical scattering diagrams. Moreover, the passage from an initial scattering diagram to its consistent completion, a procedure which exists due to works of Kontsevich--Soibelman \cite{kontsevich-soibelman04,kontsevich2014wall} and Gross--Siebert \cite{gross2011real}, can be understood in terms of the perturbative construction of Maurer--Cartan elements. These ideas were pursued in the setting of toric mirror symmetry to study the deformation theory of the Landau--Ginzburg mirror of a toric surface $X$ and its relation to tropical disk counting in $X$ \cite{kwchan-ma-p2}.

\subsection*{Main results}

In this paper we further develop the asymptotic approach to illustrate how refined (or more generally tropical) and Hall algebraic (or non-tropical) scattering diagrams, as well as the relevant theta functions, are controlled by asymptotic limits of Maurer--Cartan elements. To describe our results, we require some notation. Let $M$ be a lattice of rank $r$ and let $N = \Hom_{\mathbb{Z}}(M,\mathbb{Z})$. Write $M_\real = M \otimes_{\mathbb{Z}} \real$ and $N_\real = N \otimes_{\mathbb{Z}} \real$. Let $\sigma \subset M_\real$ be a strictly convex cone and set $M_\sigma^+ = (M \cap \sigma) \setminus \{0\}$. Let $\mathfrak{h}$ be a $M_\sigma^+$-graded Lie algebra, which for the moment we assume to be tropical.

Following \cite{kwchan-leung-ma, kwchan-ma-p2}, we consider differential forms on $M_\real$ which depend on a parameter $\hp \in \mathbb{R}_{> 0}$. Let $\mathcal{W}^{0}_*$ be the dg algebra of such differential forms which approach a bump form along a closed tropical polyhedral subset $P \subset M_{\mathbb{R}}$ as $\hp \rightarrow 0$. See Figure \ref{fig:delta_function}. The subspace $\mathcal{W}^{-1}_* \subset \mathcal{W}^{0}_*$ of differential forms which satisfy $\lim_{\hp \rightarrow 0} \alpha = 0$ is a dg ideal and
\[
%\label{eqn:introduction_tropical_dgla}
\mathcal{H}^* := \bigoplus_{m \in M_\sigma^+} \left(\mathcal{W}^{0}_*/ \mathcal{W}^{-1}_*\right) \otimes_\comp \mathfrak{h}_{m}
\]
is a tropical dg Lie algebra. Our goal is to construct and interpret Maurer--Cartan elements of $\mathcal{H}^*$.

\begin{figure}[h]
	\centering
	\includegraphics[scale=0.25]{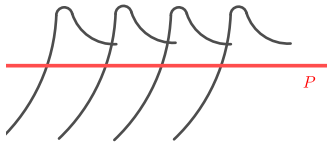}
	\caption{A bump form concentrating along $P$.}
	\label{fig:delta_function}
\end{figure}

Our first result relates Maurer--Cartan elements of $\mathcal{H}^*$ to the counting of tropical disks in $M_{\mathbb{R}}$. Let $\mathscr{D}_{in}$ be an initial scattering diagram. To each wall $\mathbf{w}$ of $\mathscr{D}_{in}$, whose support is a hyperplane $P_\mathbf{w}$ of $M_{\real}$ and whose wall-crossing factor is $\log(\Theta_\mathbf{w})$, we associate the term
\[
\incoming^{\mathbf{w}}:= -\delta_{P_\mathbf{w}} \log(\Theta_\mathbf{w}) \in \mathcal{H}^*.
\]
Here $\delta_{P_\mathbf{w}}$ is an $\hbar$-dependent $1$-form which concentrates along $P_\mathbf{w}$ as $\hp \rightarrow 0$. We take $\incoming = \sum_{\mathbf{w} \in \mathscr{D}_{in}} \incoming^{\mathbf{w}}$ as input data to solve the Maurer--Cartan equation. Our first main result, whose proof uses a modification of a method of Kuranishi \cite{Kuranishi65}, describes a Maurer--Cartan element $\varPhi$ constructed perturbatively from $\incoming$ using a propagator $\mathbf{H}$ (see Section \ref{sec:modified_homotopy_operator}).

\begin{theorem*}[See Theorems \ref{thm:theorem_1} and \ref{thm:theorem_1_modified}]\label{thm:introduction_theorem_1_modified}
	The Maurer--Cartan element $\varPhi$ can be written as a sum over tropical disks $\ltr$ in $(M_\real,\mathscr{D}_{in})$,
	$$
	\varPhi = \sum_{\ltr} \frac{1}{|\Aut(\ltr)|} \alpha_{\ltr} g_\ltr.
	$$
	Here $\alpha_\ltr$ is a $1$-form concentrated along $P_\ltr\subset M_\real$, the locus traced out by the stop of $\ltr$ as it varies in its moduli, and $g_\ltr$ is the Block--G\"{o}ttsche-type multiplicity of $\ltr$. Moreover, when $\dim_{\real}(P_\ltr) = r-1$, there exists a polyhedral decomposition $\mathcal{P}_\ltr$ of $P_\ltr$ such that, for each maximal cell $\sigma$ of $\mathcal{P}_\ltr$, there exists a constant $c_{\ltr,\sigma}$ such that 
	$
	\lim_{\hp\rightarrow 0 }\int_{\varrho} \alpha_{\ltr} = -c_{\ltr,\sigma}
	$
	for any affine line $\varrho$ intersecting positively with $\sigma$ in its relative interior.
	
	Furthermore, if we generically perturb the scattering diagram $\mathscr{D}_{in}$, then $c_{L,\sigma}=1$, so that the limit $\lim_{\hp\rightarrow 0 }\int_{\varrho} \alpha_{\ltr}$ is a count of tropical disks. 
	\end{theorem*}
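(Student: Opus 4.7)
The plan is to construct $\varPhi$ perturbatively via Kuranishi's method and then re-index the resulting sum geometrically as a sum over tropical disks. Writing the Maurer--Cartan equation as $\varPhi = \onewall - \mathbf{H}\bigl(\tfrac{1}{2}[\varPhi,\varPhi]\bigr)$, we obtain an iterative solution
\[
\varPhi = \sum_{n\geq 1} \varPhi_n, \qquad \varPhi_1 = \onewall, \quad \varPhi_n = -\tfrac{1}{2}\sum_{i+j=n}\mathbf{H}\bigl([\varPhi_i,\varPhi_j]\bigr),
\]
so by induction $\varPhi_n$ decomposes as a sum over rooted binary trees $\tr$ with $n$ leaves. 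Each leaf is decorated by a choice of wall $\mathbf{w}\in \mathscr{D}_{in}$, each internal vertex by the bracket, and each internal edge by $\mathbf{H}$, with the root carrying the final output 1-form. The first step is to verify that this Kuranishi recursion converges in the topology of $\mathcal{H}^\ast$, using the $M_\sigma^+$-grading to bound the contribution of trees of fixed total degree and the fact that the propagator $\mathbf{H}$ preserves the ``bump along polyhedral subset'' property.

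The geometric reinterpretation is the heart of the proof. At an internal vertex, $[\incoming^{\mathbf{w}_1},\incoming^{\mathbf{w}_2}]$ takes a wedge of two bump forms concentrated on $P_{\mathbf{w}_1}, P_{\mathbf{w}_2}$ and tensors it against a Lie bracket of homogeneous elements $[\log(\Theta_{\mathbf{w}_1}), \log(\Theta_{\mathbf{w}_2})]\in \mathfrak{h}_{m_1+m_2}$. Under $\mathbf{H}$, the resulting top-degree bump form on $P_{\mathbf{w}_1}\cap P_{\mathbf{w}_2}$ is integrated along flow lines in the outgoing direction $m_1+m_2$ and hence is converted into a lower-degree bump form concentrated along a new affine half-polyhedron. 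Iterating this, the form attached to the whole tree concentrates precisely on the locus traced out by the root as the decorations of the leaves vary over the walls. Identifying this locus as the moduli image of a tropical disk $\ltr$ whose legs are the incoming walls and whose internal edges are weighted by the bracketed momenta, we obtain a bijection (up to automorphisms) between trees contributing to the Kuranishi sum and pairs $(\ltr, \text{labelling})$; the factor $\tfrac{1}{|\Aut(\ltr)|}$ then arises from the standard symmetrization over labellings. The coefficient in the tropical Lie algebra $\mathfrak{h}$, computed via the nested brackets prescribed by the tree, yields exactly the Block--G\"ottsche-type multiplicity $g_\ltr$; here one uses the tropical property of $\mathfrak{h}$ to reduce brackets on parallel momenta to zero, ensuring that only tree topologies corresponding to genuine tropical disks survive.

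Next, I analyze the asymptotic form of $\alpha_\ltr$. The successive applications of $\mathbf{H}$ produce a 1-form which, in the $\hp \to 0$ limit, becomes a delta-type distribution on the half-polyhedron $P_\ltr$; but its multiplicity along $P_\ltr$ depends on the relative position of the disk's supporting walls. Decomposing $P_\ltr$ into the maximal strata on which no additional wall of $\ltr$ folds onto it gives the polyhedral decomposition $\mathcal{P}_\ltr$, and on each maximal cell $\sigma$ the transverse integral stabilizes to a constant $c_{\ltr,\sigma}$ computed by a direct evaluation of the iterated convolution of the $\delta$-factors along the propagation flow. This computation is essentially the product over vertices of local intersection contributions, whence $c_{\ltr,\sigma}\in \mathbb{Z}_{>0}$ with equality $c_{\ltr,\sigma}=1$ when all intersections are transverse.

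Finally, the generic perturbation statement follows because a small generic translation of the walls in $\mathscr{D}_{in}$ ensures that, at each vertex of $\ltr$, the two relevant affine subspaces meet in the expected codimension, so all the local contributions collapse to $1$. The main obstacle will be the asymptotic integration step: one must show that the propagator-produced forms have no hidden contributions away from $P_\ltr$ and that the polyhedral strata of $P_\ltr$ are characterized precisely by the combinatorial coincidence patterns of the disk's walls. Once this is established, the identification $\lim_{\hp\to 0}\int_\varrho \alpha_\ltr=-c_{\ltr,\sigma}$ reduces to a finite-dimensional distributional pairing.
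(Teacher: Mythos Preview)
Your outline follows the paper's strategy closely, but there are two genuine gaps. First, the recursion $\varPhi = \incoming - \tfrac{1}{2}\mathbf{H}[\varPhi,\varPhi]$ does \emph{not} automatically yield a Maurer--Cartan element: it is only the fixed-point (``pseudo-MC'') equation, and one must separately verify that the projection term $\mathsf{P}[\varPhi,\varPhi]$ vanishes. This is more delicate here than usual because the propagator $\mathbf{H}_m$ (integration from $-\infty$ along $-m$) is only a \emph{partial} homotopy operator---it does not satisfy $d\mathbf{H}+\mathbf{H}d=\Id-\iota\mathsf{P}$ on the nose. The paper resolves this by a truncation argument: replace $\mathbf{H}$ by the cut-off $\mathbf{H}_L$, show that on any precompact $K$ the error $\alpha_\wrtr-\alpha_{\wrtr,L}$ lies in $\mathcal{W}^{-\infty}$ for large $L$, and then check that $(\tau^{m_\wrtr}_{-L})^*(\alpha_{\wrtr_1}\wedge\alpha_{\wrtr_2})$ vanishes on $K$ because the flow by $-m_\wrtr$ eventually escapes $P_{\wrtr_1}\cap P_{\wrtr_2}$ whenever $n_\wrtr\neq 0$. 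You do not address this at all.

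Second, your proposed mechanism for the constancy of $c_{\ltr,\sigma}$ and its integrality is incorrect. You suggest a ``direct evaluation of the iterated convolution of the $\delta$-factors,'' concluding $c_{\ltr,\sigma}\in\mathbb{Z}_{>0}$. In the unperturbed case the image cone $\vec\tau(\mathcal{I}_\varrho)$ only contains $0$ on its \emph{boundary}, not its interior, so the Gaussian integral gives a fractional value depending on solid angles, not an integer; the paper claims only $c_{\ltr,\sigma}>0$. More importantly, constancy over the cell $\sigma$ is not proved by direct evaluation but by a Stokes-type argument: one first shows (inductively) that there is a polyhedral decomposition $\mathcal{P}_\ltr$ with $d\alpha_{\ltr}\in\mathcal{W}^{-\infty}_2$ away from $|\mathcal{P}_\ltr^{[r-2]}|$, and then, given two transverse paths $\varrho_1,\varrho_2$ through $\Int_{re}(\sigma)$, closes them into a cycle $C=\partial D$ with $D\cap|\mathcal{P}_\ltr^{[r-2]}|=\emptyset$ to obtain $\lim_{\hp\to 0}\int_{\varrho_1}\alpha_\ltr=\lim_{\hp\to 0}\int_{\varrho_2}\alpha_\ltr$. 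Your description of the polyhedral decomposition as ``strata on which no additional wall of $\ltr$ folds onto it'' is too imprecise to run this argument; the actual $\mathcal{P}_\ltr$ is determined by the asymptotic supports of $d\alpha_\ltr$, built inductively from those of the subtrees.
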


In Section \ref{sec:non_perturbed_scattering} we associate to the Maurer--Cartan element $\varPhi$ a scattering diagram $\mathscr{D}(\varPhi)$. The walls of $\mathscr{D}(\varPhi)$ are labeled by the maximal cells $\sigma$ of the polyhedral decompositions $\mathcal{P}_L$ and have wall-crossing automorphisms $\exp(\frac{c_{L,\sigma}}{|\Aut (L)|} g_L)$. The diagram $\mathscr{D}(\varPhi)$ extends $\mathscr{D}_{in}$ and is in fact a consistent scattering diagram; see Proposition \ref{prop:consistence_from_solution}. In this way, we obtain an enumerative interpretation of the consistent completion of $\mathscr{D}_{in}$.

Next, we turn to theta functions. Let $\mathcal{C}$ be a cone satisfying $ \sigma \subset \mathcal{C} \subset M_{\real}$ with associated monoid $P = \mathcal{C} \cap M$ and let $A$ be a $P$-graded algebra with a graded $\mathfrak{h}$-action. The dg Lie algebra $\mathcal{H}^*$ acts naturally on the dg algebra
\begin{equation}\label{eqn:introduction_dga}
\mathcal{A}^* := \bigoplus_{m \in P} \left(\mathcal{W}^{0}_*/ \mathcal{W}^{-1}_*\right) \otimes_\comp A_{m}.
\end{equation}
Given a Maurer--Cartan element $\varPhi \in \mathcal{H}^*$, it is natural to study the space of flat sections $\text{Ker}(d_\varPhi)$ of the deformed differential $d_\varPhi = d + [\varPhi,\cdot ]$. The algebra structure on $\mathcal{A}^*$ induces an algebra structure on $\Ker(\varPhi)$. The following result describes the wall-crossing behavior of the $\hp \rightarrow 0$ limit of flat sections.

\begin{theorem*}[See Theorem \ref{thm:wall_crossing_for_sections}]
	Let $s \in \Ker (d_\varPhi)$ and $Q,Q^{\prime} \in M_\real \setminus \Supp (\mathscr{D}(\varPhi))$. Then, for any path $\gamma \subset M_\real \setminus \Joints(\mathscr{D})$ from $Q$ to $Q^{\prime}$, we have
	\[
	\lim_{\hp \rightarrow 0}s_{Q^{\prime}} = \Theta_{\gamma,\mathscr{D}}(\lim_{\hp \rightarrow 0} s_{Q}),
	\]
	where $\Theta_{\gamma,\mathscr{D}(\varPhi)}$ is a wall-crossing factor and $s_{Q^{\prime}}$, $s_Q$ are the restrictions of $s$ to $Q$, $Q^{\prime}$, respectively.
	\end{theorem*}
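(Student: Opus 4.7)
The plan is to recognize the flat section equation $d_\varPhi s = 0$ as an infinitesimal parallel-transport condition for the connection $d + \ad(\varPhi)$, and then to compute the asymptotic $\hp \to 0$ limit of the associated path-ordered holonomy along $\gamma$ using the concentration of $\varPhi$ established in the previous theorem.

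Since $Q$ and $Q'$ lie outside $\Supp(\mathscr{D}(\varPhi))$, each form $\alpha_L$ in the decomposition $\varPhi = \sum_L \tfrac{1}{|\Aut(L)|} \alpha_L g_L$ vanishes in the $\hp \to 0$ limit in a neighborhood of $Q$ and of $Q'$. The equation $d_\varPhi s = 0$ therefore reduces to $ds = 0$ there in the limit, so both $\lim_{\hp \to 0} s_Q$ and $\lim_{\hp \to 0} s_{Q'}$ are well-defined locally constant sections. Using compactness of $\gamma$ together with its avoidance of joints, I would partition $\gamma$ into segments $\gamma_1, \ldots, \gamma_N$ with endpoints $Q = P_0, P_1, \ldots, P_N = Q'$ outside $\Supp(\mathscr{D}(\varPhi))$, so that each $\gamma_k$ either stays in the complement of the support or crosses transversely, at a single interior point, a unique wall lying in the relative interior of a maximal cell $\sigma_k$ of a polyhedral decomposition $\mathcal{P}_L$. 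Segments of the first kind contribute the identity in the limit, so it suffices to establish the wall-crossing formula across each $\gamma_k$ of the second kind.

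Along such a $\gamma_k$, the equation $d_\varPhi s = 0$ becomes a linear ODE whose solution is
\begin{equation*}
s(P_k) \;=\; \mathcal{P}\exp\!\Bigl(-\int_{\gamma_k} \ad(\varPhi)\Bigr)\, s(P_{k-1}).
\end{equation*}
Because we work modulo a filtration of $\mathfrak{h}$ by $M_\sigma^+$-degree, this path-ordered exponential is a finite Dyson series. Its first-order term equals $\sum_L \tfrac{1}{|\Aut(L)|}\ad(g_L)\bigl(-\int_{\gamma_k}\alpha_L\bigr)$ and, by the asymptotic statement of the previous theorem, converges as $\hp \to 0$ to $\ad(X_{\sigma_k})$, where $X_{\sigma_k} := \sum_{L : \sigma_k \subset P_L} \tfrac{c_{L,\sigma_k}}{|\Aut(L)|} g_L$ is the logarithm of the wall-crossing automorphism of $\mathscr{D}(\varPhi)$ attached to $\sigma_k$.

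The main obstacle is controlling the higher-order terms of the Dyson series, namely iterated integrals of brackets $\ad(\alpha_{L_1}g_{L_1})\cdots\ad(\alpha_{L_n}g_{L_n})$. Two observations reduce them to the expected exponential. First, if $\gamma_k$ meets the walls associated to $L_i$ and $L_j$ at distinct points, then in the $\hp \to 0$ limit the forms $\alpha_{L_i}$ and $\alpha_{L_j}$ concentrate on disjoint points of $\gamma_k$, so the corresponding iterated integral over the ordered simplex tends to zero; hence only tuples $(L_1, \ldots, L_n)$ with \emph{all} walls containing $\sigma_k$ contribute. Second, once all surviving $\alpha_L$ concentrate at the unique crossing point of $\gamma_k$ with $\sigma_k$, the symmetry $\int_{0\le t_1\le \cdots\le t_n\le 1} \chi_\epsilon(t_1)\cdots \chi_\epsilon(t_n)\, dt_1\cdots dt_n \to \tfrac{1}{n!}$ for an approximate-delta bump $\chi_\epsilon$ of mass one reassembles the surviving iterated integrals into $\tfrac{1}{n!}\ad(X_{\sigma_k})^n$. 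Summing over $n$ yields
\begin{equation*}
\lim_{\hp \to 0}\mathcal{P}\exp\!\Bigl(-\int_{\gamma_k} \ad(\varPhi)\Bigr) \;=\; \exp\bigl(\ad(X_{\sigma_k})\bigr),
\end{equation*}
which is precisely the wall-crossing automorphism attached to $\sigma_k$ in $\mathscr{D}(\varPhi)$. Composing these factors across $k = 1, \ldots, N$ produces $\Theta_{\gamma, \mathscr{D}(\varPhi)}$, as required.
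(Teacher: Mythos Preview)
Your route via the Dyson expansion of the holonomy is genuinely different from the paper's. The paper works locally near a maximal cell $\sigma$: it produces a gauge $\facs \in \mathcal{H}^{<k,0}(U)$ with $e^{\ad_\facs}\, d\, e^{-\ad_\facs} = d_{\varPhi}$ and $\facs|_{U_-} = 0$, and cites \cite{kwchan-leung-ma} to identify $\facs|_{U_+}$ with $\log\Theta_\sigma$. Then $d_\varPhi s = 0$ is equivalent to $d(e^{-\ad_\facs} s) = 0$, so $e^{-\ad_\facs} s$ is constant on $U$, giving $s_+ = \Theta_\sigma(s_-)$ immediately. No iterated integrals appear.

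There is, however, a real gap in your second observation. The identity $\int_{0 \le t_1 \le \cdots \le t_n \le 1} \chi_\epsilon(t_1)\cdots\chi_\epsilon(t_n)\,dt = \tfrac{1}{n!}\bigl(\int\chi_\epsilon\bigr)^n$ is a statement about a \emph{single} scalar bump. But $\gamma_k^*\varPhi = \sum_L \tfrac{1}{|\Aut(L)|}\,(\gamma_k^*\alpha_L)\, g_L$ is a sum over many trees $L$ whose bumps $\gamma_k^*\alpha_L$ have \emph{different} profiles, all concentrating at the same crossing point. For $L_1 \neq L_2$ the individual simplex integral $\int_{t_1 \le t_2} \gamma_k^*\alpha_{L_1}(t_1)\,\gamma_k^*\alpha_{L_2}(t_2)$ has no reason to tend to $\tfrac12 c_{L_1,\sigma_k}c_{L_2,\sigma_k}$; only the sum over the two orderings does. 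Since in the Dyson series each ordering of the bumps is paired with a \emph{different} ordering of the operators $\ad(g_{L_i})$, the terms do not reassemble into $\tfrac{1}{n!}\ad(X_{\sigma_k})^n$ by this reasoning alone.

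The repair is to invoke the tropical hypothesis. If $P_{L_1}$ and $P_{L_2}$ both contain the codimension-one cell $\sigma_k$, their normals are parallel and each $m_{L_i}$ is tangent to $\sigma_k$; condition~(2) of Definition~\ref{def:tropical_lie_algebra_for_diagram} then forces $[g_{L_1}, g_{L_2}] \in \mathfrak{h}_{m_{L_1}+m_{L_2},\,0} = \{0\}$ (cf.\ Remark~\ref{rem:tropical_vs_non_tropical}). With all relevant $\ad(g_L)$ commuting, the path-ordered exponential collapses to the ordinary one, $\exp\bigl(\ad(-\!\int_{\gamma_k}\varPhi)\bigr)$, and your first-order computation finishes the job. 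Note that without this commutativity your method does not close; this is precisely why, in the non-tropical setting of Section~\ref{sec:hall_algebra}, the paper \emph{defines} $\Theta_\sigma$ via the boundary value of the gauge $\facs$ rather than as a product of commuting factors.
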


To connect with theta functions, we work in the square zero extension dg Lie algebra $\mathcal{H}^* \oplus \mathcal{A}^*[-1]$ where, for each $m \in P$, we perturbatively solve the Maurer--Cartan equation with input $\incoming + z^m$. The resulting Maurer--Cartan element is of the form $\varPhi + \theta_m$, with $\varPhi$ as above and $\theta_m \in \Ker (d_\varPhi)$. On the other hand, associated to $m \in P$ is the (standard) theta function
\[
\vartheta_{m,Q} := \sum_{\substack{\textnormal{broken lines } \gamma\\ \textnormal{ending at $(m,Q)$}}} a_{\gamma}
\]
defined in terms of broken lines ending at $(m,Q)$, that is, piecewise linear maps $\gamma : (-\infty,0] \rightarrow M_\real$ which bend only at the walls of $\mathscr{D}(\varPhi)$. Each broken line $\gamma$ has an associated weight $a_\gamma \in A$.

\begin{theorem*}[See Theorem \ref{thm:theta_function_comparsion}]
	The equality
	$$
	\lim_{\hp \rightarrow 0} \theta_{m}(Q) = \vartheta_{m,Q}
	$$
	holds for all $Q \in M_\real \setminus \Supp (\mathscr{D}(\varPhi))$, where $\theta_{m}(Q)$ denotes the value of $\theta_{m}$ at $Q$.
\end{theorem*}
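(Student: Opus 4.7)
The plan is to show that both $\lim_{\hp \rightarrow 0} \theta_m$ and $\vartheta_{m, \cdot}$ are flat sections of the same wall-crossing local system on $M_\real \setminus \Supp(\mathscr{D}(\varPhi))$ and that they coincide in a single chamber. Expanding the Maurer--Cartan equation for $\varPhi + \theta_m$ in the square-zero extension $\mathcal{H}^* \oplus \mathcal{A}^*[-1]$ decouples it into the Maurer--Cartan equation for $\varPhi$, treated in Theorems~\ref{thm:theorem_1} and~\ref{thm:theorem_1_modified}, together with the flatness equation $d_\varPhi \theta_m = 0$. The Kuranishi-type iteration with input $z^m$ produces $\theta_m$ in the form $z^m$ plus corrections built by iterated brackets with $\varPhi$ followed by applications of the propagator $\mathbf{H}$.

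Since $\theta_m \in \Ker(d_\varPhi)$, Theorem~\ref{thm:wall_crossing_for_sections} implies that $\lim_{\hp \rightarrow 0} \theta_m$ transforms across the walls of $\mathscr{D}(\varPhi)$ by the wall-crossing factors $\Theta_{\gamma, \mathscr{D}(\varPhi)}$. On the other hand, the consistency of $\mathscr{D}(\varPhi)$ established in Proposition~\ref{prop:consistence_from_solution} together with the standard compatibility of broken-line theta functions with wall-crossing yields the identical transformation rule for $\vartheta_{m, \cdot}$. Hence both sections are flat with respect to the same scattering-diagram connection, and it suffices to verify their equality in a single chamber $\mathfrak{C}_0 \subset M_\real \setminus \Supp(\mathscr{D}(\varPhi))$.

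I would choose $\mathfrak{C}_0$ sufficiently far away from $\Supp(\mathscr{D}(\varPhi))$ that the only broken line with asymptotic class $m$ ending at $Q_0 \in \mathfrak{C}_0$ is the straight one, so that $\vartheta_{m, Q_0} = z^m$ by definition. On the analytic side, each correction term in the perturbative expansion of $\theta_m$ inherits from $\varPhi$ the 1-form factors $\alpha_L$ of Theorem~\ref{thm:theorem_1_modified} which concentrate on the loci $P_L$ as $\hp \rightarrow 0$; in particular they vanish pointwise on $\mathfrak{C}_0$. Therefore $\lim_{\hp \rightarrow 0} \theta_m(Q_0) = z^m$, and the agreement in $\mathfrak{C}_0$ propagates to every chamber by flatness.

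The main technical obstacle is establishing this initial condition on the analytic side: one must analyze how iterated applications of $\mathbf{H}$ interact with the wall-concentrated forms produced from $\incoming$ and verify that no tree in the perturbative expansion can propagate a non-vanishing contribution into a chamber chosen deeply enough in an asymptotic direction. An alternative, more combinatorial route would be to identify each tree in the perturbative expansion of $\theta_m$ directly with a broken line ending at $(m, Q)$, matching the 1-form factors with the broken line bend loci in the $\hp \rightarrow 0$ limit and checking that the tree weights reassemble into the broken line weights $a_\gamma$; this handles the analytic and combinatorial aspects simultaneously but requires bookkeeping analogous to the tropical disk description of $\varPhi$.
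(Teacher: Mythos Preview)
The paper takes precisely the route you describe as ``alternative'': it matches terms in the perturbative expansion of $\theta_m$ directly with broken lines. The key is Lemma~\ref{lem:broken_line_from_theta}, which groups marked trees $\mtr$ into orbits under permutations of the labeled subtrees $\ltr_j$ attached along the core $\mathfrak{c}_\mtr$; a homotopy of the iterated flow map $\vec{\tau}_{\mathfrak{c}}$ to a simplified flow $\breve{\tau}$, followed by a symmetrization identity for the resulting iterated integral, shows that summing $\frac{1}{|\Aut(\mtr)|}\alpha_\mtr a_\mtr$ over each orbit produces exactly the weight $a_\gamma$ of the corresponding broken line $\gamma$. The theorem then follows from the observation that this grouping exhausts $\mtree{k}(Q,\mathsf{m})$ and that, conversely, every broken line with end $(\mathsf{m},Q)$ arises from some orbit.

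Your primary approach is sound in outline but the initial-chamber step is not as you describe it. The correction terms of $\theta_m$ are $0$-forms $\alpha_\mtr$, not $1$-forms, and by Lemma~\ref{lem:solution_asy_support_marked} they have asymptotic support on \emph{full-dimensional} regions $P_\mtr$ (the loci swept out by stops of marked tropical disks), so they do not vanish merely because $Q_0$ is away from walls; what you need is $Q_0 \notin P_\mtr$ for every nontrivial $\mtr$, i.e., a nonexistence statement for marked tropical disks with stop $Q_0$. On the broken-line side, ``far from $\Supp(\mathscr{D}(\varPhi))$'' is ill-posed since the initial walls are full hyperplanes; the workable version is to argue modulo $\mathfrak{h}^{\geq k}$, take $Q_0$ on the ray $\real_{\geq c}\cdot\varphi(\mathsf{m})$ for $c$ large depending on $k$, and use the tropical condition $\mathfrak{h}_{m,n}\cdot A_{m'}=\{0\}$ when $\langle m',n\rangle=0$ to rule out bends at walls containing that ray. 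Both issues are fixable, but they constitute the actual content of the initial-condition argument rather than a formality.

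The combinatorial route buys more: it gives a termwise identity at every generic $Q$ without singling out a distinguished chamber, and it transfers with only minor changes to the non-tropical Hall-algebra setting of Section~\ref{sec:hall_algebra}, where the existence of a chamber with both sides equal to $z^n$ requires a separate geometric argument.
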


Finally, in Section \ref{sec:hall_algebra} we study the above constructions in the setting of non-tropical Lie algebras. One advantage of the differential geometric approach of this paper is that it is applicable to non-generic cases without perturbing $\mathscr{D}_{in}$. With a mild commutativity condition on the wall-crossing automorphisms of the walls of $\mathscr{D}_{in}$ which, for example, is satisfied in the Hall algebra setting, we obtain new results in the non-tropical case, where perturbation of $\mathscr{D}_{in}$ is not possible. Theorem \ref{thm:consistent_scattering_nontropical} generalizes to the non-tropical setting the construction of a Maurer--Cartan element $\varPhi$ from an initial scattering diagram $\mathscr{D}_{in}$ and associates to $\varPhi$ a consistent completion of $\mathscr{D}_{in}$. We also prove that the completed scattering diagram is equivalent to that constructed algebraically by Bridgeland \cite{bridgeland2017scattering}. Moreover, we construct, for each $n \in N_{\sigma}^+$, a theta function $\theta_n \in \Ker(d_{\varPhi})$ as a perturbative Maurer--Cartan element and prove that it agrees with Bridgeland's Hall algebra theta function \cite{bridgeland2017scattering}.

In Section \ref{sec:hall_algebra_acyclic_quiver} we restrict attention to the case in which the Lie algebra is the motivic Hall--Lie algebra of an acyclic quiver. In this case, there is a canonical choice for the propagator $\mathbf{H}$, leading to a combinatorial formula for $\varPhi$ and $\theta_n$ in terms of tropical disks.

\begin{theorem*}[See Theorem \ref{thm:non_tropical_WCF}]
\phantomsection
Let $\mathfrak{h}$ be the Hall--Lie algebra of an acyclic quiver with non-degenerate skew-symmetrized Euler form. Then $\varPhi$ can be written as a sum over labeled trees, 
$$
\varPhi = \sum_{k \geq 1} \sum_{\substack{\ltr \in \ltree{k}\\ \mathfrak{M}_{\ltr}(N_\real,\mathscr{D}_{in}) \neq \emptyset}} \frac{1}{|\Aut(\ltr)|} \alpha_{\ltr} g_\ltr,
$$
and $\theta_n$ can be written as a sum over marked tropical trees,
$$
\theta_n = \sum_{k \geq 1} \sum_{\substack{\mtr \in \mtree{k}\\ P_{\mtr} \neq \emptyset}} \frac{1}{|\Aut(J)|} \alpha_{\mtr} a_{\mtr}.
$$
Moreover, $\theta_n$ is related to Bridgeland's Hall algebra theta function $\vartheta_{n,Q}$ by $\vartheta_{n,Q} = \theta_n(Q)$.
\end{theorem*}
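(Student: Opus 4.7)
I would combine the non-tropical Maurer--Cartan construction of Theorem \ref{thm:consistent_scattering_nontropical} with the tropical tree-expansion method of Theorems \ref{thm:theorem_1} and \ref{thm:theorem_1_modified}, using the acyclic structure of the quiver to pin down a canonical propagator that makes the perturbative expansion combinatorially tractable even in the absence of a generic perturbation of $\mathscr{D}_{in}$.

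The first step is to fix the canonical propagator $\mathbf{H}$. For an acyclic quiver with non-degenerate skew-symmetrized Euler form, the Hall--Lie algebra $\mathfrak{h}$ is graded by the positive cone spanned by dimension vectors, and the acyclic order on vertices induces a partial order on this cone under which the Hall brackets are essentially upper-triangular. Using this order I would choose $\mathbf{H}$ to intertwine with the grading so that, unlike in the generic tropical setting, one never needs to resolve incidental coincidences of walls by perturbation; the iteration $\varPhi = \incoming - \tfrac{1}{2}\mathbf{H}[\varPhi,\varPhi]$ then terminates at each fixed grade in finitely many steps and the resulting solution is intrinsic to $\mathscr{D}_{in}$.

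Expanding this iteration order by order, each summand is naturally indexed by a rooted labeled tree $\ltr \in \ltree{k}$: leaves correspond to walls of $\mathscr{D}_{in}$, internal vertices encode Hall brackets, edges carry the grading data, and the root is the outermost $\mathbf{H}$-application. The associated differential form concentrates along the tropical locus $P_{\ltr}$, the factor $|\Aut(\ltr)|^{-1}$ arises from overcounting equivalent contractions in the iteration, and the Hall-algebra coefficient is the Block--G\"ottsche multiplicity $g_\ltr$ assembled from the skew Euler pairings along the edges; summing over trees yields the stated expansion for $\varPhi$. For $\theta_n$, I would repeat the Kuranishi iteration in the square-zero extension $\mathcal{H}^* \oplus \mathcal{A}^*[-1]$ with input $\incoming + z^n$; the $\mathcal{A}^*$-component of the solution is a flat section of $d_\varPhi$ whose tree expansion, by the same bookkeeping, is indexed by marked trees $\mtr \in \mtree{k}$ whose marking singles out the vertex carrying $z^n$, with Hall-algebra weight $a_\mtr$ read off from the bracket composition along $\mtr$.

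To identify $\theta_n$ with Bridgeland's theta function, I would invoke Theorem \ref{thm:theta_function_comparsion}: $\theta_n$ is a flat section of $d_\varPhi$ whose $\hp \to 0$ limit transforms between adjacent chambers by the wall-crossing automorphism of $\mathscr{D}(\varPhi)$. Since $\mathscr{D}(\varPhi)$ coincides with Bridgeland's scattering diagram by Theorem \ref{thm:consistent_scattering_nontropical}, and Bridgeland's $\vartheta_{n,Q}$ is uniquely characterized by the same wall-crossing relations together with its value on the initial chamber, the equality $\vartheta_{n,Q} = \theta_n(Q)$ follows. The main obstacle is the combinatorial step in the non-tropical setting: without generic perturbation of $\mathscr{D}_{in}$, one must verify that different orderings of the iteration assemble into the same labeled tree up to $\Aut(\ltr)$ and that the formal sum over trees is well-defined in each fixed grade. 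This reduces to an acyclicity-based commutativity lemma for Hall brackets at equal grades, which is the technical heart of the argument.
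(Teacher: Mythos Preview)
Your overall shape is right---build a canonical propagator from the quiver data, run the Kuranishi iteration, and identify $\theta_n$ with Bridgeland's function via the wall-crossing characterization---but two of the steps you single out are not the ones that carry the weight, and the step that does is missing from your sketch.

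First, the canonical propagator is not an abstract ``intertwining with the grading''; it is the specific choice $v^m=-\sem(m)$ (and $v^{m,n}=-\sem(m)-n$ for the $\mathcal{B}^*$-component), where $\sem:M\to N$ is determined by the skew-symmetrized Euler form. The acyclicity of $\mathsf{Q}$ translates into the concrete inequality $\langle m^{>i},-\sem(m)\rangle\le 0$ for every truncation $m^{>i}$ of $m\in M_\sigma^+$. This inequality drives an induction showing $P_{\lrtr}\subset\{x:\langle m_{\lrtr}^{>i},x\rangle\le 0\}$ for all $\lrtr$, which is what makes the Maurer--Cartan equation hold: for a pair $\lrtr_1,\lrtr_2$ joining to $\lrtr$, either some hyperplane $(m_\lrtr^{>i})^\perp$ separates $P_{\lrtr_1}\cap P_{\lrtr_2}$ from the flow direction $\sem(m_\lrtr)$, so $\mathbf{P}_L[\cdot,\cdot]$ vanishes for large $L$, or all the pairings vanish, which forces $[g_{\lrtr_1},g_{\lrtr_2}]=0$ because $\mathfrak{h}_{\mathtt{I}_{m_\lrtr}}$ is abelian. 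Your ``acyclicity-based commutativity lemma'' captures only the second branch of this dichotomy; the geometric separation in the first branch is equally essential and is where the specific direction $-\sem(m)$ is used.

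Second, the identification with $\vartheta_{n,Q}$ cannot go through Theorem~\ref{thm:theta_function_comparsion}: that result matches $\theta_\mathsf{m}$ to \emph{broken line} theta functions in the tropical setting, and the paper itself points out (via Cheung--Mandel) that Hall algebra broken line sums do not satisfy the wall-crossing formula. What you need is the wall-crossing formula for flat sections (Theorem~\ref{thm:wall_crossing_for_sections} in its non-tropical form) together with Bridgeland's characterization of $\vartheta_{n,Q}$ by wall-crossing plus the normalization $\vartheta_{n,Q}=z^n$ for $Q\in\Int(\sigma^\vee)$. The substantive remaining step---which your proposal does not mention---is to verify that the perturbatively constructed $\theta_n$ also satisfies $\theta_n(Q)=z^n$ on $\Int(\sigma^\vee)$. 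This is not automatic: one must show that no non-trivial marked tropical disk with $a_\mtr\neq 0$ can have its stop in $\Int(\sigma^\vee)$. The argument again uses the inequalities $\langle m_j^{>i},n_j\rangle\le 0$ along the core of $\mtr$ to confine the disk to a half-space disjoint from $\Int(\sigma^\vee)$ whenever the contribution is non-zero. This is the actual ``technical heart'' in the paper's proof, not the tree-ordering bookkeeping you flagged.
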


Here $\ltree{k}$ and $\mtree{k}$ are the sets of labeled, respectively marked, $k$-trees and $g_{\ltr}$, $a_\mtr$ are Hall algebraic Block--G\"{o}ttsche-type multiplicities. The formula for $\theta_n$ can be regarded as a replacement for a description of $\vartheta_{n,Q}$ in terms of Hall algebra broken lines. Indeed, it was recently shown by Cheung and Mandel \cite{cheung2019} that, contrary to Bridgeland's theta functions, Hall algebra theta functions which are defined in terms of Hall algebra broken lines do not, in general, satisfy the wall-crossing formula.

\subsection*{Acknowledgements}
The authors would like to thank Kwokwai Chan, Man-Wai Cheung and Travis Mandel for many useful discussions and suggestions. Naichung Conan Leung was supported in part by a grant from the Research Grants Council of the Hong Kong Council of the Hong Kong Special Administrative Region, China (Project No. CUHK 14302215 and 14303516).

\section{Scattering diagrams and theta functions}
\label{sec:counting}

We collect background material on scattering diagrams and theta functions. Fix a lattice $M$ of rank $r$ with dual lattice $N = \Hom_{\mathbb{Z}}(M,\mathbb{Z})$. Write $\langle \cdot,\cdot \rangle : M \times N \rightarrow \mathbb{Z}$ for the canonical pairing. Let $M_\real = M \otimes_\inte \real$ and $N_\real = N \otimes_\inte \real$.

%\begin{notation}\label{not:skew_symmetric_pairing}
%We also fix a skew-symmetric $\mathbb{Q}$-biliear form $\langle \cdot, \cdot \rangle$ on $M_\mathbb{Q}$ with a natural map $\sem$
%$$
%\sem: M_\mathbb{Q} \rightarrow N_\mathbb{Q}
%$$
%given by $(\sem(m),m'):= \langle m,m' \rangle$. We let $d \in \inte_+$ to be the smallest integer such that $d \langle \cdot,\cdot \rangle$ is integral. 

%We will let $K:= ker(\sem) \cap M$ to be a sublattice and write $\overline{M} = M /K$ and also let $\overline{N}:=\sem(M) \subset N_\mathbb{Q}$. We will define $\overline{M}_\mathbb{Q}$, $\overline{M}_\real$, $\overline{N}_\mathbb{Q}$ and $\overline{N}_\real$ similarly.
%\end{notation}

\subsection{Tropical Lie algebras and scattering diagrams}\label{sec:scattering_diagram}
Following \cite{gross2018canonical, mandel2015refined}, we recall the definition of scattering diagrams. Compared to \cite{mandel2015refined}, the roles of $M$ and $N$ are reversed.

\subsubsection{Tropical Lie algebras}\label{sec:tropical_Lie_algebras}

Fix a strictly convex polyhedral cone $\sigma \subset M_\real$. Let $M_\sigma = \sigma \cap M$ and $M_\sigma^+ = M_\sigma \setminus \{0\}$. For each $k \in \inte_{> 0}$, set $k M_\sigma^+ = \{ m_1 + \cdots + m_k \mid m_i \in M_\sigma^+\}$.

Let $\mathfrak{h}= \bigoplus_{m \in M^{+}_\sigma} \mathfrak{h}_m$ be a $M^+_\sigma$-graded Lie algebra over $\comp$. For each $k \in \mathbb{Z}_{> 0}$, set $\mathfrak{h}^{\geq k}= \bigoplus_{m \in kM_\sigma^+} \mathfrak{h}_m$. Then $\mathfrak{h}^{< k}:= \mathfrak{h}/\mathfrak{h}^{\geq k}$ is a nilpotent Lie algebra. Associated to the pro-nilpotent Lie algebra $\hat{\mathfrak{h}}:= \varprojlim_{k} \mathfrak{h}^{< k}$ is the exponential group $\hat{G} := \exp(\hat{\mathfrak{h}})$. Similarly, for each $m \in M^{+}_\sigma$, set $\mathfrak{h}_m^{\parallel} = \bigoplus_{k\geq 1} \mathfrak{h}_{km}$ and $\hat{\mathfrak{h}}^{\parallel}_{m} = \prod_{k\in \mathbb{Z}_{>0}} \mathfrak{h}_{km} \subset \hat{\mathfrak{h}}$ with associated exponential group $\hat{G}_m^{\parallel}$.

To define theta functions, we require a second (not necessarily strictly) convex polyhedral cone $\mathcal{C} \subsetneq M_\real$ which contains $\sigma$. Let $P = \mathcal{C} \cap M$ be the corresponding monoid. Suppose that $\mathfrak{h}$ acts on a $P$-graded $\comp$-algebra $A = \bigoplus_{m \in P} A_m$ by derivations so that $\mathfrak{h}_m \cdot A_{m'} \subset A_{m+m'}$. Then $A^{\geq k} := \bigoplus_{m \in kM_\sigma^+ + P} A_m$ is a graded ideal of $A$. Set $A^{< k} = A \slash A^{\geq k}$ and $\hat{A} = \varprojlim_k A^{<k}$. There is an induced action of $\hat{\mathfrak{h}}$, and hence also of $\hat{G}$, on the algebra $\hat{A}$.

More generally, given a sublattice $L \subset M$, let $\mathfrak{h}_L = \bigoplus_{m \in L\cap M_\sigma^+ } \mathfrak{h}_m$ and $A_L = \bigoplus_{m \in L\cap P } A_m$ with associated completions $\hat{\mathfrak{h}}_L$ and $\hat{A}_L$.

Let $K \subset M$ be a saturated sublattice which satisfies the following conditions:
\begin{enumerate}
	\item $\mathfrak{h}_K$ is a central Lie subalgebra of $\mathfrak{h}$.
	
	\item The induced $\mathfrak{h}_K$-action on $A$ is trivial.
	
	\item The induced $\mathfrak{h}$-action on $A_K$ is trivial. 
	\end{enumerate}
Denote by $\pi_K : M \rightarrow \overline{M}:=M/K$ the canonical projection and by $\overline{N}:= \overline{M}^\vee \hookrightarrow N$ the embedding of $\overline{M}^\vee$ into $N$ as the orthogonal $K^\perp$.

The following assumption will be used in Section \ref{sec:theta_function_def}.

\begin{assum}[\cite{mandel2015refined}]
\phantomsection
\label{assum:P_and_K_relation}
\begin{enumerate}
	\item The monoid $P$ satisfies $\overline{M} = \pi_K(P)$.
	\item There is a fan structure on $\overline{M}_\real$ and a piecewise linear section $\varphi: \overline{M}_\real \rightarrow M_\real$ of $\pi_K$ which satisfies $\varphi(0) = 0$ and $P=\varphi(\overline{M})+(K\cap P)$. 
	
	\item We are given elements $z^{\varphi(\mathsf{m})} \in A_{\varphi(\mathsf{m})}$, $\mathsf{m} \in \overline{M}$, which satisfy
	\begin{enumerate}
		\item $z^{\varphi(0)}=1$,
		\item for any $a \in \hat{A}_K \setminus \{0\}$ and $\mathsf{m} \in \overline{M}$, we have $a z^{\varphi(\mathsf{m})} \neq 0$, and
		
		\item for any $\mathsf{m} \in \overline{M}$, we have $ A_{\varphi(\mathsf{m})+P\cap K}= z^{\varphi(\mathsf{m})} A_K$.
	\end{enumerate}
\end{enumerate}
\end{assum}

\begin{definition}\label{def:tropical_lie_algebra_for_diagram}
	The Lie algebra $\mathfrak{h}$ is called {\em tropical} if, for each pair $(m, n) \in  M_\sigma^+ \times\overline{N}$ satisfying $\langle m,n \rangle = 0$, it is equipped with a subspace $\mathfrak{h}_{m,n} \subset \mathfrak{h}_{m}$. These subspaces are required to satisfy
	\begin{enumerate}
		\item $\mathfrak{h}_{m,0} = \{0\}$ and $\mathfrak{h}_{m,kn} = \mathfrak{h}_{m,n}$ for each $k \neq 0$,
		
		\item $[\mathfrak{h}_{m_1,n_1},\mathfrak{h}_{m_2,n_2}] \subset \mathfrak{h}_{m_1+m_2,n}$, where $n = \langle m_2,n_1 \rangle n_2 - \langle m_1,n_2 \rangle n_1$, and
		
		\item $\mathfrak{h}_{m_1,n} \cdot A_{m_2} = \{0\}$ if $\langle m_2,n \rangle = 0$.
	\end{enumerate} 
\end{definition}

Examples of tropical and non-tropical Lie algebras can be found in \cite[Example 2.1]{mandel2015refined}. See also Section \ref{sec:motivicHall}. Until mentioned otherwise, we will assume that $\mathfrak{h}$ is tropical.

Observe that if $(m,n) \in M^{+}_\sigma \times \overline{N}$ with $\langle m,n \rangle = 0$, then $\mathfrak{h}_{m,n}^\parallel:= \bigoplus_{k\in \inte_{>0}} \mathfrak{h}_{km,n}$ is an abelian Lie subalgebra of $\mathfrak{h}_{m}^\parallel$. Denote by $\hat{\mathfrak{h}}_{m,n}^\parallel$ the completion of $\mathfrak{h}_{m,n}^\parallel$.

Finally, given a commutative unital $\comp$-algebra $R$, there are $R$-linear versions of the above definitions. For example, $\mathfrak{h}_R := \mathfrak{h} \otimes_\comp R$ is a Lie algebra over $R$ which acts on $A \otimes_{\comp} R$ by the $R$-linear extension of the rule $t_1 h \cdot t_2 a= t_1 t_2 (h \cdot a)$. The completion $\hat{\mathfrak{h}}\hat{\otimes}_{\mathbb{C}} R$ acts on $\hat{A} \hat{\otimes}_{\mathbb{C}}R$. The corresponding exponential group is $G_R$ with completion $\hat{G}_R$. Similarly, there are abelian Lie subalgebras $\hat{\mathfrak{h}}_{m,n,R}^\parallel \subset \hat{\mathfrak{h}}_{m,R}^\parallel$ and, given a saturated sublattice $L \subset M$, we can form $\mathfrak{h}_{L,R}$, $A_{L,R}$ and so on.

\subsubsection{Scattering diagrams}\label{sec:scattering_diagram_def}

We continue to follow \cite{gross2018canonical, mandel2015refined}. Fix a commutative unital $\mathbb{C}$-algebra $R$. Recall that $r$ is the rank of $M$.

\begin{definition}\label{wall}
	A {\em wall} $\mathbf{w}$ (over $R$) in $M_\real$ is a tuple $(m,n, P, \Theta)$ consisting of
	\begin{enumerate}
		\item
		a primitive element $m \in M^{+}_\sigma$ and an element $n \in \overline{N} \setminus \{0\}$ which satisfy $\langle m,n \rangle=0$,
		\item
		an $(r-1)$-dimensional closed convex rational polyhedral subset $P$ of $m_0 + n^{\perp}\subset M_\real $ for some $m_0 \in M_\real$, called the {\em support} of $\mathbf{w}$, and
		\item
		an element $\Theta \in \hat{G}_{m,n,R}:= \exp( \hat{\mathfrak{h}}^\parallel_{m,n,R})$, called the {\em wall-crossing automorphism} of $\mathbf{w}$.
	\end{enumerate}
\end{definition}

A wall $\mathbf{w} = (m,n,P, \Theta)$ is called {\em incoming} (resp. {\em outgoing}) if $P + t m \subset P$ for all $t \in \real_{>0}$ (resp.  $t \in \real_{\leq 0}$). The vector $-m$ is called the {\em direction} of $\mathbf{w}$.

\begin{definition}\label{def:scattering_diagram}
	A {\em scattering diagram} $\mathscr{D}$ over $R$ is a countable set of walls $\left\{ ( m_i, n_i, P_i, \Theta_i) \right\}_{i\in I}$ such that, for each $k \in \mathbb{Z}_{> 0}$, the image of $\log(\Theta_i)$ in $\mathfrak{h}^{< k} \otimes_\comp R$ is zero for all but finitely many $i \in I$.
\end{definition}

Let $k \in \mathbb{Z}_{> 0}$. Using the canonical projection $\hat{\mathfrak{h}}_R \rightarrow \mathfrak{h}^{< k} \otimes_\comp R$, a scattering diagram $\mathscr{D}$ induces a finite scattering diagram $\mathscr{D}^{< k}$ with wall-crossing automorphisms in $\exp(\mathfrak{h}^{< k} \otimes_\comp R)$.

	The {\em support} and {\em singular set} of a scattering diagram $\mathscr{D}$ are
	$$
	\Supp(\mathscr{D}) := \bigcup_{\mathbf{w} \in \mathscr{D}} P_{\mathbf{w}},
	\qquad
	\Joints(\mathscr{D}) := \bigcup_{\mathbf{w} \in \mathscr{D}} \partial P_{\mathbf{w}} \cup \bigcup_{\substack{ \mathbf{w}_1, \mathbf{w}_2 \in \mathscr{D} \\ \dim(\mathbf{w}_1 \cap \mathbf{w}_2)=r-2}} P_{\mathbf{w}_1} \cap P_{\mathbf{w}_2}.
	$$

\subsubsection{Path ordered products}\label{sec:path_ordered_product}

An embedded path $\gamma: [0,1] \rightarrow N_\real \setminus \Joints(\mathscr{D})$ is said to {\em intersect $\mathscr{D}$ generically}
if $\gamma$ intersects all walls of $\mathscr{D}$ transversally, $\gamma(0), \gamma(1) \notin \Supp(\mathscr{D})$ and $\text{Im}(\gamma) \cap \Joints(\mathscr{D}) = \emptyset$. The {\em path ordered product} of such a path is $\Theta_{\gamma, \mathscr{D}}:= \varprojlim_k \Theta_{\gamma,\mathscr{D}}^{< k }$, where $
\Theta_{\gamma, \mathscr{D}}^{< k} := \prod^{\gamma}_{\mathbf{w} \in \mathscr{D}^{< k}} \Theta_{\mathbf{w}} \in \exp(\mathfrak{h}^{< k} \otimes_{\mathbb{C}} R)$
is defined in \cite[\S 1.3]{gross2010tropical}.

\begin{comment}
Explicitly, we will have a sequence of real numbers
$$
0=t_0 < t_1< t_2<\cdots<t_s<t_{s+1} = 1
$$
such that $\left\{ \gamma(t_1), \ldots, \gamma(t_s) \right\} = \gamma \cap \Supp(\mathscr{D}^{\leq k})$.

For each $1 \leq i \leq s$, there are walls $\mathbf{w}_{i,1}, \ldots, \mathbf{w}_{i,l_i}$ in $\mathscr{D}^{\leq k}$ such that $\gamma(t_i) \in P_{i,j} := \Supp(\mathbf{w}_{i,j})$ for all $j = 1, \dots, l_i$. Since $\gamma$ does not hit $\Joints(\mathscr{D})$, we have $\text{codim}(\Supp(\mathbf{w}_{i,j_1} )\cap \Supp(\mathbf{w}_{i,j_2} )) = 1$ for any $j_1, j_2$, i.e. the walls $\mathbf{w}_{i,1}, \ldots, \mathbf{w}_{i,l_i}$ are overlapping with each other and contained in a common hyperplane. Then we have an element
$$
\Theta_{\gamma(t_i)} := \prod_{j=1}^k \Theta^{\sigma_j}_{\mathbf{w}_{i,j}} \in \exp(\mathfrak{h}^{\leq k} \otimes_{\mathbb{C}} R),
$$
where
$$
\sigma_j
= \left\{
\begin{array}{ll}
1 & \text{if $(-n_{i,j}, \gamma'(t_i))>0$},\\
-1 & \text{if $(-n_{i,j}, \gamma'(t_i))<0$};
\end{array}\right.
$$
this element is well defined without prescribing the order of the product since the elements $\Theta_{\mathbf{w}_{i,j}}$'s are commuting with each other from $\mathfrak{h}$ being tropical in the Definition \ref{def:tropical_lie_algebra_for_diagram}.

Finally, we take the ordered product along the path $\gamma$ as
$$
\Theta^{k}_{\gamma, \mathscr{D}} := \Theta_{\gamma(t_s)}\cdots \Theta_{\gamma(t_i)} \cdots \Theta_{\gamma(t_1)}.
$$
\end{comment}

\begin{definition}\label{consistent_def}
\begin{enumerate}
\item	A scattering diagram $\mathscr{D}$ is called {\em consistent} if $\Theta_{\gamma,\mathscr{D}} = \Id$ for any embedded loop $\gamma$ intersecting $\mathscr{D}$ generically.

\item Scattering diagrams $\mathscr{D}_1$, $\mathscr{D}_2$ are called {\em equivalent} if $\Theta_{\gamma,\mathscr{D}_1} = \Theta_{\gamma,\mathscr{D}_2}$
	for any embedded path $\gamma$ intersecting both $\mathscr{D}_1$ and $\mathscr{D}_2$ generically.
\end{enumerate}
\end{definition}

The following result is fundamental in the theory of scattering diagrams.

\begin{theorem}[\cite{kontsevich-soibelman04,gross2011real}]\label{thm:kontsevich_soibelman_thm}
	Let $\mathscr{D}_{in}$ be a scattering diagram consisting of finitely many walls supported on full affine hyperplanes. Then there exists a scattering diagram $\mathcal{S}(\mathscr{D}_{in})$ which is consistent and is obtained from $\mathscr{D}_{in}$ by adding only outgoing walls. Moreover, the scattering diagram $\mathcal{S}(\mathscr{D}_{in})$ is unique up to equivalence.
\end{theorem}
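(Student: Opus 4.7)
The plan is to construct $\mathcal{S}(\mathscr{D}_{in})$ by realizing it as the scattering diagram $\mathscr{D}(\varPhi)$ associated to a Maurer--Cartan element $\varPhi$ of the dg Lie algebra $\mathcal{H}^*$, following the asymptotic analytic framework developed later in the paper. First, I would encode $\mathscr{D}_{in}$ as the input
\[
\incoming = \sum_{\mathbf{w} \in \mathscr{D}_{in}} \incoming^{\mathbf{w}}, \qquad \incoming^{\mathbf{w}} := -\delta_{P_\mathbf{w}} \log(\Theta_\mathbf{w}) \in \mathcal{H}^*.
\]
Since $\mathscr{D}_{in}$ is finite, $\incoming$ is a finite sum whose asymptotic behavior as $\hp \to 0$ recovers the wall-crossings of $\mathscr{D}_{in}$. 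However, $\incoming$ fails to be Maurer--Cartan because the brackets $[\incoming^{\mathbf{w}_1},\incoming^{\mathbf{w}_2}]$ concentrate non-trivially along the joints where the supporting hyperplanes intersect.

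The second step is to apply a Kuranishi-type iteration using the propagator $\mathbf{H}$ of Section \ref{sec:modified_homotopy_operator}. Working modulo the filtration by $\mathfrak{h}^{\geq k}$, I would define $\varPhi_0 = \incoming$ and $\varPhi_{n+1} = \incoming - \tfrac{1}{2}\mathbf{H}[\varPhi_n,\varPhi_n]$, and take the inverse limit $\varPhi = \lim_n \varPhi_n$. Because $\mathbf{H}$ is a homotopy inverse for $d$ designed to produce $1$-forms propagating outward from joints, each successive correction $\varPhi_{n+1} - \varPhi_n$ is supported along outgoing polyhedra emanating from joints of the earlier walls, and lies strictly deeper in the filtration $\mathfrak{h}^{\geq k}$; the limit $\varPhi$ is then genuinely Maurer--Cartan, i.e. $d\varPhi + \tfrac{1}{2}[\varPhi,\varPhi]=0$.

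The third step extracts $\mathscr{D}(\varPhi)$ from $\varPhi$ via Proposition \ref{prop:consistence_from_solution}: for each codimension-one limit support locus $P$, a polyhedral decomposition $\mathcal{P}$ of $P$ and the asymptotic transversal integral of the associated concentrated $1$-form assign a wall-crossing automorphism to each maximal cell of $\mathcal{P}$. By construction all added walls are outgoing, since $\mathbf{H}$ propagates forms outward from joints. Consistency follows from the Maurer--Cartan equation itself: for a small loop $\gamma$ around a joint, pairing the identity $d\varPhi + \tfrac{1}{2}[\varPhi,\varPhi] = 0$ with a transverse disk bounded by $\gamma$ via Stokes' theorem yields $\Theta_{\gamma,\mathscr{D}(\varPhi)} = \Id$ in the $\hp \to 0$ limit, and global consistency follows by decomposing arbitrary embedded loops in $M_\real \setminus \Joints(\mathscr{D}(\varPhi))$ into small loops of this type.

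For uniqueness, I would induct on the filtration index $k$: at each stage, the outgoing walls added to $\mathscr{D}^{<k-1}$ must be chosen so that their wall-crossing automorphisms cancel the holonomies produced by lower-order walls around each joint, and the tropicality condition of Definition \ref{def:tropical_lie_algebra_for_diagram} ensures this cancellation is unique up to the equivalence of Definition \ref{consistent_def}. The main obstacle will be the analytic verification that, at each stage of the Kuranishi iteration, $\mathbf{H}$ applied to the bracket of bump forms concentrated along rational polyhedra again yields a $1$-form concentrated along a finite union of closed convex rational polyhedra, so that $\mathscr{D}(\varPhi)$ satisfies the countability and finiteness conditions of Definition \ref{def:scattering_diagram}. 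Controlling these asymptotic Gaussian propagator integrals, together with proving that the inverse limit $\varPhi$ admits a well-defined polyhedral decomposition of its support locus, is the technical heart of the approach.
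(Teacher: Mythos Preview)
The paper does not give its own proof of this theorem; it is stated as a citation to Kontsevich--Soibelman and Gross--Siebert. What the paper does contribute is an independent proof of the \emph{existence} part only, developed in Sections~\ref{sec:MC_equation}--\ref{sec:main_theorem} and culminating in Proposition~\ref{prop:consistence_from_solution}, and this follows precisely the Maurer--Cartan route you outline: encode $\mathscr{D}_{in}$ as the input $\incoming$, solve the Kuranishi-type fixed-point equation $\varPhi = \incoming - \tfrac{1}{2}\mathbf{H}[\varPhi,\varPhi]$ (expressed as the sum over labeled trees in Theorem~\ref{thm:theorem_1_modified}), and read off the scattering diagram $\mathscr{D}(\varPhi)$ via Definition~\ref{def:diagram_from_solution}. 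Your existence sketch therefore agrees with the paper's alternative argument in both strategy and detail, including the identification of the analytic estimates on $\mathbf{H}$ applied to products of bump forms (Lemmas~\ref{lem:integral_convergent}--\ref{lem:iterated_integral_constant}) as the technical core.

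Two minor divergences are worth noting. First, your consistency argument via Stokes' theorem on a transverse disk is heuristically sound but not quite what the paper does: the paper instead constructs a gauge $\facs$ satisfying $e^{\ad_\facs} d\, e^{-\ad_\facs} = d_{\varPhi}$ near each joint (equations~\eqref{eqn:gauge_equivalent}--\eqref{eqn:wall_crossing_factor}) and reads off the wall-crossing factors as the jump of $\facs$, then invokes the methods of \cite{kwchan-leung-ma} to conclude consistency. Second, the paper makes no attempt to reprove uniqueness via the analytic framework; it simply cites Theorem~\ref{thm:kontsevich_soibelman_thm} itself to identify $\mathscr{D}(\varPhi)$ with $\mathcal{S}(\mathscr{D}_{in})$. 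Your inductive uniqueness sketch is the standard algebraic argument from the cited references and is orthogonal to the paper's contribution.
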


Using asymptotic analytic techniques, an independent proof of the existence part of Theorem \ref{thm:kontsevich_soibelman_thm} will be given in Proposition \ref{prop:consistence_from_solution}.

\subsection{Broken lines and theta functions}\label{sec:theta_function_def}

We follow \cite{mandel2015refined} to define broken lines. Fix a consistent scattering diagram $\mathscr{D}$ over $R$.

%\begin{definition}\label{def:broken_lines}
%	A broken line $\gamma$ with end $(\mathsf{m},Q) \in \overline{M} \setminus \{0\} \times M_{\real} \setminus \Supp(\mathscr{D})$ is the data of a partition $-\infty = t_{-1} < t_0 \leq t_1 \leq \cdots \leq t_l = 0$, a piecewise linear map $\gamma : (-\infty,0] \rightarrow M_\real \setminus \Joints(\mathscr{D})$ and elements $a_i \in A_{m_i}\otimes_{\mathbb{C}} R$, $i = 0 ,\dots, l$, with $ m_i \neq 0$. This data is required to satisfy the conditions:  $a_0 = z^{\varphi(\mathsf{m})}$; and $\gamma(0) = Q$, and $\{t_0,\dots,t_{l-1}\} \subset \gamma^{-1}(\Supp(\mathscr{D}))$; and $\gamma'|_{(t_{i-1},t_i)} \equiv -m_i$ for $i = 0,\dots,l$, and all bends $m_{i+1} - m_i$ are non-zero; and for each $i = 0,\dots l-1$, set $
%		\Theta_i := \prod_{\substack{\mathbf{w} \in \mathscr{D}\\ \gamma(t_i) \in P_\mathbf{w}}} \Theta_{\mathbf{w}}^{\sgn(m_i, n_\mathbf{w})} \in \hat{G}$. Then $a_{i+1}$ is a homogeneous summand of $\Theta_i \cdot a_i$.
%\end{definition}

\begin{definition}\label{def:broken_lines}
	A {\em broken line $\gamma$ with end} $(\mathsf{m},Q) \in \overline{M} \setminus \{0\} \times M_{\real} \setminus \Supp(\mathscr{D})$ is the data of a partition $-\infty < t_0 \leq t_1 \leq \cdots \leq t_l = 0$, a piecewise linear map $\gamma : (-\infty,0] \rightarrow M_\real \setminus \Joints(\mathscr{D})$ and elements $a_i \in A_{m_i}\otimes_{\mathbb{C}} R$, $i = 0 ,\dots, l$, with $ m_i \neq 0$. This data is required to satisfy the following conditions:
	\begin{enumerate}
		\item  $a_0 = z^{\varphi(\mathsf{m})}$.
		
		\item $\gamma(0) = Q$.
		
		\item $\{t_0,\dots,t_{l-1}\} \subseteq \gamma^{-1}(\Supp(\mathscr{D}))$. 
		
		\item $\gamma'|_{(t_{i-1},t_i)} \equiv -m_i$ for $i = 0,\dots,l$, where $t_{-1} := -\infty$, and all {\em bends} $m_{i+1} - m_i$ are non-zero.
		
		\item For each $i = 0,\dots, l-1$, set $
		\Theta_i := \prod_{\substack{\mathbf{w} \in \mathscr{D}\\ \gamma(t_i) \in P_\mathbf{w}}} \Theta_{\mathbf{w}}^{\sgn \langle m_i, n_\mathbf{w} \rangle } \in \hat{G}_R$. Then $a_{i+1}$ is a homogeneous summand of $\Theta_i \cdot a_i$.
	\end{enumerate}
\end{definition}

In the notation of Definition \ref{def:broken_lines}, we will write $a_\gamma$ for $a_{l}$.

\begin{definition}\label{def:theta_functions}
	The {\em broken line theta function} associated to $(\mathsf{m},Q) \in \overline{M} \setminus \{0\} \times M_{\real} \setminus \Supp(\mathscr{D})$ is
	\[
	\vartheta_{\mathsf{m},Q} = \sum_{\textnormal{End}(\gamma) = (\mathsf{m},Q)} a_{\gamma} \in \hat{A} \hat{\otimes}_{\mathbb{C}} R,
	\]
	the sum being over all broken lines with end $(\mathsf{m},Q)$. Define also $\vartheta_{0,Q} = 1$.
\end{definition}

In the present setting, well-definedness of theta functions was proved in \cite{mandel2015refined}. Observe that 
$
\vartheta_{\mathsf{m},Q} \in z^{\varphi(\mathsf{m})} + \hat{A}_{\varphi(\mathsf{m})+M_\sigma^+},
$
where $\hat{A}_{\varphi(\mathsf{m})+M_\sigma^+}$ is the completion of $A_{\varphi(\mathsf{m})+M_\sigma^+} \otimes_{\mathbb{C}} R$.

\begin{prop}[\cite{CPS,mandel2015refined}]\label{prop:theta_functions_properties}
Under Assumption \ref{assum:P_and_K_relation}, the following statements hold:
\begin{enumerate}
\item For each $Q \in M_\real \setminus \Supp(\mathscr{D})$, the set $\{ \vartheta_{\mathsf{m},Q}\}_{\mathsf{m} \in \overline{M}}$ is linearly independent over $\hat{A}_K \hat{\otimes}_{\mathbb{C}} R$ and, for each $k \in \mathbb{Z}_{> 0}$, additively generates $A^{<k} \otimes_{\mathbb{C}} R$ over $A_K^{<k} \otimes_{\mathbb{C}} R$.

\item Let $\mathscr{D} = \mathcal{S}(\mathscr{D}_{in})$ and let $\rho : [0,1] \rightarrow M_\real \setminus \Joints(\mathscr{D})$ be a path with generic endpoints which do not lie in $\Supp(\mathscr{D})$. Then the equality
	$
	\vartheta_{\mathsf{m},\rho(1)} = \Theta_{\rho,\mathscr{D}}(\vartheta_{\mathsf{m},\rho(0)})
	$
	holds for all $\mathsf{m} \in \overline{M}$.
\end{enumerate}
\end{prop}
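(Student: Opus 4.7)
The plan is to prove the two parts in the order (2) then (1), since the wall-crossing formula gives the tool needed for the generation/independence argument.

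For part (2), I would reduce to the truncated setting modulo $\mathfrak{h}^{\geq k}$, proceed by induction on $k$, and then pass to the inverse limit. Since only finitely many walls of $\mathscr{D}^{<k}$ contribute nontrivially, the path $\rho$ may be subdivided so that each subpath crosses a single connected component of $\Supp(\mathscr{D}^{<k})$ transversally at a single point $\rho(t_*)$; by multiplicativity of path-ordered products it suffices to treat one such crossing. At the crossing point, let $\mathbf{w}_1, \dots, \mathbf{w}_s$ denote the walls of $\mathscr{D}^{<k}$ through $\rho(t_*)$. I would then establish a bijective correspondence between broken lines $\gamma$ with end $(\mathsf{m},\rho(1))$ and pairs $(\gamma_0, \text{decoration})$, where $\gamma_0$ is a broken line with end $(\mathsf{m},\rho(0))$ and the decoration records either a trivial continuation or an additional bend at $\rho(t_*)$ along one of the $\mathbf{w}_i$. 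The resulting weight change for $a_\gamma$ is exactly a homogeneous summand of $\bigl(\prod_i \Theta_{\mathbf{w}_i}^{\sgn\langle m_{\gamma_0},n_{\mathbf{w}_i}\rangle}\bigr)\cdot a_{\gamma_0}$, matching step (5) of Definition \ref{def:broken_lines}. Summing over all broken lines and all homogeneous summands reconstructs $\Theta_{\rho,\mathscr{D}^{<k}}(\vartheta_{\mathsf{m},\rho(0)})$.

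For part (1), I would exploit the triangular structure. Assumption \ref{assum:P_and_K_relation} implies that any homogeneous element of $A^{<k} \otimes_\comp R$ lies in $z^{\varphi(\mathsf{m})} A_K^{<k}\otimes_\comp R$ for a unique $\mathsf{m} \in \overline{M}$. Combining the observation $\vartheta_{\mathsf{m},Q} \in z^{\varphi(\mathsf{m})} + \hat{A}_{\varphi(\mathsf{m})+M_\sigma^+}$ with the finiteness of relevant walls modulo $\mathfrak{h}^{\geq k}$, I would invert the triangular system by induction on the partial order on $M_\sigma$, obtaining the generation statement. Linear independence follows from the same triangular structure: if $\sum_{\mathsf{m}} c_\mathsf{m} \vartheta_{\mathsf{m},Q} = 0$ with $c_\mathsf{m} \in \hat{A}_K\hat{\otimes}_\comp R$, then isolating the minimal $\mathsf{m}$ in the support (in the partial order induced by $M_\sigma^+$) and using part (3c) of Assumption \ref{assum:P_and_K_relation}, which guarantees that $c_{\mathsf{m}} z^{\varphi(\mathsf{m})} \neq 0$ whenever $c_\mathsf{m} \neq 0$, forces $c_\mathsf{m} = 0$ inductively.

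The main obstacle is the bookkeeping at a single crossing when several walls of $\mathscr{D}$ overlap along a common hyperplane at $\rho(t_*)$. Here one must verify that the wall-crossing automorphisms of the overlapping walls commute, so that their product defining $\Theta_{\rho,\mathscr{D}}$ is unambiguous, and simultaneously that the broken-line expansion realizes this commutative product as a sum over homogeneous summands in a consistent way. Commutativity follows from condition (2) of Definition \ref{def:tropical_lie_algebra_for_diagram} applied to $(m_i, n_{\mathbf{w}_j})$ with $\langle m_i, n_{\mathbf{w}_j}\rangle = 0$; the tropical condition is precisely what makes the operator $\prod_i \Theta_{\mathbf{w}_i}^{\sgn\langle m_i,n_{\mathbf{w}_i}\rangle}$ well-defined and compatible with the bending rule. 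Once this is clear, the passage to the inverse limit in $k$ is routine because, for each fixed truncation, only finitely many broken lines contribute.
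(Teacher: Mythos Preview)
The paper does not give its own proof of this proposition: it is stated with attribution to \cite{CPS,mandel2015refined} and no argument is supplied. So there is nothing in the paper to compare against directly.

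That said, your outline matches the standard arguments in those references. For part (2) the usual proof is exactly the single-wall reduction and broken-line bijection you describe; for part (1) it is the triangular/leading-term argument using $\vartheta_{\mathsf{m},Q} \in z^{\varphi(\mathsf{m})} + \hat{A}_{\varphi(\mathsf{m})+M_\sigma^+}$ together with Assumption~\ref{assum:P_and_K_relation}(3). One point to tighten in part (2): the correspondence is not literally between broken lines ending at $\rho(1)$ and decorated broken lines ending at $\rho(0)$; rather, one deforms $Q$ continuously along $\rho$ and tracks how the finite set of contributing broken lines changes as the endpoint crosses the wall. The genericity of $\rho$ (avoiding $\Joints(\mathscr{D})$) is what guarantees that the only change is a possible new final bend at the crossed wall, and that no broken lines are created or destroyed by degenerations elsewhere. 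With that caveat your sketch is sound.
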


\subsection{Tropical disk counting}\label{sec:tropical_counting}
We recall some definitions from \cite{mandel2015refined}, modified so as to incorporate the work of \cite{kwchan-leung-ma}. Fix a scattering diagram $\mathscr{D}_{in} = \{ \mathbf{w}_i = (m_i, n_i, P_i,\Theta_i)\}_{i \in I}$ and let $g_i = \log(\Theta_i)$. Write
\begin{equation}\label{eqn:initial_walls}
g_i = \sum_{j \geq 1} g_{ji} \in \big( \prod_{j \geq 1} \mathfrak{h}_{jm_i} \big) \cap \hat{\mathfrak{h}}_{m_i,n_i}^\parallel, \qquad g_{ji} \in \mathfrak{h}_{jm_i}.
\end{equation}   

For each $l \geq 0$, define commutative rings $R =\comp[\{t_i \mid  i\in I\}]$ and $R_l = \comp[\{t_i \mid  i\in I\}]/\langle t_i^{l+1}\ | \ i\in I\rangle$, as in \cite{gross2010tropical, mandel2015refined}. There is a ring homomorphism
\[
%\label{eqn:ring_base_change}
R_l \rightarrow \tilde{R}_{l}:= \frac{\comp\left[\{u_{ij} \mid i \in I,\ 1 \leq j \leq l\}\right]}{\langle u_{ij}^2  \mid i \in I,\ 1 \leq j \leq l \rangle}, \qquad t_i \mapsto \sum_{j=1}^l u_{ij}.
\]

%\begin{definition}\label{def:standard_diagram}
%	A standard scattering diagram $\mathscr{D}$ consist of one wall $\mathbf{w}_i = (e_i,P_i,\Theta_i)$ for each $i \in I \setminus F$, with $P_i = e_i^\perp$ and the wall crossing factor of the form
%	$$
%	\log(\Theta_i) = \sum_{j,k=1}^{\infty} a^{(i)}_{jk} \hat{z}^{ke_i}  t_i^j,
%	$$
%	with $a^{(i)}_{jk} \in \comp_q$ such that there are only finitely many non-zero $a^{(i)}_{jk}$'s for each fixed $j$.
%	\end{definition}

\begin{definition}\label{def:initial_diagram}
	A {\em perturbation} $\tilde{\mathscr{D}}_{in,l}$ of $\mathscr{D}_{in}$ over $\tilde{R}_{l}$ is a scattering diagram over $\tilde{R}_{l}$ consisting of a wall
	$\mathbf{w}_{iJ} = (m_i, n_i, P_{iJ}, \Theta_{iJ})$
	for each $i \in I$ and $J \subset \{1,\dots,l\}$ with $\# J \geq 1$ such that
	\begin{enumerate}
		\item
		each $P_{iJ}$ is a translate of $n_i^\perp$ and $P_{iJ} \neq P_{i'J'}$ unless $i=i^{\prime}$ and $J = J^{\prime}$, and
		\item
		the equality $\log(\Theta_{iJ}) = (\# J)! g_{(\#J) i} \prod_{s\in J} u_{is}$
 		holds.
 	\end{enumerate}
\end{definition}

We follow \cite{kwchan-leung-ma, filippini2015block, gross2010tropical, Mikhalkin05} and introduce tropical disks in $\mathscr{D}_{in}$ or $\tilde{\mathscr{D}}_{in,l}$.

\begin{definition}\label{def:k_tree_def}\label{def:ribbon_k_tree}
	A {\em (directed) $k$-tree} $\tr$ is the data of finite sets of vertices $\bar{\tr}^{[0]}$ and edges $\bar{\tr}^{[1]}$, a decomposition $\bar{\tr}^{[0]} = \tr^{[0]}_{in} \sqcup \tr^{[0]} \sqcup \{v_{out}\}$ into incoming, internal and outgoing vertices, and boundary maps $\partial_{in} , \partial_{out} : \bar{\tr}^{[1]} \rightarrow \bar{\tr}^{[0]}$. This data is required to satisfy the following conditions:
	\begin{enumerate}
		\item The set $\tr^{[0]}_{in}$ has cardinality $k$.
		
		\item
		Each vertex $v \in \tr^{[0]}_{in}$ is univalent and satisfies $\# \partial_{out}^{-1}(v) = 0$ and $\# \partial_{in}^{-1}(v) = 1$.
		
		\item
		Each vertex $v \in \tr^{[0]}$ is trivalent and satisfies $\# \partial_{out}^{-1}(v) = 2$ and $\# \partial_{in}^{-1}(v) = 1$.
		
		\item
		We have $\# \partial_{out}^{-1}(v_{out}) = 1$ and $\# \partial_{in}^{-1}(v_{out}) = 0$.
		
		\item
		The topological realization
		$|\bar{\tr}| := \left( \coprod_{e \in \bar{\tr}^{[1]}} [0,1] \right) / \sim$, where $\sim$ is the equivalence relation which identifies boundary points of edges if their images in $\tr^{[0]}$ agree, is connected and simply connected.
	\end{enumerate}
\end{definition}

Two $k$-trees are isomorphic if there exist bijections between their sets of vertices and edges which preserve the respective decompositions and boundary maps. Set $T^{[0]}_{\infty} = T_{in}^{[0]} \sqcup \{v_{out}\}$ and $\tr^{[1]} = \bar{\tr}^{[1]}\setminus \partial_{in}^{-1}(\tr^{[0]}_{in})$. The edge $e_{out} := \partial_{out}^{-1}(v_{out})$ is called the {\em outgoing edge}. The {\em root vertex} $v_r$ is the unique vertex satisfying $e_{out} = \partial^{-1}_{in}(v_r)$.

%\begin{definition}\label{def:weighted_tree_tropical}\label{label_tree_def}
%	A {\em labeled $k$-tree} (resp. weighted $k$-tree) is a $k$-tree $\ltr$ (resp. $\wtr$) together with a labeling (resp. weight) of each $e \in \partial_{out}^{-1}(\ltr^{[0]}_{in})$ (resp. $e \in \partial_{out}^{-1}(\wtr^{[0]}_{in})$) by a wall $\mathbf{w}_{i_e} = (m_{i_e},n_{i_e},P_{i_e},\Theta_{i_e})$ in $\mathscr{D}_{in}$ for some $i_e \in I$ and an element $m_e\in M_\sigma^+$ such that $m_e = k_{e} m_{i_e}$ for some $k_{e} \in \inte_+$ (resp. a wall $\mathbf{w}_{i_e J_e} = (m_{i_e},n_{i_e},P_{i_e J_e},\Theta_{i_e J_e})$ in $\tilde{\mathscr{D}}_{in,l}$ for some $i_e \in I$ and $J_e \subset \{1,\dots,k\}$ and a pair $(m_e, u^{\vec{J}_e})$ (Here we write $u^{\vec{J}_e} := \prod_{i \in I} \prod_{j \in J_{e,i}} u_{ij} \in \tilde{R}_{l}$) such that $m_e= (\# J_e) m_{i_e}$, where $\vec{J}_e = (J_{e,1},\dots,J_{e,r})$ is a finite set of subsets $J_{e,j} \subset \{1,\dots,l\}$ such that $J_{e,i} =J_e$ and $J_{e,j} = \emptyset $ for $j \neq i$). Furthemore, the weights of the incoming edges are required to be pairwise distinct for weighted $k$-tree $\wrtr$.
%\end{definition}

\begin{definition}
\phantomsection
\label{def:weighted_tree_tropical}\label{label_tree_def}
	\begin{enumerate}
	\item A {\em labeled $k$-tree} is a $k$-tree $\ltr$ with a labeling of each edge $e \in \partial_{in}^{-1}(\ltr^{[0]}_{in})$ by a wall $\mathbf{w}_{i_e} = (m_{i_e},n_{i_e},P_{i_e},\Theta_{i_e})$ in $\mathscr{D}_{in}$ and an element $m_e\in M_\sigma^+$ such that $m_e = k_{e} m_{i_e}$ for some $k_{e} \in \mathbb{Z}_{> 0}$.
	
	\item A {\em marked $k$-tree} is a $k$-tree $\mtr$ with a marked edge $\breve{e} \in \partial_{in}^{-1}(\ltr^{[0]}_{in})$ and an associated element $m_{\breve{e}} = \varphi(\mathsf{m})$ for some $\mathsf{m} \in \overline{M} \setminus \{0\}$, together with a labeling of each edge $e \in \partial_{in}^{-1}(\ltr^{[0]}_{in}) \setminus \{\breve{e}\}$ by a wall $\mathbf{w}_{i_e} = (m_{i_e},n_{i_e},P_{i_e},\Theta_{i_e})$ in $\mathscr{D}_{in}$ as for labeled $k$-tree.
		
	\item A {\em weighted $k$-tree} is a $k$-tree $\wtr$ with a weighting of each edge $e \in \partial_{in}^{-1}(\wtr^{[0]}_{in})$ by a wall $\mathbf{w}_{i_e J_e} = (m_{i_e},n_{i_e},P_{i_e J_e},\Theta_{i_e J_e})$ in $\tilde{\mathscr{D}}_{in,l}$ and a pair $(m_e, u^{\vec{J}_e})$, where $u^{\vec{J}_e} := \prod_{i \in I} \prod_{j \in J_{e,i}} u_{ij} \in \tilde{R}_{l}$, such that $m_e= (\# J_e) m_{i_e}$ and $\vec{J}_e$ is an $I$-tuple of finite subsets of $\{1,\dots,l\}$ such that $J_{e,i_e} =J_e$ and $J_{e,j} = \emptyset $ for $j \in I \setminus \{i_e\}$. Moreover, the weights of incoming edges are required to be pairwise distinct.
	\end{enumerate}
\end{definition}

	Two labeled $k$-trees are isomorphic if they are isomorphic as $k$-trees by a label preserving isomorphism, and similarly for marked and weighted cases. The set of isomorphism classes of labeled, marked and weighted $k$-trees will be denoted by $\ltree{k}$, $\mtree{k}$ and $\wtree{k}$, respectively.

Let $\ltr$ be a labeled $k$-tree. Inductively define a labeling of all edges of $\ltr$ by requiring that for a vertex $v \in \ltr^{[0]}$ with incoming edges $e_1, e_2$ (so that $\partial_{out}^{-1}(v) = \{e_1,e_2\}$) and outgoing edge $e_3$, the equality $m_{e_3} = m_{e_1} + m_{e_2}$ holds. A similar procedure applies to marked and weighted $k$-trees, where in the latter case we also require $u^{\vec{J}_{e_3}} = u^{\vec{J}_{e_1}} u^{\vec{J}_{e_2}}$. Write $m_{\ltr \slash \mtr \slash \wtr} = m_{e_{out}}$ and $u^{\vec{J}_{\wtr}} = u^{\vec{J}_{e_{out}}}$.

%\begin{definition}
%	\label{def:weighted_ribbon_k_tree}
%	A labeled (resp. weighted) ribbon $k$-tree $\lrtr$ (resp. $\wrtr$) is a labeled (resp. weighted) $k$-tree together with a ribbon structure, that is, a cyclic ordering of $\partial_{in}^{-1}(v) \sqcup \partial_{out}^{-1}(v)$ for each internal vertex $v$. Two labeled (resp. weighted) ribbon $k$-trees $\lrtr_1$ and $\lrtr_2$ (resp. $\wrtr_1$ and $\wrtr_2$) are said to be {\em isomorphic} if they are isomorphic as $k$-trees by an isomorphism which preserves both the ribbon structure and the labels (resp. weights). The set of isomorphism classes of labeled (resp. weighted) ribbon $k$-trees will be denoted by $\lrtree{k}$ (resp. $\wrtree{k}$).
%\end{definition}

\begin{definition}
	\label{def:weighted_ribbon_k_tree}
	A {\em labeled ribbon $k$-tree} $\lrtr$ is a labeled $k$-tree with a ribbon structure, that is, a cyclic ordering of $\partial_{in}^{-1}(v) \sqcup \partial_{out}^{-1}(v)$ for each $v \in \lrtr^{[0]}$. A {\em marked ribbon $k$-tree} is defined analogously.
\end{definition}

Labeled ribbon $k$-trees are isomorphic if they are isomorphic as $k$-trees by an isomorphism which preserves the ribbon structure and labels. The set of isomorphism classes of labeled ribbon $k$-trees will be denoted by $\lrtree{k}$. Similarly, $\mrtree{k}$ and $\wrtree{k}$ are the sets of isomorphism classes of marked and weighted ribbon $k$-trees, respectively. Note that the topological realization of a labeled (or marked, weighted) ribbon $k$-tree $\lrtr$ admits a canonical embedding into the unit disc $D$ so that $\lrtr^{[0]}_\infty \subset \partial D$.

\begin{comment}
\begin{definition}
	A {\em weighted $k$-tree} (weighted by $\mathscr{D}$) is a $k$-tree $\wtr$ together with a labeling of each edge $e \in \bar{\wtr}^{[1]}$ by a pair $(m_e,u^{\vec{J}_e})$, where for each incoming edge $e \in \partial_{out}^{-1}(\wtr^{[0]}_{in})$ we have $m_e = (\#J) m_i$ for some initial wall $\mathbf{w}_{iJ} =  (m_i,n_i, P_{iJ}, \Theta_{iJ})$ in $\tilde{\mathscr{D}}_{in,l}$, and $\vec{J}_e = (J_{e,1},\dots,J_{e,r})$ is a finite set of subsets $J_{e,j} \subset \{1,\dots,l\}$ such that $J_{e,i} =J$ and $J_{e,j} = \emptyset $ for $j \neq i$. It is required that at a trivalent vertex $v \in \Gamma^{[0]}$ with incoming edges $e_1, e_2$ and outgoing edge $e_3$ that
		\begin{equation} \label{eqn:balancing_condition}
		m_{e_3} = m_{e_1} + m_{e_2} \qquad \mbox{and} \qquad
		u^{\vec{J}_{e_3}} = u^{\vec{J}_{e_1}} u^{\vec{J}_{e_2}},
		\end{equation}
		using addition in $M_\sigma^+$ and multiplication $\tilde{R}_{l}$, where $u^{\vec{J}_e} := \prod_{i \in I} \prod_{j \in J_{e,i}} u_{ij} \in \tilde{R}_{l}$. Furthemore, the labels of the incoming edges are required to be pairwise distinct. 
\end{definition}
\end{comment}

\begin{definition}[\cite{mandel2015refined}]\label{def:multiplicity}
	Given a labeled $k$-tree $\ltr$ (resp. weighted $k$-tree $\wtr$), associate to each $e \in \bar{\ltr}^{[1]}$ (resp. $e \in \bar{\wtr}^{[1]}$) a pair $\pm(n_e,g_e)$, defined up to sign,\footnote{As the signs of $n_{e_3}$ and $g_{e_3}$ depend on the cyclic ordering $e_1,e_2,e_3$ in the same way, only $\pm(n_{e_3}, g_{e_3})$ is defined.} with $n_e \in \overline{N}$ and $g_{e} \in \mathfrak{h}_{m_e,n_e}$ (resp. $g_{e} \in \mathfrak{h}_{m_e,n_e,\tilde{R}_l}$), inductively along the direction of the tree as follows:
	\begin{enumerate}
		\item Associated to each $e \in \partial_{in}^{-1}(\ltr^{[0]}_{in})$ (resp. $e \in \partial_{in}^{-1}(\wtr^{[0]}_{in})$) is a unique initial wall $\mathbf{w}_{i_e} =  (m_{i_e},n_{i_e}, P_{i_e}, \Theta_{i_e})$ (resp. $\mathbf{w}_{i_eJ_e} =  (m_{i_e},n_{i_e}, P_{i_eJ_e}, \Theta_{i_eJ_e})$). Set $n_e = n_{i_e}$ and $g_{e} = g_{k_{e} i_e }$ (resp. $g_{e} = g_{(\#J_{e,i_e}) i_e}$), where $g_{ji}$ is given by equation \eqref{eqn:initial_walls}.
		
		\item At a trivalent vertex $v \in \ltr^{[0]}$ (resp. $v \in \wtr^{[0]}$) with incoming edges $e_1, e_2$ and outgoing edge $e_3$, set $n_{e_3}  = \langle m_{e_2},n_{e_1} \rangle n_{e_2} - \langle m_{e_1},n_{e_2} \rangle n_{e_1}$ and $g_{e_3}  = [g_{e_1},g_{e_2}]$.
\end{enumerate}

For a labeled (resp. weighted) ribbon tree $\lrtr$ (resp. $\wrtr$), the label $(n_e,g_e)$ of $e \in \bar{\lrtr}^{[1]}$ (resp. $e \in \bar{\wrtr}^{[1]}$) can be defined without the sign ambiguity by requiring that $\{e_1,e_2,e_3\}$ be clockwise oriented.
\end{definition}

Write $(n_\ltr, g_\ltr)$ or $(n_\wtr, g_\wtr)$ for the pair associated to $e_{out}$. Note that if $v \in \wtr^{[0]}$ has incoming edges $e_1, e_2$ and outgoing edge $e_3$ and $m_{e_1}, m_{e_2} \in M_{\real}$ are linearly dependent, then $n_{e_3} = 0$ and hence $g_\wtr = 0$, as follows from the vanishing $\mathfrak{h}_{m,0} = \{0\}$.

\begin{definition}\label{def:marked_tree_core}
	The {\em core} $\mathfrak{c}_{\mtr}$ of $\mtr \in \mtree{k}$ is the directed path of edges $\breve{e} =e_0, e_1,\dots,e_l = e_{out}$ joining $\breve{e}$ to $e_{out}$. Removing $\mathfrak{c}_{\mtr}$ results in $l$ disconnected labeled trees $\ltr_{1},\dots,\ltr_{l}$ according to the order of attaching to $\mathfrak{c}_{\mtr}$. We assign $a_{e_i} \in A_{m_{e_i}}$ inductively along $\mathfrak{c}_{\mtr}$ as follows:
	\begin{enumerate}
		\item Associated to the marked edge $\breve{e}$ is the element $z^{m_{\breve{e}}} = z^{\varphi(\mathsf{m})}$.

		\item With $a_{e_i}$ defined, define $a_{e_{i+1}} = g_{\ltr_{i+1}} \cdot a_{e_{i}}$.
	\end{enumerate}
	Associated to the edge $e_{out}$ is $a_{\mtr} = a_{e_{out}} \in A_{m_{\mtr}}$. We also let $\epsilon_{\mtr} = \prod_{i=1}^l \sgn(\langle - m_{e_{i-1}},n_{\ltr_{i}} \rangle)$.
	\end{definition}
	
	Definition \ref{def:marked_tree_core} applies without change to marked ribbon trees. Note that the product $\epsilon_{\mtr} a_{\mtr}$ is well-defined without the specification of a ribbon structure on $\mtr$.

Given a weighted $k$-tree $\wtr$ and $\vec{s} := (s_e)_{e\in \wtr^{[1]}} \in (\real_{<0})^{|\wtr^{[1]}|}$, the associated {\em realization} of $\wtr$ is
$
|\wtr_{\vec{s}}| := \Big( \big(\bigsqcup_{e \in \partial_{out}^{-1}(\wtr^{[0]}_{in})} (\real_{\leq 0})_e \big) \sqcup \big(\bigsqcup_{e \in \wtr^{[1]}} [s_e, 0] \big) \Big) / \sim.
$
Here $(\real_{\leq 0})_e$ is a copy of $\real_{\leq 0}$ and $\sim$ is the equivalence relation which identifies boundary points of edges if their images in $\wtr^{[0]}$ agree. For labeled (resp. marked) $k$-trees $\ltr$ (resp. $\mtr$), we allow $s_{e} = 0$ for $e \in \ltr^{[1]}$ (resp. $e \in \mtr^{[1]}$). 

\begin{definition}\label{def:tropical_curve}\label{def:P_lrtr}
	A {\em tropical disk} in $(M_\real, \mathscr{D}_{in})$ (resp. $(M_\real, \tilde{\mathscr{D}}_{in,l})$) consists of
	\begin{enumerate}
		\item
		a labeled $k$-tree $\ltr$ (resp. weighted $k$-tree $\wtr$), with labeling of $e \in \partial_{in}^{-1}(\ltr^{[0]}_{in})$ by a wall $\mathbf{w}_{i_e} =  (m_{i_e},n_{i_e}, P_{i_e}, \Theta_{i_e})$ and $m_e \in M_\sigma^+$ (resp. labeling of $e \in \partial_{in}^{-1}(\wtr^{[0]}_{in})$ by a wall $\mathbf{w}_{i_e J_{e,i_e}} =  (m_{i_e},n_{i_e}, P_{i_e J_{e,i_e}}, \Theta_{i_e J_{e,i_e}})$ and $(m_e, u^{\vec{J}_e})$), 
		\item
		a tuple of parameters $\vec{s} = (s_e)_{e\in \ltr^{[1]}} \in (\real_{ \leq 0})^{|\ltr^{[1]}|}$ (resp. $\vec{s} = (s_e)_{e\in \wtr^{[1]}} \in (\real_{<0})^{|\wtr^{[1]}|}$), and
		\item
		a proper map $\varsigma: |\ltr_{\vec{s}}| \rightarrow M_\real$ (resp. $\varsigma: |\wtr_{\vec{s}}| \rightarrow M_\real$)
	\end{enumerate}
	such that the following conditions are satisfied:
	\begin{enumerate}[(i)]
		\item
		For each $e\in \partial^{-1}_{in}(\ltr^{[0]}_{in})$ (resp. $e\in \partial^{-1}_{in}(\wtr^{[0]}_{in})$), we have $\varsigma_{\vert_{(\real_{\leq 0})_e}}(0) \in P_{i_e}$ (resp. $\varsigma_{\vert_{(\real_{\leq 0})_e}}(0) \in P_{i_e J_{e,i_e}}$) and $\varsigma_{\vert_{(\real_{\leq 0})_e}}(s) = \varsigma_{\vert_{(\real_{\leq 0})_e}}(0) + s (-m_{e})$ for all $s \in \real_{\leq 0}$.
		\item
		For each $e \in \ltr^{[1]}$ (resp. $e \in \wtr^{[1]}$), we have $\varsigma_{\vert_{[s_e,0]}}(s) = \varsigma_{\vert_{[s_e,0]}}(0) + s (-m_e)$.
	\end{enumerate}
\end{definition}

The point $\varsigma(v_{out}) := \varsigma_{\vert_{[s_{e_{out}},0]}}(0) \in M_\real$ is called the {\em stop} of the tropical disk $\varsigma$. Given a tropical disk $\varsigma$ in $(M_\real, \mathscr{D}_{in})$ (resp. $(M_\real, \tilde{\mathscr{D}}_{in,l})$), denote by $\pm (n_\varsigma,g_\varsigma)$ the pair $\pm (n_\ltr,g_\ltr)$ (resp. $\pm (n_\wtr,g_\wtr)$) associated to the underlying labeled (resp. weighted) tree.

One can also define tropical disks in $M_\real$ of type $\ltr$ without specifying a scattering diagram by relaxing condition \textit{(i)} in Definition \ref{def:tropical_curve} to
	$$\varsigma_{\vert_{(\real_{\leq 0})_e}}(s) = \varsigma_{\vert_{(\real_{\leq 0})_e}}(0) + s (-m_{e}) \text{ for all $s \in \real_{\leq 0}$}$$
	and allowing each $s_e$ to take on the value $0$.
	Tropical disks in $M_\real$ form a moduli space $\mathfrak{M}_{\ltr}(M_\real)$. Under the identification $\mathfrak{M}_{\ltr}(M_\real)\cong \real_{\leq 0}^{|\ltr^{[1]}|} \times M_\real$, the evaluation map $ev : \mathfrak{M}_{\ltr}(M_\real) \rightarrow M_\real$, obtained by taking the stop of tropical disks, is the projection to $M_\real$. Similar comments and notation apply to tropical disks in $M_\real$ of type $\wtr$.

	We denote by $\mathfrak{M}_{\ltr}(M_\real,\mathscr{D}_{in})$ (resp. $\mathfrak{M}_{\wtr}(M_\real,\tilde{\mathscr{D}}_{in,l})$) the set of all tropical disks in $(M_\real,\mathscr{D}_{in})$ (resp. $(M_\real,\tilde{\mathscr{D}}_{in,l})$) when $n_\ltr \neq 0$ (resp. $n_\wtr \neq 0$ and $u^{\vec{J}_{\wtr}} \neq 0$) with underlying labeled $k$-tree $\ltr$ (resp. weighted $k$-tree $\wtr$). By definition, $\mathfrak{M}_{\ltr}(M_\real,\mathscr{D}_{in})$ and $\mathfrak{M}_{\wtr}(M_\real,\tilde{\mathscr{D}}_{in,l})$ are subsets of $\mathfrak{M}_{\ltr}(M_\real)$. Denote their closures by an overbar. Define affine subspaces of $M_{\real}$ by $P_\ltr = ev (\overline{\mathfrak{M}}_{\ltr}(M_\real,\mathscr{D}_{in}))$ and $P_\wtr = ev (\overline{\mathfrak{M}}_{\wtr}(M_\real,\mathscr{D}_{in}))$. For marked $k$-tree $\mtr$ with $m_{\breve{e}} = \varphi(\mathsf{m})$, we define set of tropical disks $\mathfrak{M}_{\mtr}(M_\real,\mathscr{D}_{in},\mathsf{m})$ similarly (allowing $s_e = 0$ for $e \in \mtr^{[1]}$), and let $P_{\mtr} = ev (\overline{\mathfrak{M}}_{\mtr}(M_{\real},\mathscr{D}_{in},\mathsf{m}))$. 
	
	 When $M$ has rank two, $P_\wtr$ is a line when $k = 1$ and is a ray when $k > 1$. The latter case is illustrated in Figure \ref{fig:tropical_curve}.
	\begin{figure}[h]
		\centering
		\includegraphics[scale=0.4]{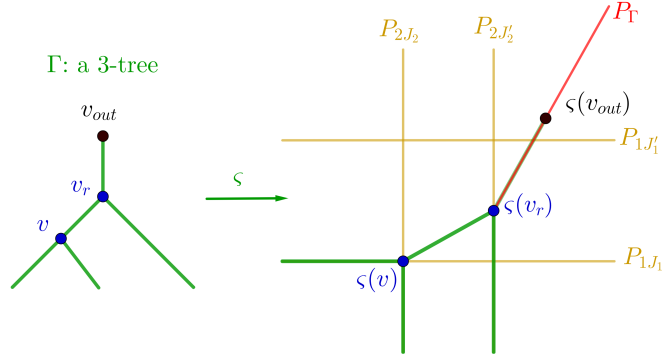}
		\caption{The affine subspace $P_{\Gamma}$ from moduli of tropical disks.}
		\label{fig:tropical_curve}
	\end{figure}

\begin{lemma}\label{lem:normal_lemma}
	If $P_\wtr$ is non-empty, then it is orthogonal to $n_{\wtr}$.
\end{lemma}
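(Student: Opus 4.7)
The plan is to prove the lemma by induction on $k$, the number of incoming vertices of $\wtr$, showing that $P_\wtr$ lies in an affine subspace parallel to $n_\wtr^\perp$. A preliminary observation used throughout is that for every edge $e$ of $\wtr$ the pair $(m_e, n_e)$ satisfies $\langle m_e, n_e\rangle = 0$, since the spaces $\mathfrak{h}_{m,n}$ are only defined for such pairs in Definition \ref{def:tropical_lie_algebra_for_diagram}; in particular $\langle m_\wtr, n_\wtr\rangle = 0$.

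For the base case $k = 1$, the tree $\wtr$ consists of a single edge $e$ labeled by a wall $\mathbf{w}_{i_e J_{e,i_e}}$, parameterized by $(\real_{\leq 0})_e$ with $s = 0$ corresponding to $v_{out}$. Condition \textit{(i)} of Definition \ref{def:tropical_curve} then places the stop $\varsigma|_{(\real_{\leq 0})_e}(0)$ on $P_{i_e J_{e,i_e}}$, which is a translate of $n_{i_e}^\perp = n_\wtr^\perp$.

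For the inductive step with $k \geq 2$, let $v_r$ be the root vertex of $\wtr$, with incoming edges $e_1, e_2 \in \partial_{out}^{-1}(v_r)$ and outgoing edge $e_{out}$. Cut $\wtr$ at $v_r$ and attach new outgoing vertices to obtain weighted subtrees $\wtr_1, \wtr_2$ of sizes $k_1, k_2$ with $k_1 + k_2 = k$, where $e_i$ becomes the outgoing edge of $\wtr_i$ with the inherited length parameter $s_{e_i}$. The formula $n_\wtr = \langle m_{e_2}, n_{e_1}\rangle n_{e_2} - \langle m_{e_1}, n_{e_2}\rangle n_{e_1}$ from Definition \ref{def:multiplicity} implies that $n_\wtr \neq 0$ forces $n_{\wtr_i} = n_{e_i} \neq 0$ for $i = 1,2$, so each $P_{\wtr_i}$ is defined. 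A tropical disk of type $\wtr$ restricts to tropical disks of types $\wtr_1, \wtr_2$ whose stops coincide at $\varsigma(v_r)$, and conversely any compatible pair glues to a disk of type $\wtr$ upon choosing $s_{e_{out}} \in \real_{\leq 0}$. Hence $\varsigma(v_r) \in P_{\wtr_1} \cap P_{\wtr_2}$, which by the inductive hypothesis lies in a translate of $n_{e_1}^\perp \cap n_{e_2}^\perp = \mathrm{span}(n_{e_1}, n_{e_2})^\perp$.

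To conclude, $n_\wtr \in \mathrm{span}(n_{e_1}, n_{e_2})$ gives $\mathrm{span}(n_{e_1}, n_{e_2})^\perp \subseteq n_\wtr^\perp$, so $\varsigma(v_r)$ varies in a translate of $n_\wtr^\perp$. Since $\varsigma(v_{out}) = \varsigma(v_r) - s_{e_{out}} m_\wtr$ and $\langle m_\wtr, n_\wtr\rangle = 0$, the extra displacement remains in $n_\wtr^\perp$, so $P_\wtr$ is contained in a translate of $n_\wtr^\perp$ as required. The main obstacle I anticipate is making the cutting procedure precise: verifying that $\wtr_1, \wtr_2$ genuinely satisfy Definition \ref{def:weighted_tree_tropical}, treating carefully the sub-case where $e_i$ is an incoming leaf of $\wtr$ rather than an internal edge (so that $\wtr_i$ reduces to the base case), and confirming that the moduli of tropical disks decomposes as claimed under the decomposition.
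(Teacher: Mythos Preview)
Your proof is correct and follows essentially the same approach as the paper's own argument: induction by splitting at the root vertex, using that $P_\wtr = (P_{\wtr_1}\cap P_{\wtr_2}) + \real_{\geq 0}\cdot(-m_\wtr)$, that $n_\wtr \in \mathrm{span}(n_{\wtr_1},n_{\wtr_2})$, and that $\langle m_\wtr,n_\wtr\rangle=0$. The only cosmetic difference is that the paper inducts on $|\wtr^{[0]}|$ while you induct on $k$, but since $|\wtr^{[0]}|=k-1$ for a $k$-tree these are equivalent.
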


\begin{proof}
	We proceed by induction on the cardinality of $\wtr^{[0]}$. In the initial case, $\wtr^{[0]} = \emptyset$, the only tree is that with a unique edge and the statement is trivial.
	
	For the induction step, suppose that $v_r \in \wtr^{[0]}$ is adjacent to the outgoing edge $e_{out}$ and incoming edges $e_1$, $e_2$. Split $\wtr$ at $v_r$, thereby obtaining trees $\wtr_1$ and $\wtr_2$ with outgoing edges $e_1$ and $e_2$ and $k_1$ and $k_2$ incoming edges, respectively. We have
	$$
	\left( \overline{\mathfrak{M}}_{\wtr_1}(M_\real,\tilde{\mathscr{D}}_{in,l}) {}_{ev}\times_{ev} \overline{\mathfrak{M}}_{\wtr_2} (M_\real,\tilde{\mathscr{D}}_{in,l}) \right) \times \real_{\geq 0} \cdot (-m_{\wtr})
\cong\overline{\mathfrak{M}}_{\wtr}(M_\real,\tilde{\mathscr{D}}_{in,l}),
	$$
	implying that $
	P_\wtr = (P_{\wtr_1} \cap P_{\wtr_2}) + \real_{\geq 0} \cdot (-m_{\wtr}).$
	By the induction hypothesis, $n_{\wtr_i}$ is orthogonal to $P_{\wtr_i}$, $i=1,2$, and hence $n_\wtr$ is orthogonal to $P_{\wtr_1} \cap P_{\wtr_2}$. A direct computation using the definition of $n_\wtr$ shows that $\langle m_\wtr,n_\wtr \rangle= 0$. The lemma follows.
\end{proof}

\begin{definition}\label{def:tropical_generic_assumption}
	A scattering diagram $\tilde{\mathscr{D}}_{in,l}$ is called {\em generic} if for any weighted trees $\wtr_1$, $\wtr_2$ such that $u^{\vec{J}_{\wtr_1}} \cdot u^{\vec{J}_{\wtr_2}} \neq 0$ and $P_{\wtr_1}$ intersects $P_{\wtr_2}$ transversally,\footnote{Here `tranversally' means that the unique affine subspaces containing $P_{\wtr_1}$ and $P_{\wtr_2}$ intersect transversally.} the intersection $P_{\wtr_1} \cap P_{\wtr_2} \subset M_{\real}$ has codimension two and is contained in the boundary of neither $P_{\wtr_1}$ nor $P_{\wtr_2}$.
\end{definition}

The next result, which was proved by various authors in increasing levels of generality, relates consistent scattering diagrams to the counting of tropical disks.

\begin{theorem}[\cite{gross2010tropical, filippini2015block, mandel2015refined}]\label{thm:GPS_theorem}
	Let $\tilde{\mathscr{D}}_{in,l}$ be a generic initial scattering diagram. There is a bijective correspondence between walls $\mathbf{w} \in \mathcal{S}(\tilde{\mathscr{D}}_{in,l})$ and weighted trees $\wtr$ with $\mathfrak{M}_{\wtr}(M_\real,\tilde{\mathscr{D}}_{in,l}) \neq \emptyset$ under which a wall $\mathbf{w} = (m,n, P, \Theta)$ corresponds to the weighted tree $\wtr$ with $(n_{\wtr},P_{\wtr}) = (n,P)$ and $\log(\Theta) = \left( \prod_{e \in \partial^{-1}_{in}(\wtr^{[0]}_{in})} (\# J_{e,i_e})! \right) g_{\wtr}  u^{\vec{J}_{\wtr}}$.
\end{theorem}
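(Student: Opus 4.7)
The plan is to proceed by induction on the total weight $d(\wtr) := \sum_{e \in \partial_{in}^{-1}(\wtr^{[0]}_{in})} \#J_{e,i_e}$, which controls the image of $\log(\Theta_{\mathbf{w}})$ in the truncations $\mathfrak{h}^{<k} \otimes_\comp \tilde{R}_l$. At the base level $d=1$, a weighted $1$-tree is simply an initial wall $\mathbf{w}_{iJ}$ of $\tilde{\mathscr{D}}_{in,l}$, and the correspondence is tautological with $g_\wtr = g_{(\#J)i}$ and $u^{\vec{J}_\wtr} = u^{\vec{J}_e}$. Assuming the bijection has been established for all $d \leq k$, I would describe $\mathcal{S}(\tilde{\mathscr{D}}_{in,l})^{< k+1}$ as obtained from $\mathcal{S}(\tilde{\mathscr{D}}_{in,l})^{< k}$ by adding outgoing walls of weight exactly $k+1$.

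The key inductive step analyzes joints of the partial scattering diagram $\mathcal{S}(\tilde{\mathscr{D}}_{in,l})^{< k}$ at which consistency may fail modulo $\mathfrak{h}^{\geq k+1}$. Because $\tilde{\mathscr{D}}_{in,l}$ is generic in the sense of Definition \ref{def:tropical_generic_assumption}, every such joint is codimension two and is the transverse intersection $P_{\mathbf{w}_1} \cap P_{\mathbf{w}_2}$ of exactly two walls $\mathbf{w}_i \in \mathcal{S}(\tilde{\mathscr{D}}_{in,l})^{< k}$ with $d(\mathbf{w}_1) + d(\mathbf{w}_2) = k+1$ and $u^{\vec{J}_{\mathbf{w}_1}} u^{\vec{J}_{\mathbf{w}_2}} \neq 0$. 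A small loop encircling the joint has non-trivial path-ordered product equal modulo $\mathfrak{h}^{\geq k+2}$ to the commutator $\pm[\log\Theta_{\mathbf{w}_1}, \log\Theta_{\mathbf{w}_2}]$ (this is the Kontsevich--Soibelman two-by-two lemma), and to restore consistency one adds a single outgoing wall $\mathbf{w}$ with direction $-m := -(m_{\mathbf{w}_1}+m_{\mathbf{w}_2})$, support $P := (P_{\mathbf{w}_1} \cap P_{\mathbf{w}_2}) + \real_{\geq 0}\cdot(-m)$, normal $n := \langle m_{\mathbf{w}_2},n_{\mathbf{w}_1}\rangle n_{\mathbf{w}_2} - \langle m_{\mathbf{w}_1},n_{\mathbf{w}_2}\rangle n_{\mathbf{w}_1}$ and wall-crossing factor whose logarithm is precisely this commutator.

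To match this with a weighted tree, apply the induction hypothesis to obtain weighted trees $\wtr_1, \wtr_2$ corresponding to $\mathbf{w}_1, \mathbf{w}_2$, then glue them at a new trivalent root vertex $v_r$ with outgoing edge $e_{out}$. Definition \ref{def:multiplicity} gives exactly $n_\wtr = n$ and $g_\wtr = [g_{\wtr_1},g_{\wtr_2}]$, while Lemma \ref{lem:normal_lemma} together with the recursion $P_\wtr = (P_{\wtr_1}\cap P_{\wtr_2}) + \real_{\geq 0}\cdot(-m_\wtr)$ established in its proof yields $P_\wtr = P$. Surjectivity of the correspondence then follows by running this construction at every joint; injectivity and well-definedness up to equivalence follow because splitting a weighted tree $\wtr$ of weight $k+1$ at its root vertex recovers the pair $(\wtr_1,\wtr_2)$ uniquely, and distinct trees with the same $(n_\wtr, P_\wtr, u^{\vec{J}_\wtr})$ can be collected into a single wall whose $g_\wtr$ is the sum.

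The main obstacle is getting the combinatorial prefactor $\prod_{e} (\#J_{e,i_e})!$ right. It arises because the log of an initial wall $\Theta_{iJ}$ is by Definition \ref{def:initial_diagram} equal to $(\#J)!\, g_{(\#J)i}\prod_{s\in J}u_{is}$, so each leaf edge of $\wtr$ contributes a factor $(\#J_{e,i_e})!$ when one expands the nested commutators that define $\log\Theta_{\mathbf{w}}$; the identity $u^{\vec J_\wtr} = \prod_e u^{\vec J_e}$, enforced by the requirement that the leaf weights are pairwise disjoint subsets of $\{1,\dots,l\}$, together with the fact that isomorphism classes of weighted trees in $\wtree{k}$ absorb permutations of the $J_{e,i_e}$ but not of the individual $u_{ij}$'s, ensures that this multinomial factor is not double-counted. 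One must also check that walls produced at different joints but with the same support $P$ and direction $-m$ are legitimately combined into one wall of $\mathcal{S}(\tilde{\mathscr{D}}_{in,l})$, which is precisely what the equivalence relation in Definition \ref{consistent_def} is designed to accommodate, and that the construction terminates at each fixed truncation because only finitely many weighted trees have total weight at most $k$ with nonzero $u^{\vec J_\wtr}\in\tilde R_l$.
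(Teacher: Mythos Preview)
The paper does not prove this theorem directly at its statement; it is quoted from the cited references. The paper's own contribution is an \emph{alternative} route via Maurer--Cartan theory: Theorem~\ref{thm:theorem_1} expresses the Maurer--Cartan solution $\tilde{\varPhi}$ as a sum over weighted trees, with each summand carrying exactly the wall-crossing factor $\big(\prod_{e}(\#J_{e,i_e})!\big)g_{\wtr}u^{\vec{J}_\wtr}$, and Proposition~\ref{prop:consistence_from_solution} shows that the resulting diagram $\mathscr{D}(\tilde{\varPhi})$ is consistent. The paper explicitly remarks after Theorem~\ref{thm:theorem_1} that these two results together ``provide an alternative realization of the enumerative interpretation of Theorem~\ref{thm:GPS_theorem}.''

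Your proposal instead reconstructs the original inductive argument of the cited sources: build $\mathcal{S}(\tilde{\mathscr{D}}_{in,l})$ order by order in the total weight, use genericity to reduce the failure of consistency at each joint to a single commutator, and glue the corresponding subtrees at a new root vertex. This is a correct outline and is close to how the result is established in \cite{gross2010tropical, filippini2015block, mandel2015refined}. The difference in strategy is that your argument is local and combinatorial (joint-by-joint corrections via the Baker--Campbell--Hausdorff expansion), whereas the paper's argument is global and analytic: the entire scattering process is encoded in a single differential-geometric object $\tilde{\varPhi}$, the tree structure emerges from Kuranishi's iterated homotopy operator, and consistency is a consequence of the Maurer--Cartan equation rather than something imposed by hand. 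Your approach is more elementary; the paper's approach is the point of the manuscript and is what extends to the non-tropical Hall-algebra setting in Section~\ref{sec:hall_algebra}, where perturbation to a generic diagram is unavailable.

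One small caution: your claim that each joint at order $k+1$ involves ``exactly two walls'' is not literally a consequence of Definition~\ref{def:tropical_generic_assumption} alone; you also need the nilpotency of $\tilde{R}_l$ and the pairwise-distinctness of the leaf monomials to ensure that only binary collisions contribute nontrivially modulo $\mathfrak{h}^{\geq k+2}$.
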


\section{Pertubative solution of the Maurer--Cartan equation}\label{sec:MC_equation}

We introduce a differential graded (dg) Lie algebra whose Maurer--Cartan equation governs the scattering process from $\mathscr{D}_{in}$ to $\mathcal{S}(\mathscr{D}_{in})$, or its generic perturbation.

\subsection{Differential forms with asymptotic support}\label{sec:asymptotic_support}

We begin by recalling some background material from \cite[\S 4.2.3]{kwchan-leung-ma} and \cite[\S 3.2]{kwchan-ma-p2}.
%\label{exponential_decay}
	
	Let $U$ be a convex open subset of $M_\real$, or more generally, of an integral affine manifold, as in \cite[\S 3.2]{kwchan-ma-p2}. Introduce the notation $\Omega^k_\hp(U) := \Gamma(U \times \mathbb{R}_{>0}, \bigwedge^{\raisebox{-0.4ex}{\scriptsize $k$}} T^{\vee} U)$, where the coordinate of $\mathbb{R}_{>0}$ is $\hp$. Let $\mathcal{W}^{-\infty}_k(U) \subset \Omega^k_\hp(U)$ be the set of $k$-forms $\alpha$ such that, for each $q \in U$, there exists a neighborhood $q \in V \subset U$ and constants $D_{j,V}$, $c_V$ such that $\|\nabla^j \alpha\|_{L^\infty(V)} \leq D_{j,V} e^{-c_V/\hp}$ for all $j \geq 0$. Similarly, let $\mathcal{W}^{\infty}_k(U) \subset \Omega^k_\hp(U)$ be the set of $k$-forms $\alpha$ such that, for each $q \in U$, there exists a neighborhood $q \in V \subset U$ and constants $D_{j,V}$ and $N_{j,V} \in \inte_{>0}$ such that $\|\nabla^j \alpha\|_{L^\infty(V)} \leq D_{j,V} \hp^{-N_{j,V}}$ for all $j \geq 0$. The assignment $U \mapsto \mathcal{W}^{-\infty}_k(U)$ (resp. $U \mapsto \mathcal{W}^{\infty}_k(U)$) defines a sheaf $\mathcal{W}^{-\infty}_k$ (resp. $\mathcal{W}^{\infty}_k$) on $M_\real$. Note that $\mathcal{W}^{-\infty}_k$ and $\mathcal{W}^{\infty}_k$ are closed under the wedge product,  $\nabla_{\dd{x}}$ and the de Rham differential $d$. Since $\mathcal{W}^{-\infty}_k$ is a dg ideal of $\mathcal{W}^{\infty}_k$, the quotient $\mathcal{W}^{\infty}_*/\mathcal{W}^{-\infty}_*$ is a sheaf of dg algebras when equipped with the de Rham differential.

By a {\em tropical polyhedral subset} of $U$ we mean a connected convex subset which is defined by finitely many affine equations or inequalities over $\mathbb{Q}$.

\begin{definition}\label{def:asypmtotic_support_pre}
	A $k$-form $\alpha \in \mathcal{W}^{\infty}_k(U)$ is said to have {\em asymptotic support on a closed codimension $k$ tropical polyhedral subset $P \subset U$ with weight $s$}, denoted $\alpha \in \mathcal{W}_{P}^s(U)$, if the following conditions are satisfied:
	\begin{enumerate}
		\item
		For any $p \in U \setminus P$, there is a neighborhood $p \in V \subset U \setminus P$ such that $\alpha|_V \in \mathcal{W}^{-\infty}_k(V)$.
		
		\item
		There exists a neighborhood $W_P \subset U$ of $P$ such that $\alpha =  h(x,\hp) \nu_P + \eta$ on $W_P$, where $\nu_P \in \bigwedge^k N_{\real}$ is the unique affine $k$-form which is normal to $P$, $h(x,\hp) \in C^\infty(W_P \times \real_{>0})$ and $\eta \in \mathcal{W}^{-\infty}_k(W_P)$.
		
		\item
		For any $p \in P$, there exists a convex neighborhood $p \in V \subset U$ equipped with an affine coordinate system $x = (x_1,\dots, x_n)$ such that $x' := (x_1, \dots, x_k)$ parametrizes codimension $k$ affine linear subspaces of $V$ parallel to $P$, with $x' = 0$ corresponding to the subspace containing $P$. With the foliation $\{(P_{V, x'})\}_{x' \in N_V}$, where $P_{V,x'} = \{ (x_1,\dots,x_n) \in V  \ | \ (x_1,\dots,x_k) = x' \}$ and $N_V$ is the normal bundle of $V$, we require that, for all $j \in \inte_{\geq 0}$ and multi-indices $\beta = (\beta_1,\dots,\beta_k) \in \inte_{\geq 0}^k$, the estimate
		\[
		%\label{estimate_order_s}
		\int_{x'}  (x')^\beta \left(\sup_{P_{V,x'}}|\nabla^j (\iota_{\nu_P^\vee} \alpha)| \right) \nu_P \leq D_{j,V,\beta} \hp^{-\frac{j+s-|\beta|-k}{2}}
		\]
		holds for some constant $D_{j,V,\beta}$ and $s \in \inte$, where $|\beta| = \sum_l \beta_l$ and $\nu_P^\vee = \dd{x_1}\wedge\cdots \wedge\dd{x_k}$.
	\end{enumerate}
\end{definition}

\begin{comment}
\begin{remark}
	Note that condition (3) in Definition \ref{def:asypmtotic_support_pre} is independent of the choice of the convex neighborhood $V$, the transversal slice $N_V$ and the choice of the local affine coordinates $x = (x_1,\dots x_n)$ (although the constant $D_{j,V,\beta}$ may depends these choices). Therefore this condition can be checked by choosing a sufficiently nice neighborhood $V$ at every point $p \in P$.
\end{remark}
\end{comment}

Observe that $\nabla_{\dd{x_l}} \mathcal{W}^s_P(U) \subset \mathcal{W}^{s + 1}_P(U)$ and $(x')^{\beta}\mathcal{W}^s_P(U) \subset \mathcal{W}^{s-|\beta|}_P(U)$. It follows that
\[
%\label{asy_support_basic_property}
(x')^{\beta} \nabla_{\dd{x_{l_1}}}\cdots \nabla_{\dd{x_{l_j}}} \mathcal{W}^s_P(U) \subset \mathcal{W}^{s+j-|\beta|}_P(U).
\]

The weight $s$ defines a filtration of $\mathcal{W}^{\infty}_k$ (we drop the $U$ dependence from the notation whenever it is clear from the context):\footnote{Note that $k$ is equal to the codimension of $P \subset U$.}
\[
%\label{filtration}
\mathcal{W}^{-\infty}_k \subset \cdots \subset \mathcal{W}^{-1}_P\subset \mathcal{W}^0_P \subset \mathcal{W}^1_P \subset \cdots \subset \mathcal{W}^{\infty}_k \subset \Omega^k_\hp(U).
\]
This filtration, which keeps track of the polynomial order of $\hp$ for $k$-forms with asymptotic support on $P$, provides a convenient tool to express and prove results in asymptotic analysis.

\begin{definition}\label{def:asymptotic_support}\label{def:asy_support_algebra}
	A differential $k$-form $\alpha$ is in $\tilde{\mathcal{W}}^{s}_k(U)$ if there exist polyhedral subsets $P_1, \dots, P_l \subset U$ of codimension $k$ such that $\alpha \in \sum_{j=1}^l \mathcal{W}^s_{P_j}(U)$.
	If, moreover, $d \alpha \in \tilde{\mathcal{W}}^{s+1}_{k+1}(U)$, then we write $\alpha \in \mathcal{W}^s_k(U)$. For every $s \in \inte$, let $\mathcal{W}^s_*(U) = \bigoplus_k \mathcal{W}^{s+k}_k(U)$.
\end{definition}

We say that closed tropical polyhedral subsets $P_1, P_2 \subset U$ of codimension $k_1, k_2$ {\em intersect transversally} if the affine subspaces of codimension $k_1$ and $k_2$ which contain $P_1$ and $P_2$, respectively, intersect transversally. This definition applies also when $\partial P_i \neq \emptyset$.

\begin{lemma}[{\cite[Lemma 3.11]{kwchan-ma-p2}}]
\phantomsection
\label{lem:support_product}\label{prop:support_product}\label{prop:asy_support_algebra_ideal}
\begin{enumerate}
\item Let $P_1, P_2, P \subset U$ be closed tropical polyhedral subsets of codimension $k_1$, $k_2$ and $k_1+k_2$, respectively, such that $P$ contains $P_1 \cap P_2$ and is normal to $\nu_{P_1} \wedge \nu_{P_2}$. Then $\mathcal{W}^s_{P_1}(U) \wedge \mathcal{W}^r_{P_2}(U) \subset \mathcal{W}^{r+s}_P(U)$ if $P_1$ and $P_2$ intersect transversally and $\mathcal{W}^s_{P_1}(U) \wedge \mathcal{W}^r_{P_2}(U) \subset \mathcal{W}^{-\infty}_{k_1 + k_2}(U)$ otherwise.

\item We have $\mathcal{W}^{s_1}_{k_1}(U) \wedge \mathcal{W}^{s_2}_{k_2}(U) \subset \mathcal{W}^{s_1+s_2}_{k_1+k_2}(U)$. In particular, $\mathcal{W}^0_*(U) \subset \mathcal{W}^{\infty}_*(U)$ is a dg subalgebra and $\mathcal{W}^{-1}_*(U) \subset \mathcal{W}^0_*(U)$ is a dg ideal.
\end{enumerate}
\end{lemma}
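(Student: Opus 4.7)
The plan is to establish part (1) by a local analysis in coordinates adapted to $P_1$ and $P_2$, and then deduce part (2) from it together with the graded Leibniz rule. I would first localize: outside a neighborhood of $P_i$ the factor $\alpha_i$ lies in $\mathcal{W}^{-\infty}_{k_i}$, so the wedge product lies in $\mathcal{W}^{-\infty}_{k_1+k_2}$ there. Hence condition (3) of Definition \ref{def:asypmtotic_support_pre} need only be verified near points of $P \supset P_1 \cap P_2$.

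For the transverse case, fix $p \in P$ and choose a convex coordinate neighborhood $V$ with affine coordinates $(x_1, \dots, x_n)$ such that $x' := (x_1, \dots, x_{k_1})$ is normal to $P_1$ and $x'' := (x_{k_1+1}, \dots, x_{k_1+k_2})$ is normal to $P_2$; transversality makes $(x', x'')$ normal to $P$. Using Definition \ref{def:asypmtotic_support_pre}(2) write $\alpha_1 = h_1\, dx' + \eta_1$ and $\alpha_2 = h_2\, dx'' + \eta_2$ with $\eta_i \in \mathcal{W}^{-\infty}_{k_i}(V)$. Then
\[
\alpha_1 \wedge \alpha_2 = h_1 h_2\, dx' \wedge dx'' + \zeta, \qquad \zeta \in \mathcal{W}^{-\infty}_{k_1+k_2}(V),
\]
and $dx' \wedge dx'' = \pm\nu_P$ by the normality hypothesis on $P$. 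Contracting with $\nu_P^\vee$, the estimate in Definition \ref{def:asypmtotic_support_pre}(3) reduces to bounding $\int (x')^{\beta'}(x'')^{\beta''} \sup_{\mathrm{leaf}}|\nabla^j(h_1 h_2)|\, dx'\, dx''$. Leibniz's rule expands $\nabla^j(h_1 h_2) = \sum_{j_1+j_2 = j} \binom{j}{j_1} \nabla^{j_1} h_1 \cdot \nabla^{j_2} h_2$; the supremum over a leaf of $P$ at $(x', x'')$ is bounded above by the product of suprema over the larger leaves of $P_1$ at $x'$ and of $P_2$ at $x''$, and Fubini then separates the integral into an $x'$-factor and an $x''$-factor. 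Each factor is bounded using the hypothesis on $\alpha_i$, yielding the required $\hp^{-(j+s+r-|\beta|-(k_1+k_2))/2}$ bound.

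If $P_1$ and $P_2$ fail to intersect transversally, their normal spans meet in dimension strictly greater than zero, so $\nu_{P_1} \wedge \nu_{P_2} = 0$ in $\bigwedge^{k_1+k_2} N_{\real}$; the leading term of $\alpha_1 \wedge \alpha_2$ vanishes identically and only $\eta_i$-type contributions survive, forcing $\alpha_1 \wedge \alpha_2 \in \mathcal{W}^{-\infty}_{k_1+k_2}(V)$. For part (2), decompose each $\alpha_i \in \mathcal{W}^{s_i}_{k_i}(U)$ as a finite sum $\alpha_i = \sum_j \alpha_{i,j}$ with $\alpha_{i,j} \in \mathcal{W}^{s_i}_{P_{i,j}}$; for each pair $(a,b)$ choose a polyhedron $P_{a,b}$ of codimension $k_1 + k_2$ containing $P_{1,a} \cap P_{2,b}$ and normal to $\nu_{P_{1,a}} \wedge \nu_{P_{2,b}}$ (or arbitrary if that vector is zero). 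Part (1) then gives $\alpha_{1,a} \wedge \alpha_{2,b} \in \mathcal{W}^{s_1+s_2}_{P_{a,b}}(U)$ in every case, so $\alpha_1 \wedge \alpha_2 \in \tilde{\mathcal{W}}^{s_1+s_2}_{k_1+k_2}(U)$. Applying the same reasoning to $d\alpha_1 \wedge \alpha_2$ and $\alpha_1 \wedge d\alpha_2$ via graded Leibniz yields $d(\alpha_1 \wedge \alpha_2) \in \tilde{\mathcal{W}}^{s_1+s_2+1}_{k_1+k_2+1}(U)$, completing the proof; the consequences for $\mathcal{W}^0_*$ and $\mathcal{W}^{-1}_*$ follow by specializing to $s_1, s_2 \in \{-1, 0\}$.

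The main obstacle is controlling the interchange of supremum-over-leaves with integration across the normal directions after Leibniz expansion: one must verify that the sup over the (smaller) leaves of $P$ through $(x', x'')$ is indeed bounded by the product of suprema over the (larger) leaves of $P_1$ through $x'$ and of $P_2$ through $x''$, so that Fubini separates the individual asymptotic bounds correctly. Beyond this, the argument is a careful bookkeeping of the integer weights against the combinatorics of the Leibniz-Fubini expansion.
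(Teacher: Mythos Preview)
The paper does not supply its own proof of this lemma; it is quoted verbatim from \cite[Lemma~3.11]{kwchan-ma-p2} and used as a black box. Your argument is the expected one and is correct: localize away from $P_1\cap P_2$ using condition~(1) of Definition~\ref{def:asypmtotic_support_pre}, then in adapted coordinates reduce to a Leibniz--Fubini estimate on the leading coefficient $h_1 h_2$, and treat the non-transverse case via $\nu_{P_1}\wedge\nu_{P_2}=0$; the deduction of part~(2) from part~(1) by decomposing into summands and invoking graded Leibniz for the $d$-closure is also standard. The ``main obstacle'' you flag is not a real difficulty: the leaf of $P$ through $(x',x'')$ is contained in both the leaf of $P_1$ through $x'$ and the leaf of $P_2$ through $x''$, so $\sup_{P\text{-leaf}}|fg|\le(\sup_{P_1\text{-leaf}}|f|)(\sup_{P_2\text{-leaf}}|g|)$ holds pointwise in $(x',x'')$, after which Fubini applies directly.
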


\subsubsection{Homotopy operators}\label{sec:integral_lemma}
Let $P \subset U$ be a closed tropical polyhedral subset. In the remainder of this section, we study the behavior of $\mathcal{W}^s_P(U)$ under the application of a homotopy-type operator $I$. To do so, fix a reference tropical hyperplane $R \subset U$ which divides $U$ into $U\setminus R = U_+ \sqcup U_-$. Fix also an affine vector field $v$ (meaning $\nabla v = 0$) which is not tangent to $R$ and points into $U_+$.

By shrinking $U$ if necessary, we can assume that, for any $p \in U$, the unique flow line of $v$ in $U$ passing through $p$ intersects $R$ at a unique point, say $x \in R$.
The time $t$ flow along $v$ then defines a diffeomorphism
$\tau : W \rightarrow U,\ (t, x) \mapsto \tau(t,x),$
where $W \subset \real \times R$ is the maximal domain of definition of $\tau$.
For each $x \in R$, set $\tau_x(t) = \tau(t,x)$. \begin{comment}Figure \ref{fig:integral_flow} illustrates the situation.

\begin{figure}[h]
	\centering
	\includegraphics[scale=0.27]{integral_flow.png}
	\caption{The flow along $v$ and $I(P)$}
	\label{fig:integral_flow}
\end{figure}
\end{comment}
Let $P_\pm = P \cap \overline{U}_{\pm}$ and define
\[
%\label{defining_IP}
I(P)_+ =( P_+ + \real_{\geq 0} \cdot v ) \cap U, \quad
I(P)_-  = (P_- + \real_{\leq 0 } \cdot v) \cap U.
\]

Define an integral operator $I$ by
\[
%\label{general_integral_operator}
I(\alpha)(t,x) = \int_{0}^t \iota_{\dd{s}}(\tau^*(\alpha))(s,x) ds, \qquad \alpha \in \mathcal{W}^s_P(U).
\]
Despite the notation, $I$ depends on the choice of the tropical hyperplane $R$ and the vector field $v$.

\begin{lemma}[{cf. \cite[Lemmas 3.12, 3.15]{kwchan-ma-p2}}]\label{lem:integral_lemma}\label{lem:integral_lemma_modified}
	Let $\alpha \in \mathcal{W}^s_P(U)$. Then $I(\alpha) \in \mathcal{W}^{-\infty}_{k-1}(U)$ if $v$ is tangent to $P$ and $I(\alpha) \in \mathcal{W}^{s-1}_{I(P)_+}(U) + \mathcal{W}^{s-1}_{I(P)_-}(U)$ otherwise. Moveover, if $\alpha \in \tilde{\mathcal{W}}^s_k(U)$ (resp. $\alpha \in \mathcal{W}^s_k(U)$), then $I(\alpha) \in \tilde{\mathcal{W}}^{s-1}_{k-1}(U)$ (resp. $I(\alpha) \in \mathcal{W}^{s-1}_{k-1}(U)$).
\end{lemma}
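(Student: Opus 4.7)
The plan is to reduce to local computations on sufficiently small convex neighborhoods and exploit the defining weighted estimates of $\mathcal{W}^s_P$ from Definition \ref{def:asypmtotic_support_pre}(3), together with the decomposition $\alpha = h(x,\hp) \nu_P + \eta$ near $P$ with $\eta \in \mathcal{W}^{-\infty}_k$ from Definition \ref{def:asypmtotic_support_pre}(2). Away from $P$, the contribution to $I(\alpha)$ is automatically of class $\mathcal{W}^{-\infty}$ by Definition \ref{def:asypmtotic_support_pre}(1) combined with the smoothness of the flow $\tau$, so the entire analysis is local near $P$.

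For the case when $v$ is tangent to $P$, the key observation is that $\nu_P \in \bigwedge^k N_{\real}$ annihilates every tangent direction to $P$, hence $\iota_v \nu_P = 0$. Therefore $\iota_v \alpha = h \cdot \iota_v \nu_P + \iota_v \eta = \iota_v \eta$ in a neighborhood of $P$, which lies in $\mathcal{W}^{-\infty}_{k-1}$. Pulling back by $\tau$ and integrating from $0$ to $t$ preserves the $\mathcal{W}^{-\infty}$ class (the flow $\tau$ is smooth on a fixed domain, so exponential-in-$\hp^{-1}$ bounds are preserved), giving $I(\alpha) \in \mathcal{W}^{-\infty}_{k-1}(U)$.

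For the transverse case, I first check that at any point $p \notin I(P)_+ \cup I(P)_-$ the unique flow line through $p$ does not meet $P$, so the integrand is $\mathcal{W}^{-\infty}$ along the whole flow segment from $R$ to $p$ and $I(\alpha) \in \mathcal{W}^{-\infty}_{k-1}$ near $p$. Near a point $p \in I(P)_+ \setminus P$, I choose local affine coordinates $(x_1,\ldots,x_n)$ as in Definition \ref{def:asypmtotic_support_pre}(3) so that $x' = (x_1,\ldots,x_k)$ parametrizes the normal directions to $P$, and perform a change of variables replacing flow time $s$ along $v$ by one of the transverse coordinates, which is possible precisely because $v$ is not tangent to $P$. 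The polynomial moment estimate in Definition \ref{def:asypmtotic_support_pre}(3), upon integrating in the new transverse variable, yields bounds of the form $D \hp^{-(j+(s-1)-|\beta|-(k-1))/2}$ on $I(\alpha)$, which is exactly the condition for $I(\alpha) \in \mathcal{W}^{s-1}_{I(P)_+}(U)$; the analogous argument on $U_-$ handles $I(P)_-$.

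For the moreover statements, the assertion for $\tilde{\mathcal{W}}^s_k$ is immediate from linearity. To see that $I$ maps $\mathcal{W}^s_k$ into $\mathcal{W}^{s-1}_{k-1}$, I use the Cartan-type homotopy identity
\[
d \circ I(\alpha) + I(d\alpha) = \alpha - r^*(j^* \alpha),
\]
where $j : R \hookrightarrow U$ is the inclusion and $r : U \to R$ is the retraction along the $v$-flow. The right-hand side splits into $\alpha \in \tilde{\mathcal{W}}^s_k$ and the pullback $r^*(j^*\alpha)$, whose asymptotic support lies on $r^{-1}(P \cap R)$; both lie in $\tilde{\mathcal{W}}^s_k$, and the already-established transverse-case estimate applied to $d\alpha \in \tilde{\mathcal{W}}^{s+1}_{k+1}$ places $I(d\alpha)$ in $\tilde{\mathcal{W}}^s_k$. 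Rearranging gives $d I(\alpha) \in \tilde{\mathcal{W}}^s_k$, which combined with $I(\alpha) \in \tilde{\mathcal{W}}^{s-1}_{k-1}$ is the definition of $\mathcal{W}^{s-1}_{k-1}$. The main obstacle I anticipate is the polyhedral bookkeeping in the transverse case when $\partial P \neq \emptyset$: verifying that the two summands $I(P)_+$ and $I(P)_-$ are the correct minimal polyhedral supports and that the weighted moment estimate transfers cleanly under the change of variables from flow time to a transverse affine coordinate, including the case in which the affine hyperplane containing $P$ crosses the reference hyperplane $R$.
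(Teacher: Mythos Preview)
The paper does not supply its own proof of this lemma: it is stated with the attribution ``cf.\ \cite[Lemmas 3.12, 3.15]{kwchan-ma-p2}'' and no argument is given in the text. So there is no in-paper proof to compare against; the relevant benchmark is the argument in the cited reference.

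Your outline matches the standard approach one expects there. The tangent case via $\iota_v \nu_P = 0$ is exactly right. In the transverse case, the change of variables replacing flow time by a transverse affine coordinate and then reading off the moment estimate with the shift $(s,k)\mapsto (s-1,k-1)$ (so that the exponent $-(j+s-|\beta|-k)/2$ is unchanged) is the correct bookkeeping. For the ``moreover'' part, your use of the Cartan homotopy identity $dI + Id = \mathrm{Id} - \mathbf{p}^*\mathbf{i}^*$ is precisely how the paper packages things (this is the single-step version of Proposition~\ref{prop:homotopy_operator_identity}); to close the loop you need $\mathbf{p}^*\mathbf{i}^*\alpha \in \tilde{\mathcal{W}}^s_k(U)$, which is exactly the content of Lemma~\ref{lem:inclusion_projection_lemma}, so you should invoke it explicitly rather than leaving the control of $r^*(j^*\alpha)$ implicit.

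The only place your sketch is thin is the one you flag yourself: when $\partial P \neq \emptyset$ or when the affine hyperplane of $P$ meets $R$, one must verify that the moment estimate near the boundary strata of $I(P)_\pm$ still holds, and that differentiating $I(\alpha)$ in the flow direction (which removes the integral and returns $\iota_v\alpha$ evaluated along the leaf) is compatible with taking the supremum over the enlarged $(k-1)$-codimensional leaves. These are genuine but routine checks, not conceptual gaps.
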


Using the affine coordinates determined by $\tau$, define a tropical hypersurface $\mathbf{i}: R \rightarrow U$, $x \mapsto (0,x)$, and an affine projection $\mathbf{p} : U \rightarrow R$, $(t,x) \mapsto x$.

\begin{lemma}[{cf. \cite[Lemmas 3.13, 3.14]{kwchan-ma-p2}}]
\phantomsection
\label{lem:inclusion_projection_lemma}
\begin{enumerate}
\item Let $k = \textnormal{codim}_\real(P \subset U)$. For $\alpha \in \mathcal{W}^s_P(U)$, we have $\mathbf{i}^* \alpha \in \mathcal{W}^s_{Q}(R)$ if $P$ intersects $R$ transversally, where $Q \subset R$ is any codimension $k$ polyhedral subset which contains $P \cap R$ and is normal to $\mathbf{i}^*\nu_P$, and $\mathbf{i}^*\alpha \in \mathcal{W}^{-\infty}_k(R)$ otherwise. Moreover, the pullback along $\mathbf{i}$ is a map $\mathbf{i}^* : \mathcal{W}^s_k(U) \rightarrow \mathcal{W}^s_k(R)$.

\item For $\alpha \in \mathcal{W}^s_{P}(R)$, we have $\mathbf{p}^* \alpha \in \mathcal{W}^s_{\mathbf{p}^{-1}(P)}(U)$. Moreover, the pullback along $\mathbf{p}$ is a map $\mathbf{p}^* : \mathcal{W}^s_k(R) \rightarrow \mathcal{W}^s_k(U)$.
\end{enumerate}
\end{lemma}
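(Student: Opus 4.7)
The proof will be a direct check against the three conditions of Definition 4.3, carried out in carefully chosen affine coordinates. In both parts, once the claim about weights of the form $\mathcal{W}^s_{?}$ is established, the compatibility with the de Rham differential follows from the commutation $d \circ \mathbf{i}^* = \mathbf{i}^* \circ d$ and $d \circ \mathbf{p}^* = \mathbf{p}^* \circ d$, which gives the extensions $\mathbf{i}^* : \mathcal{W}^s_k(U) \rightarrow \mathcal{W}^s_k(R)$ and $\mathbf{p}^* : \mathcal{W}^s_k(R) \rightarrow \mathcal{W}^s_k(U)$ once one knows that individual $\mathcal{W}^s_P$ summands are sent into $\tilde{\mathcal{W}}^s_k$-type spaces.

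For part (2), the projection case, I would begin by picking a point $p \in P \subset R$ and a convex neighborhood $V \subset R$ of $p$ equipped with affine coordinates $x=(x_1,\dots,x_{n-1})$ in which $x'=(x_1,\dots,x_k)$ parametrizes the leaves parallel to $P$. The preimage $\mathbf{p}^{-1}(V)$ carries the affine coordinate system $(x,t)$ in which the same $x'$ still parametrizes leaves parallel to $\mathbf{p}^{-1}(P)$, and $\mathbf{p}^* \nu_P = \nu_{\mathbf{p}^{-1}(P)}$ up to sign. Writing $\alpha = h(x,\hp) \nu_P + \eta$ with $\eta \in \mathcal{W}^{-\infty}_k$ near $P$, one obtains $\mathbf{p}^*\alpha = h(\mathbf{p}(\cdot),\hp)\, \nu_{\mathbf{p}^{-1}(P)} + \mathbf{p}^*\eta$. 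Since $\mathbf{p}^*\alpha$ is independent of $t$, one has $\iota_{\nu^\vee_{\mathbf{p}^{-1}(P)}} \mathbf{p}^*\alpha = \mathbf{p}^*(\iota_{\nu^\vee_P} \alpha)$, and derivatives $\nabla_{\partial/\partial x_l}$ commute with $\mathbf{p}^*$, while $\nabla_{\partial/\partial t}$ annihilates it. Thus the sup over leaves in $U$ coincides with the sup over corresponding leaves in $R$, and the estimate in condition (3) transfers verbatim, yielding the same weight $s$. Exponential decay away from $P$ in $R$ lifts to exponential decay away from $\mathbf{p}^{-1}(P)$ in $U$ since $\mathbf{p}$ is a fiber bundle, so condition (1) holds.

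For part (1), the transverse case is handled similarly. At $p \in P \cap R$, transversality lets me pick coordinates $(x_1,\dots,x_n)$ on $U$ in which $x'=(x_1,\dots,x_k)$ is normal to $P$ and $R$ is cut out by $x_n=0$, so that $(x_1,\dots,x_{n-1})|_R$ are affine coordinates on $R$ in which $x'$ is still normal to $P \cap R$. Under the decomposition $\alpha = h\nu_P + \eta$ near $P$, the pullback is $\mathbf{i}^*\alpha = h|_R \, \mathbf{i}^*\nu_P + \mathbf{i}^*\eta$, and $\mathbf{i}^*\nu_P$ equals $\nu_{P \cap R}$ (up to sign) on the conormal bundle to $P \cap R$ in $R$. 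The integral in condition (3) on $R$ is obtained from the integral on $U$ simply by restricting to $x_n=0$ and dropping the corresponding direction of the sup, so the same bound $\hp^{-(j+s-|\beta|-k)/2}$ is preserved. For a general polyhedral subset $Q \supset P \cap R$ normal to $\mathbf{i}^*\nu_P$, one writes $Q = (P \cap R) \cup Q'$ with $Q'$ disjoint from $P \cap R$, and the exponential decay of $\alpha$ away from $P$ (condition (1) of Definition 4.3) makes the contribution near $Q'$ exponentially small. Away from $P$, condition (1) on $U$ passes to condition (1) on $R$ since $R$ is closed in $U$.

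The main point to address carefully, and where I expect the bookkeeping to be most delicate, is the non-transverse case of (1). If $P \cap R = \emptyset$, then $\mathbf{i}^*\alpha \in \mathcal{W}^{-\infty}_k(R)$ is immediate from condition (1). Otherwise non-transversality forces $TP \subset TR$, and then in coordinates in which $R = \{x_n = 0\}$, the normal direction $\partial/\partial x_n$ lies in $T^\perp R$; since $P$ is parallel to $R$ locally, $\nu_P$ can be chosen without a $dx_n$ factor, so $\mathbf{i}^*\nu_P$ still makes sense, but the issue is that the leaves $P_{V,x'}$ through $R$ miss $P$ unless $x'=0$ forces $x_n=0$, which is precisely ruled out by non-transversality. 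One concludes that for $p \in R \setminus P$, exponential decay gives $\mathbf{i}^*\alpha|_V \in \mathcal{W}^{-\infty}_k(V)$; near points of $P \cap R$, the local decomposition $\alpha = h\nu_P + \eta$ pulled back yields a contribution to $\mathbf{i}^*\alpha$ whose main term is supported on a polyhedral subset of $R$ of codimension strictly less than $k$, hence vanishes as a $k$-form on $R$, leaving only $\mathbf{i}^*\eta \in \mathcal{W}^{-\infty}_k(R)$. Combining these gives $\mathbf{i}^*\alpha \in \mathcal{W}^{-\infty}_k(R)$, as required.
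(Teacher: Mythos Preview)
The paper does not supply its own proof of this lemma; it is stated with a reference to \cite{kwchan-ma-p2}. Your approach---verifying the three conditions of Definition~\ref{def:asypmtotic_support_pre} directly in adapted affine coordinates, and then deducing the ``Moreover'' statements from the commutation of $d$ with pullback---is the natural one and matches what the cited source does. Your arguments for part~(2) and for the transverse case of part~(1) are correct.

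There is, however, a genuine error in your handling of the non-transverse case of part~(1). You assert that since $P$ is parallel to $R$, the form $\nu_P$ can be chosen \emph{without} a $dx_n$ factor. This is backwards. Non-transversality means $TP + TR \neq TU$; since $R$ is a hyperplane this forces $TP \subset TR$, so the conormal direction $dx_n$ to $R$ is also conormal to $P$, and hence $\nu_P$ necessarily \emph{contains} $dx_n$ as a factor. (Concretely: in $U=\mathbb{R}^3$ with $R=\{x_3=0\}$ and $P=\{x_1=x_3=0\}$, one has $\nu_P = dx_1 \wedge dx_3$.) The correct argument is then much simpler than what you wrote: $\mathbf{i}^*\nu_P = 0$ because $\mathbf{i}^* dx_n = 0$, so on $R \cap W_P$ the main term $h\,\nu_P$ pulls back to zero identically and one is left with $\mathbf{i}^*\alpha = \mathbf{i}^*\eta \in \mathcal{W}^{-\infty}_k(R \cap W_P)$. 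On $R \setminus W_P \subset U \setminus P$, condition~(1) for $\alpha$ already gives exponential decay. Together these yield $\mathbf{i}^*\alpha \in \mathcal{W}^{-\infty}_k(R)$. Your final sentence---that the main term ``is supported on a polyhedral subset of $R$ of codimension strictly less than $k$, hence vanishes as a $k$-form on $R$''---is not a valid inference on its own and should be replaced by the direct observation $\mathbf{i}^*\nu_P = 0$.
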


Finally, we extend the above construction to define an integral operator which retracts $U$ to a chosen point $q_0$. Consider a chain of affine subspaces $\{q_0\}=U_0 \subseteq U_1 \subseteq \cdots \subseteq U_r = U$ with $\dim_\real(U_j) = j$. Denote by $\mathbf{i}_j : U_j \rightarrow U_{j+1}$ and $\mathbf{p}_j : U_{j+1} \rightarrow U_j$ the inclusions and affine projections, respectively. Let $v_j$ be a constant affine vector field on $U_{j+1}$ which is tangent to the fiber of $\mathbf{p}_j$. Composition of the inclusion operators gives $\mathbf{i}_{i,j} : U_i \rightarrow U_j$, $i <j$, and similarly for the projection operators. Let $I_j : \mathcal{W}^s_{k}(U_{j+1}) \rightarrow \mathcal{W}^{s-1}_{k-1}(U_{j+1})$ be the integral operator defined using $v_j$, as above. For the purpose of solving the Maurer--Cartan equation, we will choose $q_0$ to be an irrational point in $U_1$. While $\{q_0\}$ is not a tropical polyhedral subset of $U_1$, the definition of $\mathbf{p}_{0,j}^*$ remains valid if it is treated as the inclusion of constant functions. The operator $I_0$ defines a map $\mathcal{W}^s_1(U_1) \rightarrow \mathcal{W}^{s-1}_{0}(U_1)$, even if $q_0$ is irrational. Indeed, each $\alpha \in \mathcal{W}^s_{1}(U_1)$ can be written as a finite sum $\sum_{l} \alpha_l$ with $\alpha_l \in \tilde{\mathcal{W}}^s_{P_l}(U_1)$ for some rational points $P_l$ of $U_1$ which, in particular, are distinct from $q_0$. It follows that $I_0(P_l)$ is still a tropical subspace of $U_1$.

With the above notation, define $I:\mathcal{W}^s_*(U) \rightarrow \mathcal{W}^{s-1}_{*-1}(U)$ by
\begin{equation}\label{eqn:generalized_integral_operator}
I = \mathbf{p}_{1,r}^* I_{0}\mathbf{i}_{1,r}^* + \dots + \mathbf{p}_{r-1,r}^* I_{r-2}\mathbf{i}_{r-1,r}^* + I_{r-1}.
\end{equation}
Write $\mathbf{i}^*:=\mathbf{i}_{0,r}^*$ for evaluation at $q_0$ and $\mathbf{p}^* : =  \mathbf{p}_{0,r}^*$.

\begin{prop}[{cf. \cite[Lemma 3.16]{kwchan-ma-p2}}]\label{prop:homotopy_operator_identity}
	The equality
	$
	d I + I d= \Id - \mathbf{p}^* \mathbf{i}^*
	$
	holds.
\end{prop}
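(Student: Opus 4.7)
The plan is to reduce the full identity to a single-step homotopy identity
\[
dI_j + I_j d = \Id - \mathbf{p}_j^* \mathbf{i}_j^* \quad \text{on } \mathcal{W}^s_*(U_{j+1}),
\]
proved individually for each $j = 0, \dots, r-1$, and then to telescope the resulting identities.

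For the single-step identity, I would choose affine coordinates $(t, x_1, \dots, x_j)$ on $U_{j+1}$ in which $v_j = \partial_t$ and $x = (x_1,\dots,x_j)$ are pulled back from $U_j$ via $\mathbf{p}_j$. Any differential $k$-form decomposes uniquely as $\alpha = \alpha_0 + dt \wedge \alpha_1$, with neither summand containing $dt$; then $\iota_{\partial_t}\alpha = \alpha_1$ and $I_j(\alpha)(t,x) = \int_0^t \alpha_1(s,x)\, ds$. A direct calculation, using $d\alpha = dt \wedge \partial_t \alpha_0 + d_R \alpha_0 - dt \wedge d_R \alpha_1$ with $d_R$ the exterior derivative in the $x$-variables, gives $\iota_{\partial_t} d\alpha = \partial_t \alpha_0 - d_R \alpha_1$, so by the fundamental theorem of calculus
\[
I_j(d\alpha) = \alpha_0(t,x) - \alpha_0(0,x) - \int_0^t d_R \alpha_1(s,x)\, ds,
\]
whereas differentiating under the integral sign yields
\[
dI_j(\alpha) = dt \wedge \alpha_1(t,x) + \int_0^t d_R \alpha_1(s,x)\, ds.
\]
Adding these produces $\alpha(t,x) - \alpha_0(0,x)$, which equals $\alpha - \mathbf{p}_j^* \mathbf{i}_j^* \alpha$ since $\mathbf{i}_j^*$ restricts to $t=0$ and kills $dt$ while $\mathbf{p}_j^*$ extends constantly in $t$. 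The same argument applies at $j = 0$ when $q_0$ is irrational, because $\mathbf{i}^*$ remains evaluation at a point and $\mathbf{p}^*$ remains constant extension.

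To telescope, I would use the cocycle relations $\mathbf{p}_{j,r}^* = \mathbf{p}_{j+1,r}^* \mathbf{p}_j^*$ and $\mathbf{i}_{j,r}^* = \mathbf{i}_j^* \mathbf{i}_{j+1,r}^*$, the section identity $\mathbf{i}_{j+1,r}^* \mathbf{p}_{j+1,r}^* = \Id$, and the fact that $\mathbf{p}_{j+1,r}^*$ and $\mathbf{i}_{j+1,r}^*$ commute with $d$. These give
\[
dI + Id = \sum_{j=0}^{r-2} \mathbf{p}_{j+1,r}^*(dI_j + I_j d)\mathbf{i}_{j+1,r}^* + (dI_{r-1} + I_{r-1} d).
\]
Substituting the single-step identity rewrites the $j$-th summand as $\mathbf{p}_{j+1,r}^* \mathbf{i}_{j+1,r}^* - \mathbf{p}_{j,r}^* \mathbf{i}_{j,r}^*$ for $0 \leq j \leq r-2$, and the last summand as $\Id - \mathbf{p}_{r-1,r}^* \mathbf{i}_{r-1,r}^*$; the sum then collapses to $\Id - \mathbf{p}^*\mathbf{i}^*$. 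The only delicate point is tracking signs carefully in the coordinate computation and confirming that the irrational-$q_0$ case at $j=0$ presents no obstacle; the rest is bookkeeping within the telescoping sum.
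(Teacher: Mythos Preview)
Your argument is correct. The paper does not supply its own proof of this proposition; it merely records the statement and cites \cite[Lemma~3.16]{kwchan-ma-p2}. Your approach---proving the one-step Cartan homotopy identity $dI_j + I_j d = \Id - \mathbf{p}_j^*\mathbf{i}_j^*$ by a direct coordinate computation and then telescoping using $\mathbf{p}_{j,r}^* = \mathbf{p}_{j+1,r}^*\mathbf{p}_j^*$ and $\mathbf{i}_{j,r}^* = \mathbf{i}_j^*\mathbf{i}_{j+1,r}^*$---is the standard way to establish such an identity and is exactly what one would expect the cited lemma to do.
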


\subsection{The tropical differential graded Lie algebra}\label{sec:tropical_dgla}

\subsubsection{Abstract tropical dg Lie algebras}

Consider a $M_\sigma^+$-graded tropical Lie algebra $\mathfrak{h}$ acting on a $P$-graded algebra $A$, as in Section \ref{sec:scattering_diagram}. Fix a convex open subset $U \subset M_\real$.

\begin{definition}\label{def:abstract_tropical_dgla}
	The {\em tropical dg Lie algebra associated to $\mathfrak{h}$} is 
	$$
	\mathcal{H}^*(U) := \bigoplus_{m \in M_\sigma^+} \left(\mathcal{W}^{0}_*(U)/ \mathcal{W}^{-1}_*(U)\right) \otimes_\comp \mathfrak{h}_{m}
	$$
	with differential $d (\alpha  h) = (d\alpha ) h$ and Lie bracket $
	[\alpha h,\alpha^{\prime}  h'] = ( \alpha \wedge \alpha^{\prime}) [h,h']$, where $\alpha, \alpha^{\prime} \in \mathcal{W}^{0}_*(U)/ \mathcal{W}^{-1}_*(U)$ and $h, h^{\prime} \in \mathfrak{h}$.
\end{definition}

Denote by $\hat{\mathcal{H}}^*(U)$ the completion associated to the monoid ideals $k M_\sigma^+ \subset M_\sigma^+$, $k \in \mathbb{Z}_{>0}$. Given a commutative algebra $R$, set $\mathcal{H}^*_R(U) = \mathcal{H}^*(U) \otimes_{\mathbb{C}} R$ and $\hat{\mathcal{H}}^*_{R}(U) = \hat{\mathcal{H}}^*(U) \hat{\otimes}_{\mathbb{C}} R$. We also introduce the dg Lie algebra $\mathcal{G}^{*}(U) := \bigoplus_{m \in M_\sigma^+} \mathcal{W}^0_*(U) \otimes_{\mathbb{C}} \mathfrak{h}_m$ and its dg Lie ideal
	$
	\mathcal{I}^{*}(U) := \bigoplus_{m \in M_\sigma^+} \mathcal{W}^{-1}_*(U) \otimes_{\mathbb{C}} \mathfrak{h}_m.
	$
	Observe that $\mathcal{G}^*(U)/\mathcal{I}^*(U) \simeq \mathcal{H}^*(U)$. When $U = M_\real$, we will often omit $U$ from the notation.

We will be interested in solving the Maurer--Cartan equation in $\hat{\mathcal{H}}^*$ or $\mathcal{H}^*_R$, which reads
	\begin{equation}\label{eqn:MC_equation}
	d \varphi + \half[\varphi,\varphi] = 0.
	\end{equation}

\begin{definition}\label{def:tropical_dga}
	The {\em tropical dg algebra associated to $A$} is
	$
	\mathcal{A}^*(U) := \bigoplus_{m \in P} \left(\mathcal{W}^{0}_*(U)/ \mathcal{W}^{-1}_*(U)\right) \otimes_\comp A_{m},
	$
	with differential $d (\alpha  f) = (d\alpha ) f$ and product $(\alpha f)\wedge (\alpha^{\prime} f^{\prime}) = (\alpha \wedge \alpha^{\prime}) (f f^{\prime})$, where $\alpha, \alpha^{\prime} \in \mathcal{W}^{0}_*(U)/ \mathcal{W}^{-1}_*(U)$ and $f, f^{\prime} \in A$. As for $\mathcal{H}^*(U)$, we can also define $\hat{\mathcal{A}}^*(U)$ and $\mathcal{A}_R(U)$.
	\end{definition}
	 
	 There is a left $\mathcal{H}^*(U)$-action on $\mathcal{A}^*(U)$ given by $
	 (\alpha h)\cdot (\alpha^{\prime} f):= (\alpha \wedge \alpha^{\prime}) (h \cdot f)$. The square zero extension dg Lie algebra $(\mathcal{H} \oplus \mathcal{A}[1])^*(U) := \mathcal{H}^*(U) \oplus \mathcal{A}^{*}(U)[-1]$ has the bracket
	 \[
	 [h+f,h^{\prime}+f^{\prime}] = [h,h^{\prime}] + h \cdot f^{\prime} -(-1)^{|h^{\prime}| |f|} h^{\prime} \cdot f, \qquad h, h^{\prime} \in \mathcal{H}^*(U), \;\; f, f^{\prime} \in \mathcal{A}^*(U)[-1].
	 \]

\subsubsection{Homotopy operator}\label{sec:homotopy_operator}

We will solve equation \eqref{eqn:MC_equation} using Kuranishi's method \cite{Kuranishi65}, in which a solution is written as a sum over trivalent trees. We take $U=M_\real$ for the remainder of this section.

%\label{not:perpendicular_hyperplane}
	
Fix an affine metric $g_0$ on $M_\real$. For each $m \in M_\sigma^+$, fix a chain of affine subspaces $\{pt\}=U^m_{0} \subseteq U_{1}^m \subseteq \dots \subseteq  U^m_{r} = M_\real$. We assume that $U^m_{0}$ is an irrational point of $U^{m}_1$. Denote by $\mathbf{p}^m_{j}$ the affine projection determined by the vector field $v^m_{j}$, with the convention that $v^m_{1} = -m$.

	Given these choices, we obtain a homotopy operator $\mathsf{H}_m : \mathcal{W}^{0}_* \rightarrow \mathcal{W}^0_{*-1}$ using equation \eqref{eqn:generalized_integral_operator} (denoted there by $I$). Let $\mathsf{P}_m : \mathcal{W}^0_*\rightarrow \mathcal{W}^0_0(U^m_0) $ be the projection $\mathsf{P}_m (\alpha ) := \alpha|_{U^m_{0}}$ and let $\iota_m :\mathcal{W}^0_0(U^m_0) \rightarrow \mathcal{W}^0_*$ be given by $\iota_m (\alpha) := \alpha$, the embedding of constant functions on $M_\real$. As in \cite{kwchan-leung-ma}, these operators satisfy
	\[
	d \mathsf{H}_m + \mathsf{H}_m d= \Id - \iota_m \mathsf{P}_m,
	\]
	so that $\mathsf{H}_* = \bigoplus_m \mathsf{H}_m$ is a homotopy retracting $\mathcal{W}^0_*$ to its cohomology $H^*(\mathcal{W}^0_*,d) \simeq \mathcal{W}^0_0(U^m_0)$. Moreover, these operators descend to the quotient $\mathcal{W}^0_*/\mathcal{W}^{-1}_*$, thereby contracting its cohomology to $\comp \cong \mathcal{W}^0_0(U^m_0)/\mathcal{W}^{-1}_0(U^m_0)$.

\begin{definition}\label{MC_homotopy}
\phantomsection
\label{pathspacehomotopy}
\begin{enumerate}
	\item For each $m \in M_\sigma^+$, let $\mathcal{H}^*_m:= \big(\mathcal{W}^0_*/\mathcal{W}^{-1}_*\big) \otimes_{\mathbb{C}} \mathfrak{h}_m$ and define the homotopy operator $\mathsf{H}_m : \mathcal{H}^{*+1}_m \rightarrow \mathcal{H}^*_m$ by $\mathsf{H}_m(\alpha h) = \mathsf{H}_m(\alpha) h$. Denote by $\mathsf{H} = \bigoplus_m \mathsf{H}_m$ the induced operator on $\mathcal{H}^*$.
	\item Define operators $\mathsf{P} = \bigoplus_m \mathsf{P}_m$ and $\iota = \bigoplus_m \iota_m$ similarly.	
\end{enumerate}
\end{definition}

Taking inverse limits defines operators on $\hat{\mathcal{H}}^*$. Similar definitions apply to $\mathcal{H}^*_R$ and $\hat{\mathcal{H}}^*_R$. We can also apply the construction to the dg Lie algebra $\mathcal{H} \oplus \mathcal{A}[-1]$, obtaining operators $\mathsf{H}$, $\mathsf{P}$ and $\iota$.

\subsection{Solving the Maurer--Cartan equation}\label{sec:abstract_solving_equation}

\subsubsection{Input of the Maurer--Cartan equation}\label{sec:input_to_MC}

Consider an initial scattering diagram $\mathscr{D}_{in}$, or its perturbation $\tilde{\mathscr{D}}_{in,l}$. We will associate to each wall a term in $\hat{\mathcal{H}}^1$ or $\hat{\mathcal{H}}^1_{\tilde{R}_l}$ to serve as inputs to solve the Maurer--Cartan equation. 

Consider first $\mathscr{D}_{in} = \{ \mathbf{w}_i = (m_i, n_i, P_i,\Theta_i)\}_{i \in I}$, with $\log(\Theta_i) = \sum_{j} g_{j i}$ as in equation \eqref{eqn:initial_walls}. Consider an affine function $\eta_i = \langle \cdot, n_i \rangle + c$ such that $P_i = \{ x \in M_{\mathbb{R}} \mid \eta_i(x)= 0 \}$ and set
\[
%\label{eqn:delta_P_i}
\delta_{P_i} = \big(\frac{1}{\pi \hp}\big)^{1/2} e^{-(\eta_i^2)/\hp}d\eta_i.
\]

\begin{lemma}[{\cite[\S 4]{kwchan-leung-ma}}]\label{lem:input_delta}
	We have $\delta_{P_i} \in \mathcal{W}^{1}_{P_i}(M_\real)$. 
\end{lemma}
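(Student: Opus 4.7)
The plan is to verify the three conditions of Definition \ref{def:asypmtotic_support_pre} for $\delta_{P_i}$ with weight $s = 1$ and codimension $k = 1$, and then confirm that $d \delta_{P_i}$ lies in the appropriate space so that $\delta_{P_i} \in \mathcal{W}^1_{P_i}$ rather than merely $\tilde{\mathcal{W}}^1_{P_i}$.

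For condition (1), away from $P_i$ the affine function $\eta_i$ is bounded away from $0$ on any sufficiently small neighborhood $V$. Thus the Gaussian $e^{-\eta_i^2/\hp}$ (together with all its derivatives, which only produce additional polynomial factors in $\eta_i / \hp$) decays like $e^{-c_V / \hp}$, so $\delta_{P_i}|_V \in \mathcal{W}^{-\infty}_1(V)$. For condition (2), on any neighborhood $W_{P_i}$ of $P_i$ one can take $\nu_{P_i} = d \eta_i$ (after normalization), so $\delta_{P_i} = h(x,\hp)\, \nu_{P_i}$ with $h = (\pi \hp)^{-1/2} e^{-\eta_i^2 / \hp}$ and vanishing remainder.

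The main step is verifying condition (3). Choose affine coordinates $x = (x_1,\dots,x_r)$ on a convex neighborhood $V$ of any point $p \in P_i$ such that $x' := x_1 = \eta_i$ parametrizes the leaves of the foliation transverse to $P_i$, with $\nu_{P_i}^{\vee} = \partial_{x_1}$. Then $\iota_{\nu_{P_i}^{\vee}} \delta_{P_i} = (\pi \hp)^{-1/2} e^{-(x')^2 / \hp}$, a function depending only on $x'$. Each $x'$-derivative produces a factor of order $\hp^{-1/2}$ times a polynomial in $x' / \sqrt{\hp}$, so
\[
\sup_{P_{V,x'}} |\nabla^j (\iota_{\nu_{P_i}^{\vee}} \delta_{P_i})| \leq C_j \hp^{-(j+1)/2} Q_j(x'/\sqrt{\hp})\, e^{-(x')^2/\hp}
\]
for some polynomial $Q_j$. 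Multiplying by $(x')^{\beta}$ and integrating in $x'$, the substitution $u = x'/\sqrt{\hp}$ turns the right-hand side into
\[
C_j \hp^{-(j+1)/2} \hp^{(\beta+1)/2} \int_{\mathbb{R}} |u|^{\beta} Q_j(u)\, e^{-u^2}\, du = D_{j,V,\beta}\, \hp^{-(j - \beta)/2},
\]
which matches the required bound $\hp^{-(j + s - \beta - k)/2}$ precisely when $s = 1$ and $k = 1$.

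Finally, since $\delta_{P_i} = f(\eta_i,\hp)\, d\eta_i$ with $f(t,\hp) = (\pi\hp)^{-1/2} e^{-t^2/\hp}$, we have $d \delta_{P_i} = f'(\eta_i,\hp)\, d\eta_i \wedge d\eta_i = 0$, which trivially lies in $\tilde{\mathcal{W}}^2_2$. Combined with the estimate above, this yields $\delta_{P_i} \in \mathcal{W}^1_{P_i}(M_\real)$ as defined in Definitions \ref{def:asymptotic_support} and \ref{def:asypmtotic_support_pre}. There is no serious obstacle; the only point that requires care is tracking the exact exponent in $\hp$ through the Gaussian rescaling, which is what pins down the weight $s = 1$.
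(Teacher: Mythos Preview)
Your proof is correct. The paper itself does not give a proof of this lemma but simply cites \cite[\S 4]{kwchan-leung-ma}; your direct verification of the conditions in Definition~\ref{def:asypmtotic_support_pre} via the Gaussian rescaling $u = x'/\sqrt{\hp}$ is exactly the computation underlying that citation, and your observation that $d\delta_{P_i}=0$ (hence $\delta_{P_i}\in\mathcal{W}^1_1$ as well) is a small bonus beyond the literal statement.
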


Set
\begin{equation}\label{eqn:incoming_for_each_wall}
\incoming^{(i)} = -\delta_{P_i} \log(\Theta_i) \in \hat{\mathcal{H}}^1
\end{equation}
and take $\incoming = \sum_{i \in I} \incoming^{(i)}$ as the input to solve the Maurer--Cartan equation.

If instead we begin with a perturbed diagram $\tilde{\mathscr{D}}_{in,l}$, then we have walls $\mathbf{w}_{iJ} = (m_{iJ},n_{iJ},P_{iJ},\Theta_{iJ})$, leading to $\delta_{P_{iJ}} \in \mathcal{W}^{1}_{P_i{iJ}}(M_\real)$ and $\tilde{\incoming} = \sum_{i,J} \tilde{\incoming}^{(i)}_J \in \mathcal{H}_{\tilde{R}_l}^1$.

\subsubsection{Summation over trees}\label{sec:sum_over_tree_formula}

Motivated by Kuranishi's method \cite{Kuranishi65} of solving the Maurer--Cartan equation of the Kodaira--Spencer dg Lie algebra, and its generalization to general dg Lie algebras (see \cite{manetti2005differential}), instead of solving equation \eqref{eqn:MC_equation}, we first look for solutions $\breve{\Phi} \in \hat{\mathcal{H}}^1$ of the equation
\begin{equation}\label{eqn:pseudoMC}
\breve{\Phi}= \incoming - \half \mathsf{H} [\breve{\Phi},\breve{\Phi}].
\end{equation}
In the perturbed setting, we look for solutions $\tilde{\breve{\Phi}} \in \mathcal{H}_{\tilde{R}_l}^1$ of the equation $\tilde{\breve{\Phi}}= \tilde{\incoming} - \half \mathsf{H} [\tilde{\breve{\Phi}},\tilde{\breve{\Phi}}]$.

\begin{prop}\label{prop:pseudoMC_to_MC}
	If $\breve{\Phi}$ satisfies equation \eqref{eqn:pseudoMC}, then $\breve{\Phi}$ satisfies equation \eqref{eqn:MC_equation} if and only if $\mathsf{P}[\breve{\Phi},\breve{\Phi}] = 0$. An analgous statement holds for $\tilde{\breve{\Phi}}$.
\end{prop}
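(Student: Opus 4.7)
Following Kuranishi's method, I apply $d$ to the fixed-point equation \eqref{eqn:pseudoMC} and rewrite $d\mathsf{H}[\breve{\Phi},\breve{\Phi}]$ using the homotopy identity $d\mathsf{H} + \mathsf{H}d = \Id - \iota\mathsf{P}$ of Proposition \ref{prop:homotopy_operator_identity}. Each wall one-form $\delta_{P_i} = (\pi \hp)^{-1/2} e^{-\eta_i^2/\hp} d\eta_i$ is closed, since $d\delta_{P_i}$ contains the factor $d\eta_i\wedge d\eta_i = 0$, so $d\incoming = 0$. This computation yields
\[
\mathcal{E} := d\breve{\Phi} + \tfrac{1}{2}[\breve{\Phi},\breve{\Phi}] = \tfrac{1}{2}\iota\mathsf{P}[\breve{\Phi},\breve{\Phi}] + \tfrac{1}{2}\mathsf{H}\,d[\breve{\Phi},\breve{\Phi}].
\]
Next, the graded Leibniz rule gives $d[\breve{\Phi},\breve{\Phi}] = 2[d\breve{\Phi},\breve{\Phi}]$ and the graded Jacobi identity applied to the odd element $\breve{\Phi}$ gives $[[\breve{\Phi},\breve{\Phi}],\breve{\Phi}] = 0$, whence $[\mathcal{E},\breve{\Phi}] = [d\breve{\Phi},\breve{\Phi}] = \tfrac{1}{2}d[\breve{\Phi},\breve{\Phi}]$. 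Substituting produces the self-referential identity
\[
\mathcal{E} = \tfrac{1}{2}\iota\mathsf{P}[\breve{\Phi},\breve{\Phi}] + \mathsf{H}[\mathcal{E},\breve{\Phi}].
\]

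Both implications follow from this identity. The forward direction is immediate: if $\mathcal{E} = 0$, then $\iota\mathsf{P}[\breve{\Phi},\breve{\Phi}] = 0$, and since $\iota$ is the injective inclusion of constants, $\mathsf{P}[\breve{\Phi},\breve{\Phi}] = 0$. For the converse, assume $\mathsf{P}[\breve{\Phi},\breve{\Phi}] = 0$, so $\mathcal{E} = \mathsf{H}[\mathcal{E},\breve{\Phi}]$; the main obstacle is to deduce $\mathcal{E} = 0$ from this fixed-point identity. I argue by induction on the pro-nilpotent filtration $\{\hat{\mathcal{H}}^{\geq k}\}_{k\geq 1}$. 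Since $d\incoming = 0$, equation \eqref{eqn:pseudoMC} together with $[\breve{\Phi},\breve{\Phi}] \in \hat{\mathcal{H}}^{\geq 2}$ forces $d\breve{\Phi} \in \hat{\mathcal{H}}^{\geq 2}$, and hence $\mathcal{E} \in \hat{\mathcal{H}}^{\geq 2}$. The containment $[\hat{\mathcal{H}}^{\geq k}, \hat{\mathcal{H}}^{\geq 1}] \subset \hat{\mathcal{H}}^{\geq k+1}$ together with the compatibility of $\mathsf{H}$ with the monoid grading then propagates $\mathcal{E} \in \hat{\mathcal{H}}^{\geq k} \Rightarrow \mathcal{E} \in \hat{\mathcal{H}}^{\geq k+1}$, yielding $\mathcal{E} \in \bigcap_k \hat{\mathcal{H}}^{\geq k} = 0$ by completeness. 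The statement for $\tilde{\breve{\Phi}}$ is proved identically, with convergence additionally controlled by the nilpotency of $\tilde{R}_l$.
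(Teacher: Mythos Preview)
Your argument is correct and is precisely the standard Kuranishi obstruction calculation. The paper does not actually supply a proof of this proposition; it is stated as a known fact, with a reference to Kuranishi's original method \cite{Kuranishi65} and its generalization to abstract dg Lie algebras \cite{manetti2005differential}. Your write-up reconstructs that standard argument faithfully: deriving the self-referential identity $\mathcal{E} = \tfrac{1}{2}\iota\mathsf{P}[\breve{\Phi},\breve{\Phi}] + \mathsf{H}[\mathcal{E},\breve{\Phi}]$ from the homotopy relation and the dg Lie identities, then killing $\mathcal{E}$ by filtration-shifting in the pro-nilpotent completion.
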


The unique solution $\breve{\Phi}$ of equation \eqref{eqn:pseudoMC} can be expressed as a sum over directed trees, as we now recall. An analogous statement holds for $\tilde{\breve{\Phi}}$. Further details can be found in \cite[\S 5.1]{kwchan-leung-ma}.

	\begin{definition}\label{def:label_tree_operation}\label{def:weighted_tree_operation}
		Given $\lrtr \in \lrtree{k}$ (resp. $\wrtr \in \wrtree{k}$), number the incoming vertices by $v_1,\dots,v_k$ according to their cyclic ordering and let $e_1,\dots,e_k$ be their incoming edges. Define $\mathfrak{l}_{k,\lrtr}: (\hat{\mathcal{H}}^{*+1})^{\otimes k} \rightarrow \hat{\mathcal{H}}^{*+1}$ (resp. $\mathfrak{l}_{k,\wrtr}: (\mathcal{H}_{\tilde{R}_l}^{*+1})^{\otimes k} \rightarrow \mathcal{H}_{\tilde{R}_l}^{*+1}$) so that its value on $\zeta_1, \dots, \zeta_k \in \hat{\mathcal{H}}_{(\tilde{R}_l)}^{*+1}$ is given by
		\begin{enumerate}
			\item
			extracting the component of $\zeta_i$ in $\mathcal{H}^{*+1}_{m_{e_i}}$ (resp. $\mathcal{H}^{*+1}_{m_{e_i}} u^{J_{e_i}}$) and aligning it as the input at $v_i$,
			
			\item then applying $m_2$ at each vertex in $\lrtr^{[0]}$ (resp. $\wrtr^{[0]}$), where $m_2: \hat{\mathcal{H}}^{*+1} \otimes \hat{\mathcal{H}}^{*+1} \to \hat{\mathcal{H}}^{*+1}$ is the graded symmetric operator $m_2(\alpha,\beta) = (-1)^{\bar{\alpha}(\bar{\beta}+1)}[\alpha,\beta]$, where $\bar{\alpha}$ and $\bar{\beta}$ denote the degrees of $\alpha$ and $\beta$, and finally
			
			\item applying the homotopy operator $-\mathsf{H}$ to each edge in $\lrtr^{[1]}$ (resp. $\wrtr^{[1]}$).
		\end{enumerate}
	\end{definition}
	
	Having defined $\mathfrak{l}_{k,\lrtr}$ and $\mathfrak{l}_{k,\wrtr}$, we can write
	\begin{equation}\label{eqn:weighted_tree_MC_solution}
	\breve{\Phi} = \sum_{k \geq 1} \frac{1}{2^{k-1}}\sum_{\lrtr \in \lrtree{k}} \mathfrak{l}_{k,\lrtr}(\incoming,\dots,\incoming),\qquad \tilde{\breve{\Phi}} = \sum_{k \geq 1} \frac{1}{2^{k-1}}\sum_{\wrtr \in \wrtree{k}} \mathfrak{l}_{k,\wrtr}(\tilde{\incoming},\dots,\tilde{\incoming}).
	\end{equation}
It is not hard to see that the sum defining $\breve{\Phi}$ converges in $\hat{\mathcal{H}}^*$. The sum defining $\tilde{\breve{\Phi}}$ is finite in $\mathcal{H}_{\tilde{R}_l}^*$ because the maximal ideal of $\tilde{R}_l$ is nilpotent.

\section{Tropical counting and theta functions from Maurer--Cartan solutions}\label{sec:main_theorem}

\subsection{Tropical counting from Maurer--Cartan solutions}\label{sec:tropical_counting_thm} 
The goal of this section is to relate Maurer--Cartan elements of $ \mathcal{H}^1_{\tilde{R}_l}$ to the counting of tropical disks in $(M_\real,\tilde{\mathscr{D}}_{in,l})$. Similar results for $M$ of rank two can be found in \cite{kwchan-ma-p2}.

\subsubsection{A partial homotopy operator}\label{sec:modified_homotopy_operator}

Recall the homotopy operator $\mathsf{H} = \bigoplus_{m \in M_\sigma^+} \mathsf{H}_m$ from Section \ref{sec:homotopy_operator}. It will be useful to replace $\mathsf{H}_m$ with the partial homotopy operator
\begin{equation}\label{eqn:modified_homotopy_operator}
\mathbf{H}_m(\alpha)(x):= \int_{-\infty}^0 (\iota_{\dd{s}}(\tau^m)^*(\alpha)(s,x)) ds,
\end{equation}
where $\tau^m : \real \times M_\real \rightarrow M_\real$ is the flow with respect to the vector field $-m$. Given $\lrtr \in \lrtree{k}$ (resp. $\wrtr \in \wrtree{k}$), denote by $\mathbf{L}_{k,\lrtr}$ (resp. $\mathbf{L}_{k,\wrtr}$) the operation obtained by replacing the operator $\mathsf{H}$ with $\mathbf{H}$ in Definition \ref{def:label_tree_operation} (denoted there by $\mathfrak{l}$ instead of $\mathbf{L}$).

The reason for introducing $\mathbf{H}$, $\mathbf{L}_{k,\lrtr}$ and $\mathbf{L}_{k,\wrtr}$ is that the operator $\mathsf{H}$ depends on the choice of a chain of affine subspaces $U^{m}_{\bullet}$ for each $m \in M_{\sigma}^+$. A drawback of the $U^m_{\bullet}$-independent $\mathbf{H}$ is that $\mathbf{H}_m(\alpha)$ is defined only when $\alpha$ is suitably behaved at infinity; see Lemma \ref{lem:integral_convergent}. For this reason, additional arguments are required to verify that the analogues of equation \eqref{eqn:weighted_tree_MC_solution} are well-defined and in fact solve the Maurer--Cartan equation; see Lemma \ref{lem:well_definedness_solution}. An alternative way of proceeding, taken in \cite{kwchan-leung-ma, kwchan-ma-p2}, is to make a careful choice of $U^m_{\bullet}$ so as to directly relate the Maurer--Cartan solution \eqref{eqn:weighted_tree_MC_solution} with scattering diagrams.

%For each $m \in M_\sigma^+$, we let 
%$$
%\mathcal{D}_{m,*} := \{ \alpha \in \mathcal{W}^0_*(M_\real)\ | \ \int_{-\infty}^0 (\iota_{\dd{s}}\sigma^*(\alpha)(s,x)) ds \ \text{converge uniformly for $x\in K$ in any compact $K \subset M_\real$} \}
%$$
%to be the domain of the operator $\mathbf{H}_m$. We notice that we would have
%\begin{equation}\label{eqn:modified_homotopy_identity}
%d\mathbf{H}_m + \mathbf{H}_m d = \text{Id}.
%\end{equation}
%Similar to the Definition \ref{pathspacehomotopy}, we take $\mathbf{H}:=\sum_{m \in M_\sigma^+} \mathbf{H}_m$ which acts on $\mathcal{D}_*:= \sum_{m \in M_\sigma^+} \mathcal{D}_{m,*}$. 

\subsubsection{Modified Maurer--Cartan solutions}\label{sec:modified_MC_solution}
In this section we prove that $\mathbf{L}_{k,\wrtr}(\tilde{\incoming},\dots,\tilde{\incoming})$ is well-defined; the proof for $\mathbf{L}_{k,\lrtr}(\incoming,\dots,\incoming)$ is similar. 

Given a weighted ribbon $k$-tree $\wrtr$, denote by $\mathfrak{M}_{\wrtr}(M_\real) \cong \real_{\leq 0}^{|\wrtr^{[1]}|} \times M_\real$ the space of tropical disks in $M_\real$ for the underlying weighted tree.

\begin{definition}\label{def:sequence_flow_def}
	Given a directed path $\mathfrak{e} = (e_0, \ldots, e_{l})$ in $\wrtr$, considered as a sequence of edges, define a map $\tau^{\mathfrak{e}}: \real_{\leq 0}^{|\wrtr^{[1]}_{\mathfrak{e}}|} \times M_\real \rightarrow M_\real$
	by
	$
	\tau^{\mathfrak{e}} (\vec{s},x) = \tau_{s_0}^{e_0} \circ \cdots \circ \tau_{s_l}^{e_l} (x),
	$
	where $\tau^{e_j} = \tau^{m_{e_j}}$ and $\wrtr^{[1]}_{\mathfrak{e}}$ is the subset $\{e_0, \ldots, e_l\} \subset \wrtr^{[1]}$. The map $\tau^{\mathfrak{e}}$ extends to a map
	$
	\hat{\tau}^\mathfrak{e}: \mathfrak{M}_{\wrtr}(M_\real) \cong \real_{\leq 0}^{|\wrtr^{[1]}|} \times M_\real  \rightarrow \real_{\leq 0}^{|\wrtr^{[1]} \setminus \wrtr^{[1]}_{\mathfrak{e}}|} \times M_\real
	$
	by taking the Cartesian product with $\real_{\leq 0}^{|\wrtr^{[1]} \setminus \wrtr^{[1]}_{\mathfrak{e}}|}$.
\end{definition}

This definition does not use the ribbon structure of $\wrtr$ and so also applies to a weighted $k$-trees.

Recall that the differential form $\delta_{P_i}$ depends on an affine function $\eta_i$ which vanishes on $P_i$. Let $N_i$ be the space of leaves obtained by parallel translation of $P_i$, equipped with the natural coordinate function $\eta_i$. Recall that associated to each edge $e \in \wrtr^{[1]}_{in}$ is a wall $\mathbf{w}_{i_e}$. Define an affine map
\begin{equation}\label{eqn:tau_map}
\vec{\tau} : \mathfrak{M}_\wrtr(M_\real) \rightarrow \prod_{e \in \wrtr^{[1]}_{in}} N_{i_{e}}
\end{equation}
by requiring $\vec{\tau}^*(\eta_{i_e}) = \eta_{i_e} (\tau^{\mathfrak{e}}(\vec{s},x))$. Write $\mathcal{I}_x$ for $\real_{\leq 0}^{\wrtr^{[1]}} \times \{x\}$.

\begin{definition}\label{def:trop_tree_orientation}
	Assign a differential form $\nu_e$ on $\real_{\leq 0}^{|\wrtr^{[1]}|}$ to each $e \in \bar{\wrtr}^{[1]}$ recursively as follows. Set $\nu_e = 1$ if $e \in \partial_{in}^{-1}(\wrtr_{in}^{[0]})$. If $v$ is an internal vertex with $\partial_{out}^{-1}(v) = \{e_1, e_2\}$ and $\partial_{in}^{-1}(v) = \{e_3\}$ such that $\{e_1, e_2, e_3\}$ is clockwise oriented, then set $\nu_{e_3} =  (-1)^{|\nu_{e_2}|} \nu_{e_1} \wedge \nu_{e_2} \wedge ds_{e_{3}}$, where $|\nu_{e_2}|$ is the cohomological degree of $\nu_{e_2}$.
\end{definition}

The form $\nu_{\wrtr}$ attached to the edge $e_{out} \in \wrtr^{[1]}$ is a volume form on $\real_{\leq 0}^{|\wrtr^{[1]}|}$.

The following result can be proved in the same way as \cite[Lemma 5.33]{kwchan-leung-ma}.

\begin{lemma}\label{lem:tau_map_iso}
	We have
	$
	\vec{\tau}^*(d\eta_{i_{e_1}} \wedge \cdots \wedge d\eta_{i_{e_k}}) =  c \nu_{\wrtr} \wedge n_\wrtr + \varepsilon
	$
	for some $c> 0$, where $n_\wrtr \in N$ is a $1$-form on $M_\real$, $\nu_{\wrtr}^{\vee}$ is the top polyvector field on $\real_{\leq 0}^{|\wrtr^{[1]}|}$ dual to $\nu_{\wrtr}$ and $\iota_{\nu_{\wrtr}^{\vee}} \varepsilon = 0$. In particular, $\vec{\tau}_{\vert \mathcal{I}_x}$ is an affine isomorphism onto its codimension one image $C(\vec{\tau},x) \subset  \prod_{e \in \wrtr^{[1]}_{in}} N_{i_e}$ when $n _\wrtr \neq 0$.

\end{lemma}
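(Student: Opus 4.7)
The plan is to establish the pullback formula by induction on the number $k$ of incoming leaves of $\wrtr$, and to then extract the affine isomorphism statement from it.

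The base case $k=1$ is immediate: $\wrtr^{[1]} = \emptyset$, so $\mathfrak{M}_\wrtr(M_\real) = M_\real$, the map $\vec{\tau}$ is the identity, $\nu_\wrtr = 1$, $n_\wrtr = n_{i_e}$ where $e$ is the unique incoming edge, and the formula holds with $c = 1$ and $\varepsilon = 0$. For the inductive step, split $\wrtr$ at the root vertex $v_r = \partial_{in}(e_{out})$ into two subtrees $\wrtr_1, \wrtr_2$ whose outgoing edges $e_a, e_b$ are the two edges in $\partial_{out}^{-1}(v_r)$. The map $\vec{\tau}_{\wrtr}$ then factors as the translation $(s_{e_{out}}, x) \mapsto y := x - s_{e_{out}} m_{e_{out}}$ followed by $\vec{\tau}_{\wrtr_1} \times \vec{\tau}_{\wrtr_2}$ applied with $y$ in place of $x$. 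Let $\omega_i$ denote the top form $\bigwedge d\eta_{i_e}$ associated to the incoming edges of $\wrtr_i$. By the inductive hypothesis, $\vec{\tau}_{\wrtr_i}^*(\omega_i) = c_i \nu_{\wrtr_i} \wedge n_{\wrtr_i}(y) + \varepsilon_i$ with $c_i > 0$, and the translation pullback converts $n_{\wrtr_i}(y)$ to $n_{\wrtr_i}(x) - \langle m_{e_{out}}, n_{\wrtr_i} \rangle ds_{e_{out}}$. The critical algebraic step is the wedge identity
\[
\bigl(n_{\wrtr_1} - \langle m_{e_{out}}, n_{\wrtr_1}\rangle ds_{e_{out}}\bigr) \wedge \bigl(n_{\wrtr_2} - \langle m_{e_{out}}, n_{\wrtr_2}\rangle ds_{e_{out}}\bigr) = n_{\wrtr_1} \wedge n_{\wrtr_2} \;\pm\; ds_{e_{out}} \wedge n_\wrtr,
\]
which follows from the recursion $n_\wrtr = \langle m_{\wrtr_2}, n_{\wrtr_1}\rangle n_{\wrtr_2} - \langle m_{\wrtr_1}, n_{\wrtr_2}\rangle n_{\wrtr_1}$ together with the inductively maintained orthogonality $\langle m_{\wrtr_i}, n_{\wrtr_i}\rangle = 0$ noted in the proof of Lemma \ref{lem:normal_lemma}.

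Combining this identity with the orientation recursion $\nu_\wrtr = (-1)^{|\nu_{\wrtr_2}|} \nu_{\wrtr_1} \wedge \nu_{\wrtr_2} \wedge ds_{e_{out}}$, the $ds_{e_{out}} \wedge n_\wrtr$ piece of the wedge product assembles into $c\, \nu_\wrtr \wedge n_\wrtr$, while the $n_{\wrtr_1} \wedge n_{\wrtr_2}$ piece carries no $ds_{e_{out}}$ factor and is therefore killed by $\iota_{\partial_{s_{e_{out}}}}$, hence by $\iota_{\nu_\wrtr^\vee}$, so it is absorbed into $\varepsilon$. Cross terms involving the inductive remainders $\varepsilon_i$ each lack some $ds_f$ with $f \in \wrtr_i^{[1]}$, so they are likewise annihilated by $\iota_{\nu_\wrtr^\vee}$ and absorbed into $\varepsilon$. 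The main technical obstacle is verifying $c > 0$: this requires carefully tracking the Koszul sign from moving $\nu_{\wrtr_2}$ past the 1-form factor, the sign in the orientation recursion, and the sign in the displayed identity above, which together should yield $c = c_1 c_2$ after cancellation.

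For the affine isomorphism statement, observe that $d\vec{\tau}$ is the block matrix $(A \mid B)$, where $A_{e,f} = -\langle m_f, n_{i_e}\rangle$ is the Jacobian in the $\vec{s}$-directions (indexed by $e \in \wrtr^{[1]}_{in}$ and $f \in \wrtr^{[1]}$) and $B_{e,j} = (n_{i_e})_j$ is the Jacobian in $x$. Expanding $\vec{\tau}^*(d\eta_{i_{e_1}} \wedge \cdots \wedge d\eta_{i_{e_k}})$ in these coordinates, the coefficient of $\nu_\wrtr \wedge dx^j$ equals, up to sign, the $k \times k$ minor $\det(A \mid B_j)$. On the other hand, the formula already proven shows this coefficient equals $c\,(n_\wrtr)_j$, since $\varepsilon$ contributes nothing to it by $\iota_{\nu_\wrtr^\vee}$-annihilation. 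When $n_\wrtr \neq 0$, some $(n_\wrtr)_j$ is nonzero, so the augmented matrix $(A \mid B_j)$ has rank $k$, forcing $A$ itself to have the maximal rank $k-1$. The restriction $\vec{\tau}|_{\mathcal{I}_x}$ is then affine with injective differential $A$ on the $(k-1)$-dimensional domain $\mathcal{I}_x = \real_{\leq 0}^{|\wrtr^{[1]}|} \times \{x\}$, so it is an affine isomorphism onto its image $C(\vec{\tau},x) \subset \prod_{e \in \wrtr^{[1]}_{in}} N_{i_e}$, which is therefore a codimension-one affine subspace.
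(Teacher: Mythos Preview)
Your proof is correct and follows essentially the same inductive strategy as the paper's (which defers to \cite[Lemma~5.33]{kwchan-leung-ma}, but whose suppressed argument proceeds identically): split at the root vertex, apply the inductive hypothesis to the two subtrees, and combine via the wedge product. The paper's version identifies the $ds_{e_{out}}\wedge n_\wrtr$ component of $\tau_{e_{out}}^*(n_{\wrtr_1}\wedge n_{\wrtr_2})$ geometrically by pairing against test paths in $\real_{\leq 0}\times M_\real$, whereas you extract it algebraically from the recursion for $n_\wrtr$; both are valid and equivalent, and your Jacobian argument for the affine-isomorphism clause is a bit more explicit than the paper's one-line appeal to nondegeneracy of the differential.

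One remark on the point you flagged: your displayed identity actually resolves with a definite sign, namely
\[
(n_{\wrtr_1}-a_1\,ds_{e_{out}})\wedge(n_{\wrtr_2}-a_2\,ds_{e_{out}})
= n_{\wrtr_1}\wedge n_{\wrtr_2} \;-\; ds_{e_{out}}\wedge n_\wrtr,
\]
since $a_2 n_{\wrtr_1}-a_1 n_{\wrtr_2}=-n_\wrtr$ once one uses $m_{e_{out}}=m_{\wrtr_1}+m_{\wrtr_2}$ and $\langle m_{\wrtr_i},n_{\wrtr_i}\rangle=0$. So the sign bookkeeping you mention is not merely ``should work'' but can be carried out explicitly; you should do so, since a naive combination with $\nu_\wrtr=(-1)^{|\nu_{\wrtr_2}|}\nu_{\wrtr_1}\wedge\nu_{\wrtr_2}\wedge ds_{e_{out}}$ appears to give $c=-c_1c_2$, and resolving this against the orientation conventions used in the integral $\alpha_\wrtr(x)=-\int_{\mathcal{I}_x}\cdots$ is exactly where the content of $c>0$ (as opposed to merely $c\neq 0$) lives.
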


The well-definedness of $\mathbf{L}_{k,\wrtr}(\tilde{\incoming},\dots,\tilde{\incoming})$ depends on the convergence of the integral in the following lemma. Write $\alpha_j$ in place of $\delta_{P_{i_{e_j}J_{e_j}}}$ and, for each $L >0$, set $\mathcal{I}_{x,L} = [-L,0]^{\wrtr^{[1]}} \times \{x\}$.

\begin{lemma}
\phantomsection
\label{lem:integral_convergent}
\begin{enumerate}
\item The integral
	$$
	\alpha_{\wrtr}(x) := -\int_{\mathcal{I}_x} (\tau^{\mathfrak{e}_1})^*(\alpha_1) \wedge \cdots \wedge (\tau^{\mathfrak{e}_k})^*(\alpha_k)
	$$
	is well-defined. Moreover, $\alpha_{\wrtr}=0$ if $n_\wrtr = 0$ and $\alpha_{\wrtr} \in \mathcal{W}^{-\infty}_1(M_\real)$ if $P_\wrtr = \emptyset$, where $P_\wrtr$ is defined as in Section \ref{sec:tropical_counting} by forgetting the ribbon structure of $\wrtr$.
	
	\item The integral 
	$\alpha_{\wrtr,L}(x):=-\int_{\mathcal{I}_{x,L}} (\tau^{\mathfrak{e}_1})^*(\alpha_1) \wedge \cdots \wedge (\tau^{\mathfrak{e}_k})^*(\alpha_k)$ uniformly converges to $\alpha_{\wrtr}(x)$ for $x$ in any pre-compact open subset $K \subset M_\real$. Furthermore, $(\alpha_\wrtr - \alpha_{\wrtr,L})_{\vert K} \in \mathcal{W}^{-\infty}_1(K)$ for sufficiently large $L$.
	
	 \item If $P_\wrtr \neq \emptyset$, so that $\dim_{\mathbb{R}}(P_\wrtr) = r-1$, and $\varrho:(a,b) \rightarrow M_\real$ is an embedded affine line intersecting $P_\wrtr$ positively\footnote{Intersecting positively means $\langle \varrho^{\prime}, n_\wrtr \rangle >0$.} and transversally in its relative interior $\Int_{re}(P_\wrtr)$, then 
	$
	\lim_{\hp \rightarrow 0} \int_{\varrho} \alpha_{\wrtr} =-1.
	$
\end{enumerate}
\end{lemma}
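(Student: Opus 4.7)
The unifying tool is the affine change of variables $\vec{\tau}: \mathfrak{M}_\wrtr(M_\real) \to \prod_{e \in \wrtr^{[1]}_{in}} N_{i_e}$ of equation \eqref{eqn:tau_map} together with the decomposition from Lemma \ref{lem:tau_map_iso}. Each input $\alpha_j = \delta_{P_{i_{e_j}J_{e_j}}}$ has the explicit form $(\pi\hp)^{-1/2} e^{-\eta_{i_{e_j}}^2/\hp}\, d\eta_{i_{e_j}}$, so the wedge of the pullbacks can be written as
\begin{equation*}
(\tau^{\mathfrak{e}_1})^*\alpha_1 \wedge \cdots \wedge (\tau^{\mathfrak{e}_k})^*\alpha_k = (\pi\hp)^{-k/2} \exp\!\Big( -\hp^{-1}\sum_{j=1}^k \eta_{i_{e_j}}^2 \circ \tau^{\mathfrak{e}_j} \Big) \; \vec{\tau}^*\big(d\eta_{i_{e_1}}\wedge \cdots \wedge d\eta_{i_{e_k}}\big).
\end{equation*}
Applying Lemma \ref{lem:tau_map_iso}, the final factor equals $c\,\nu_\wrtr \wedge n_\wrtr + \varepsilon$ with $\iota_{\nu_\wrtr^\vee}\varepsilon = 0$ and $c>0$, so integration over the fiber $\mathcal{I}_x$ (whose volume form is $\nu_\wrtr$) extracts only the coefficient in $n_\wrtr$; in particular, the result is a $1$-form on $M_\real$ proportional to $n_\wrtr$, which vanishes identically when $n_\wrtr=0$.

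For part $(1)$, since $\vec{\tau}|_{\mathcal{I}_x}$ is an affine map (in fact an isomorphism onto $C(\vec{\tau},x)$ when $n_\wrtr\neq 0$), the integrand is a Gaussian in the coordinates on $\mathcal{I}_x$ and therefore absolutely integrable, yielding a smooth $\hp$-family of $1$-forms $\alpha_\wrtr$. If $P_\wrtr = ev(\overline{\mathfrak{M}}_\wrtr(M_\real,\tilde{\mathscr{D}}_{in,l}))$ is empty, then for every $x$ in a neighborhood the locus $\vec{\tau}^{-1}(0) \cap \mathcal{I}_x$ is empty, so $\|\vec{\tau}|_{\mathcal{I}_x}\|$ is bounded below by a positive constant on this neighborhood, giving $\alpha_\wrtr \in \mathcal{W}^{-\infty}_1$ by the standard Gaussian tail bound; derivative estimates follow the same template by differentiating under the integral.

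For part $(2)$, the same exponential bound controls the tail: on any precompact $K \subset M_\real$, the integrand on $\mathcal{I}_x \setminus \mathcal{I}_{x,L}$ is dominated by $e^{-cL^2/\hp}$ times a polynomial in $\hp^{-1}$, uniformly in $x \in K$ (with $c>0$ depending on $K$), so $\alpha_{\wrtr,L} \to \alpha_\wrtr$ uniformly and $(\alpha_\wrtr - \alpha_{\wrtr,L})|_K \in \mathcal{W}^{-\infty}_1(K)$ once $L$ exceeds a suitable constant depending on $K$.

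For part $(3)$, apply Fubini to write $\int_\varrho \alpha_\wrtr = -\int_{\varrho \times \mathcal{I}_x} (\tau^{\mathfrak{e}_1})^*\alpha_1 \wedge \cdots \wedge (\tau^{\mathfrak{e}_k})^*\alpha_k \wedge dt$, where $t$ is an affine parameter along $\varrho$. Extending $\vec{\tau}$ to $\varrho \times \mathcal{I}_x$ and using that $\langle \varrho', n_\wrtr\rangle > 0$ together with Lemma \ref{lem:tau_map_iso} shows that the combined map $(\vec{\tau}, t) : \varrho \times \mathcal{I}_x \to \prod N_{i_e}$ is an affine diffeomorphism on a neighborhood of the (unique, by the transverse intersection hypothesis) preimage of $0$. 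Performing the change of variables, the factor $c>0$ in Lemma \ref{lem:tau_map_iso} cancels against the Jacobian, and the integral reduces to $-\prod_j \int_{\mathbb{R}}(\pi\hp)^{-1/2} e^{-\eta^2/\hp}\,d\eta = -1$ in the $\hp \to 0$ limit. The main technical obstacle will be verifying the correct sign: one must check that the orientation conventions from Definition \ref{def:trop_tree_orientation}, the positivity of $c$ in Lemma \ref{lem:tau_map_iso}, the positive intersection $\langle \varrho', n_\wrtr\rangle > 0$, and the overall minus sign in the definition of $\alpha_\wrtr$ combine consistently, which should reduce to a direct computation in a model transverse intersection.
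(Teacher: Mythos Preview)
Your approach is essentially the paper's: write the integrand as an explicit Gaussian via $\vec{\tau}$, invoke Lemma~\ref{lem:tau_map_iso} to see that fiber integration picks out the $n_\wrtr$-component, and then use Gaussian tail estimates for parts (1)--(2) and a change of variables for part (3).

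Two small points of friction. First, in part (2) your bound $e^{-cL^{2}/\hp}$ is too strong: the minimum of $\sum_j (\eta_{i_{e_j}}\!\circ\tau^{\mathfrak{e}_j})^{2}$ on $\mathcal{I}_x\setminus\mathcal{I}_{x,L}$ need not grow with $L$; what is true (and sufficient) is that the zero locus $\bigcap_e\{(\tau^{\mathfrak{e}})^*\eta_{i_e}=0\}\cap\mathcal{I}_x$ is bounded uniformly for $x\in K$, so once $L$ exceeds some $L_0(K)$ the tail stays a fixed positive distance from $0$, giving $e^{-c_K/\hp}$ decay. Second, in part (3) your claim that the preimage of $0$ lies in the \emph{interior} of $\varrho\times\mathcal{I}_x=\real_{\le 0}^{|\wrtr^{[1]}|}\times(a,b)$ is exactly where the paper invokes both the relative-interior hypothesis on $\varrho\cap P_\wrtr$ \emph{and} the genericity of $\tilde{\mathscr{D}}_{in,l}$ (Definition~\ref{def:tropical_generic_assumption}); without genericity the corresponding tropical disk could have a collapsed edge ($s_e=0$), putting $0$ on the boundary of $\vec{\tau}(\mathcal{I}_\varrho)$ and spoiling the limit. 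You should make that dependence explicit.
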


\begin{proof}
	Explicitly, the integral $\alpha_{\wrtr}(x)$ under consideration is 
	$$
	\big(\frac{1}{\pi \hp}\big)^{k/2} \int_{\mathcal{I}_x} \vec{\tau}^* \big( \prod_{j=1}^k e^{-(\eta_{i_{e_j}}^2)/\hp}d\eta_{i_{e_j}} \big) = \big(\frac{1}{\pi \hp}\big)^{k/2} \int_{\mathcal{I}_x}   e^{-(\sum_{j=1}^k (\tau^{\mathfrak{e_j}})^*\eta_{i_{e_j}}^2)/\hp}  \vec{\tau}^*(d\eta_{i_{e_1}} \cdots d\eta_{i_{e_k}} ).
	$$
	By Lemma \ref{lem:tau_map_iso}, the only case that we need consider is when $n_\wrtr \neq 0$, in which case $\vec{\tau}_{\vert \mathcal{I}_x}$ is an affine isomorphism onto its image $C(\vec{\tau},x)$, a codimension one closed affine subspace. The well-definedness of the integral is due to the fact that 
	$
	\int_{C(\vec{\tau},x)} e^{-(\sum_{j=1}^k (\tau^{\mathfrak{e_j}})^*\eta_{i_{e_j}}^2)/\hp}  \mu_{C(\vec{\tau},x)} < \infty
	$
	for any affine linear volume form $\mu_{C(\vec{\tau},x)}$ on $ C(\vec{\tau},x)$. When $P_\wrtr = \emptyset$, we have $ 0 \notin C(\vec{\tau},x)$ for all $x \in M_\real$, which implies $\alpha_{\wrtr} \in \mathcal{W}^{-\infty}_1(M_\real)$.

	Notice that 
	$$
	\bigcap_{ e \in \wrtr^{[1]}_{in}} \{(\tau^{\mathfrak{e}})^*\eta_{i_e} = 0\} = \overline{\mathfrak{M}}_{\wrtr} (M_\real,\tilde{\mathscr{D}}_{in,l}) \subset \mathfrak{M}_\wrtr(M_\real).
	 $$
	 Furthermore, for any pre-compact subset $K \subset M_\real$ and $b>0$, there exists an $L_b$ such that 
	 $
	 (\mathcal{I}_x \setminus \mathcal{I}_{x,L_b}) \cap 	\bigcap_{ e \in \wrtr^{[1]}_{in}} \{|(\tau^{\mathfrak{e}})^*\eta_{i_e}| \leq b\} = \emptyset
	 $
	 for all $x \in K$, as follows from the fact that $\vec{\tau}_{\vert \mathcal{I}_x}$ is an affine isomorphism onto its image. This implies that $\alpha_{\wrtr,L}$ converges uniformly to $\alpha_{\wrtr}$ on $K$ and that $(\alpha_\wrtr - \alpha_{\wrtr,L})_{\vert K} \in \mathcal{W}^{-\infty}_1(K)$ for sufficiently large $L$. 
	 
	 Suppose now that $P_\wrtr$ and $\varrho$ are as in the final statement of the lemma. Consider the affine subspace $\mathcal{I}_\varrho := \bigcup_{t \in (a,b)} \mathcal{I}_{\varrho(t)} \subset \mathfrak{M}_\wrtr(M_\real)$. We have 
	 $$
	 \int_{\varrho} \alpha_{\wrtr } = -\int_{\mathcal{I}_\varrho}(\tau^{\mathfrak{e}_1})^*(\alpha_1) \wedge \cdots \wedge (\tau^{\mathfrak{e}_k})^*(\alpha_k)
	 = -\int_{\vec{\tau}(\mathcal{I}_\varrho)} \alpha_1 \wedge \cdots \wedge \alpha_k,
	 $$
	 where $\vec{\tau}(\mathcal{I}_\varrho) \subset \prod_{e \in \wrtr^{[1]}_{in}} N_{i_{e}}$. Here we apply Lemma \ref{lem:tau_map_iso} to conclude that $\vec{\tau}$ is an affine isomorphism onto its image when $P_\wrtr \neq \emptyset$. Since $\varrho$ intersects $P_\wrtr$ in $\Int_{re}(P_\wrtr)$ and $\tilde{D}_{in,l}$ is generic, we have $0 \in \Int(\vec{\tau}(\mathcal{I}_\varrho) )$. Together with the explicit form of $\alpha_1 \wedge \cdots \wedge \alpha_k$, we then obtain $\lim_{\hp \rightarrow 0} \int_{\varrho} \alpha_\wrtr =-1$.
	 \end{proof}

\subsubsection{Relation with tropical counting}\label{sec:relating_MC_solution_counting}

The following result is a modification of \cite[\S 5]{kwchan-leung-ma}.

\begin{lemma}\label{lem:tree_support_lemma}
	For each $\wrtr \in \wrtree{k}$, we have
	$
	\mathbf{L}_{k,\wrtr}(\tilde{\incoming},\dots,\tilde{\incoming}) = \Big( \prod_{e \in \partial^{-1}_{in}(\wrtr^{[0]}_{in})} (\# J_{e,i_e})! \Big) \alpha_{\wrtr} g_{\wrtr} u^{\vec{J}_{\wrtr}}
	$.
\end{lemma}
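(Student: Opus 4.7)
I would proceed by induction on the number $k$ of incoming vertices of $\wrtr$. The base case $k=1$ is immediate: such a tree has no internal vertices and satisfies $\wrtr^{[1]} = \emptyset$, so $\mathbf{L}_{1,\wrtr}(\tilde{\incoming})$ applies neither $m_2$ nor $-\mathbf{H}$ and reduces to the input $\tilde{\incoming}^{(i)}_{J} = -\delta_{P_{iJ}}\, (\# J)!\, g_{(\# J)i}\, u^{\vec J}$. On the right hand side, $|\wrtr^{[1]}|=0$, so the fiber-integration domain $\mathcal{I}_x$ is a point and $\alpha_\wrtr(x)=-\delta_{P_{iJ}}(x)$; the two sides agree.

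For the inductive step, assume $k \geq 2$ and let $v_r \in \wrtr^{[0]}$ be the internal vertex adjacent to $e_{out}$, with incoming edges $e_1, e_2$ in clockwise order. Cutting $\wrtr$ at $v_r$ produces weighted ribbon subtrees $\wrtr_1, \wrtr_2$ with $k_1+k_2=k$ incoming vertices and outgoing edges $e_1, e_2$. Definition \ref{def:weighted_tree_operation}, with $\mathsf{H}$ replaced by $\mathbf{H}$, gives the recursion
\[
\mathbf{L}_{k,\wrtr}(\tilde{\incoming},\ldots,\tilde{\incoming}) = -\mathbf{H}_{m_\wrtr}\, m_2\bigl( \mathbf{L}_{k_1,\wrtr_1}(\tilde{\incoming},\ldots),\, \mathbf{L}_{k_2,\wrtr_2}(\tilde{\incoming},\ldots) \bigr).
\]
Since both factors carry cohomological degree one, the Koszul sign in $m_2(a,b)=(-1)^{\bar a(\bar b+1)}[a,b]$ is trivial. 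Applying the induction hypothesis to $\wrtr_1$ and $\wrtr_2$, together with (i) the bracket rule $[\alpha h, \alpha' h'] = (\alpha \wedge \alpha')[h,h']$ in $\mathcal{H}^*$, (ii) the recursive definition $g_\wrtr = [g_{\wrtr_1}, g_{\wrtr_2}]$ from Definition \ref{def:multiplicity}, (iii) the multiplicativity $u^{\vec J_\wrtr} = u^{\vec J_{\wrtr_1}}\, u^{\vec J_{\wrtr_2}}$, and (iv) the obvious partition of leaf edges between $\wrtr_1$ and $\wrtr_2$, the recursion reduces the claim to the single pointwise identity
\[
-\mathbf{H}_{m_\wrtr}(\alpha_{\wrtr_1} \wedge \alpha_{\wrtr_2})(x) = \alpha_\wrtr(x).
\]

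To verify this identity I would expand the left hand side via \eqref{eqn:modified_homotopy_operator} as $-\int_{-\infty}^0 \iota_{\partial_s}(\tau^{m_\wrtr})^*(\alpha_{\wrtr_1} \wedge \alpha_{\wrtr_2})\, ds$, substitute the inductive fiber-integral expressions for $\alpha_{\wrtr_i}$ evaluated at $y = \tau^{m_\wrtr}_s(x)$, and invoke the composition $\tau^{m_\wrtr} \circ \tau^{\mathfrak{e}^{(i)}_e} = \tau^{\mathfrak{e}_e}$ that extends each subtree path through $e_{out}$ into a path in $\wrtr$. Fubini's theorem then assembles the result into a single integral of $\bigwedge_j (\tau^{\mathfrak{e}_j})^*\alpha_j$ over the product domain $\mathcal{I}_x = \mathcal{I}_{x,\wrtr_1} \times \mathcal{I}_{x,\wrtr_2} \times \real_{\leq 0}$, which is $\alpha_\wrtr(x)$ by definition. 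The uniform convergence estimate in Lemma \ref{lem:integral_convergent}(2) ensures absolute integrability and justifies the interchange of integrals.

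The hard part will be a careful sign bookkeeping. The orientation recursion $\nu_{e_{out}} = (-1)^{|\nu_{e_2}|}\nu_{e_1}\wedge\nu_{e_2}\wedge ds_{e_{out}}$ of Definition \ref{def:trop_tree_orientation} must be reconciled with the sign $(-1)^{|\omega|}$ produced by $\iota_{\partial_s}(\omega \wedge ds) = (-1)^{|\omega|}\omega$, the explicit minus signs appearing in the definitions of the $\alpha_{\wrtr_i}$ and of $\mathbf{H}_{m_\wrtr}$, and the use of the clockwise ribbon ordering at $v_r$ in defining both $g_\wrtr$ and $\nu_\wrtr$. Once these signs are matched consistently, the identity becomes essentially a direct application of Fubini and the induction closes.
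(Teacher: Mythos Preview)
Your proposal is correct and follows essentially the same approach as the paper: induction on the tree structure by splitting at the root vertex, reducing to the identity $-\mathbf{H}_{m_\wrtr}(\alpha_{\wrtr_1}\wedge\alpha_{\wrtr_2})=\alpha_\wrtr$, with well-definedness supplied by Lemma~\ref{lem:integral_convergent}. The only cosmetic difference is that the paper cites this last integral identity from \cite[Lemma~5.31]{kwchan-leung-ma} rather than spelling out the Fubini argument you outline, and inducts on $|\wrtr^{[0]}|$ rather than on $k$ (which is equivalent since $|\wrtr^{[0]}|=k-1$).
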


\begin{proof}
	We proceed by induction on the cardinality of $\wrtr^{[0]}$. In the initial case, $\wrtr^{[0]} = \emptyset$, the only tree is that with a unique edge and there is nothing to prove.
	
	For the induction step, the root vertex $v_r \in \wrtr^{[0]}$ is adjacent to the outgoing edge $e_{out}$ and two incoming edges, say $e_1$ and $e_2$. Assume that $\{e_1,e_2,e_{out}\}$ are clockwise oriented. Split $\wrtr$ at $v_r$, thereby obtaining trees $\wrtr_1$ and $\wrtr_2$ with outgoing edges $e_1$ and $e_2$ and $k_1$ and $k_2$ incoming edges, respectively. By the induction hypothesis, we can write
	$\mathbf{L}_{k_i,\wrtr_i}(\tilde{\incoming},\dots,\tilde{\incoming}) = \Big( \prod_{e \in \partial^{-1}_{in}((\wrtr_i)^{[0]}_{in})} (\# J_{e,i_e})! \Big)  \alpha_{\wrtr_i} g_{\wrtr_i} u^{\vec{J}_{\wrtr_i}}$, $i =1,2$. We therefore have
	$$\mathbf{L}_{k,\wrtr}(\tilde{\incoming},\dots,\tilde{\incoming}) =  -\Big( \prod_{e \in \partial^{-1}_{in}(\wrtr^{[0]}_{in})} (\# J_{e,i_e})! \Big) \mathbf{H} (\alpha_{\wrtr_1} \wedge \alpha_{\wrtr_2}) [g_{\wrtr_1},g_{\wrtr_2}] u^{\vec{J}_{\wrtr_1}} u^{\vec{J}_{\wrtr_2}}.$$
	By definition, $g_\wrtr = [g_{\wrtr_1},g_{\wrtr_2}]$ and $u^{\vec{J}_\wrtr} = u^{\vec{J}_{\wrtr_1}} u^{\vec{J}_{\wrtr_2}}$. Finally, the proof of \cite[Lemma 5.31]{kwchan-leung-ma} shows that 
	$$
	\mathbf{H} (\alpha_{\wrtr_1} \wedge \alpha_{\wrtr_2}) = \int_{\mathcal{I}_x} (\tau^{\mathfrak{e}_1})^*(\alpha_1) \cdots (\tau^{\mathfrak{e}_k})^*(\alpha_k) =-\alpha_{\wrtr}(x).
	$$
	Note that the well-definedness of $\mathbf{H} (\alpha_{\wrtr_1} \wedge \alpha_{\wrtr_2})$ is guaranteed by Lemma \ref{lem:integral_convergent}.
\end{proof}

\begin{lemma}\label{lem:solution_asy_support}
	For each $\wrtr \in \wrtree{k}$, we have $\alpha_\wrtr \in \mathcal{W}^{1}_{P_\wrtr}(M_\real) \cap \mathcal{W}^{1}_1(M_\real)$ if $P_\wrtr \neq \emptyset$. 
	\end{lemma}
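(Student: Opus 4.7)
The plan is to derive an explicit Gaussian formula for $\alpha_{\wrtr}$ using the affine structure provided by Lemma \ref{lem:tau_map_iso}, and then verify the conditions of Definition \ref{def:asypmtotic_support_pre} together with the closure condition $d\alpha_{\wrtr} \in \tilde{\mathcal{W}}^{2}_{2}(M_{\real})$ by direct computation. If $n_{\wrtr} = 0$, then $\alpha_{\wrtr} = 0$ by Lemma \ref{lem:integral_convergent} and the claim holds trivially, so I may assume $n_{\wrtr} \neq 0$. In this case Lemma \ref{lem:tau_map_iso} gives $\vec{\tau}^{*}(d\eta_{i_{e_{1}}} \wedge \cdots \wedge d\eta_{i_{e_{k}}}) = c\,\nu_{\wrtr} \wedge n_{\wrtr} + \varepsilon$ with $\iota_{\nu_{\wrtr}^{\vee}}\varepsilon = 0$, and $\vec{\tau}|_{\mathcal{I}_{x}}:\mathcal{I}_{x} \to C(\vec{\tau},x)$ is an affine isomorphism onto a codimension one affine subspace of $\prod_{e} N_{i_{e}}$.

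Substituting this decomposition into the integral defining $\alpha_{\wrtr}$, the $\varepsilon$-contribution vanishes: since $\nu_{\wrtr}$ is a top form along the fiber $\mathcal{I}_{x}$, the condition $\iota_{\nu_{\wrtr}^{\vee}}\varepsilon = 0$ forces $\varepsilon$ to have no pure-fiber top-form component, so its fiber integral over $\mathcal{I}_{x}$ is zero. The remaining contribution, after changing variables along $\vec{\tau}|_{\mathcal{I}_{x}}$ and observing that $\{C(\vec{\tau},x)\}_{x}$ is a family of parallel translates of a fixed $(k-1)$-dimensional affine subspace $C_{0}$ (since $\vec{\tau}$ is affine), takes the form
\[
\alpha_{\wrtr}(x) = -C\,\hp^{-k/2}\,n_{\wrtr} \int_{C(\vec{\tau},x)} e^{-|\xi|^{2}/\hp}\, d\mu(\xi)
\]
for some constant $C > 0$ and induced affine volume form $d\mu$. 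Letting $v(x)$ denote the perpendicular foot from $0$ to $C(\vec{\tau},x)$, the criterion $0 \in C(\vec{\tau},x) \Leftrightarrow x \in P_{\wrtr}$ (from the proof of Lemma \ref{lem:integral_convergent}) combined with the affine dependence of $v(x)$ on $x$ forces $|v(x)| = c'|x_{1}|$, where $x_{1}$ is a normal coordinate to the hyperplane $P_{\wrtr}$. A direct Gaussian integration on the parallel translate $v(x) + C_{0}$ then yields
\[
\alpha_{\wrtr} = f(x_{1},\hp)\, n_{\wrtr}, \qquad f(x_{1},\hp) = -C_{1}\,\hp^{-1/2}\, e^{-c_{1} x_{1}^{2}/\hp},
\]
for positive constants $C_{1}, c_{1}$.

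To check $\alpha_{\wrtr} \in \mathcal{W}^{1}_{P_{\wrtr}}(M_{\real})$: outside any fixed neighborhood of $P_{\wrtr}$, $|x_{1}|$ is bounded below so all derivatives of $\alpha_{\wrtr}$ decay exponentially in $1/\hp$, verifying Definition \ref{def:asypmtotic_support_pre}(1). Near $P_{\wrtr}$, Lemma \ref{lem:normal_lemma} gives $n_{\wrtr} \propto \nu_{P_{\wrtr}}$, so $\alpha_{\wrtr} = h(x_{1},\hp)\,\nu_{P_{\wrtr}}$ for a rescaled Gaussian $h$, verifying (2). For the quantitative bound (3), contracting with $\nu_{P_{\wrtr}}^{\vee} \propto \partial_{x_{1}}$ reduces matters to the one-dimensional Gaussian estimate
\[
\int_{\real} |x_{1}|^{\beta}\, \big|\partial_{x_{1}}^{j} f(x_{1},\hp)\big|\, dx_{1} \leq D_{j,\beta}\,\hp^{-(j-\beta)/2},
\]
which follows by the substitution $y = x_{1}/\sqrt{\hp}$ and matches exactly the required weight $s = 1$ bound $\hp^{-(j + s - \beta - k)/2}$ for $k = 1$. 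Finally, since $n_{\wrtr} \propto dx_{1}$ and $f$ depends only on $x_{1}$, one has $d\alpha_{\wrtr} = \partial_{x_{1}} f\, dx_{1} \wedge n_{\wrtr} = 0$, so $d\alpha_{\wrtr} \in \tilde{\mathcal{W}}^{2}_{2}(M_{\real})$ trivially, giving $\alpha_{\wrtr} \in \mathcal{W}^{1}_{1}(M_{\real})$.

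The main technical obstacle is establishing that $\int_{C(\vec{\tau},x)} e^{-|\xi|^{2}/\hp}\, d\mu$ depends on $x$ only through the single coordinate $x_{1}$. This requires carefully exploiting the parallel-translate structure of $\{C(\vec{\tau},x)\}$, identifying $|v(x)|$ as proportional to $|x_{1}|$, and checking that the induced volume form is $x$-independent (which it is, since $\vec{\tau}$ is affine and its linearization is constant in $x$). Once this reduction to a transverse Gaussian in a single coordinate is achieved, the vanishing $d\alpha_{\wrtr} = 0$ and all asymptotic estimates follow from elementary Gaussian analysis.
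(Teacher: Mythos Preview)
Your approach has a genuine gap. The domain $\mathcal{I}_{x}=\real_{\leq 0}^{|\wrtr^{[1]}|}\times\{x\}$ is an orthant, not all of $\real^{k-1}$, so its image $C(\vec{\tau},x)$ under the affine map $\vec{\tau}$ is a simplicial \emph{cone} inside the $(k-1)$--dimensional affine hull $H_{x}$, not the full hyperplane $H_{x}$ itself. (The paper's phrase ``codimension one image'' refers to dimension, not to being an affine subspace; this is why Lemma~\ref{lem:integral_convergent}(3) has to invoke $0\in \Int(\vec{\tau}(\mathcal{I}_\varrho))$.) Consequently the Gaussian integral $\int_{C(\vec{\tau},x)}e^{-|\xi|^{2}/\hp}\,d\mu$ depends on the position of the apex of the cone, i.e.\ on the full translation vector $Bx+c_{0}$ modulo nothing, not merely on its component $v(x)$ orthogonal to $H_{x}$. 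Your key reduction ``$|v(x)|=c'|x_{1}|$'' conflates the condition $0\in C(\vec{\tau},x)$ (equivalent to $x\in P_{\wrtr}$) with the weaker condition $0\in H_{x}$ (equivalent to $x$ lying in the affine hull of $P_{\wrtr}$).

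This is not a cosmetic issue: for $k\geq 2$ the set $P_{\wrtr}$ is a proper polyhedral half of the hyperplane $n_{\wrtr}^{\perp}$ (cf.\ Figure~\ref{fig:tropical_curve}), so a $1$-form of the shape $-C_{1}\hp^{-1/2}e^{-c_{1}x_{1}^{2}/\hp}\,n_{\wrtr}$, which is \emph{not} exponentially small on $\{x_{1}=0\}\setminus P_{\wrtr}$, cannot satisfy condition~(1) of Definition~\ref{def:asypmtotic_support_pre} for $P_{\wrtr}$. Likewise your conclusion $d\alpha_{\wrtr}=0$ fails: since $\alpha_{\wrtr}=-\mathbf{H}_{m_{e_{out}}}(\alpha_{\wrtr_{1}}\wedge\alpha_{\wrtr_{2}})$, one has $d\alpha_{\wrtr}=\alpha_{\wrtr_{1}}\wedge\alpha_{\wrtr_{2}}$ up to $\mathcal{W}^{-\infty}$, which is a $2$-form asymptotically supported on the codimension-two set $P_{\wrtr_{1}}\cap P_{\wrtr_{2}}=\partial P_{\wrtr}$, not zero.

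The paper sidesteps all of this by arguing inductively on $|\wrtr^{[0]}|$: splitting $\wrtr$ at the root gives $\alpha_{\wrtr}=-\mathbf{H}_{m_{e_{out}}}(\alpha_{\wrtr_{1}}\wedge\alpha_{\wrtr_{2}})$, and then one invokes Lemma~\ref{lem:support_product} for the wedge and Lemma~\ref{lem:integral_lemma_modified} for the integral operator (after comparing $\mathbf{H}$ with a suitable $\mathsf{H}$), never needing an explicit formula. A direct Gaussian computation along your lines could in principle be made to work, but it would have to track the cone boundary carefully and would essentially reproduce the content of those lemmas.
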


\begin{proof}
	We proceed by induction on the cardinality of $\wrtr^{[0]}$. The initial case, $\wrtr^{[0]} = \emptyset$, holds by Lemma \ref{lem:input_delta}.
	
	For the induction step, split $\wrtr$ at $v_r \in \wrtr^{[0]}$ to obtain trees $\wrtr_1$ and $\wrtr_2$, as in the proof Lemma \ref{lem:tree_support_lemma}. We can assume that each $P_{\wrtr_i}$ is non-empty and that $P_{\wrtr_1}$, $P_{\wrtr_2}$ intersect transversally and generically, as in Definition \ref{def:tropical_generic_assumption}. Then $Q = P_{\wrtr_1} \cap P_{\wrtr_2}$ is a codimension two affine subspace of $M_\real$. The induction hypothesis implies $\alpha_{\wrtr_i} \in 
	\mathcal{W}^{1}_{P_{\wrtr_i}}(M_\real) \cap \mathcal{W}^{1}_1(M_\real)$ and Lemma \ref{lem:support_product} gives $\alpha_{\wrtr_1} \wedge \alpha_{\wrtr_2} \in \mathcal{W}^2_Q(M_\real) \cap \mathcal{W}^2_2(M_\real)$. Arguing as in the proof of Lemma \ref{lem:tree_support_lemma}, we find $\alpha_\wrtr = -\mathbf{H}_{m_{e_{out}}} (\alpha_{\wrtr_1} \wedge \alpha_{\wrtr_2} )$, which is nonzero only if $n_\wrtr \neq 0$. Note that if $n_\wrtr \neq 0$, then $-m_{\wrtr} = - m_{e_{out}} $ is not tangent to $Q$.

We would like to apply Lemma \ref{lem:integral_lemma_modified} to conclude our result. However, the operator $\mathbf{H}_{m_{e_{out}}} $ is slightly different from that appearing in Lemma \ref{lem:integral_lemma_modified}. A modification is therefore required.
	
	To simplify notation, write $m = m_{e_{out}}$. Since $m$ is not tangent to $Q$, we can assume that the chain $U^m_{\bullet}$ used to define $\mathsf{H}_m$ is such that $U^m_{r-1}$ separates $M_\real$ into $M_\real^-$ and $M_\real^+$, $-m$ points into $M_\real^+$ and $Q \subset M_\real^+$. With this choice, we obtain a homotopy operator $\mathsf{H}_m$ as in Section \ref{sec:homotopy_operator} which, by Lemma \ref{lem:integral_lemma_modified}, satisfies $\mathsf{H}_m (\alpha_{\wrtr_1} \wedge \alpha_{\wrtr_2}) \in \mathcal{W}^{1}_{P_\wrtr}(M_\real) \cap \mathcal{W}^{1}_1(M_\real)$. Since $U^{m}_{r-1} \cap Q = \emptyset$, we find that $\mathbf{H}_{m,L} (\alpha_{\wrtr_1} \wedge \alpha_{\wrtr_2} ) - \mathsf{H}_m (\alpha_{\wrtr_1} \wedge \alpha_{\wrtr_2})$ lies in $\mathcal{W}^{-\infty}_1(M_\real)$. It follows that $\mathbf{H}_{m,L} (\alpha_{\wrtr_1} \wedge \alpha_{\wrtr_2} )$ satisfies the desired property.
	\end{proof}

\begin{lemma}\label{lem:well_definedness_solution}
	The element 
	$\tilde{\varPhi}:= \sum_{k \geq 1} \frac{1}{2^{k-1}}\sum_{\wrtr \in \wrtree{k}} \mathbf{L}_{k,\wrtr}(\tilde{\incoming},\dots,\tilde{\incoming})
	$ is well-defined in $\mathcal{G}^*\otimes_{\mathbb{C}} \tilde{R}_l$. Furthermore, it solves equation \eqref{eqn:MC_equation}.
	\end{lemma}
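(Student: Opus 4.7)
The plan is to verify well-definedness by a direct finiteness argument and then derive the Maurer--Cartan equation via a Kuranishi-style fixed-point argument based on the identity $d\mathbf{H}_m+\mathbf{H}_m d=\Id$ for forms decaying appropriately in the $+m$ direction.

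For well-definedness, Lemma~\ref{lem:tree_support_lemma} identifies each summand as a scalar multiple of $\alpha_\wrtr g_\wrtr u^{\vec{J}_\wrtr}$, with $\alpha_\wrtr \in \mathcal{W}^0_1(M_\real)$ guaranteed by Lemmas~\ref{lem:integral_convergent} and~\ref{lem:solution_asy_support}. Since $u_{ij}^2=0$ in $\tilde{R}_l$, the weight $u^{\vec{J}_\wrtr}$ vanishes unless the total number of incoming edges of $\wrtr$ is at most $|I|l$, so only finitely many isomorphism classes of weighted ribbon trees contribute non-trivially, and the sum $\tilde{\varPhi}$ lies in $\mathcal{G}^*\otimes_{\mathbb{C}}\tilde{R}_l$.

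For the Maurer--Cartan equation, I first verify the fixed-point identity
$$
\tilde{\varPhi}=\tilde{\incoming}-\tfrac{1}{2}\mathbf{H}[\tilde{\varPhi},\tilde{\varPhi}]
$$
by induction on $k$ via root-vertex splitting, following the standard argument of~\cite{kwchan-leung-ma}. Applying $d$ to both sides, using $d\tilde{\incoming}=0$ (the 1-forms $\delta_{P_{iJ}}$ are closed) together with the identity $d\mathbf{H}_m+\mathbf{H}_m d=\Id$, yields
$$
d\tilde{\varPhi}=-\tfrac{1}{2}[\tilde{\varPhi},\tilde{\varPhi}]+\mathbf{H}[d\tilde{\varPhi},\tilde{\varPhi}].
$$
Setting $X:=d\tilde{\varPhi}+\tfrac{1}{2}[\tilde{\varPhi},\tilde{\varPhi}]$ and invoking the graded Jacobi identity $[[\tilde{\varPhi},\tilde{\varPhi}],\tilde{\varPhi}]=0$ produces the self-referential equation $X=\mathbf{H}[X,\tilde{\varPhi}]$; since $\tilde{\varPhi}$ lies in the maximal ideal of $\tilde{R}_l$, iteration combined with nilpotency forces $X=0$.

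The principal technical obstacle is verifying the identity $d\mathbf{H}_m+\mathbf{H}_m d=\Id$ on the summands of $[\tilde{\varPhi},\tilde{\varPhi}]_m$, which by the fundamental theorem of calculus reduces to the vanishing of the boundary term at $s\to-\infty$. Each summand is a wedge product $\alpha_{\wrtr_1}\wedge\alpha_{\wrtr_2}$ with asymptotic support on the codimension-two affine subspace $P_{\wrtr_1}\cap P_{\wrtr_2}$; when $[g_{\wrtr_1},g_{\wrtr_2}]$ is non-zero (equivalently $n_\wrtr\neq 0$, since $\mathfrak{h}_{m,0}=0$ by the tropical property), a direct computation shows $\langle m,n_{\wrtr_i}\rangle=\langle m_{\wrtr_{3-i}},n_{\wrtr_i}\rangle$ is non-zero for at least one $i$, so $m$ is transverse to at least one of $P_{\wrtr_1}$, $P_{\wrtr_2}$, guaranteeing exponential decay of $\alpha_{\wrtr_1}\wedge\alpha_{\wrtr_2}$ along the flow of $-m$ to infinity.
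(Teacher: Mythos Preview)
Your proposal is correct and rests on the same geometric observation as the paper: when $n_\wrtr\neq 0$, the vector $m_\wrtr$ is transverse to the hyperplane containing at least one of $P_{\wrtr_1},P_{\wrtr_2}$, so the pullback $(\tau^{m_\wrtr}_s)^*(\alpha_{\wrtr_1}\wedge\alpha_{\wrtr_2})$ tends to zero as $s\to-\infty$. The packaging, however, differs. You work directly with the improper operator $\mathbf{H}$, claim the identity $d\mathbf{H}_m+\mathbf{H}_m d=\Id$ on the relevant summands because the boundary term at $-\infty$ vanishes, and then run the standard Kuranishi argument $X=\mathbf{H}[X,\tilde{\varPhi}]$, concluding $X=0$ from nilpotency of the maximal ideal of $\tilde{R}_l$. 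The paper instead restricts to a pre-compact $K\subset M_\real$ and passes to the finite-$L$ truncation $\mathbf{H}_L$, so that the projection $\mathbf{P}_{L,m}(\beta)=(\tau^m_{-L})^*\beta$ is an honest smooth form; it then shows $\mathbf{P}_L[\tilde{\varPhi}_L,\tilde{\varPhi}_L]=0$ in $\mathcal{H}^{2}_{\tilde{R}_l}(K)$ by choosing $L$ large enough that $\tau^{m_\wrtr}_{-L}(K)\cap(P_{\wrtr_1}\cap P_{\wrtr_2})=\emptyset$, and invokes the analogue of Proposition~\ref{prop:pseudoMC_to_MC} for $\mathbf{H}_L$. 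The truncation device buys a cleaner analytic treatment (no need to justify the existence of the $s\to-\infty$ limit as an element of $\mathcal{W}^0_*/\mathcal{W}^{-1}_*$, or the well-definedness of $\mathbf{H}_m$ on $d[\tilde{\varPhi},\tilde{\varPhi}]$), while your approach is more direct and makes the role of Proposition~\ref{prop:pseudoMC_to_MC} transparent; both reduce to the identical transversality check.
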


\begin{proof}
	Well-definedness of $\tilde{\varPhi}$ follows from Lemmas \ref{lem:integral_convergent} and \ref{lem:tree_support_lemma}. The same reasoning as Section \ref{sec:sum_over_tree_formula} then shows that 
	$
	\tilde{\varPhi}= \tilde{\incoming} - \half \mathbf{H} [\tilde{\varPhi},\tilde{\varPhi}].
	$
	
	We will use Proposition \ref{prop:pseudoMC_to_MC} to show that $\tilde{\varPhi}$ solves equation \eqref{eqn:MC_equation}. Fix a pre-compact open subset $K\subset M_\real$ and consider the restriction of equation \eqref{eqn:MC_equation} to $K$. By Lemma \ref{lem:integral_convergent}, we may choose $L$ sufficiently large so as to ensure that the truncation
	$
	\tilde{\varPhi}_L := \sum_{k \geq 1} \frac{1}{2^{k-1}}\sum_{\substack{\wrtr \in \wrtree{k}\\ u^{\vec{J}_\wrtr} \neq 0}} \alpha_{\wrtr,L} g_{\wrtr} u^{\vec{J}_\wrtr}
	$
	satisfies $\alpha_\wrtr - \alpha_{\wrtr,L} \in \mathcal{W}^{-\infty}_1(K)$. Indeed, this is possible because there are only finitely many terms with $u^{\vec{J}_\wrtr} \neq 0$ in the expression for $\tilde{\varPhi}_L $, as the maximal ideal of $\tilde{R}_l$ is nilpotent. Notice that $\tilde{\varPhi}_L$ satisfies
	$
	\tilde{\varPhi}_L= \tilde{\incoming} - \half \mathbf{H}_L [\tilde{\varPhi}_L,\tilde{\varPhi}_L],
	$
	where $\mathbf{H}_L : =\bigoplus_{m \in M_\sigma^+} \mathbf{H}_{L,m}$ and
	$
	\mathbf{H}_{L,m}(\alpha)(x):= \int_{-L}^0 (\iota_{\dd{s}}(\tau^m)^*(\alpha)(s,x)) ds.
	$

	Similar to Proposition \ref{prop:pseudoMC_to_MC}, it suffices to show that $\mathbf{P}_L [\tilde{\varPhi}_L, \tilde{\varPhi}_L] = 0$ on $K$, where $\mathbf{P}_L :=\bigoplus_{m \in M_\sigma^+}  \mathbf{P}_{L,m}$ and $\mathbf{P}_{L,m} (\beta) := (\tau^{m}_{-L})^* \beta$. Since $\tilde{\varPhi}_L$ is a sum over trees, we consider $\wrtr_i \in \wrtree{k_i}$, $i=1,2$, with $u^{\vec{J}_{\wrtr_i}} \neq 0$ and the associated terms $\alpha_{\wrtr_i,L} g_{\wrtr_i}$, where $g_{\wrtr_i} \in \mathfrak{h}_{m_{\wrtr_i},n_{\wrtr_i},\tilde{R}_{l}}$. Join $\wrtr_1$ and $\wrtr_2$ to give $\wrtr$. It suffices to assume $n_\wrtr \neq 0$, since $[g_{\wrtr_1} , g_{\wrtr_2}] \in \mathfrak{h}_{m_\wrtr,0,\tilde{R}_{l}} = \{0\}$ when $n_\wrtr = 0$. If $n_\wrtr \neq 0$, then $m_{\wrtr}$ is not tangent to $P_{\wrtr_1} \cap P_{\wrtr_2}$. We may therefore choose $L$ sufficiently large so that $\tau_{-L}^{m_{\wrtr}}(K) \cap P_{\wrtr_1} \cap P_{\wrtr_2} = \emptyset$. As a result, we have $(\tau_{-L}^{m_{\wrtr}})^*(\alpha_{\wrtr_1} \wedge \alpha_{\wrtr_2}) = 0$ in $\mathcal{H}^2_{\tilde{R}_l}(K)$.	
	\begin{comment}
	We take a tree $\wrtr$ with the unique root vertex $v_r \in \wrtr^{[0]}$ connecting to the outgoing edge $e_{out}$ with two incoming edges $e_1$ and $e_2$ such that $e_1,e_2,e_{out}$ are in clockwise orientation, such that when we split $\wrtr$ at $v_r$ to obtain the two trees $\wrtr_1, \wrtr_2$, using the same notations as in previous we have
	$$
	\mathbf{P}_{L,m} (\alpha_{\wrtr_1,L} \wedge \alpha_{\wrtr_2,L}) (x)
	=\int_{\mathcal{I}_{1,\tau^{e_{out}}_{-L}(x),L}  \times \mathcal{I}_{2,\tau^{e_{out}}_{-L} (x),L}\times \{-L\}\times \{x\}}  (\tau^{\mathfrak{e}_1})^*(\alpha_1) \cdots (\tau^{\mathfrak{e}_k})^*(\alpha_k).
	$$

	Making use of the same argument as in the proof of Lemma \ref{lem:integral_convergent}, we notice that we only have to consider the case when $n_\wrtr \neq 0$ and in that case we have$
	\int_{\mathcal{I}_{1,\tau^{e_{out}}_{-L}(x),L}  \times \mathcal{I}_{2,\tau^{e_{out}}_{-L} (x),L}\times \{-L\}\times \{x\}}  (\tau^{\mathfrak{e}_1})^*(\alpha_1) \cdots (\tau^{\mathfrak{e}_k})^*(\alpha_k) \in \mathcal{W}^{-\infty}_2(K)
	$ 
	when we restrict to $x \in K$ by choosing $L$ large enough. Since $K \subset M_\real$ is arbitrary and this complete the proof.
	\end{comment}
	\end{proof}

Let $\wtr \in \wtree{k}$ with $P_{\wtr}\neq \emptyset$. Since the monomial weights $u^{\vec{J}_e}$ at incoming edges $e \in \wtr^{[1]}_{in}$ are distinct, there are, up to isomorphism, exactly $2^{k-1}$ ribbon structures on $\wtr$. Note that $\mathbf{L}_{k,\wrtr}(\tilde{\incoming},\dots,\tilde{\incoming})$ does not depend\footnote{This can also be deduced from the proof of Lemma \ref{lem:tree_support_lemma} by observing that the dependence of $\alpha_\wrtr$ and $g_\wrtr$ on the ribbon structure of $\wrtr$ cancels out in the formula for $\mathbf{L}_{k,\wrtr}(\tilde{\incoming},\dots,\tilde{\incoming})$.} on the ribbon structure of $\wrtr$, since $\tilde{\incoming} \in \mathcal{H}^{1}_{ \tilde{R}_l}$ and $\tilde{\incoming}$ commutes with odd elements of $ \mathcal{H}^{1}_{ \tilde{R}_l}$. It follows that $\tilde{\varPhi} = \sum_{k \geq 1}\sum_{\wtr \in \wtree{k}} \mathbf{L}_{k,\wrtr}(\tilde{\incoming},\dots,\tilde{\incoming})$, where $\wrtr$ is any ribbon tree whose underlying tree $\underline{\wrtr}$ is $\wtr$. Combining Lemmas \ref{lem:tree_support_lemma}, \ref{lem:solution_asy_support} and \ref{lem:well_definedness_solution}, we conclude the following theorem.

\begin{theorem}\label{thm:theorem_1}
		The Maurer--Cartan solution $\tilde{\varPhi} \in \mathcal{H}^{1}_{ \tilde{R}_l}$ of Lemma \ref{lem:tree_support_lemma} can be expressed as the following sum over trees:
		$$
		\tilde{\varPhi} = \sum_{k \geq 1} \sum_{\substack{\wtr \in \wtree{k}\\ \mathfrak{M}_{\wtr}(M_\real,\tilde{\mathscr{D}}_{in,l}) \neq \emptyset}} \alpha_{\wtr} \log(\Theta_{\wtr}).
		$$
		Here $\mathfrak{M}_{\wtr}(M_\real,\tilde{\mathscr{D}}_{in,l}) \neq \emptyset$ indicates the existence of a tropical disk in $(M_\real,\tilde{\mathscr{D}}_{in,l})$ of combinatorial type $\wtr$, the wall-crossing factor $\Theta_{\wtr}$ is given by
		\[
		\log(\Theta_{\wtr}) = \Big( \prod_{e \in \partial^{-1}_{in}(\wtr^{[0]}_{in})} (\# J_{e,i_e})! \Big) g_{\wtr}  u^{\vec{J}_{\wtr}}
		\]
		and $\alpha_{\wtr}$ is a $1$-form with asymptotic support on $P_{\wtr}$ which satisfies
		$
		\lim_{\hp\rightarrow 0 }\int_{\varrho} \alpha_{\Gamma} = -1
		$
		for any affine line $\varrho$ intersecting positively with $P_\wtr$.
	\end{theorem}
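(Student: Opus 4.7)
The proof will essentially be a repackaging of the lemmas preceding the theorem, with the key conceptual point being the passage from a sum over ribbon trees to a sum over (unordered) weighted trees. My plan is as follows.

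First, I would start from the expression
\[
\tilde{\varPhi} = \sum_{k \geq 1} \frac{1}{2^{k-1}}\sum_{\wrtr \in \wrtree{k}} \mathbf{L}_{k,\wrtr}(\tilde{\incoming},\dots,\tilde{\incoming})
\]
established in Lemma \ref{lem:well_definedness_solution} and substitute the tree-by-tree formula
\[
\mathbf{L}_{k,\wrtr}(\tilde{\incoming},\dots,\tilde{\incoming}) = \Big( \prod_{e \in \partial^{-1}_{in}(\wrtr^{[0]}_{in})} (\# J_{e,i_e})! \Big) \alpha_{\wrtr} g_{\wrtr} u^{\vec{J}_{\wrtr}}
\]
from Lemma \ref{lem:tree_support_lemma}. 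The right-hand side is already in the desired shape, with the bracketed product together with $g_{\wrtr} u^{\vec{J}_\wrtr}$ assembling into $\log(\Theta_\wrtr)$ as defined in the theorem statement.

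Next, I would argue that each summand only depends on the underlying weighted tree $\underline{\wrtr} = \wtr$, not on the chosen ribbon structure. Since $\tilde{\incoming} \in \mathcal{H}^1_{\tilde{R}_l}$ is of odd degree and commutes with itself, swapping the roles of the two incoming edges at any internal vertex of a ribbon tree leaves $\mathbf{L}_{k,\wrtr}(\tilde{\incoming},\dots,\tilde{\incoming})$ invariant (both $\alpha_\wrtr$ and $g_\wrtr$ flip sign, and the two signs cancel). Because the monomial weights $u^{\vec{J}_e}$ at the incoming edges of $\wtr$ are pairwise distinct by Definition \ref{def:weighted_tree_tropical}, there are exactly $2^{k-1}$ distinct ribbon structures on $\wtr$ up to isomorphism. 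Summing over these ribbon structures exactly cancels the factor $\frac{1}{2^{k-1}}$, converting $\frac{1}{2^{k-1}}\sum_{\wrtr \in \wrtree{k}}$ into $\sum_{\wtr \in \wtree{k}}$ applied to $\alpha_\wtr \log(\Theta_\wtr)$, where we now write $\alpha_\wtr$ and $g_\wtr$ unambiguously.

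Finally, I would restrict the sum to trees with nonempty moduli. By Lemma \ref{lem:integral_convergent}(1), whenever $P_\wtr = \emptyset$ the form $\alpha_\wtr$ lies in $\mathcal{W}^{-\infty}_1(M_\real)$, hence vanishes in the quotient $\mathcal{W}^0_* / \mathcal{W}^{-1}_*$; similarly the $n_\wtr = 0$ case forces $g_\wtr \in \mathfrak{h}_{m_\wtr, 0} = \{0\}$, and so the corresponding summand is zero in $\mathcal{H}^*_{\tilde{R}_l}$. The remaining assertions of the theorem are then direct quotes of earlier results: the asymptotic support statement $\alpha_\wtr \in \mathcal{W}^1_{P_\wtr}(M_\real)$ is Lemma \ref{lem:solution_asy_support}, and the integration identity $\lim_{\hp \to 0} \int_\varrho \alpha_\wtr = -1$ is Lemma \ref{lem:integral_convergent}(3).

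The main conceptual step in this proof is the ribbon-to-weighted tree passage, which relies on the genericity hypothesis for $\tilde{\mathscr{D}}_{in,l}$ (ensuring distinct monomial labels so that isomorphism classes of ribbon lifts are counted correctly), together with the oddness of $\tilde{\incoming}$. No new analytic input is needed beyond the lemmas already proven; the work has already been done in Lemmas \ref{lem:tree_support_lemma}, \ref{lem:solution_asy_support}, \ref{lem:well_definedness_solution}, and \ref{lem:integral_convergent}.
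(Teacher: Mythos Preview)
Your proposal is correct and follows essentially the same approach as the paper: the paper's argument is precisely the ribbon-to-weighted tree passage (counting $2^{k-1}$ ribbon lifts via the pairwise-distinctness of the monomials $u^{\vec{J}_e}$, and using the oddness of $\tilde{\incoming}$ to make $\mathbf{L}_{k,\wrtr}$ ribbon-independent), followed by citing Lemmas \ref{lem:tree_support_lemma}, \ref{lem:solution_asy_support}, \ref{lem:well_definedness_solution} and \ref{lem:integral_convergent}. One minor correction: the distinctness of the incoming monomials is built into Definition \ref{def:weighted_tree_tropical} of weighted $k$-trees, not into the genericity of $\tilde{\mathscr{D}}_{in,l}$ per se.
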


Theorem \ref{thm:theorem_1} gives a bijection between tropical disks and summands of $\tilde{\varPhi}$. Together with Proposition \ref{prop:consistence_from_solution} below, which relates Maurer--Cartan solutions with consistent scattering diagrams, this provides an alternative realization of the enumerative interpretation of Theorem \ref{thm:GPS_theorem}.

\subsection{Non-perturbed initial scattering diagram}\label{sec:non_perturbed_scattering}

In this section we study the relationship between Maurer--Cartan elements and non-perturbed scattering diagrams. We are motivated by the fact that it is not always be possible (or desirable) to perturb the incoming diagram. This is the case, for example, for Hall algebra scattering diagrams. With appropriate modifications, we find that most of the results of Sections \ref{sec:modified_MC_solution} and \ref{sec:relating_MC_solution_counting} remain true without perturbation.

Let $\mathscr{D}_{in}$ be an initial scattering diagram and consider $\mathbf{L}_{k,\lrtr}(\incoming,\dots,\incoming)$ as in Section \ref{sec:tropical_counting_thm}. The main difference between the perturbed and non-perturbed cases is that , when $P_\lrtr \neq \emptyset$, we have $\dim_\real(P_\lrtr) = r-1$ in former whereas we only have $0\leq \dim_{\real}(P_\lrtr)  \leq r-1$ in the latter.

To begin, note that the first two parts of Lemma \ref{lem:integral_convergent} remain true in the context of labeled ribbon $k$-trees $\lrtr$. However, $\lim_{\hp \rightarrow 0} \int_{\varrho} \alpha_{\lrtr}$ need not equal $-1$, even when $\dim_\real(P_\lrtr) =r-1$. Indeed, we only have $0 \in \vec{\tau}(\mathcal{I}_\varrho)$, as opposed to $0 \in \Int(\vec{\tau}(\mathcal{I}_\varrho))$, so the relevant part of the proof of Lemma \ref{lem:integral_convergent} does not apply. The replacement of the third part of Lemma \ref{lem:integral_convergent} will be given in Lemma \ref{lem:iterated_integral_constant}.

\begin{lemma}\label{lem:tree_support_lemma_labeled}
	For each $\lrtr \in \lrtree{k}$, we have
	$
	\mathbf{L}_{k,\lrtr}(\incoming,\dots,\incoming) = \alpha_{\lrtr} g_{\lrtr},
	$
	with $g_{\lrtr}$ as in Definition \ref{def:multiplicity}.   
\end{lemma}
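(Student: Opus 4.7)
The plan is to induct on $|\lrtr^{[0]}|$, directly mirroring the proof of Lemma \ref{lem:tree_support_lemma} once the combinatorial factors $\prod (\#J_{e,i_e})!$ and the monomials $u^{\vec{J}_\wrtr}$ coming from the perturbation are stripped away.

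For the base case $\lrtr^{[0]} = \emptyset$, $\lrtr$ has a unique edge $e$ labeled by a wall $\mathbf{w}_{i_e}$ and an integer $k_e \in \inte_{>0}$ with $m_e = k_e m_{i_e}$. Definition \ref{def:label_tree_operation} then produces $\mathbf{L}_{1,\lrtr}(\incoming)$ as the $m_e$-degree component of $\incoming^{(i_e)} = -\delta_{P_{i_e}}\sum_{j\geq 1} g_{j i_e}$, which equals $-\delta_{P_{i_e}} g_{k_e i_e}$. This is exactly $\alpha_\lrtr g_\lrtr$, where $\alpha_\lrtr$ is read off from the (degenerate) iterated integral and $g_\lrtr = g_{k_e i_e}$ is prescribed by Definition \ref{def:multiplicity}.

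For the inductive step, I would split $\lrtr$ at its root vertex $v_r$ into labeled ribbon trees $\lrtr_1, \lrtr_2$ with outgoing edges $e_1, e_2$, arranged so that $\{e_1,e_2,e_{out}\}$ inherits the clockwise orientation from the ribbon structure. Applying the induction hypothesis to each piece, then $m_2$ at $v_r$ followed by $-\mathbf{H}_{m_\lrtr}$ along $e_{out}$, gives
$$\mathbf{L}_{k,\lrtr}(\incoming,\dots,\incoming) = -\mathbf{H}_{m_\lrtr}(\alpha_{\lrtr_1} \wedge \alpha_{\lrtr_2})\,[g_{\lrtr_1}, g_{\lrtr_2}].$$
The Lie-algebraic identity $[g_{\lrtr_1}, g_{\lrtr_2}] = g_\lrtr$ is part of Definition \ref{def:multiplicity}, and the reorganization $\mathbf{H}_{m_\lrtr}(\alpha_{\lrtr_1} \wedge \alpha_{\lrtr_2})(x) = -\alpha_\lrtr(x)$ proceeds by the same fibered decomposition of $\mathcal{I}_x$ over the $\real_{\leq 0}$-parameter of $e_{out}$ that was carried out in the proof of Lemma \ref{lem:tree_support_lemma}; no factorial or monomial book-keeping is needed.

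The main obstacle is well-definedness of the integrals at every stage of the induction, since the genericity hypothesis on the initial scattering diagram has been dropped and $\dim_\real P_\lrtr$ may be strictly less than $r-1$. I would handle this by invoking the observation made immediately before the lemma: parts (1) and (2) of Lemma \ref{lem:integral_convergent} extend verbatim to the labeled setting. Their proofs use only the affine-isomorphism statement of Lemma \ref{lem:tau_map_iso} and the Gaussian decay of $\delta_{P_{i_e}}$ transverse to the fibers of $\vec{\tau}$, neither of which is affected by genericity. This keeps $\alpha_\lrtr$ and $\mathbf{H}_{m_\lrtr}(\alpha_{\lrtr_1}\wedge \alpha_{\lrtr_2})$ convergent throughout the induction and closes the argument. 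The more delicate enumerative statement $\lim_{\hp\to 0}\int_\varrho \alpha_\lrtr$, which corresponds to part (3) of Lemma \ref{lem:integral_convergent} and genuinely fails without perturbation, is not claimed here and is deferred to Lemma \ref{lem:iterated_integral_constant}.
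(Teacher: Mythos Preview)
Your proposal is correct and follows exactly the approach the paper indicates: the paper's proof of this lemma simply states that it can be proved in the same way as Lemma~\ref{lem:tree_support_lemma}, which is precisely the induction on $|\lrtr^{[0]}|$ you describe, with the well-definedness of the iterated integrals handled (as you note) by the extension of parts (1) and (2) of Lemma~\ref{lem:integral_convergent} to the labeled setting that the paper records just before stating this lemma.
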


\begin{proof}
This can be proved in the same way as Lemma \ref{lem:tree_support_lemma}.
\end{proof}

The next result gives the required modification of Lemma \ref{lem:solution_asy_support}.

\begin{lemma}\label{lem:solution_asy_support_labeled}
Let $\lrtr \in \lrtree{k}$ and let $P_\lrtr \subset P$ be the codimension one hyperplane normal to $n_\lrtr$.
\begin{enumerate}
\item We have $\alpha_\lrtr \in \mathcal{W}^{1}_{P_\lrtr}(M_\real) \cap \mathcal{W}^{1}_1(M_\real)$ if $\dim_\real(P_\lrtr) = r-1$ and $\alpha_\lrtr \in \mathcal{W}^{1}_{P}(M_\real) \cap \mathcal{W}^{1}_1(M_\real)$ otherwise. In either case, $\alpha_{\lrtr \vert M_\real \setminus P_\lrtr} \in \mathcal{W}^{-\infty}_1(M_\real \setminus P_\lrtr)$.

\item If $\dim_\real(P_\lrtr) = r-1$, then there exists a polyhedral decomposition $\mathcal{P}_\lrtr$ of $P_\lrtr$ such that $d(\alpha_\lrtr)_{\vert M_\real \setminus  |\mathcal{P}_\lrtr^{[r-2]}|} \in \mathcal{W}^{-\infty}_2(M_\real \setminus |\mathcal{P}_\lrtr^{[r-2]}|)$, where $\mathcal{P}^{[l]}$ denotes the set of $l$-dimensional strata and $|\mathcal{P}^{[l]}|$ is the underlying set of $\mathcal{P}^{[l]}$.
\end{enumerate}
\end{lemma}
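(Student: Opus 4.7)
The strategy is induction on $|\lrtr^{[0]}|$, following the proof of Lemma~\ref{lem:solution_asy_support} but treating carefully the non-generic alignments that arise without perturbation. The base case $|\lrtr^{[0]}|=0$ is a single-edge tree, with $\alpha_\lrtr = -\delta_{P_{i_e}}$ and $P_\lrtr = P_{i_e}$ of dimension $r-1$, so Lemma~\ref{lem:input_delta} delivers both parts with the trivial decomposition $\mathcal{P}_\lrtr = \{P_{i_e}\}$. For the induction step, split $\lrtr$ at the root into labeled ribbon trees $\lrtr_1,\lrtr_2$ whose outgoing edges carry $(m_{\lrtr_i},n_{\lrtr_i})$; the computation of Lemma~\ref{lem:tree_support_lemma_labeled} gives $\alpha_\lrtr = -\mathbf{H}_{m_\lrtr}(\alpha_{\lrtr_1}\wedge \alpha_{\lrtr_2})$, to which the inductive hypothesis applies.

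For part (1) I separate the cases $n_\lrtr = 0$ and $n_\lrtr \neq 0$. When $n_\lrtr = 0$, either $n_{\lrtr_1}, n_{\lrtr_2}$ are linearly dependent, so the normal hyperplanes $H_1,H_2$ are parallel and Lemma~\ref{lem:support_product} gives $\alpha_{\lrtr_1}\wedge\alpha_{\lrtr_2}\in \mathcal{W}^{-\infty}_2$, or they are independent, in which case the formula $n_\lrtr=\langle m_{\lrtr_2},n_{\lrtr_1}\rangle n_{\lrtr_2}-\langle m_{\lrtr_1},n_{\lrtr_2}\rangle n_{\lrtr_1}$ from Definition~\ref{def:multiplicity} forces $\langle m_{\lrtr_i},n_{\lrtr_j}\rangle=0$ for $i\neq j$, making $-m_\lrtr$ tangent to $H_1\cap H_2$, so Lemma~\ref{lem:integral_lemma_modified} yields $\mathbf{H}_{m_\lrtr}(\alpha_{\lrtr_1}\wedge\alpha_{\lrtr_2})\in \mathcal{W}^{-\infty}_1$. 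Either way $\alpha_\lrtr \in \mathcal{W}^{-\infty}_1$, which sits inside every $\mathcal{W}^1_P\cap \mathcal{W}^1_1$ and trivially decays off $P_\lrtr$. When $n_\lrtr \neq 0$, the inductive hypothesis gives $\alpha_{\lrtr_i}\in \mathcal{W}^1_{H_i}\cap \mathcal{W}^1_1$ with decay off $P_{\lrtr_i}$, and $H_1, H_2$ are transverse, so Lemma~\ref{lem:support_product} produces $\alpha_{\lrtr_1}\wedge\alpha_{\lrtr_2}\in \mathcal{W}^2_{H_1\cap H_2}\cap \mathcal{W}^2_2$ with decay off $P_{\lrtr_1}\cap P_{\lrtr_2}$. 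Since $-m_\lrtr$ is not tangent to $H_1\cap H_2$, I compare $\mathbf{H}_{m_\lrtr}$ with an $\mathsf{H}_{m_\lrtr}$ built from a chain $U^{m_\lrtr}_\bullet$ that separates $H_1\cap H_2$ from the $-m_\lrtr$ half-line at infinity, exactly as in the proof of Lemma~\ref{lem:solution_asy_support}, and invoke Lemma~\ref{lem:integral_lemma_modified} to obtain $\alpha_\lrtr \in \mathcal{W}^1_Q\cap \mathcal{W}^1_1$ with $Q := (H_1\cap H_2) + \real_{\geq 0}(-m_\lrtr)$. This specializes to asymptotic support on $P_\lrtr = Q$ when $\dim P_{\lrtr_i} = r-1$ for both $i$, and to the weaker $\mathcal{W}^1_P$ statement with $P$ any codimension-one hyperplane containing $Q$ otherwise.

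For part (2), construct $\mathcal{P}_\lrtr$ by intersecting $P_\lrtr$ with the flowed codimension-one strata $\tau^{m_\lrtr}(\real_{\leq 0}\times |\mathcal{P}_{\lrtr_i}^{[r-2]}|)$ for $i=1,2$ and with $\tau^{m_\lrtr}(\real_{\leq 0}\times \partial(P_{\lrtr_1}\cap P_{\lrtr_2}))$, the latter recording where the integration range in $\mathbf{H}_{m_\lrtr}$ first meets $P_{\lrtr_1}\cap P_{\lrtr_2}$. The homotopy identity $d\mathbf{H}_{m_\lrtr}+\mathbf{H}_{m_\lrtr}d=\Id$, valid on the integrable forms at hand by Lemma~\ref{lem:integral_convergent}, yields
\[
d\alpha_\lrtr = -\alpha_{\lrtr_1}\wedge\alpha_{\lrtr_2} + \mathbf{H}_{m_\lrtr}\bigl(d(\alpha_{\lrtr_1}\wedge\alpha_{\lrtr_2})\bigr).
\]
The first term is in $\mathcal{W}^{-\infty}_2$ off $P_{\lrtr_1}\cap P_{\lrtr_2}\subset |\mathcal{P}_\lrtr^{[r-2]}|$; for the second, the inductive hypothesis places each $d\alpha_{\lrtr_i}$ in $\mathcal{W}^{-\infty}_2$ off $|\mathcal{P}_{\lrtr_i}^{[r-2]}|$, and applying Lemma~\ref{lem:integral_lemma_modified} to $\mathbf{H}_{m_\lrtr}$ shows that its contribution is $\mathcal{W}^{-\infty}_2$ off the flowed strata, which by construction lie in $|\mathcal{P}_\lrtr^{[r-2]}|$. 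The main obstacle is precisely this bookkeeping: without a generic perturbation, normals may align and $P_\lrtr$ may drop dimension, so the decomposition $\mathcal{P}_\lrtr$ must be built inductively in a way that captures exactly the codimension-one loci in $P_\lrtr$ across which the asymptotic $\delta$-coefficient of $\alpha_\lrtr$ jumps, while the degenerate-case analysis of part (1) must be kept in parallel with the flow structure of $\mathbf{H}_{m_\lrtr}$ used in part (2).
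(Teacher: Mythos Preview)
Your treatment of part (1) is essentially the paper's. The only difference is the $n_\lrtr=0$ case: the paper dispatches it in one sentence by citing (the labeled-tree analogue of) Lemma~\ref{lem:integral_convergent}, which gives $\alpha_\lrtr=0$ outright, whereas you run a case split on the alignment of $n_{\lrtr_1},n_{\lrtr_2}$ to reach the weaker $\alpha_\lrtr\in\mathcal{W}^{-\infty}_1$. (A minor inaccuracy: your claim ``$P_\lrtr=Q$ when $\dim P_{\lrtr_i}=r-1$ for both $i$'' is not right in general; one only has $P_\lrtr\subset Q=(H_1\cap H_2)+\real_{\geq0}(-m_\lrtr)$, and the passage from $\mathcal{W}^1_Q$ to $\mathcal{W}^1_{P_\lrtr}$ comes from the decay off $P_\lrtr$, not from an equality of supports.)

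For part (2) you take a genuinely different route. You build $\mathcal{P}_\lrtr$ explicitly by flowing the child strata $|\mathcal{P}_{\lrtr_i}^{[r-2]}|$ along $-m_\lrtr$ and analyzing $d\alpha_\lrtr=-\alpha_{\lrtr_1}\wedge\alpha_{\lrtr_2}+\mathbf{H}_{m_\lrtr}\bigl(d(\alpha_{\lrtr_1}\wedge\alpha_{\lrtr_2})\bigr)$ term by term. The paper instead notes that part (1) already gives $\alpha_\lrtr\in\mathcal{W}^1_1(M_\real)$, and by Definition~\ref{def:asy_support_algebra} this \emph{means} $d\alpha_\lrtr=\sum_j\beta_j$ with $\beta_j\in\mathcal{W}^2_{Q_j}(M_\real)$ for finitely many codimension-two polyhedral subsets $Q_j$; combined with $d\alpha_{\lrtr\vert M_\real\setminus P_\lrtr}\in\mathcal{W}^{-\infty}$ from part (1), any polyhedral decomposition $\mathcal{P}_\lrtr$ with $|\mathcal{P}_\lrtr^{[r-2]}|\supset P_\lrtr\cap\bigcup_j Q_j$ works. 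This is a two-line argument and needs no inductive use of part (2) at all.

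Your constructive route has a gap as written: you invoke $|\mathcal{P}_{\lrtr_i}^{[r-2]}|$ for both children, but the induction hypothesis for part (2) supplies $\mathcal{P}_{\lrtr_i}$ only when $\dim_\real P_{\lrtr_i}=r-1$. Even assuming $\dim_\real P_\lrtr=r-1$, one may have $\dim_\real P_{\lrtr_1}=r-2$ (take $P_{\lrtr_1}$ an $(r-2)$-dimensional polyhedral subset lying inside $H_1\cap H_2$, and $P_{\lrtr_2}$ an $(r-1)$-dimensional subset of $H_2$ containing $P_{\lrtr_1}$; then $P_{\lrtr_1}\cap P_{\lrtr_2}=P_{\lrtr_1}$ has dimension $r-2$ and $P_\lrtr$ has dimension $r-1$). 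In that case your construction of $\mathcal{P}_\lrtr$ is undefined and the phrase ``the inductive hypothesis places each $d\alpha_{\lrtr_i}$ in $\mathcal{W}^{-\infty}_2$ off $|\mathcal{P}_{\lrtr_i}^{[r-2]}|$'' has no content. The fix is to treat low-dimensional children separately using the $\mathcal{W}^{-\infty}$ decay off $P_{\lrtr_i}$ from part (1), but this drags in precisely the case analysis that the paper's abstract argument sidesteps.
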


\begin{proof}
We proceed by induction on the cardinality of $\lrtr^{[0]}$. The initial case, $\lrtr^{[0]} = \emptyset$, holds by Lemma \ref{lem:input_delta}.
	
	For the induction step, split $\lrtr$ at $v_r \in \lrtr^{[0]}$ to obtain $\lrtr_1$ and $\lrtr_2$, as in the proof Lemma \ref{lem:tree_support_lemma}. We can assume that $n_{\lrtr} \neq 0$, as otherwise $\alpha_{\lrtr} =0$ by Lemma \ref{lem:integral_convergent}. By the induction hypothesis, we have $\alpha_{\lrtr_i} \in \mathcal{W}^{1}_{P_{i}}(M_\real) \cap \mathcal{W}^{1}_1(M_\real)$, with $P_i = n_{\lrtr_i}^\perp$ containing $P_{\lrtr_i}$, $i=1,2$. Since $n_\lrtr \neq 0$, $P_1$ and $P_2$ intersect transversally. Applying Lemma \ref{lem:support_product} then gives $\alpha_{\lrtr_1} \wedge \alpha_{\lrtr_2} \in \mathcal{W}^2_Q(M_\real) \cap \mathcal{W}^2_2(M_\real)$, where $Q = P_1 \cap P_2$. We have $\alpha_\wrtr = -\mathbf{H}_{m_{e_{out}}} (\alpha_{\wrtr_1} \wedge \alpha_{\wrtr_2} )$, as in Lemma \ref{lem:tree_support_lemma}. Similar to the proof of Lemma \ref{lem:solution_asy_support}, since $Q-\real_{\geq 0} m_\lrtr \subset P$, we can apply Lemma \ref{lem:integral_lemma_modified} to conclude that $\alpha_\lrtr \in \mathcal{W}^{1}_{P}(M_\real) \cap \mathcal{W}^{1}_1(M_\real)$. Using the induction hypothesis and the relation $P_\lrtr =(P_{\lrtr_1} \cap P_{\lrtr_2}) -\real_{\geq 0} m_\lrtr$, we have $(\alpha_{\wrtr_1} \wedge \alpha_{\wrtr_2})_{\vert M_\real \setminus (P_{\lrtr_1} \cap P_{\lrtr_2})} \in \mathcal{W}^{-\infty}_2(M_\real \setminus (P_{\lrtr_1} \cap P_{\lrtr_2}))$, which gives $\alpha_{\lrtr \vert M_\real \setminus P_\lrtr} \in \mathcal{W}^{-\infty}_1(M_\real \setminus P_\lrtr)$.

	Since $\alpha_\lrtr \in \mathcal{W}^1_1(M_\real)$, we can write $d\alpha_\lrtr = \sum_{j} \beta_j$, where $\beta_j \in \mathcal{W}^2_{Q_j}(M_\real)$ for some codimension two polyhedral subsets $Q_j \subset M_\real$. In particular, $d \alpha_\lrtr |_{M_\real \setminus \bigcup_{j} Q_j } \in \mathcal{W}^{-\infty}(M_\real \setminus \bigcup_{j} Q_j )$ and $d \alpha_{\lrtr \vert M_\real \setminus P_\lrtr} \in \mathcal{W}^{-\infty}(M_\real \setminus P_\lrtr)$. Letting $\mathcal{P}_\lrtr$ be a polyhedral decomposition of $P_\lrtr$ such that $|\mathcal{P}_{\lrtr}^{[r-2]}|$ contains $P_\lrtr \cap \bigcup_{j} Q_j$, we obtain the desired result.
	\end{proof}

%\begin{lemma}\label{lem:iterated_integral_constant}
%	Suppose we have a polyhedral decomposition $\mathcal{P}_\lrtr$ of $P_\lrtr$ when $\dim_\real(P_\lrtr)$ satisfying the previous Lemma \ref{lem:solution_asy_support_labeled} and we consider a maximal cell $\sigma \in \mathcal{P}^{[r-1]}_\lrtr$. There exists a constant $c_{\lrtr,\sigma}>0$ such that 
%	$
%	\lim_{\hp \rightarrow 0} \int_{\varrho} \alpha_{\lrtr} = - c_{\lrtr,\sigma} 
%	$
%	for any embedded $\varrho$ affine line which intersect transversally with $\sigma$ in $\Int_{re}(\sigma)$ positively (meaning $(n_\lrtr,\varrho')>0$). 
%\end{lemma}

\begin{lemma}\label{lem:iterated_integral_constant}
	Let $\mathcal{P}_\lrtr$ be a polyhedral decomposition of $P_\lrtr$ which satisfies the second part of Lemma \ref{lem:solution_asy_support_labeled} and let $\sigma \in \mathcal{P}^{[r-1]}_\lrtr$. Then there exists a constant $c_{\lrtr,\sigma}>0$ such that 
	$
	\lim_{\hp \rightarrow 0} \int_{\varrho} \alpha_{\lrtr} = - c_{\lrtr,\sigma} 
	$
	for any embedded affine line $\varrho$ which intersects positively and transversally with $\sigma$ in $\Int_{re}(\sigma)$. 
\end{lemma}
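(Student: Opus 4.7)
The plan is twofold: first, show that $\lim_{\hp\to 0}\int_\varrho \alpha_\lrtr$ depends only on the cell $\sigma$ (and not on the particular choice of $\varrho$) via a Stokes-type argument, and then identify the common limit with a strictly negative constant $-c_{\lrtr,\sigma}$ using the explicit Gaussian representation of $\alpha_\lrtr$ established in the proof of Lemma \ref{lem:integral_convergent}.

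For the first step, let $\varrho_0,\varrho_1$ be two affine lines crossing $\sigma$ positively and transversally at points $q_0,q_1\in\Int_{re}(\sigma)$. Since the space of affine lines crossing $\Int_{re}(\sigma)$ positively and transversally is connected, I can join $\varrho_0$ to $\varrho_1$ through a smooth family $\{\varrho_t\}_{t\in[0,1]}$ of such lines. The swept surface $S:=\{\varrho_t(s):s\in\real,\ t\in[0,1]\}$, truncated at sufficiently large $|s|$, is an affine $2$-chain in $M_\real$ whose boundary consists of truncated pieces of $\varrho_0$ and $\varrho_1$ together with two transverse arcs lying in $M_\real\setminus P_\lrtr$. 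The exponential decay $\alpha_{\lrtr}|_{M_\real\setminus P_\lrtr}\in\mathcal{W}^{-\infty}_1$ from Lemma \ref{lem:solution_asy_support_labeled}(1) shows that the truncation arc contributions are $O(\hp^\infty)$. Since $\Int_{re}(\sigma)$ is disjoint from the codimension-two skeleton $|\mathcal{P}^{[r-2]}_\lrtr|$, the family $\{\varrho_t\}$ may be chosen so that $S$ avoids $|\mathcal{P}^{[r-2]}_\lrtr|$; then Lemma \ref{lem:solution_asy_support_labeled}(2) yields $\int_S d\alpha_\lrtr = O(\hp^\infty)$. Applying Stokes' theorem gives $\int_{\varrho_1}\alpha_\lrtr - \int_{\varrho_0}\alpha_\lrtr = O(\hp^\infty)$, so the limits coincide.

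For the second step, the same iterated integration argument that produced the formula in Lemma \ref{lem:integral_convergent} gives
$$
\int_\varrho \alpha_\lrtr \;=\; -\int_{\mathcal{I}_\varrho} (\tau^{\mathfrak{e}_1})^*\delta_{P_{i_{e_1}}} \wedge \cdots \wedge (\tau^{\mathfrak{e}_k})^*\delta_{P_{i_{e_k}}},
$$
where the integrand is a non-negative Gaussian $k$-form. Because $\sigma\subset P_\lrtr = ev(\overline{\mathfrak{M}}_\lrtr(M_\real,\mathscr{D}_{in}))$ and $\varrho$ meets $\Int_{re}(\sigma)$, the set $\vec{\tau}^{-1}(0)\cap\mathcal{I}_\varrho$ is nonempty and consists of tuples $(\vec{s},\varrho(t^*))$ corresponding to honest tropical disks of type $\lrtr$ whose stops lie on $\varrho\cap\sigma$. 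As $\hp\to 0$ the Gaussian concentrates at $0\in\prod_e N_{i_e}$, so only a neighborhood of $\vec{\tau}^{-1}(0)$ inside $\mathcal{I}_\varrho$ contributes. A local analysis near each such preimage, using Lemma \ref{lem:tau_map_iso} to identify $\vec{\tau}|_{\mathcal{I}_x}$ as an affine isomorphism onto a codimension-one image with normal direction $n_\lrtr$, shows that each preimage contributes a strictly positive amount to $-\lim_\hp\int_\varrho\alpha_\lrtr$, with consistent sign thanks to the positivity hypothesis $\langle\varrho',n_\lrtr\rangle>0$. The total $c_{\lrtr,\sigma}$ is therefore a positive weighted count of tropical disks of type $\lrtr$ whose stops cover $\sigma$, so $c_{\lrtr,\sigma}>0$.

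The main obstacle lies in the second step. In contrast to the generic perturbed setting of Lemma \ref{lem:integral_convergent}(3), in the non-perturbed case $\vec{\tau}$ may fail to be injective on $\mathcal{I}_\varrho$ and its image may only locally contain $0$; several distinct tropical disks of type $\lrtr$ can have stops on $\varrho\cap\sigma$. One must therefore carry out the Gaussian concentration argument preimage by preimage and verify that all the contributions add with consistent positive sign. Once this local sign verification is in place, the summation over preimages produces the desired positive constant $c_{\lrtr,\sigma}$, independent of $\varrho$ by the Stokes argument of the first step.
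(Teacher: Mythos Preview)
Your proposal is correct and follows the same two-step strategy as the paper: a Stokes-type argument for independence of $\varrho$, and the Gaussian representation for positivity. The Stokes arguments differ only cosmetically: you sweep out a $2$-chain by a family of transversal lines, while the paper closes $\varrho_1$ and $\varrho_2$ into a cycle $C$ by joining their endpoints with arcs $\gamma_0,\gamma_1$ lying in $M_\real\setminus P_\lrtr$ (so $\int_{\gamma_i}\alpha_\lrtr\to 0$), then bounds $C$ by a $2$-chain $D$ missing $|\mathcal{P}^{[r-2]}_\lrtr|$.

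The one place you overcomplicate things is the ``main obstacle'' you flag at the end. Your worry about non-injectivity of $\vec{\tau}$ on $\mathcal{I}_\varrho$ and multiple preimages of $0$ is unfounded: Lemma~\ref{lem:tau_map_iso} applies verbatim to labeled ribbon trees and gives $\vec{\tau}^*(d\eta_{i_{e_1}}\wedge\cdots\wedge d\eta_{i_{e_k}})=c\,\nu_\lrtr\wedge n_\lrtr+\varepsilon$ with $c>0$ and $\iota_{\nu_\lrtr^\vee}\varepsilon=0$. Restricted to the $k$-dimensional $\mathcal{I}_\varrho=\real_{\leq 0}^{|\lrtr^{[1]}|}\times\varrho$, the error term $\varepsilon$ vanishes and one gets $c\langle\varrho',n_\lrtr\rangle\,\nu_\lrtr\wedge dt$, which is nowhere zero. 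Hence $\vec{\tau}|_{\mathcal{I}_\varrho}$ is an affine isomorphism onto its image, and the paper simply writes $\int_\varrho\alpha_\lrtr=-\int_{\vec{\tau}(\mathcal{I}_\varrho)}\alpha_1\wedge\cdots\wedge\alpha_k$. The only subtlety in the non-perturbed case is that $0$ may lie on the boundary of $\vec{\tau}(\mathcal{I}_\varrho)$ rather than in its interior, so the Gaussian integral yields some positive $c_{\lrtr,\sigma}$ rather than exactly $1$. No preimage-by-preimage sign analysis is needed.
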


\begin{proof}
	For any such $\varrho$, we have
	$
	\lim_{\hp \rightarrow 0} \int_{\varrho} \alpha_{\lrtr}  = -  \int_{\vec{\tau}(\mathcal{I}_\varrho)} \alpha_1 \wedge \cdots \wedge \alpha_k
	$, as in the proof of Lemma \ref{lem:integral_convergent}. Although $0 \in \vec{\tau}(\mathcal{I}_\varrho)$ instead of $0 \in \Int(\vec{\tau}(\mathcal{I}_\varrho))$, we still have $\int_{\vec{\tau}(\mathcal{I}_\varrho)} \alpha_1 \wedge \cdots \wedge  \alpha_k = -c$ for some constant $c>0$. It remains to argue that $c$ is independent of $\varrho$.
	
	Let $\varrho_1$ and $\varrho_2$ be paths as above. Join the end points of $\varrho_1$ and $\varrho_2$ by paths $\gamma_0$ and $\gamma_1$ which do not intersect in $P_\lrtr$ to form a cycle $C$. Then $\lim_{\hp \rightarrow 0} \int_{\gamma_i} \alpha_{\lrtr} = 0$ and $\lim_{\hp \rightarrow 0 } \int_{C} \alpha_{\lrtr} = \lim_{\hp \rightarrow 0} \int_{D} d \alpha_\lrtr = 0$ for some $2$-chain $D$ with $D \cap |\mathcal{P}^{[r-2]}_\lrtr| = \emptyset$. It follows that $\lim_{\hp \rightarrow 0} \int_{\varrho_1} \alpha_{\lrtr} = \lim_{\hp \rightarrow 0} \int_{\varrho_2} \alpha_{\lrtr}$.
	\end{proof}

We claim that
$
\varPhi:= \sum_{k \geq 1} \frac{1}{2^{k-1}}\sum_{\lrtr \in \lrtree{k}} \mathbf{L}_{k,\lrtr}(\incoming,\dots,\incoming)
$
defines an element of $\hat{\mathcal{G}}^*$ which satisfies equation \eqref{eqn:MC_equation} in $\hat{\mathcal{H}}^*$. Indeed, if we consider this claim in $\mathcal{G}^{<k,*}:= \mathcal{W}^{0}_* \otimes_{\mathbb{C}} \mathfrak{h}^{<k}$ and $\mathcal{H}^{<k,*}:= \big( \mathcal{W}^{0}_*/\mathcal{W}^{-1}_* \big) \otimes_{\mathbb{C}} \mathfrak{h}^{<k}$, we will have a finite number of terms and the proof of Lemma \ref{lem:well_definedness_solution} applies. The claim then follows by taking limits.

Let $\ltr \in \ltree{k}$. Since $\mathbf{L}_{k,\lrtr}(\incoming,\dots,\incoming)$ does not depend on the ribbon structure of $\lrtr$, we can make sense of the sum $\mathbf{L}_{k,\ltr}(\incoming,\dots,\incoming)$. Since the labeling of the incoming edges $e \in \ltr^{[1]}_{in}$ need not be distinct, we have
\begin{equation}\label{eqn:ribbon_tree_non_ribbon_tree_relation}
\frac{1}{|\Aut(\ltr)|}\mathbf{L}_{k,\ltr}(\incoming, \dots, \incoming) = \sum_{\underline{\lrtr} = \ltr} \frac{1}{2^{k-1}} \mathbf{L}_{k,\lrtr}(\incoming,\dots,\incoming)
\end{equation}
and hence 
$
\varPhi= \sum_{k \geq 1}\sum_{\ltr \in \ltree{k}}\frac{1}{|\Aut(\ltr)|} \mathbf{L}_{k,\ltr}(\incoming,\dots,\incoming).
$
Combining the above arguments yields the following modification of Theorem \ref{thm:theorem_1}.

\begin{theorem}\label{thm:theorem_1_modified}
		The Maurer--Cartan solution $\varPhi \in \hat{\mathcal{H}}^*$ can be expressed as a sum over trees,
		\[
		%\label{eqn:varPhiSumTrees}
		\varPhi = \sum_{k \geq 1} \sum_{\substack{\ltr \in \ltree{k}\\ \mathfrak{M}_{\ltr}(M_\real,\mathscr{D}_{in}) \neq \emptyset}} \frac{1}{|\Aut(\ltr)|} \alpha_{\ltr} g_\ltr,
		\]
		with $\alpha_\ltr \in \mathcal{W}^{1}_{P}(M_\real) \cap \mathcal{W}^{1}_1(M_\real)$ for the codimension one affine subspace $P_\ltr \subset P$ normal to $n_\ltr$. 
		
		Furthermore, when $\dim_{\real}(P_\ltr) = r-1$, there exists a polyhedral decomposition $\mathcal{P}_\ltr$ of $P_\ltr$ such that, for each $\sigma \in \mathcal{P}_\ltr^{[r-1]}$, there is a constant $c_{\ltr,\sigma}$ such that 
		$
		\lim_{\hp\rightarrow 0 }\int_{\varrho} \alpha_{\ltr} = -c_{\ltr,\sigma}
		$
		 for any affine line $\varrho$ intersecting positively\footnote{Positivity depends on $n_\ltr$, which is defined up to sign. However, this sign ambiguity cancels with that of $g_\ltr$, as mentioned in Definition \ref{def:multiplicity}.} with $\sigma$ in $\Int_{re}(\sigma)$
\end{theorem}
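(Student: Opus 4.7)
The plan is to assemble the result from the preceding three lemmas and the truncation argument that was sketched in the paragraph following Lemma \ref{lem:iterated_integral_constant}. First I would verify that the infinite sum
\[
\varPhi = \sum_{k \geq 1} \frac{1}{2^{k-1}}\sum_{\lrtr \in \lrtree{k}} \mathbf{L}_{k,\lrtr}(\incoming,\dots,\incoming)
\]
is well-defined in $\hat{\mathcal{G}}^{*}$ and satisfies the Maurer--Cartan equation in $\hat{\mathcal{H}}^{*}$. This is done by projecting modulo $\mathfrak{h}^{\geq k}$ for each $k$: in the nilpotent quotient $\mathcal{G}^{<k,*}$ only finitely many trees contribute to each fixed degree $m \in M_\sigma^+$, so the truncated sum $\varPhi^{<k}$ is a finite sum and the proof of Lemma \ref{lem:well_definedness_solution} carries over verbatim to give $d\varPhi^{<k} + \tfrac12[\varPhi^{<k},\varPhi^{<k}] = 0$ in $\mathcal{H}^{<k,*}$. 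Passing to the inverse limit along $k$ then yields the Maurer--Cartan property in $\hat{\mathcal{H}}^{*}$.

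Next I would convert the sum over labeled ribbon trees to a sum over labeled trees. Because $\incoming \in \mathcal{H}^1$ lies in odd degree, the operator $\mathbf{L}_{k,\lrtr}(\incoming,\dots,\incoming)$ is insensitive to the cyclic ordering at internal vertices, so for a fixed underlying labeled tree $\ltr$ and a given unordered tuple of inputs the value depends only on $\ltr$ itself. Summing over all ribbon refinements of $\ltr$ with the automorphism correction gives the identity \eqref{eqn:ribbon_tree_non_ribbon_tree_relation}, and hence
\[
\varPhi = \sum_{k \geq 1}\sum_{\ltr \in \ltree{k}} \frac{1}{|\Aut(\ltr)|}\, \mathbf{L}_{k,\ltr}(\incoming,\dots,\incoming).
\]
Applying Lemma \ref{lem:tree_support_lemma_labeled} to each summand rewrites this as $\sum \frac{1}{|\Aut(\ltr)|} \alpha_{\ltr} g_\ltr$. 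The restriction of the sum to trees with $\mathfrak{M}_{\ltr}(M_\real,\mathscr{D}_{in}) \neq \emptyset$ is then automatic: if this moduli space is empty then $P_\ltr = \emptyset$ and the proof of Lemma \ref{lem:integral_convergent}(1), which applies without modification in the labeled setting, forces $\alpha_\ltr \in \mathcal{W}^{-\infty}_1(M_\real)$, so such terms vanish modulo $\mathcal{W}^{-1}_*$ and contribute nothing to $\hat{\mathcal{H}}^{*}$.

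The asymptotic support statement $\alpha_\ltr \in \mathcal{W}^{1}_{P_\ltr}(M_\real) \cap \mathcal{W}^{1}_1(M_\real)$ is then precisely the content of the first part of Lemma \ref{lem:solution_asy_support_labeled}, and the existence of the polyhedral decomposition $\mathcal{P}_\ltr$ when $\dim_\real(P_\ltr) = r-1$ is the second part of that lemma. Finally, the integral formula $\lim_{\hp \to 0}\int_\varrho \alpha_\ltr = -c_{\ltr,\sigma}$ for a constant $c_{\ltr,\sigma}$ independent of $\varrho$ (depending only on the maximal cell $\sigma \in \mathcal{P}_\ltr^{[r-1]}$) is the conclusion of Lemma \ref{lem:iterated_integral_constant}.

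The main obstacle in this argument, and the reason the non-perturbed case is genuinely different from Theorem \ref{thm:theorem_1}, is that one loses genericity of the affine map $\vec{\tau}$: the origin $0 \in \prod_{e} N_{i_e}$ sits only on the boundary of the image $\vec{\tau}(\mathcal{I}_\varrho)$ rather than in its interior, so the normalization $c = 1$ enjoyed in the perturbed case need not hold. The constants $c_{\ltr,\sigma}$ are only guaranteed to be well-defined cell by cell, which is why the statement requires a polyhedral decomposition and a separate constant for each maximal cell. All of this is already packaged into Lemmas \ref{lem:solution_asy_support_labeled} and \ref{lem:iterated_integral_constant}, so the role of the proof of the theorem itself is mainly bookkeeping.
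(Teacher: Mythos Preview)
Your proposal is correct and follows essentially the same route as the paper: the theorem is obtained by combining the truncation argument (reducing to $\mathcal{H}^{<k,*}$ and invoking the proof of Lemma \ref{lem:well_definedness_solution}), the passage from ribbon trees to labeled trees via \eqref{eqn:ribbon_tree_non_ribbon_tree_relation}, Lemma \ref{lem:tree_support_lemma_labeled} for the form $\alpha_\ltr g_\ltr$, Lemma \ref{lem:solution_asy_support_labeled} for the asymptotic support and the polyhedral decomposition, and Lemma \ref{lem:iterated_integral_constant} for the constants $c_{\ltr,\sigma}$. Your discussion of why trees with empty moduli space drop out and why the constants need not equal $1$ is also accurate.
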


\begin{definition}\label{def:diagram_from_solution}
	Let $\varPhi$ be as in Theorem \ref{thm:theorem_1_modified}. Define a scattering diagram $\mathscr{D}(\varPhi)$ as follows. For each $\ltr \in \ltree{k}$ with $\dim_\real(P_\ltr) = r-1$, let $\sigma \in \mathcal{P}_\ltr^{[r-1]}$ be a maximal cell with associated constant $c_{\ltr,\sigma}$. Define a wall $\mathbf{w}_{\ltr,\sigma} = (m_{\ltr},n_{\ltr},P_{\ltr,\sigma},\Theta_{\ltr,\sigma})$ so that $m_{\ltr}$ and $n_{\ltr}$ are as in the case of weighted $k$-trees (see Definitions \ref{def:weighted_tree_tropical} and \ref{def:multiplicity}), $P_{\ltr,\sigma} = \sigma$ and $\Theta_{\ltr,\sigma} = \exp(\frac{c_{\ltr,\sigma}}{|\Aut(\ltr)|} g_\ltr)$. 
\end{definition}

We claim that $\mathscr{D}(\varPhi)$ is equivalent to $\mathcal{S}(\mathscr{D}_{in})$. We would like to apply the main result of \cite{kwchan-leung-ma} to conclude that $\mathscr{D}(\varPhi)$ is a consistent extension of $\mathscr{D}_{in}$. However, this result does not apply directly to the present situation, so must must supply some modifications. Firstly, we have $\mathscr{D}(\varPhi)^{<k} = \mathscr{D}(\varPhi^{<k})$, where $\varPhi^{<k}$ is the image $\varPhi$ in $\mathcal{H}^{<k,*}$ and $\mathscr{D}(\varPhi)^{<k}$ is the diagram obtained by replacing the wall-crossing automorphisms with their images under $\hat{\mathfrak{h}} \rightarrow \mathfrak{h}^{<k}$. To prove consistency of $\mathscr{D}(\varPhi)$, it suffices to prove consistency of $\mathscr{D}(\varPhi^{<k})$ for each $k$. For the latter, consider a polyhedral decomposition $\mathcal{J}(\mathscr{D}(\varPhi^{<k}))$ of $\text{Joints}(\mathscr{D}(\varPhi^{<k}))$ such that, for each $\mathfrak{j} \in \mathcal{J}(\mathscr{D}(\varPhi^{<k}))^{[r-2]}$, the intersection $P_{\ltr} \cap \mathfrak{j}$ is a facet of $\mathfrak{j}$ for all labeled trees $\ltr$ with $g_\ltr \neq 0 \in \mathfrak{h}^{<k}$. It suffices to prove consistency at each joint $\mathfrak{j}$.

Let $U$ be a convex neighborhood of $\Int_{re}(\mathfrak{j})$ such that $(U \setminus \mathfrak{j}) \cap P_\ltr \neq \emptyset$ only if $\dim_{\real}(P_\ltr) = r-1$. There is a decomposition 
$
\varPhi_{\vert U} = \sum_{(\ltr,\sigma) \in \mathbb{W}} \varPhi^{(\ltr,\sigma)} +\mathcal{E}.
$
Here $\mathbb{W}$ is the set of pairs $(\ltr,\sigma)$ for which $\dim_\real(P_\ltr) = r-1$ and $\sigma \in \mathcal{P}_{\ltr}$ and $\sigma \cap \Int_{re}(\mathfrak{j}) \neq \emptyset$. Restricted to $U \setminus \mathfrak{j}$, the summand $\varPhi^{(\ltr,\sigma)}$ is equal to $\frac{1}{|\Aut(\ltr)|} \alpha_{\ltr} g_\ltr$. The final term $\mathcal{E} = \sum_{\substack{P_\ltr \cap U \neq \emptyset\\ \dim_\real(P_\ltr) <r-1}} \frac{1}{|\Aut(\ltr)|} \alpha_{\ltr} g_\ltr$ satisfies $\mathcal{E}_{\vert U\setminus \mathfrak{j}} = 0$, as follows from our assumptions on $\mathcal{J}(\mathscr{D}(\varPhi^{<k}))$ and the fact that $(U \setminus \mathfrak{j}) \cap  P_\ltr = \emptyset $ for those $P_\ltr$ satisfying $\dim(P_\ltr) <r-1$. 
 
Since the sum $\sum_{(\ltr,\sigma) \in \mathbb{W}} \varPhi^{(\ltr,\sigma)}$ satisfies Assumptions I and II of \cite[Introduction]{kwchan-leung-ma}, the following result can be proved using the methods of \cite{kwchan-leung-ma}.

 \begin{prop}\label{prop:consistence_from_solution}
 	The scattering diagram $\mathscr{D}(\varPhi)$ is consistent.
 \end{prop}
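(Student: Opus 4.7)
The plan is to verify consistency of $\mathscr{D}(\varPhi)$ by reducing to each finite level $\mathfrak{h}^{<k}$, localizing to each joint, and applying the local MC-to-consistency translation of \cite{kwchan-leung-ma}. Since consistency is detected by path-ordered products, which are computed in the inverse system $\{\mathfrak{h}^{<k}\}_{k}$, it suffices to prove that $\mathscr{D}(\varPhi^{<k})$ is consistent for every $k$. This finite-level reduction is useful because only finitely many walls of $\mathscr{D}(\varPhi^{<k})$ are nontrivial, so the polyhedral decomposition $\mathcal{J}(\mathscr{D}(\varPhi^{<k}))$ of the singular set is well-defined and we can check consistency one joint at a time.

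Fix a joint $\mathfrak{j} \in \mathcal{J}(\mathscr{D}(\varPhi^{<k}))^{[r-2]}$. The first step is to choose a convex neighborhood $U$ of $\Int_{re}(\mathfrak{j})$ as in the setup preceding the proposition, so that any $P_\ltr$ of less-than-maximal dimension meeting $U$ is contained in $\mathfrak{j}$. On $U$ one then has the decomposition
\[
\varPhi|_U = \sum_{(\ltr,\sigma) \in \mathbb{W}} \varPhi^{(\ltr,\sigma)} + \mathcal{E},
\]
where $\mathcal{E}|_{U \setminus \mathfrak{j}} = 0$. The second step is to observe that, restricted to $U \setminus \mathfrak{j}$, the partial sum $\Psi := \sum_{(\ltr,\sigma) \in \mathbb{W}} \varPhi^{(\ltr,\sigma)}$ agrees with $\varPhi|_{U \setminus \mathfrak{j}}$ and therefore satisfies the Maurer--Cartan equation on $U \setminus \mathfrak{j}$. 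By Lemma \ref{lem:solution_asy_support_labeled}, each $\varPhi^{(\ltr,\sigma)}$ has asymptotic support of codimension one concentrated along $\sigma \subset P_\ltr$ with weight $1$, and by Lemma \ref{lem:iterated_integral_constant} the limiting transverse integral is $-c_{\ltr,\sigma}$. These are exactly the input data (Assumptions I and II) required by \cite{kwchan-leung-ma} to associate a scattering diagram to a MC element on $U \setminus \mathfrak{j}$.

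The third step is to identify the diagram associated by the machinery of \cite{kwchan-leung-ma} to $\Psi$ at the joint $\mathfrak{j}$ with the restriction of $\mathscr{D}(\varPhi^{<k})$ to $U$. This identification is exactly the content of Definition \ref{def:diagram_from_solution}: a wall $(m_\ltr, n_\ltr, \sigma, \exp(\tfrac{c_{\ltr,\sigma}}{|\Aut(\ltr)|} g_\ltr))$ is attached to each maximal cell $\sigma$ of $\mathcal{P}_\ltr$ meeting $\Int_{re}(\mathfrak{j})$, and the contribution of $\varPhi^{(\ltr,\sigma)}$ to the path-ordered product across a transverse embedded arc is, by Lemma \ref{lem:iterated_integral_constant}, precisely $\exp(\tfrac{c_{\ltr,\sigma}}{|\Aut(\ltr)|} g_\ltr)$. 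Applying the main result of \cite{kwchan-leung-ma} then gives that the path-ordered product of any embedded loop around $\mathfrak{j}$ in $U \setminus \mathfrak{j}$, computed from $\mathscr{D}(\varPhi^{<k})$, is the identity.

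The main obstacle is bookkeeping rather than analytic: one must verify that the hypotheses of \cite{kwchan-leung-ma} really are satisfied by $\Psi$ despite the presence of the remainder $\mathcal{E}$ in $\varPhi|_U$, and that the polyhedral decomposition $\mathcal{J}(\mathscr{D}(\varPhi^{<k}))$ can be chosen so that each $P_\ltr \cap \mathfrak{j}$ is a facet of $\mathfrak{j}$ when $g_\ltr \neq 0$ in $\mathfrak{h}^{<k}$. The first point is handled by the vanishing $\mathcal{E}|_{U \setminus \mathfrak{j}} = 0$, which is ensured by the choice of $U$: terms in $\mathcal{E}$ correspond to labeled trees $\ltr$ with $\dim_\real(P_\ltr) < r-1$, and by construction these have $P_\ltr \cap U \subset \mathfrak{j}$. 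The second point is a standard refinement argument on polyhedral decompositions, possible because only finitely many walls appear at level $k$. Once these combinatorial issues are resolved, the asymptotic analysis of \cite{kwchan-leung-ma} goes through without modification and yields consistency of $\mathscr{D}(\varPhi^{<k})$ at $\mathfrak{j}$; passing to the inverse limit over $k$ concludes.
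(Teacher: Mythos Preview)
Your proposal is correct and follows essentially the same approach as the paper: reduce to each finite level $\mathfrak{h}^{<k}$, localize at each joint $\mathfrak{j}$ via the decomposition $\varPhi|_U = \sum_{(\ltr,\sigma)\in\mathbb{W}} \varPhi^{(\ltr,\sigma)} + \mathcal{E}$ with $\mathcal{E}|_{U\setminus\mathfrak{j}}=0$, and invoke the main result of \cite{kwchan-leung-ma} after verifying Assumptions~I and~II there. Your write-up is in fact more detailed than the paper's, which simply states that the sum over $\mathbb{W}$ satisfies those assumptions and that the methods of \cite{kwchan-leung-ma} then apply.
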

 
By applying Theorem \ref{thm:kontsevich_soibelman_thm}, we conclude that the scattering diagrams $\mathscr{D}(\varPhi)$ and $\mathcal{S}(\mathscr{D}_{in})$ are equivalent. Similarly, in the perturbed case, $\mathscr{D}(\tilde{\varPhi})$ and $\mathcal{S}(\tilde{\mathscr{D}}_{in,l})$ are equivalent.

\subsection{Theta functions as flat sections}\label{sec:theta_function_from_MC}

Let $\varPhi \in \hat{\mathcal{H}}^{1}$ be a Maurer--Cartan element. Then $d_{\varPhi} := d + [\varPhi,-]$ is a differential which acts on the graded algebra $\hat{\mathcal{A}}^*$. The space of flat sections of $d_{\varPhi}$,
\[
\Ker(d_\varPhi)  = \{ s \in \hat{\mathcal{A}}^0 \ | \ d_{\varPhi}(s) = 0 \},
\]
inherits a product from $\hat{\mathcal{A}}^*$. Similarly, $\Ker^{<k}(d_\varPhi)$ inherits a product from $\mathcal{A}^{<k,*}$. The goal of this section is to relate $\Ker(d_\varPhi)$ or $\Ker^{<k}(d_\varPhi)$ with the theta functions introduced in Section \ref{sec:theta_function_def}.

\subsubsection{Wall-crossing of flat sections}\label{sec:wall_crossing_flat_sections}

In this section we prove a wall-crossing formula for flat sections $\Ker^{<k}(d_\varPhi)$ (and hence $\Ker(d_\varPhi)$) using arguments similar to those of \cite[Introduction]{kwchan-ma-p2}.

Consider a polyhedral decomposition $\mathcal{P}^{<k}$ of $\Supp(\mathscr{D}(\varPhi^{<k}))$ with the property that, for every $0 \leq l \leq r-1$ and $\sigma \in \mathcal{P}^{<k,[l]}$, we have $\sigma \subset P_\mathbf{w}$ for some wall $\mathbf{w} \in \mathscr{D}(\varPhi^{<k})$ and $P_\ltr \cap \sigma$ is a facet of $\sigma$ for every $P_\ltr$ with $g_\ltr \neq 0 \in \mathfrak{h}^{<k}$. Fix a maximal cell $\sigma \in\mathcal{P}^{<k,[r-1]}$. Let $U \subset M_\real \setminus |\mathcal{P}^{<k,[r-2]}|$ be a contractible open subset which is separated by $\Int_{re}(\sigma)$ into two connected components, $U_+$ and $U_-$. Associated to $\sigma$ is the wall-crossing automorphism
\[
\Theta_\sigma = \prod_{\substack{ \mathbf{w} \in \mathscr{D}(\varPhi^{<k}) \\ P_\mathbf{w} \cap U \cap \sigma \neq \emptyset}} \Theta_{\mathbf{w}}^{\sgn(n_\mathbf{w},v)},
\]
where $v \neq 0$ points into $U_+$. Results from \cite[\S 4]{kwchan-leung-ma} imply that there is a unique gauge $\facs$ which solves the equation
\begin{equation}\label{eqn:gauge_equivalent}
e^{\ad_\facs}  d e^{-\ad_\facs}
=
%d - \left[ \left( \frac{e^{\ad_\facs}- \text{Id}}{\ad_\facs} \right) d  \facs,\cdot \right]
%=
d_{\varPhi}
\end{equation}
and satisfies $\facs_{\vert U_-} = 0$. Moreover, this gauge is necessarily given by
\begin{equation}\label{eqn:wall_crossing_factor}
\facs
= \left\{
\begin{array}{ll}
\displaystyle \log(\Theta_\sigma) & \text{on $U_+$},\\
\displaystyle 0 & \text{on $U_-$}.
\end{array}\right.
\end{equation}
In words, $\varPhi$ behaves like a delta function supported on $\sigma$ and $\varphi$ behaves like a step function which jumps across $\sigma$. %See Figure \ref{fig:delta_integrate_to_gauge}.

%\begin{figure}[h]
%	\centering
%	\includegraphics[scale=0.35]{delta_integrate_to_gauge.eps}
%	\caption{The gauge $\varphi$ as a step function.}
%	\label{fig:delta_integrate_to_gauge}
%\end{figure}

Let $s \in \Ker^{<k}(\varPhi)$. Since $\varPhi^{<k}_{\vert U_\pm} =0 \in \mathcal{H}^{<k,*}(U_\pm)$, we have $d(s_{\vert U_{\pm}}) = 0$. We can therefore treat $s_{\vert U_\pm}$ as a constant section over $U_\pm$, which we henceforth denote by $s_\pm  \in A^{<k}$.

Using equation \eqref{eqn:gauge_equivalent}, the condition $d_{\varPhi}(s) = 0$ is seen to be equivalent to the condition that the function $e^{-\ad_\facs}(s)$, which is defined on $U$, is $d$-flat. On the other hand, equation \eqref{eqn:wall_crossing_factor} gives
$$
e^{-\ad_\facs}(s) = \left\{
\begin{array}{ll}
\displaystyle \Theta_\sigma^{-1}(s_+) & \text{on $U_+$},\\
\displaystyle s_- & \text{on $U_-$}.
\end{array}
\right.
$$
We therefore conclude that $\Theta_\sigma(s_-) = s_+$. By applying this argument to a path $\gamma$ crossing finitely many walls generically in $\mathscr{D}(\varPhi^{<k})$, we obtain the following wall-crossing formula.

\begin{theorem}\label{thm:wall_crossing_for_sections}
	Let $s \in \Ker(d_\varPhi)$ and $Q,Q' \in M_\real \backslash \Supp(\mathscr{D}(\varPhi))$. Then
	\[
	%\label{eqn:wall_crossing_formula}
	s_{Q'} = \Theta_{\gamma,\mathscr{D}}(s_{Q})
	\]
	for any path $\gamma \subset M_\real \setminus \Joints(\mathscr{D})$ joining $Q$ to $Q'$, where $s_{Q^{\prime}}$ and $s_Q$ are restrictions of $s$ to sufficiently small neighborhoods containing $Q$ and $Q'$, respectively, and are treated as constant $A^0$-valued sections.
\end{theorem}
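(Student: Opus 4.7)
The plan is to reduce the statement to the wall-crossing behavior across a single maximal cell and then compose along the path. Since the claim is compatible with the projections $\hat{\mathfrak{h}} \rightarrow \mathfrak{h}^{<k}$, it suffices to prove the analogous statement in $\Ker^{<k}(d_\varPhi)$ for every $k \in \mathbb{Z}_{>0}$ and then take inverse limits. Passing to the truncation has the advantage that the scattering diagram $\mathscr{D}(\varPhi^{<k})$ has only finitely many walls with nonzero contribution, so a polyhedral decomposition $\mathcal{P}^{<k}$ of $\Supp(\mathscr{D}(\varPhi^{<k}))$ with the properties used in Section \ref{sec:wall_crossing_flat_sections} is available.

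Next, I would subdivide the path $\gamma$ using the polyhedral structure. Since $\gamma$ avoids $\Joints(\mathscr{D})$ and its endpoints avoid $\Supp(\mathscr{D})$, I can choose points $Q=Q_0,Q_1,\dots,Q_N=Q'$ and contractible neighborhoods $U_i$ covering successive sub-arcs of $\gamma$ such that each $U_i$ is disjoint from $|\mathcal{P}^{<k,[r-2]}|$ and is separated by the relative interior of a single maximal cell $\sigma_i \in \mathcal{P}^{<k,[r-1]}$ into two connected components $U_i^\pm$, with $Q_{i-1} \in U_i^-$ and $Q_i \in U_i^+$ (after orienting the normal so that $v$ agrees with the direction of $\gamma$). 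Then the composition of the individual single-wall jumps yields precisely the path-ordered product $\Theta_{\gamma,\mathscr{D}(\varPhi^{<k})}$ by its definition, so the global statement will follow from the single-wall case.

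The single-wall case is the heart of the argument and is essentially already contained in the discussion preceding the theorem. On each $U_i$, I would invoke the existence and uniqueness of the gauge $\varphi_i$ solving $e^{\ad_{\varphi_i}} d\, e^{-\ad_{\varphi_i}} = d_\varPhi$ with $\varphi_{i\vert U_i^-}=0$, as provided by equations \eqref{eqn:gauge_equivalent}--\eqref{eqn:wall_crossing_factor}; on $U_i^+$ one has $\varphi_i = \log(\Theta_{\sigma_i})$. Given $s \in \Ker^{<k}(d_\varPhi)$, the function $e^{-\ad_{\varphi_i}}(s)$ is $d$-flat on $U_i$, hence a single constant element of $A^{<k}$. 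Evaluating at $Q_{i-1}$ and at $Q_i$ gives $s_{Q_{i-1}} = e^{-\ad_{\varphi_i}}(s)\vert_{U_i^-}$ and $\Theta_{\sigma_i}^{-1}(s_{Q_i}) = e^{-\ad_{\varphi_i}}(s)\vert_{U_i^+}$, so $s_{Q_i} = \Theta_{\sigma_i}(s_{Q_{i-1}})$. Composing over $i=1,\dots,N$ and recognizing the ordered product as $\Theta_{\gamma,\mathscr{D}(\varPhi^{<k})}$ finishes the truncated statement.

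The main obstacle is ensuring that the local gauge-fixing really is constructible in the form \eqref{eqn:wall_crossing_factor}, including verifying that $\varPhi_{\vert U_i^\pm}$ vanishes in $\mathcal{H}^{<k,*}(U_i^\pm)$ so that $d_{\varPhi\vert U_i^\pm} = d$ and sections there can be treated as constants. This requires choosing $\mathcal{P}^{<k}$ finely enough that every $\alpha_\ltr$ appearing in $\varPhi^{<k}$ with $\dim_\real P_\ltr < r-1$ has asymptotic support contained in $|\mathcal{P}^{<k,[r-2]}|$, and controlling the terms with $\dim_\real P_\ltr = r-1$ exactly as in the decomposition $\varPhi_{\vert U} = \sum_{(\ltr,\sigma) \in \mathbb{W}} \varPhi^{(\ltr,\sigma)} + \mathcal{E}$ used in the proof of Proposition \ref{prop:consistence_from_solution}. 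Once this refinement is in place, the application of the results from \cite[\S 4]{kwchan-leung-ma} used to produce $\varphi_i$ is straightforward, and taking the limit $k \to \infty$ delivers the theorem in $\Ker(d_\varPhi)$.
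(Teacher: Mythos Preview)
Your proposal is correct and follows essentially the same approach as the paper: the argument in Section~\ref{sec:wall_crossing_flat_sections} establishes the single-wall jump $s_+ = \Theta_\sigma(s_-)$ via the gauge $\varphi$ of equations \eqref{eqn:gauge_equivalent}--\eqref{eqn:wall_crossing_factor}, and the theorem is then stated as the result of iterating this along a generic path and passing to the inverse limit over $k$. Your write-up is in fact somewhat more explicit than the paper about the subdivision of $\gamma$ and about why $\varPhi^{<k}$ vanishes on $U_i^\pm$, but the underlying strategy is identical.
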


\subsubsection{Theta functions as elements of $\Ker(d_\varPhi)$}\label{sec:theta_function_alternative_description}

In this section we define, for each $\mathsf{m} \in \overline{M} \setminus \{0\}$, an element $\theta_\mathsf{m} \in \Ker(d_\varPhi)$. We work in the dg Lie algebra $\hat{\mathcal{H}} \oplus \hat{\mathcal{A}}[-1]$ and solve the Maurer--Cartan equation with input $\incoming + z^{\varphi(\mathsf{m})}$. We are therefore led to consider the operation $\mathbf{L}_{k,\mrtr}(\incoming + z^{\varphi(\mathsf{m})},\dots,\incoming + z^{\varphi(\mathsf{m})})$, defined as in Definition \ref{def:weighted_tree_operation} using the homotopy operator $\mathbf{H}$ of Section \ref{sec:modified_homotopy_operator}, except that we insert $z^{\varphi(\mathsf{m})}$ at the vertex attached to a marked edge $\breve{e}$ and insert $\incoming$ at unmarked edges.

Consider $\vec{\tau} : \mathfrak{M}_{\mrtr}(M_{\real}) \rightarrow \prod_{e \in \mrtr^{[1]}_{in} \setminus \{\breve{e}\}} N_{i_e}$ as in equation \eqref{eqn:tau_map}. We extend Definition \ref{def:trop_tree_orientation} to marked ribbon trees $\mrtr$ by induction along the core $\mathfrak{c}_{\mrtr} = (e_0 ,\dots,e_l)$, with associated labeled ribbon trees $\lrtr_1,\dots,\lrtr_l$ as in Definition \ref{def:marked_tree_core}. Set $\nu_{e_0} = 1$ and suppose that $\nu_{e_i}$ is defined. Consider the vertex $v_{i}$ connecting $\lrtr_{i+1}$ and $e_i$ to $e_{i+1}$. Set $\nu_{e_{i+1}} = (-1)^{|\nu_{e_{i+1}}|} \nu_{\lrtr_{i+1}} \wedge \nu_{e_i} \wedge ds_{e_{i+1}}$ if $\{\lrtr_{i+1},e_i,e_{i+1}\}$ is oriented clockwise, and $\nu_{e_{i}} = \nu_{e_i} \wedge \nu_{\lrtr_{i+1}}  \wedge ds_{e_{i+1}}$ otherwise. Write $\nu_{\mrtr}$ for $\nu_{e_{out}}$. 

\begin{lemma}
The equality $\vec{\tau}^*(d\eta_{i_{e_1}} \wedge \cdots \wedge d\eta_{i_{e_k}}) =  c \epsilon_{\mrtr} \nu_{\mrtr} + \varepsilon$ holds for some $c> 0$, where $\nu_{\mrtr}^{\vee}$ is the top polyvector field on $\real_{\leq 0}^{|\mrtr^{[1]}|}$ dual to $\nu_{\mrtr}$ and $\iota_{\nu_{\mrtr}^{\vee}} \varepsilon = 0$. In particular, when $\epsilon_{\mrtr} \neq 0$, the restriction $\vec{\tau}_{\vert \mathcal{I}_x}$ is an affine isomorphism onto its image $C(\vec{\tau},x) \subset  \prod_{e \in \mrtr^{[1]}_{in} \setminus \{ \breve{e}\}} N_{i_e}$, a top dimensional cone. 
	\end{lemma}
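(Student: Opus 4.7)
The plan is to proceed by induction on the length $l$ of the core $\mathfrak{c}_{\mrtr} = (e_0 = \breve{e}, e_1, \ldots, e_l = e_{out})$. The base case $l = 0$, in which $\breve{e}$ and $e_{out}$ coincide and the underlying tree is a single marked edge with no side trees, is immediate: the codomain of $\vec{\tau}$ is an empty product of lines, both $\nu_\mrtr$ and $\epsilon_\mrtr$ equal $1$ as empty products, and each side of the asserted equality reduces to the scalar $1$.

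For the inductive step, I will split $\mrtr$ at the root vertex $v_r = v_l$, the unique common endpoint of $e_{l-1}$, $e_l = e_{out}$, and the side edge attached to $\lrtr_l$. This yields a shorter marked tree $\mrtr'$ with core $(e_0, \ldots, e_{l-1})$ and side trees $\lrtr_1, \ldots, \lrtr_{l-1}$, together with the separated labeled tree $\lrtr_l$. Correspondingly there is an identification
\[
\mathfrak{M}_{\mrtr}(M_\real) \cong \left(\mathfrak{M}_{\mrtr'}(M_\real) \times_{M_\real} \mathfrak{M}_{\lrtr_l}(M_\real)\right) \times \real_{\leq 0},
\]
where the fibered product identifies the stops of $\mrtr'$ and $\lrtr_l$ at $v_r$ and the $\real_{\leq 0}$ factor parametrizes $s_{e_l}$ via the flow $\tau^{e_l}$ along $-m_{e_l}$. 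Applying the induction hypothesis to $\mrtr'$ and Lemma \ref{lem:tau_map_iso} to $\lrtr_l$ viewed as a weighted tree, I obtain expansions $(\vec{\tau}')^*(d\eta_{i_{e_1}} \wedge \cdots) = c'\epsilon_{\mrtr'}\nu_{\mrtr'} + \varepsilon'$ and $\vec{\tau}_l^*(d\eta_{i_{e_1}} \wedge \cdots) = c_l\nu_{\lrtr_l}\wedge n_{\lrtr_l} + \varepsilon_l$, with positive constants $c', c_l$ and error terms annihilating the respective dual polyvectors.

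The main computation is to track how the covector $n_{\lrtr_l}$ interacts with the $e_l$-flow once pulled back to $\mathfrak{M}_\mrtr(M_\real)$. Since $\tau^{e_l}_{s_{e_l}}(x) = x - s_{e_l} m_{e_l}$, one has $(\tau^{e_l})^* n_{\lrtr_l} = n_{\lrtr_l} - \langle n_{\lrtr_l}, m_{e_l}\rangle\,ds_{e_l}$. The inductively defined $\nu_\mrtr = \nu_{e_l}$ involves $\nu_{\mrtr'}\wedge \nu_{\lrtr_l}\wedge ds_{e_l}$ up to an explicit sign depending on the ribbon orientation, so upon contracting with $\nu_\mrtr^\vee$ the $M_\real$-component of $(\tau^{e_l})^* n_{\lrtr_l}$ vanishes and the $ds_{e_l}$-component contributes the scalar $-\langle n_{\lrtr_l}, m_{e_l}\rangle$. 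Using the balancing relation $m_{e_l} = m_{e_{l-1}} + m_{\lrtr_l}$ together with $\langle n_{\lrtr_l}, m_{\lrtr_l}\rangle = 0$, this scalar equals $-\langle n_{\lrtr_l}, m_{e_{l-1}}\rangle$; its absolute value is absorbed into a new positive constant $c$, while its sign combines with $\epsilon_{\mrtr'}$ to produce $\epsilon_\mrtr = \epsilon_{\mrtr'}\cdot\sgn(\langle -m_{e_{l-1}}, n_{\lrtr_l}\rangle)$, as required.

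The second assertion is a direct consequence of the first: when $\epsilon_\mrtr \neq 0$, the top form $\vec{\tau}^*(d\eta_{i_{e_1}} \wedge \cdots)$ restricted to $\mathcal{I}_x$ is non-degenerate, so the affine linear map $\vec{\tau}|_{\mathcal{I}_x}$ has invertible differential and hence is an affine isomorphism onto its image; since $\mathcal{I}_x \cong \real_{\leq 0}^{|\mrtr^{[1]}|}$ is an orthant, its image is a top-dimensional cone in $\prod_{e \in \mrtr^{[1]}_{in} \setminus \{\breve{e}\}} N_{i_e}$. I expect the main obstacle to be the sign bookkeeping in the inductive step: the ribbon-dependent sign in the recursive definition of $\nu_{e_{i+1}}$, the Koszul sign from commuting $\nu_{\mrtr'}$ past $\nu_{\lrtr_l}$, and the sign extracted from the pullback of $n_{\lrtr_l}$ under $\tau^{e_l}$ must all conspire so that the product reproduces precisely $\sgn(\langle -m_{e_{l-1}}, n_{\lrtr_l}\rangle)$ and not its negative.
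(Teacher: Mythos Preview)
Your proposal is correct and follows essentially the same approach as the paper: induct on the length of the core, split $\mrtr$ at the root vertex $v_r$ into the shorter marked tree $\mrtr'$ and the last labeled side tree $\lrtr_l$, apply the induction hypothesis to $\mrtr'$ and Lemma~\ref{lem:tau_map_iso} to $\lrtr_l$, and then extract the sign from the pullback of $n_{\lrtr_l}$ under $\tau^{e_l}$. Your use of the balancing relation $m_{e_l} = m_{e_{l-1}} + m_{\lrtr_l}$ together with $\langle m_{\lrtr_l}, n_{\lrtr_l}\rangle = 0$ to rewrite $\langle -m_{e_l}, n_{\lrtr_l}\rangle$ as $\langle -m_{e_{l-1}}, n_{\lrtr_l}\rangle$ is exactly what is needed to match the definition of $\epsilon_\mrtr$; the paper's proof leaves this step implicit.
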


\begin{proof}
	We proceed by induction by splitting $\mrtr$ at $v_r$ into a labeled tree $\lrtr_1$ and a marked tree $\mrtr_2$. Assume that $\{\lrtr_1,\mrtr_2,e_{out}\}$ is oriented clockwise. The induction hypothesis gives
	$$
	\vec{\tau}^*(d\eta_{i_{e_1}} \wedge \cdots \wedge d\eta_{i_{e_k}})  = c (-1)^{|\nu_{\mrtr_2}|} \epsilon_{\mrtr_2} \nu_{\lrtr_1} \wedge \nu_{\mrtr_2}\wedge  \tau_{e_{out}}^*(n_{\lrtr_1}) + \varepsilon,
	$$
	with $\varepsilon$ as in the statement of the lemma. 	Since $\tau_{e_{out}}^*(n_{\lrtr_1}) = \sgn(\langle -m_{e_{out}},n_{\lrtr_1} \rangle) c' ds_{e_{out}}$ for some $c'>0$, this gives the desired equality.
	\end{proof}

Similar to Lemma \ref{lem:integral_convergent}, define $\alpha_{\mrtr}(x) := (-1)^l \int_{\mathcal{I}_x}\vec{\tau}^*(d\eta_{i_{e_1}} \wedge \cdots \wedge d\eta_{i_{e_k}})$ where $l$ is the length of the core $\mathfrak{c}_{\mrtr} = (e_0,\dots,e_l)$. We then have $\alpha_{\mrtr} = 0$ if $\epsilon_{\mrtr} =0$ and $\alpha_{\mrtr} \in \mathcal{W}^{-\infty}_{0}(M_{\real})$ if $P_{\mrtr} = \emptyset$. Moreover, the second statement of Lemma \ref{lem:integral_convergent} holds, after replacing $\mathcal{W}^{-\infty}_{1}(K)$ with $\mathcal{W}^{-\infty}_0(K)$. Parallel to Lemma \ref{lem:tree_support_lemma_labeled}, we have the following.

\begin{lemma}
	The equality $\mathbf{L}_{k,\mrtr}(\incoming + z^{\varphi(\mathsf{m})},\dots,\incoming + z^{\varphi(\mathsf{m})}) =  \alpha_{\mrtr} a_{\mrtr}$, holds, where $a_{\mrtr}$ is as in Definition \ref{def:marked_tree_core}.
	\end{lemma}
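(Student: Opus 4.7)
The plan is to prove the lemma by induction on the cardinality of $\mrtr^{[0]}$, paralleling the proof of Lemma \ref{lem:tree_support_lemma_labeled}. In the base case $\mrtr^{[0]} = \emptyset$ the tree consists of the single edge $\breve{e} = e_{out}$; then $\mathbf{L}_{1,\mrtr}$ simply returns its input $z^{\varphi(\mathsf{m})}$, the defining integral of $\alpha_{\mrtr}$ is an empty iterated integral equal to $1$ (with the prefactor $(-1)^{0} = 1$), and Definition \ref{def:marked_tree_core} gives $a_{\mrtr} = z^{\varphi(\mathsf{m})}$, so the identity is immediate.

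For the inductive step, let $v_r \in \mrtr^{[0]}$ be the root vertex adjacent to $e_{out}$. Since $v_r$ lies on the core $\mathfrak{c}_{\mrtr} = (e_0,\dots,e_l)$, exactly one of its two incoming edges, namely $e_{l-1}$, lies on $\mathfrak{c}_{\mrtr}$; the other is the outgoing edge of a labeled ribbon subtree $\lrtr_l$ (the last one in the enumeration of Definition \ref{def:marked_tree_core}, carrying the ribbon structure induced from $\mrtr$). Splitting $\mrtr$ at $v_r$ produces $\lrtr_l$ together with a marked ribbon tree $\mrtr'$ whose core is $(e_0,\dots,e_{l-1})$, of length $l-1$. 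By Lemma \ref{lem:tree_support_lemma_labeled} the $\lrtr_l$-contribution is $\alpha_{\lrtr_l}\, g_{\lrtr_l}$, and by the induction hypothesis the $\mrtr'$-contribution is $\alpha_{\mrtr'}\, a_{\mrtr'}$. Definition \ref{def:weighted_tree_operation}, applied in the square-zero extension $\mathcal{H} \oplus \mathcal{A}[-1]$, then instructs us to apply the graded symmetric bracket $m_2$ at $v_r$ and then $-\mathbf{H}_{m_{e_{out}}}$ along $e_{out}$. On a mixed pair of an $\mathcal{H}$-element and an $\mathcal{A}[-1]$-element the bracket reduces, up to sign, to the $\mathfrak{h}$-action, producing $g_{\lrtr_l} \cdot a_{\mrtr'}$, which by one further application of the recursion in Definition \ref{def:marked_tree_core} equals $a_{\mrtr}$.

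It therefore remains to verify the form identity $-\mathbf{H}_{m_{e_{out}}}(\alpha_{\lrtr_l} \wedge \alpha_{\mrtr'}) = \alpha_{\mrtr}$. I would unpack $\alpha_{\lrtr_l}$ and $\alpha_{\mrtr'}$ as iterated integrals of $\vec{\tau}$-pullbacks and decompose $\mathcal{I}_x$ along the new outgoing flow parameter $s_{e_{out}}$ in exactly the same way as in the integral computation in the proof of Lemma \ref{lem:tree_support_lemma}, following \cite[Lemma 5.31]{kwchan-leung-ma}. Convergence of the unbounded integral is guaranteed by the marked analog of Lemma \ref{lem:integral_convergent} already recorded in the paragraph immediately preceding the lemma. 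The main obstacle I anticipate is careful sign bookkeeping: one must reconcile the $(-1)^{l-1}$ sitting in $\alpha_{\mrtr'}$, the $-1$ built into $\alpha_{\lrtr_l}$ by the convention of Lemma \ref{lem:integral_convergent}, the additional $-1$ from $-\mathbf{H}_{m_{e_{out}}}$, the orientation-dependent signs in the recursive definition of $\nu_{\mrtr}$, and the graded commutativity sign $(-1)^{\bar{\alpha}(\bar{\beta}+1)}$ in $m_2$, with the $\sgn\langle -m_{e_{i-1}}, n_{\lrtr_i}\rangle$ factors implicit in the recursive definition of $a_{\mrtr}$. Once all of these are tracked, they combine to deliver exactly the $(-1)^l$ sitting in the definition of $\alpha_{\mrtr}$, closing the induction.
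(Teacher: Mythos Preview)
Your approach is correct and is exactly what the paper intends: the paper states this lemma without proof, prefaced only by ``Parallel to Lemma \ref{lem:tree_support_lemma_labeled}'', which itself is proved ``in the same way as Lemma \ref{lem:tree_support_lemma}'' by induction on the number of internal vertices and splitting at the root. Your write-up faithfully unpacks that parallel argument, correctly noting that the root vertex lies on the core so the split produces one marked subtree $\mrtr'$ and one labeled subtree $\lrtr_l$, and that the bracket in $\mathcal{H}\oplus\mathcal{A}[-1]$ reduces to the $\mathfrak{h}$-action so that $g_{\lrtr_l}\cdot a_{\mrtr'}=a_{\mrtr}$. One small point: the $\sgn\langle -m_{e_{i-1}},n_{\lrtr_i}\rangle$ factors are not part of $a_{\mrtr}$ in the paper's conventions---they sit in the separately defined $\epsilon_{\mrtr}$, and the relevant sign tracking is absorbed into the definition of $\alpha_{\mrtr}$ via the $(-1)^l$ and the orientation of $\nu_{\mrtr}$---but this does not affect the argument.
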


The argument from the proof of Lemma \ref{lem:solution_asy_support_labeled} gives the following result.

\begin{lemma}\label{lem:solution_asy_support_marked}
Let $\mrtr \in \mrtree{k}$.
	\begin{enumerate}
		\item We have $\alpha_\mrtr \in \mathcal{W}^{0}_{P_\mrtr}(M_\real) \cap \mathcal{W}^{0}_0(M_\real)$ if $\dim_\real(P_\mrtr) = r$ and $\alpha_\mrtr \in \mathcal{W}^{0}_0(M_\real)$ otherwise. In either case, $\alpha_{\mrtr \vert M_\real \setminus P_\mrtr} \in \mathcal{W}^{-\infty}_0(M_\real \setminus P_\mrtr)$.
		
		\item If $\dim_\real(P_\mrtr) = r$, then there exists a polyhedral decomposition $\mathcal{P}_\mrtr$ of $P_\mrtr$ such that $d(\alpha_\mrtr)_{\vert M_\real \setminus  |\mathcal{P}_\mrtr^{[r-1]}|} \in \mathcal{W}^{-\infty}_1(M_\real \setminus |\mathcal{P}_\mrtr^{[r-1]}|)$.
	\end{enumerate}
\end{lemma}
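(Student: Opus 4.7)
The plan is to mirror the inductive strategy of Lemmas \ref{lem:solution_asy_support} and \ref{lem:solution_asy_support_labeled}, proceeding by induction on $|\mrtr^{[0]}|$ and carefully tracking the cohomological degree shift caused by the marked edge $\breve{e}$, which contributes the degree-$0$ element $z^{\varphi(\mathsf{m})}$ in place of a delta $1$-form. The base case $|\mrtr^{[0]}| = 0$ is immediate: $\mrtr$ consists of a single marked edge, so $\alpha_\mrtr \equiv 1 \in \mathcal{W}^0_0(M_\real)$ and $P_\mrtr = M_\real$.

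For the induction step, I would split $\mrtr$ at the root vertex $v_r$, whose two incoming edges belong respectively to a labeled subtree $\lrtr_1$ and a marked subtree $\mrtr_2$ containing $\breve{e}$. Arguing as in Lemma \ref{lem:tree_support_lemma_labeled}, one obtains $\alpha_\mrtr = \pm \mathbf{H}_{m_{e_{out}}}(\alpha_{\lrtr_1} \wedge \alpha_{\mrtr_2})$, and one may assume $\epsilon_\mrtr \neq 0$ since otherwise $\alpha_\mrtr = 0$. By Lemma \ref{lem:solution_asy_support_labeled}, $\alpha_{\lrtr_1} \in \mathcal{W}^1_{P_1}(M_\real) \cap \mathcal{W}^1_1(M_\real)$ with $P_1 = n_{\lrtr_1}^\perp$, and the induction hypothesis gives $\alpha_{\mrtr_2} \in \mathcal{W}^0_{P_{\mrtr_2}}(M_\real) \cap \mathcal{W}^0_0(M_\real)$ when $\dim_\real(P_{\mrtr_2}) = r$, and $\alpha_{\mrtr_2} \in \mathcal{W}^0_0(M_\real)$ otherwise. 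Lemma \ref{prop:support_product} then shows that $\alpha_{\lrtr_1} \wedge \alpha_{\mrtr_2}$ has asymptotic support on the codimension-$1$ set $P_1 \cap P_{\mrtr_2}$ (or is exponentially small if this intersection fails to be transverse). Reusing the modification trick from the proof of Lemma \ref{lem:solution_asy_support}, namely choosing the chain $U^{m_{e_{out}}}_\bullet$ so that $U^{m_{e_{out}}}_{r-1}$ separates $M_\real$ with $P_1 \cap P_{\mrtr_2}$ on the positive side, makes $\mathbf{H}_{m_{e_{out}}}$ agree with $\mathsf{H}_{m_{e_{out}}}$ up to exponentially small error, and Lemma \ref{lem:integral_lemma_modified} then yields $\alpha_\mrtr \in \mathcal{W}^0_{P_\mrtr}(M_\real) \cap \mathcal{W}^0_0(M_\real)$ whenever $\dim_\real(P_\mrtr) = r$, where $P_\mrtr = (P_1 \cap P_{\mrtr_2}) - \real_{\geq 0} \cdot m_\mrtr$ is the flow of $P_1 \cap P_{\mrtr_2}$ along $-m_\mrtr$. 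The exponential decay of $\alpha_{\mrtr \vert M_\real \setminus P_\mrtr}$ is inherited from that of $(\alpha_{\lrtr_1} \wedge \alpha_{\mrtr_2}) \vert M_\real \setminus (P_1 \cap P_{\mrtr_2})$ via the integral representation.

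For the second statement, the identity $d \mathbf{H}_{m_{e_{out}}} + \mathbf{H}_{m_{e_{out}}} d = \Id$, valid on forms decaying at $s = -\infty$, gives (up to signs) $d\alpha_\mrtr = \alpha_{\lrtr_1} \wedge \alpha_{\mrtr_2} - \mathbf{H}_{m_{e_{out}}}\big(d\alpha_{\lrtr_1} \wedge \alpha_{\mrtr_2} - \alpha_{\lrtr_1} \wedge d\alpha_{\mrtr_2}\big)$. The first term has asymptotic support on the codimension-$1$ subset $P_1 \cap P_{\mrtr_2} \subset P_\mrtr$. For the second term, I would decompose $d\alpha_{\lrtr_1} = \sum_j \beta_j$ with each $\beta_j \in \mathcal{W}^2_{Q_j}(M_\real)$ for codimension-$2$ polyhedra $Q_j$ (using Lemma \ref{lem:solution_asy_support_labeled}) and apply the induction hypothesis $d\alpha_{\mrtr_2 \vert M_\real \setminus |\mathcal{P}_{\mrtr_2}^{[r-1]}|} \in \mathcal{W}^{-\infty}_1$; combining these with Lemmas \ref{prop:support_product} and \ref{lem:integral_lemma_modified} shows that the contribution is exponentially small outside the union of the flows of $\bigcup_j (Q_j \cap P_{\mrtr_2})$ and of $P_1 \cap |\mathcal{P}_{\mrtr_2}^{[r-1]}|$ along $-m_\mrtr$, each of which is codimension $1$ in $P_\mrtr$. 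Choosing $\mathcal{P}_\mrtr$ so that $|\mathcal{P}_\mrtr^{[r-1]}|$ contains $P_1 \cap P_{\mrtr_2}$ together with these flowed loci then produces the required polyhedral decomposition. The main obstacle will be the combinatorial bookkeeping in the induction step: one must simultaneously control the codimensions of all the polyhedral subsets arising from nested applications of $\mathbf{H}$, $d$, and the wedge product, and verify that the resulting $\mathcal{P}_\mrtr$ is a genuine polyhedral decomposition of all of $P_\mrtr$, rather than only of some lower-dimensional subset of it.
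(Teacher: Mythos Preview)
Your proposal is correct and follows essentially the same approach as the paper, which simply states that the argument from the proof of Lemma~\ref{lem:solution_asy_support_labeled} carries over; you have accurately reconstructed that argument with the appropriate degree shifts. One minor point: for part~(2) the paper's argument (as given for Lemma~\ref{lem:solution_asy_support_labeled}) is slightly more economical than yours---once you have established $\alpha_\mrtr \in \mathcal{W}^0_0(M_\real)$ in part~(1), the very definition of $\mathcal{W}^0_0$ gives $d\alpha_\mrtr \in \tilde{\mathcal{W}}^1_1(M_\real)$, so $d\alpha_\mrtr = \sum_j \beta_j$ with $\beta_j \in \mathcal{W}^1_{Q_j}$ for codimension-one polyhedral $Q_j$, and combining this with the exponential decay of $d\alpha_\mrtr$ outside $P_\mrtr$ lets you take $|\mathcal{P}_\mrtr^{[r-1]}| \supset P_\mrtr \cap \bigcup_j Q_j$ without ever invoking the explicit differential identity for $\mathbf{H}$ or the inductive decomposition of $d\alpha_{\lrtr_1}$ and $d\alpha_{\mrtr_2}$.
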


Motivated by the expression appearing in Theorem \ref{thm:theorem_1_modified}, define
\begin{equation}\label{eqn:theta_function_definition}
\theta_{\mathsf{m}} := \sum_{k \geq 1} \sum_{\mrtr \in \mrtree{k}} \frac{1}{2^{k-1}} \mathbf{L}_{k,\mrtr}(\incoming + z^{\varphi(\mathsf{m})},\dots,\incoming + z^{\varphi(\mathsf{m})}).
\end{equation}
By the same reasoning as was used to establish equation \eqref{eqn:ribbon_tree_non_ribbon_tree_relation}, we can write
\[
\theta_{\mathsf{m}} =\sum_{k \geq 1} \sum_{\substack{\mtr \in \mtree{k} \\ P_{\mtr} \neq \emptyset}} \frac{1}{|\Aut(\mtr)|} \alpha_{\mtr} a_{\mtr}
\]
Arguing as in Lemma \ref{lem:well_definedness_solution}, we find that $\varPhi + \theta_{\mathsf{m}} \in \hat{\mathcal{H}}^* \oplus \hat{\mathcal{A}}^*[-1]$ is a Maurer--Cartan element or, equivalently, $\varPhi \in \hat{\mathcal{H}}$ is Maurer--Cartan element and $\theta_{\mathsf{m}} \in \text{Ker}(d_{\varPhi})$.

The goal of the remainder of this section is to show that $\theta_{\mathsf{m}}(Q)= \vartheta_{\mathsf{m},Q}$, where the right hand side is the broken line theta function. We work in $\mathcal{H}^{<N,*} \oplus \mathcal{A}^{<N,*}[-1]$ for fixed $N$. Consider the scattering diagram $\mathscr{D}(\varPhi^{<N})$. Fix $\mtr \in \mtree{k}$ with $\epsilon_{\mtr} \neq 0$ and $\dim_{\real}(P_{\mtr}) = r$. Consider the core $\mathfrak{c}_{\mtr} = (e_0,\dots, e_l)$ with labeled trees $\ltr_1,\dots,\ltr_l$ attached to it at vertices $v_1 = \partial_{in}(e_1),\dots,v_l = \partial_{in}(e_l)$. In the case at hand, the map $ev : \overline{\mathfrak{M}}_{\mtr}(M_{\real},\mathscr{D}_{in},\mathsf{m}) \rightarrow P_{\mtr}$ is a diffeomorphism. Consider a polyhedral decomposition $\mathcal{P}_\mtr$ of $P_\mtr$ such that 
\begin{enumerate}
	\item Lemma \ref{lem:solution_asy_support_marked} is satisfied for $\mathcal{P}_\mtr$, and
	\item for any $\varsigma$ with $\varsigma(v_{out}) \notin |\mathcal{P}_\mtr^{[r-1]}|$, we have $\varsigma(v_i) \notin \Joints(\mathscr{D}(\varPhi^{<N}))$. 
	\end{enumerate}
	If $\varsigma$ is generic, that is satisfying $(2)$ above, then there exist walls $\mathbf{w}_j$ of $\mathscr{D}(\varPhi^{<N})$, defined by $\ltr_j$ with wall-crossing factor $\exp(\frac{c_{\mathbf{w}_j}}{|\Aut(\ltr_j)|} g_{\ltr_j})$ as in Definition \ref{def:diagram_from_solution}, such that $\varsigma(v_{j}) \in \text{int}(\mathbf{w}_j)$. Choose a non-decreasing surjection $\varkappa: \{1,\dots,l\} \rightarrow \{1,\dots,\ell\}$ such that $\varsigma(v_j) = \varsigma(v_{j'}) \in \Supp(\mathbf{w}_j)= \Supp(\mathbf{w}_{j'})$ if and only if $\varkappa(j) = \varkappa(j')$. Let $\text{PM}_i$ be the permutation group on the set $\varkappa^{-1}(i)$ and let $\text{PM}(\varkappa) = \prod_{i} \text{PM}_i$. Then, for each $\delta \in \text{PM}(\varkappa)$, we can form another marked $k$-tree $\delta(\mtr)$ by permuting the labeled trees $\ltr_j$ attached to the core. Denote by $\mtree{k}(\mtr)$ the $\text{PM}(\varkappa)$-orbit of $\mtr$ and by $\Iso(\varkappa,\mtr) = \prod_{i} \Iso_i(\varkappa,\mtr) \subseteq \text{PM}(\varkappa)$ the stabilizer subgroup of $\mtr$.

Let $\gamma$ be the restriction of $\varsigma$ to the interval corresponding to $\mathfrak{c}_{\mtr}$. Lift $\gamma$ to a broken line by setting $a_0 = z^{\varphi(\mathsf{m})}$ and, inductively, $a_{i+1} = g_{i+1} \cdot a_i$, where $g_{i+1}$ is the endomorphism of $A^{<N}$ given by
\begin{equation}\label{eqn:broken_line_construction}
g_{i+1} :=  \prod_{j \in \varkappa^{-1}(i+1)} \frac{ \sgn(\langle -m_{e_j}, n_{\ltr_j} \rangle ) c_{\mathbf{w}_j}}{|\Aut(\ltr_j)||\Iso_i (\varkappa,\mtr)|} g_{\ltr_j}.
\end{equation}
Recall that $a_{\gamma} := a_\ell$. Note that $|\Iso_i (\varkappa,\mtr)| = m_1!\cdots m_s!$ if there are $s$ distinct labeled trees in the set $\{L_j \ | \ \varkappa(j) = i \}$ which appear $m_1,\dots,m_s$ times. Hence $g_{i+1}$ is a  homogeneous factor of the product $\prod_{j \in \varkappa^{-1}(i+1)}\Theta_{\mathbf{w}_j}^{\sgn(\langle-m_{e_j}, n_{\ltr_j} \rangle)}$ appearing in Definition \ref{def:broken_lines}. Figure \ref{fig:broken_line_from_disk} illustrates the situation.

\begin{figure}[h]
	\centering
	\includegraphics[scale=0.8]{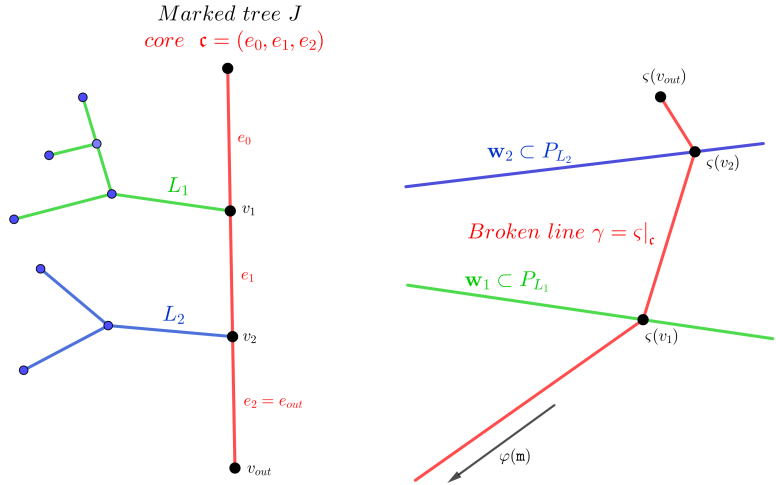}
	\caption{The relationship between the marked tree $\mtr$ and the broken line $\gamma$.}
	\label{fig:broken_line_from_disk}
\end{figure}

\begin{lemma}\label{lem:broken_line_from_theta}
	Near a generic point $Q \notin \bigcup_{\delta \in \text{PM}(\varkappa)}|\mathcal{P}^{[r-1]}_{\delta(\mtr)}|$, the equalities
\begin{equation*}
\sum_{\breve{\mtr} \in \mtree{k}(\mtr)} \frac{1}{|\Aut(\breve{\mtr})|} \alpha_{\breve{\mtr}} a_{\breve{\mtr}} = \frac{1}{| \Iso (\varkappa,\mtr)|} \sum_{\delta \in \text{PM}(\varkappa)} \frac{1}{|\Aut(\mtr)|}\alpha_{\delta(\mtr)} a_{\delta(\mtr)} = a_{\gamma}
\end{equation*}
hold. Here $a_{\gamma}$ is treated as a constant function near $Q$. 
\end{lemma}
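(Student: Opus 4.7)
The first equality is combinatorial. Since $\mtree{k}(\mtr)$ is a $\text{PM}(\varkappa)$-orbit with stabilizer $\Iso(\varkappa,\mtr)$, and the assignments $\breve{\mtr} \mapsto \alpha_{\breve{\mtr}}$, $\breve{\mtr} \mapsto a_{\breve{\mtr}}$ and $\breve{\mtr} \mapsto |\Aut(\breve{\mtr})|$ descend to isomorphism classes, the identity reduces to the orbit--stabilizer statement that each $\breve{\mtr} \in \mtree{k}(\mtr)$ is hit by exactly $|\Iso(\varkappa,\mtr)|$ elements of $\text{PM}(\varkappa)$.

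For the second equality, the plan is to separate the algebraic factor $a_{\delta(\mtr)}$ from the analytic factor $\alpha_{\delta(\mtr)}(Q)$, and to exploit the batch structure imposed by $\varkappa$. Within a batch $\varkappa^{-1}(i+1)$ all labeled trees $\ltr_j$ are attached to the core at the same point $\varsigma(v_j) \in \Supp(\mathbf{w}_{i+1})$, so their normals $n_{\ltr_j}$ are all proportional to $n_{\mathbf{w}_{i+1}}$. Combined with the tropical constraint $\langle m_{\ltr_{j}}, n_{\ltr_{j}}\rangle=0$, the inductive rule of Definition \ref{def:multiplicity} produces $n=0$ at the node joining $\ltr_j$ and $\ltr_{j'}$, hence $[g_{\ltr_j}, g_{\ltr_{j'}}]\in \mathfrak{h}_{\cdot,0}=\{0\}$ and the operators commute within a batch. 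The same two identities, applied along the core, show by a short direct computation that $\epsilon_{\delta(\mtr)}$ is invariant under permutations within each batch.

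I would then treat the analytic factor by an induction along the core $\mathfrak{c}_{\mtr}$. At each bend $v_j$ the partial homotopy $\mathbf{H}_{m_{e_j}}$ integrates against the form built from the preceding bends, and near a generic $Q$ this computation reduces, batch by batch, to the asymptotic evaluation of Lemma \ref{lem:iterated_integral_constant}. For each $\ltr_{\delta(j)}$ in batch $i+1$, this produces the factor $c_{\ltr_{\delta(j)},\sigma_{\delta(j)}}$ together with the directional sign $\sgn(\langle -m_{e_{\delta(j)-1}}, n_{\ltr_{\delta(j)}}\rangle)$ appearing in \eqref{eqn:broken_line_construction}. Well-definedness of the limiting value at $Q$ follows from Lemma \ref{lem:solution_asy_support_marked} applied to $\mtr$. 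Combining the algebraic and analytic pieces, the $|\varkappa^{-1}(i+1)|!$ orderings within batch $i+1$ yield that many equal summands; after collecting the prefactor $|\Aut(\mtr)|^{-1}|\Iso(\varkappa,\mtr)|^{-1}$ and using $|\Iso(\varkappa,\mtr)| = \prod_i |\Iso_i(\varkappa,\mtr)|$, the per-batch operator matches $g_{i+1}$ of \eqref{eqn:broken_line_construction}. Iterating along the core yields $a_\gamma = g_\ell \cdots g_1 \cdot z^{\varphi(\mathsf{m})}$, as desired.

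I expect the main obstacle to be sign bookkeeping: the signs $\epsilon_{\delta(\mtr)}$ from Definition \ref{def:marked_tree_core}, the orientation signs from Definition \ref{def:trop_tree_orientation}, and the crossing signs in \eqref{eqn:broken_line_construction} must be tracked simultaneously, and showing that they assemble into the single sign appearing in the broken-line weight is the most delicate point. A secondary difficulty is justifying that $\mathbf{H}$ produces an asymptotic value at $Q$ which factorizes cleanly through the bend points; this should follow from an iterated application of the support and regularity estimates of Lemmas \ref{lem:solution_asy_support_labeled} and \ref{lem:solution_asy_support_marked}.
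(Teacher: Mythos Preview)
Your treatment of the first equality and of the algebraic factors is correct and in fact more careful than the paper's: you give a clean reason for $a_{\delta(\mtr)}=a_{\mtr}$ and for the invariance of $\epsilon_{\delta(\mtr)}$, both of which the paper simply asserts.

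The genuine gap is in the analytic computation. Your plan is to evaluate each $\alpha_{\delta(\mtr)}(Q)$ by iterating $\mathbf{H}_{m_{e_j}}$ along the core and invoking Lemma~\ref{lem:iterated_integral_constant} batch by batch, and you then assert that ``the $|\varkappa^{-1}(i+1)|!$ orderings within batch $i+1$ yield that many equal summands''. This last claim is false in general: the individual values $\alpha_{\delta(\mtr)}(Q)$ are \emph{not} equal and are \emph{not} products of the crossing constants $c_{\mathbf{w}_j}$. Within a batch the forms $\alpha_{\ltr_j}$ are all supported on the same hyperplane, so after one application of $\mathbf{H}$ the running function is a step function across that hyperplane, not locally constant there; the next integration against $\alpha_{\ltr_{j+1}}$ therefore does not reduce to a single crossing integral, and Lemma~\ref{lem:iterated_integral_constant} does not apply. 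The support and regularity estimates of Lemmas~\ref{lem:solution_asy_support_labeled} and~\ref{lem:solution_asy_support_marked} give no factorisation here; this is exactly the ``secondary difficulty'' you identified, and it is in fact the main point.

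What makes the argument go through is a symmetrisation trick that is absent from your outline. After reducing to a single batch, the paper replaces the iterated flow $\tau^{e_j}_{s_j}\circ\cdots\circ\tau^{e_l}_{s_l}$ by the single-direction flow $\tau^{m_{\mtr}}_{s_j+\cdots+s_l}$ via an explicit homotopy $h$; the error term vanishes because $h_*(\partial/\partial t)$ is tangent to the common wall and each $\alpha_{\ltr_j}$ is a normal $1$-form. The change of variables $s_j\mapsto s_j+\cdots+s_l$ then turns $\real_{\leq 0}^l$ into the simplex $\{-\infty<s_1\leq\cdots\leq s_l\leq 0\}$, and the permutation action of $\text{PM}(\varkappa)$ becomes the standard action on simplices tiling the full orthant. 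Only after summing over $\delta$ does one recover the integral over $(-\infty,0]^l$, which then factors as $\prod_j\int_{-\infty}^0(\tau^{m_{\mtr}}_{s_j})^*\alpha_{\ltr_j}$, each factor giving $-\sgn(\langle -m_{\mtr},n_{\ltr_j}\rangle)c_{\mathbf{w}_j}$. This is where the product of crossing constants and the combinatorial factor $|\varkappa^{-1}(i+1)|!$ actually come from.
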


\begin{proof}
	Notice that $|\Aut(\mtr)| =|\Aut(\breve{\mtr})| = \prod_{j=1}^l |\Aut(\ltr_j)|$ for a marked tree $\mtr$. Since $a_{\delta(\mtr)} = a_{\mtr}$, we need only show that $\sum_{\delta} \alpha_{\delta(\mtr)}$ takes the value $\epsilon_{\mtr}\prod_{j=1}^l c_{\mathbf{w}_j}$ near $Q$. Let $j_i$ be the minimal element of $\varkappa^{-1}(i)$. Split $\mtr$ by breaking the edge $e_{j_i-1}$ into two to obtain a subtree $\mtr_{i}$ with $e_{j_i-1}$ as the outgoing edge and a tree $\hat{\mtr}_{i}$ with incoming edge $e_{j_i-1}$. We then have $a_{\mtr}(Q) = a_{\hat{\mtr}_i}(Q) a_{\mtr_i}(v_{j_i})$. We will show that, for each $i$, the equality $\sum_{\delta \in \prod_{i'<i} \text{PM}_{i'}}  \alpha_{\delta(\mtr_{i})} = \epsilon_{\mtr_{i}}\prod_{j<j_i} c_{\mathbf{w}_j}$ holds in a neighborhood of $\varsigma(v_{j_i})$. We proceed by induction. Therefore we assume that $\ell = 1$ and treat the case in which all $\ltr_j$ are overlapping walls.
	
	Consider the map $\vec{\tau}_{\mathfrak{c}} = (\tau_{\mathfrak{c},1},\dots,\tau_{\mathfrak{c},l}): (\real_{\leq 0})^{|\mathfrak{c}_{\mtr} \setminus \{e_0\}|} \times M_{\real} \simeq \real_{\leq 0}^{l} \times M_{\real} \rightarrow \prod_{j} M_{\real}$ given by backward flow $\tau_{\mathfrak{c},j} :(\real_{\leq 0})^{l} \times M_{\real} \rightarrow M_{\real}^l$ by
	\[
	\tau_{\mathfrak{c},j}(\vec{s},x)= \tau_{s_j}^{e_j} \circ \cdots \circ \tau_{s_l}^{e_l} (x) = \tau^{m_{\mtr}}_{s_j+\cdots+s_l}(x) + s_{l-1} m_{\ltr_l} +  s_{l-2}(m_{\ltr_l} + m_{\ltr_{l-1}}) + \cdots + s_j(m_{\ltr_l} + \cdots + m_{\ltr_{j+1}}).
	\]
	We have $\alpha_{\mtr}(x) = (-1)^l \int_{\real_{\leq 0}^l \times \{x\}} \vec{\tau}^* \big(\alpha_{\ltr_1} \wedge\cdots \wedge \alpha_{\ltr_l} \big)$. Define a modified backward flow $\breve{\tau} = (\breve{\tau}_1,\dots,\breve{\tau}_l) : \real_{\leq 0}^l \times M_{\real} \rightarrow M_{\real}^l$ by $\breve{\tau}_j(\vec{s},x) = \tau^{m_{\mtr}}_{s_j+\cdots+s_l}(x)$. Then $\breve{\tau}$ and $\vec{\tau}$ are homotopic via $h(\vec{s},x,t) = (1-t) \vec{\tau}(\vec{s},x) + t\breve{\tau}(\vec{s},x)$. Observe that
	\[
	\int_{\partial(\real_{\leq 0}^l) \times \{x\} \times [0,1]} h^* \big( \alpha_{\ltr_1} \wedge\cdots \wedge \alpha_{\ltr_l} \big) = 0,
	\]
	since $h_*(\dd{t})$ is tangent to the wall $\mathbf{w}_j$ and $\alpha_{\ltr_j}$ is $1$-form on the normal of $\mathbf{w}_j$. Since $d\alpha_{\ltr_j} \in \mathcal{W}^{-\infty}_2(M_{\real} \setminus |\mathcal{P}_{\ltr_j}^{[r-2]}|)$ from Lemma \ref{lem:solution_asy_support_labeled} and $\text{Im}(h|_{\real_{\leq 0}^l \times W \times [0,1]}) \cap |\mathcal{P}_{\ltr_j}^{[r-2]}| =  \emptyset$ in small enough neighborhood $W$ of $Q$, we can verify that $\alpha_{\mtr}(x)$ and $(-1)^l\int_{\real_{\leq 0}^{l} \times \{x\}} \breve{\tau}^*\big(\alpha_{\ltr_1} \wedge\cdots \wedge \alpha_{\ltr_l} \big)$ differ near $Q$ by exponentially small terms in $\mathcal{W}^{-\infty}_0$. Further, the reparamaterization $s_j \mapsto s_j+\cdots+s_l$ gives
	\[
	\alpha_{\mtr}(x) = (-1)^l\int_{-\infty<s_1 \leq \cdots \leq s_l \leq 0,x} (\tau^{m_{\mtr}}_{s_1})^*(\alpha_{\ltr_1}) \wedge \cdots \wedge (\tau^{m_{\mtr}}_{s_l})^*(\alpha_{\ltr_l}).
	\]
	The permutation group on $l$ letters acts by $\alpha_{\delta(\mtr)}(x) = (-1)^l\int_{-\infty<s_{\delta(1)} \leq \cdots \leq s_{\delta(l)} \leq 0,x} (\tau^{m_{\mtr}}_{s_1})^*(\alpha_{\ltr_1}) \wedge \cdots \wedge (\tau^{m_{\mtr}}_{s_l})^*(\alpha_{\ltr_l})$, using which we compute $\sum_{\delta} \alpha_{\delta(\mtr)}(x) =(-1)^l \prod_{j}  \big( \int_{-\infty <s_j \leq 0,x} (\tau^{m_{\mtr}}_{s_j})^*(\alpha_{\ltr_j}) \big) $. Finally, we have
	\[
	\int_{-\infty <s_j \leq 0,x} (\tau^{m_{\mtr}}_{s_j})^*(\alpha_{\ltr_j}) = -\sgn(\langle -m_{\mtr}, n_{\ltr_j} \rangle) c_{\mathbf{w}_j},
	\]
	as in Theorem \ref{thm:theorem_1_modified}.
	\end{proof}

For a generic point $Q$, let $\mtree{k}(Q,\mathsf{m}) \subset \mtree{k}$ be the set of marked trees $\mtr$ with $Q \in P_{\mtr}$ and marked edge $\varphi(\mathsf{m})$. For any two $J,J' \in \mtree{k}(Q,\mathsf{m})$, notice that either $\mtree{k}(J) = \mtree{k}(J')$ or $\mtree{k}(J) \cap \mtree{k}(J') = \emptyset$. It follows that there is a decomposition $\mtree{k}(Q,\mathsf{m}) = \sqcup_{\mtr \in \mathbb{L}}\mtree{k}(\mtr)$ such that each $\mtree{k}(\mtr)$ corresponds to a unique broken line $\gamma$ via Lemma \ref{lem:broken_line_from_theta}. Conversely, given a broken line $\gamma$ with ends $(Q,\mathsf{m})$, one can construct a marked tree $\mtr \in \mtree{k}(Q,\mathsf{m})$ with the restriction of $\varsigma$ to the core $\mathfrak{c}_{\mtr}$ being $\gamma$, and labeled trees $\ltr_1,\dots,\ltr_l$ attached to $\mathfrak{c}_{\mtr}$ such that the relation \eqref{eqn:broken_line_construction} holds. As a conclusion, we have the following theorem.

\begin{theorem}\label{thm:theta_function_comparsion}
	For generic $Q \in M_{\real} \setminus \Supp(\mathscr{D}(\varPhi))$, we have
	$
	\theta_{\mathsf{m}}(Q) = \vartheta_{\mathsf{m},Q},
	$
	where $\theta_{\mathsf{m}}(Q)$ is the value of $\theta_{\mathsf{m}}$ at $Q$.
\end{theorem}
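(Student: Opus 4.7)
The plan is to reduce the identity to a bijective correspondence between $\text{PM}(\varkappa)$-orbits of marked trees contributing to $\theta_{\mathsf{m}}(Q)$ and broken lines ending at $(\mathsf{m},Q)$, with Lemma \ref{lem:broken_line_from_theta} doing the numerical matching on each orbit. First I would fix $N \geq 1$ and work in the quotient $\mathcal{H}^{<N,*} \oplus \mathcal{A}^{<N,*}[-1]$; since both $\theta_{\mathsf{m}}$ and $\vartheta_{\mathsf{m},Q}$ are defined as limits of their truncations, it suffices to establish the equality modulo $\mathfrak{h}^{\geq N}$. Writing $\theta_{\mathsf{m}} = \sum_{k,\mtr} \frac{1}{|\Aut(\mtr)|}\alpha_{\mtr} a_{\mtr}$ in its non-ribbon form and invoking Lemma \ref{lem:solution_asy_support_marked}, every summand with $\dim_{\real}(P_\mtr) < r$ or with $Q \notin P_\mtr$ lies in $\mathcal{W}^{-\infty}_0$ near $Q$ and hence vanishes in $\mathcal{W}^0_0/\mathcal{W}^{-1}_0$. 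Thus only trees $\mtr \in \mtree{k}(Q,\mathsf{m})$ with $\epsilon_\mtr \neq 0$ contribute at a generic $Q$, where ``generic'' means $Q \notin \Supp(\mathscr{D}(\varPhi^{<N}))$ and $Q \notin |\mathcal{P}^{[r-1]}_{\delta(\mtr)}|$ for every relevant orbit representative.

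Next I would partition the contributing trees as $\mtree{k}(Q,\mathsf{m}) = \bigsqcup_{\mtr \in \mathbb{L}} \mtree{k}(\mtr)$, as in the paragraph preceding the theorem, and apply Lemma \ref{lem:broken_line_from_theta} on each orbit: the sum $\sum_{\breve{\mtr} \in \mtree{k}(\mtr)} |\Aut(\breve{\mtr})|^{-1} \alpha_{\breve{\mtr}} a_{\breve{\mtr}}$ evaluates near $Q$ to the constant $a_\gamma$, where $\gamma$ is the broken line obtained by restricting the tropical disk $\varsigma$ (for any representative $\mtr$) to its core $\mathfrak{c}_\mtr$ and endowing it inductively with the elements $a_i$ coming from formula \eqref{eqn:broken_line_construction}. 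Summing over $\mathbb{L}$ therefore yields
\[
\theta_{\mathsf{m}}(Q) = \sum_{\mtr \in \mathbb{L}} a_{\gamma(\mtr)} \pmod{\mathfrak{h}^{\geq N}}.
\]

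Then I would establish that $\mtr \mapsto \gamma(\mtr)$ is a bijection onto the set of broken lines ending at $(\mathsf{m},Q)$. Injectivity follows because the core $\mathfrak{c}_\mtr$ is recovered from $\gamma$, the wall indices $\mathbf{w}_j$ are recovered from the walls of $\mathscr{D}(\varPhi^{<N})$ containing each bend of $\gamma$, and the unordered collection of labeled trees $\ltr_j$ attached to the core is determined by which homogeneous summand of the product $\prod_j \Theta_{\mathbf{w}_j}^{\sgn\langle -m_{e_j}, n_{\ltr_j}\rangle}$ is chosen at each bend. Surjectivity is the reverse construction: given a broken line $\gamma$ with transitions $a_i \mapsto a_{i+1}$, decompose $a_{i+1}$ as a homogeneous summand $g_{i+1}\cdot a_i$ of a product of walls of $\mathscr{D}(\varPhi^{<N})$; by Definition \ref{def:diagram_from_solution} each such wall arises as $\mathbf{w}_{\ltr,\sigma}$ for some labeled tree $\ltr$, and attaching these trees to $\gamma$ produces a marked tree $\mtr$ whose orbit maps to $\gamma$.

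The main obstacle will be verifying the combinatorial factors line up precisely: the product in equation \eqref{eqn:broken_line_construction}, which weighs each attached labeled tree by $\sgn(\langle -m_{e_j},n_{\ltr_j}\rangle)c_{\mathbf{w}_j}/(|\Aut(\ltr_j)||\Iso_i(\varkappa,\mtr)|)$, must reproduce the homogeneous summand prescribed in item (5) of Definition \ref{def:broken_lines}. This is essentially bookkeeping — the factor $|\Iso_i(\varkappa,\mtr)| = m_1!\cdots m_s!$ accounts for repeated labeled trees at overlapping walls, exactly cancelling the multinomial overcounting inherent in decomposing $\prod_j \Theta_{\mathbf{w}_j}^{\sgn\langle -m_{e_j},n_{\ltr_j}\rangle}$ into its homogeneous components — but it requires careful enumeration, which is already implicitly handled in the sentence following Lemma \ref{lem:broken_line_from_theta}. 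Taking $N \to \infty$ concludes the proof.
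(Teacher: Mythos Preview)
Your proposal is correct and follows essentially the same approach as the paper: the argument in the paragraph immediately preceding the theorem is precisely the orbit decomposition $\mtree{k}(Q,\mathsf{m}) = \bigsqcup_{\mtr \in \mathbb{L}} \mtree{k}(\mtr)$, the application of Lemma~\ref{lem:broken_line_from_theta} on each orbit, and the two-way correspondence with broken lines that you spell out. The paper treats the theorem as a direct conclusion of that paragraph, and your write-up simply makes the steps more explicit.
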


\subsection{Hall algebra scattering diagrams}\label{sec:hall_algebra}

We investigate non-tropical analogues of the results of Sections \ref{sec:tropical_counting_thm}-\ref{sec:theta_function_from_MC}. Our main case of interest is the Hall algebra scattering diagrams of \cite{bridgeland2017scattering}.

\subsubsection{Motivic Hall algebras}
\label{sec:motivicHall}

We recall the definition of Joyce's motivic Hall algebra. While Hall algebra scattering diagrams are the main example of non-tropical scattering diagrams, we will not use anything technical about Hall algebras. We will therefore be brief. The reader is referred to \cite{joyce2007, bridgeland2012} for details. See also \cite[\S \S 4-5]{bridgeland2017scattering}.

Let $\mathsf{Q}$ be a quiver\footnote{For simplicity, we restrict attention to the case of trivial potential.} with finite sets of nodes $\mathsf{Q}_0$ and arrows $\mathsf{Q}_1$. Let $M^{\oplus} = \mathbb{Z}_{\geq 0} \mathsf{Q}_0$ be the monoid of dimension vectors. For each $d \in M^{\oplus}$, denote by $R_d = \prod_{\alpha \in \mathsf{Q}_1} \Hom_{\mathbb{C}}(\mathbb{C}^{d_{s(\alpha)}}, \mathbb{C}^{d_{t(\alpha)}})$ the affine variety of complex representations of $\mathsf{Q}$ of dimension vector $d$. The reductive group $\mathsf{GL}_d = \prod_{i \in \mathsf{Q}_0} \mathsf{GL}_{d_i}(\mathbb{C})$ acts on $R_d$ by change of basis. The quotient stack $\mathcal{M}_d:= R_d \slash \mathsf{GL}_d$ is the moduli stack of representations of dimension vector $d$. Set $\mathcal{M} = \bigsqcup_{d \in M^{\oplus}} \mathcal{M}_d$.

Similarly, given $d_1, d_2 \in M^{\oplus}$, let $\mathcal{M}_{d_1,d_2}$ be the moduli stack of short exact sequences $0 \rightarrow U_1 \rightarrow U_2 \rightarrow U_3 \rightarrow 0$ of representations in which $U_1$ and $U_3$ have dimension vector $d_1$ and $d_2$, respectively. There is a canonical correspondence
\begin{equation}
\label{eq:HallCorrespondence}
\mathcal{M}_{d_1} \times \mathcal{M}_{d_2} \xleftarrow[]{\pi_1 \times \pi_3} \mathcal{M}_{d_1,d_2} \xrightarrow[]{\pi_2} \mathcal{M}_{d_1+d_2},
\end{equation}
a short exact sequence being sent by $\pi_1 \times \pi_3$ to its first and third terms and by $\pi_2$ to its second term. The map $\pi_1 \times \pi_3$ is of finite type while $\pi_2$ is proper and representable.

Let $K_0(\mathsf{St} \slash \mathcal{M}) = \bigoplus_{d \in M^{\oplus}} K_0(\mathsf{St} \slash \mathcal{M}_d)$, the Grothendieck ring of finite type stacks with affine stabilizers over $\mathcal{M}$. Push-pull along the correspondence \eqref{eq:HallCorrespondence} gives $K_0(\mathsf{St} \slash \mathcal{M})$ the structure of a $M^{\oplus}$-graded associative algebra, the motivic Hall algebra of $\mathsf{Q}$. The augmentation ideal $\mathfrak{h}_{\mathsf{Q}}$ of $K_0(\mathsf{St} \slash \mathcal{M})$, with its commutator bracket, is a $M^+$-graded Lie algebra, the motivic Hall--Lie algebra.

In the setting of scattering diagrams, it is convenient to use a specialization of $K_0(\mathsf{St} \slash \mathcal{M})$. Write $\mathsf{St}$ in place of $\mathsf{St}\slash_{\Spec(\mathbb{C})}$. Cartesian product with $\Spec(\mathbb{C})$ makes $K_0(\mathsf{St} \slash \mathcal{M})$ into a $K_0(\mathsf{St})$-module. Let $\Upsilon: K_0(\mathsf{St}) \rightarrow \mathbb{C}(t)$ be the unique ring homomorphism  which sends the class of a smooth projective variety to the Poincar\'{e} polynomial of its singular cohomology with complex coefficients. Then $K_0(\mathsf{St}\slash \mathcal{M}) \otimes_{K_0(\mathsf{St})} \mathbb{C}(t)$ becomes a $\mathbb{C}(t)$-algebra, the Hall algebra of stack functions \cite{joyce2007}. Its augmentation ideal $\mathfrak{h}_{\mathsf{Q}}^{\Upsilon}$ is again a $M^+$-graded Lie algebra. The Hall algebra scattering diagram of \cite{bridgeland2017scattering} takes values in the (non-tropical) Lie algebra $\mathfrak{h}_{\mathsf{Q}}^{\Upsilon}$.  

\subsubsection{Non-tropical Maurer--Cartan solutions}\label{sec:non_tropical_diagram}

We begin by describing an abstract setting in which scattering diagrams can be defined without the tropical assumption. We largely follow \cite{bridgeland2017scattering}, introducing modifications as needed. Let $\mathfrak{h}$ be a not-necessarily tropical $M_{\sigma}^+$-graded Lie algebra.

We henceforth consider scattering diagrams in $N_\real$ instead of $M_\real$. The relevant modification of Definition \ref{wall} is as follows. 

\begin{definition}\label{def:non_tropical_walls}
		A {\em wall} $\mathbf{w}$ in $N_\real$ is a pair $(P, \Theta)$ consisting of a codimension one closed convex rational polyhedral subset $P \subset N_\real$ and an element $\Theta \in \hat{G}_{P^{\perp}} := \exp( \hat{\mathfrak{h}}_{P^{\perp}})$, where $P^{\perp}$ consists of those $m \in M$ which are perpendicular to any $n \in N_\real$ which is tangent to $P$.
\end{definition}

We also require a modified definition of scattering diagrams.

\begin{definition}\label{def:non_tropical_scattering_diagram}
A {\em scattering diagram} $\mathscr{D}$ consists of data $\{\mathscr{D}^{<k}\}_{k \in \inte_{>0}}$, where $\mathscr{D}^{<k} = \left\{ ( P_\alpha, \Theta_\alpha) \right\}_{\alpha}$ is a finite collection of walls with $\dim_\real(P_{\mathbf{w}_1} \cap P_{\mathbf{w}_2})<r-1$ for any two distinct walls $\mathbf{w}_1$, $\mathbf{w}_2$. The diagrams $\mathscr{D}^{<k+1}  \;(\textnormal{mod } \mathfrak{h}^{\geq k})$ and $\mathscr{D}^{<k}$ are required to be equal up to refinement by taking polyhedral decompositions of the polyhedral subsets of $\mathscr{D}^{<k}$ and by adding walls with trivial wall-crossing automorphisms.
\end{definition}

\begin{comment}
\begin{remark}
Suppose that $\mathfrak{h}$ is in fact tropical. Then, given a scattering diagram $\mathscr{D}$ in the sense of Definition \ref{def:scattering_diagram}, we can subdivide the walls and recombine overlapping walls in such a way that Definition \ref{def:non_tropical_scattering_diagram} is satisfied.
\end{remark}
\end{comment}

We will henceforth assume that each $P_{\alpha}$ is rational polyhedral cone. In this case Definition \ref{def:non_tropical_scattering_diagram} agrees with the notion of a $\mathfrak{h}$-complex from \cite[\S 2]{bridgeland2017scattering}. Following \cite{bridgeland2017scattering}, fix an ordered basis $(f_1,\dots, f_r)$ of $M$, thereby identifying $M$ with $\mathbb{Z}^r$. We take $\sigma = \bigoplus_{i=1}^r \mathbb{Z}_{\geq 0} \cdot f_r$ to be the standard cone and consider $\mathscr{D}_{in} = \{ \mathbf{w}_i  = (P_i,\Theta_i)\}_{1\leq i \leq r}$ with $P_i = f_i^\perp \subset N_\real$. Write $g_i :=\log(\Theta_i) = \sum_{j>1} g_{ji}$ with $g_{ji} \in \mathfrak{h}_{j f_i}$. We assume that $[g_{j_1 i},g_{j_2 i}] = 0$ for each initial wall.

\begin{example}
Let $\mathsf{Q}$ be a quiver without edge loops. For any $i \in \mathsf{Q}_0$ and $k \in \mathbb{Z}_{\geq 0}$, the stack $\mathcal{M}_{k i}$ is isomorphic to the classifying stack $B \mathsf{GL}_k(\mathbb{C})$. Let $\mathcal{M}_{\langle i \rangle} = \bigsqcup_{k \geq 0} \mathcal{M}_{k i}$. Then the element $\Theta_i = [\mathcal{M}_{\langle i\rangle} \rightarrow \mathcal{M}]$ of the dimension-completed motivic Hall algebra satisfies the above assumptions.
\end{example}

Using the affine structure on $N_\real$, we can again define the dg Lie algebras $\mathcal{H}^{*}$, $\hat{\mathcal{H}}^{*}$ and $\mathcal{H}^{<k,*}$. The discussions in Section \ref{sec:tropical_dgla} continue to hold without the tropical assumption on $\mathfrak{h}$. Let $\incoming = \sum_{i=1}^r \incoming^{(i)}$ with $\incoming^{(i)}$ as in equation \eqref{eqn:incoming_for_each_wall}. To define $\mathbf{H}_m$ via equation \eqref{eqn:modified_homotopy_operator}, we must first choose a suitable direction $v^m \in N_{\mathbb{Q}} \setminus \{0\}$ along which to define the flow $\tau^{m}$. For that purpose, fix a line $\lambda : \real \rightarrow N_\real$ of slope $(a_1,\dots,a_r) \in \mathbb{R}^r$ such that $\lambda(0)=(-1,\dots,-1)$ and $\lambda(1)$ lies in the dual cone $\text{int}(\sigma^{\vee})$. We assume that $0<a_1 <\dots<a_r$ and that $\{a_1, \dots, a_r\}$ are algebraically independent over $\mathbb{Q}$.

%with transcendence degree $\text{trdeg}_{\mathbb{Q}}(\kappa(a_1,\dots,a_r)) = r$.\footnote{Here $\kappa(a_1,\dots,a_r)$ refers to the field extension over $\mathbb{Q}$ by the element $a_1,\dots,a_r$.}

Let $m \in M_\sigma^+$. Consider the polyhedral decomposition $\mathfrak{P}_m$ of the hyperplane $m^{\perp}$ induced by the finite hyperplane arrangement whose hyperplanes are of the form $m_1^\perp \cap m^\perp$, where $m_1 \nparallel m \in M_\sigma^+$ and $m_1 + m_2 = m$ for some $m_2\in M_\sigma^+$. By construction, $\lambda \cap m^{\perp}$ is contained in $\Int_{re}(-\beth)$ for some maximal cone $\beth \in \mathfrak{P}_m^{[r-1]}$. If $m = k f_i$ for some $k >0$ and $1\leq i\leq r$, then we set $v^{m} = - k f_i^\vee$. Otherwise, we take $v^m \in \Int_{re}(\beth)$ to be a rational point.

With the above notation, we obtain operators\footnote{Labeled (ribbon) $k$-trees are defined as in Definitions \ref{label_tree_def} and \ref{def:weighted_ribbon_k_tree} using the walls of Definition \ref{def:non_tropical_walls}.} $\mathbf{L}_{k,\lrtr}(\incoming,\dots,\incoming)$ as in Section \ref{sec:modified_homotopy_operator}, and hence also $\mathbf{L}_{k,\ltr}(\incoming,\dots,\incoming)$ by equation \eqref{eqn:ribbon_tree_non_ribbon_tree_relation}. Definition \ref{def:tropical_curve} is modified to talk about tropical disks in $(N_\real, \mathscr{D}_{in})$ of type $\ltr$, which are proper maps $\varsigma : |\ltr_{\vec{s}}| \rightarrow N_\real$ whose slope at an edge $e \in \bar{\ltr}^{[1]}$ is $v^{m_e}$. The moduli space $\mathfrak{M}_\ltr(N_\real,\mathscr{D}_{in})$ is defined accordingly and $P_\ltr := ev (\overline{\mathfrak{M}}_\ltr(N_\real,\mathscr{D}_{in}))$ is now a subset of $m_\ltr^{\perp}$. Lemma \ref{lem:tau_map_iso} holds after replacing $n_\lrtr$ with $m_{\lrtr}$, with the caveat that we can only conclude that $c \neq 0$, instead of $c>0$. 

With $\alpha_{\lrtr}$ defined as in Lemma \ref{lem:integral_convergent}, parts \textit{(1)} and \textit{(2)} of Lemmas \ref{lem:integral_convergent} and \ref{lem:solution_asy_support_labeled} hold by the same argument (after replacing $n_\lrtr$ with $m_\lrtr$ and $M_{\real}$ with $N_{\real}$). Lemma \ref{lem:iterated_integral_constant} is again valid, except that the sign of $c_{\lrtr,\sigma} \neq 0$ cannot be determined. By Lemma \ref{lem:tree_support_lemma_labeled}, we have $\mathbf{L}_{k,\lrtr}(\incoming,\dots,\incoming) = \alpha_{\lrtr} g_{\lrtr}$ and, since it is independent of ribbon structure, we can write $\mathbf{L}_{k,\ltr}(\incoming,\dots,\incoming) = \alpha_\ltr g_\ltr$.

\begin{lemma}\label{lem:hall_algebra_well_definedness_solution}
The sum
	$$\varPhi := \sum_{k \geq 1} \sum_{\substack{\lrtr \in \lrtree{k}\\ \mathfrak{M}_{\lrtr}(N_\real,\mathscr{D}_{in}) \neq \emptyset}} \frac{1}{2^{k-1}} \alpha_\lrtr g_\lrtr = \sum_{k \geq 1} \sum_{\substack{\ltr \in \ltree{k}\\ \mathfrak{M}_{\ltr}(N_\real,\mathscr{D}_{in}) \neq \emptyset}} \frac{1}{|\Aut(\ltr)|} \alpha_\ltr g_\ltr$$ is a Maurer--Cartan element of $\hat{\mathcal{H}}^*$. 
	\end{lemma}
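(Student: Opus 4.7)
The plan is to adapt the argument of Lemma~\ref{lem:well_definedness_solution} together with the discussion following Theorem~\ref{thm:theorem_1_modified} to the non-tropical setting, replacing the tropical vanishing $\mathfrak{h}_{m,0}=\{0\}$ by a purely analytic substitute that exploits the specific normal geometry of the forms $\alpha_{\ltr}$.

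Well-definedness will follow by working truncation-by-truncation in $\mathcal{H}^{<k,*}$. For each $k$ only finitely many labeled trees $\ltr$ contribute, namely those with $g_{\ltr}\neq 0$ in $\mathfrak{h}^{<k}$, and for each such tree the form $\alpha_{\ltr}$ is defined by parts (1) and (2) of Lemma~\ref{lem:integral_convergent}, whose proofs carry over without the tropical assumption as noted earlier in this subsection. Equality of the two presentations of $\varPhi$ will then follow by collecting ribbon trees over a common underlying labeled tree, exactly as in equation~\eqref{eqn:ribbon_tree_non_ribbon_tree_relation}, and taking inverse limits yields $\varPhi\in\hat{\mathcal{H}}^1$.

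For the Maurer--Cartan equation itself, I note that by construction as a sum over trees $\varPhi^{<k}$ satisfies the pseudo-Maurer--Cartan equation $\varPhi^{<k}=\incoming - \tfrac12\mathbf{H}[\varPhi^{<k},\varPhi^{<k}]$. Fixing an arbitrary pre-compact open $K\subset N_\real$, I would introduce, as in Lemma~\ref{lem:well_definedness_solution}, the cutoff operators $\mathbf{H}_L$ and $\mathbf{P}_{L,m}(\beta):=(\tau^{v^m}_{-L})^{*}\beta$ and the associated truncated solution $\varPhi^{<k}_L$, which agrees with $\varPhi^{<k}$ modulo $\mathcal{I}^{<k,*}(K)$ for $L$ large. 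The same reduction as in Lemma~\ref{lem:well_definedness_solution} then identifies the Maurer--Cartan equation on $K$ with the vanishing of $\mathbf{P}_L[\varPhi^{<k}_L,\varPhi^{<k}_L]$ modulo $\mathcal{I}^{<k,*}(K)$ for $L$ sufficiently large.

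To verify this vanishing, I would analyze each pair of trees $(\lrtr_1,\lrtr_2)$ joining to a tree $\lrtr$, whose contribution is $\mathbf{P}_L(\alpha_{\lrtr_1}\wedge\alpha_{\lrtr_2})[g_{\lrtr_1},g_{\lrtr_2}]$. When $m_{\lrtr_1}\nparallel m_{\lrtr_2}$, the hyperplane $m_{\lrtr_1}^\perp\cap m_\lrtr^\perp$ belongs to the arrangement defining $\mathfrak{P}_{m_\lrtr}$, so placing $v^{m_\lrtr}$ in the relative interior of a maximal cone of $\mathfrak{P}_{m_\lrtr}$ ensures that $v^{m_\lrtr}$ is transverse to $P_{\lrtr_1}\cap P_{\lrtr_2}\subset m_{\lrtr_1}^\perp\cap m_{\lrtr_2}^\perp$; consequently $\tau^{v^{m_\lrtr}}_{-L}(K)$ is disjoint from $P_{\lrtr_1}\cap P_{\lrtr_2}$ for $L$ large and $\mathbf{P}_L(\alpha_{\lrtr_1}\wedge\alpha_{\lrtr_2})$ is exponentially small on $K$. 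When $m_{\lrtr_1}\parallel m_{\lrtr_2}$, I would substitute for the tropical argument the observation, afforded by Lemma~\ref{lem:solution_asy_support_labeled}, that each $\alpha_{\lrtr_i}$ is a $1$-form with leading normal $m_{\lrtr_i}\in M$, so the collinearity of the $m_{\lrtr_i}$'s forces $\alpha_{\lrtr_1}\wedge\alpha_{\lrtr_2}\in\mathcal{W}^{-\infty}_2(N_\real)$ via the non-transversality clause of Lemma~\ref{lem:support_product}. The hard part will be this parallel case: in the tropical setting it is trivialized by $\mathfrak{h}_{m,0}=\{0\}$, while here it must be extracted analytically from the precise normal-direction structure of $\alpha_{\ltr}$. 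The commutativity hypothesis $[g_{j_1i},g_{j_2i}]=0$ on initial walls itself plays only the organizational role of making the homogeneous expansion $\log(\Theta_i)=\sum_j g_{ji}$ a commuting sum so that the tree expansion is meaningful, and is not directly used in the verification of the Maurer--Cartan equation.
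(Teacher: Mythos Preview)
Your proposal is correct and follows essentially the same approach as the paper's proof: truncate to $\mathcal{H}^{<k,*}$, pass to the cutoff operators $\mathbf{H}_L$, $\mathbf{P}_L$ on a pre-compact $K$, and dispose of each pair $(\lrtr_1,\lrtr_2)$ according to whether $m_{\lrtr_1}\parallel m_{\lrtr_2}$. In the non-parallel case the paper uses exactly your observation that $P_{\lrtr_1}\cap P_{\lrtr_2}\subset m_{\lrtr_1}^\perp\cap m_{\lrtr}^\perp$ lies in the $(r-2)$-skeleton of $\mathfrak{P}_{m_\lrtr}$, so the flow direction $v^{m_\lrtr}$ is transverse to it; in the parallel case the paper likewise invokes the non-transversality clause of Lemma~\ref{lem:support_product} to conclude $\alpha_{\lrtr_1}\wedge\alpha_{\lrtr_2}\in\mathcal{W}^{-\infty}$. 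Your closing remark that the commutativity hypothesis $[g_{j_1 i},g_{j_2 i}]=0$ is not used in this proof is also accurate; the paper only invokes it later, in Theorem~\ref{thm:consistent_scattering_nontropical}. One small quibble: you label the parallel case ``the hard part,'' but in fact it is dispatched in one line here just as in the paper---the genuinely new point in the non-tropical setting is the careful choice of $v^m$ that makes the \emph{non}-parallel case work.
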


\begin{proof}
The equality in the statement of the lemma holds by the same reasoning as in the tropical case. So we focus on proving that $\varPhi$ is a Maurer--Cartan element.

	Fix $k \in \mathbb{Z}_{>0}$ and work in $\mathcal{H}^{<k,*}$. Let $K\subset N_\real$ be a compact subset. As in the proof of Lemma \ref{lem:well_definedness_solution}, we must show that for sufficiently large $L >0$ we have $\mathbf{P}_L[\varPhi_L,\varPhi_L] = 0$ on $K$, where $\varPhi_L := \incoming - \mathbf{H}_L [\varPhi_L,\varPhi_L]$ and $\mathbf{P}_{L,m}(\beta) := (\tau^m_{-L})^*(\beta)$. Consider labeled ribbon trees $\lrtr_1,\lrtr_2$ with associated terms $\alpha_{\lrtr_i,L} g_{\lrtr_i}$, where $\alpha_{\lrtr_i,L} \in \mathcal{W}^1_{P_i}(N_\real) \cap \mathcal{W}^1_1(N_\real)$ and $P_i = m_{\lrtr_i}^\perp$. We can assume that $m_{\lrtr_1} \nparallel m_{\lrtr_2}$, as otherwise $\alpha_{\lrtr_1,L} \wedge \alpha_{\lrtr_2,L} \in \mathcal{W}^{-\infty}(N_\real)$ and hence $[\alpha_{\lrtr_1,L} g_{\lrtr_1}, \alpha_{\lrtr_2,L} g_{\lrtr_2}] = 0 \in \mathcal{H}^{<k,*}$. Joining the trees $\lrtr_1,\lrtr_2$ to obtain $\lrtr$, the transversal intersection $P_{\lrtr_1} \cap P_{\lrtr_2}\subset P_\lrtr = m_{\lrtr}^{\perp}$ is contained in the $(r-2)$-dimensional strata of the polyhedral decomposition $\mathfrak{P}_{m_\lrtr}$. By our choice of $v^{m_{\lrtr}}$, the flow $\tau^{m_{\lrtr}}$ is not tangent to $P_{\lrtr_1} \cap P_{\lrtr_2}$. As in proof of Lemma \ref{lem:well_definedness_solution}, we can therefore choose $L$ sufficiently large so that $(\tau_{-L}^{m_{\lrtr}})^*(\alpha_{\lrtr_1} \wedge \alpha_{\lrtr_2}) = 0$ on $K$.
	\end{proof}

Observe that, by the construction of $v^{m}$, the line $\lambda$ is disjoint from each $P_\lrtr$.

Having proved Lemma \ref{lem:hall_algebra_well_definedness_solution}, the conclusion of Theorem \ref{thm:theorem_1_modified} follows after replacing $M_\real$ with $N_\real$. 

\subsubsection{Consistent scattering diagrams from non-tropical Maurer--Cartan solutions}\label{sec:hall_algebra_MC_relation}

We establish the relation between Maurer--Cartan solutions $\varPhi \in \hat{\mathcal{H}}^*$ and consistent scattering diagrams. By construction $\varPhi = \varprojlim_k \varPhi^{<k}$, where $\varPhi^{<k} = \sum_{P} \alpha_{P} g_P$ is a finite sum indexed by polyhedral subsets $P$ of $N_\real$. From the discussion in Section \ref{sec:non_tropical_diagram}, we have $g_P \in \mathfrak{h}^{<k}$ and $\alpha_P \in \mathcal{W}^1_{\tilde{P}}(N_\real) \cap \mathcal{W}^1_1(N_\real)$, where $\tilde{P} \subset N_{\mathbb{R}}$ is a codimension one polyhedral subset containing $P$ and $\alpha_{P \vert N_\real \setminus P} \in \mathcal{W}^{-\infty}(N_\real \setminus P)$. Similarly to Section \ref{sec:wall_crossing_flat_sections}, consider a polyhedral decomposition $\mathcal{P}^{<k}$ of $\bigcup_{0 \neq g_P \in \mathfrak{h}^{<k}} P$ such that, for every $0 \leq l \leq r-1$ and $\sigma \in \mathcal{P}^{<k,[l]}$, we have $\sigma \subset P$ for some $\dim_\real(P) = r-1$ and $P \cap \sigma$ is a facet of $\sigma$ for every $P$ with $0 \neq g_P \in \mathfrak{h}^{<k}$.

Let $U$ be a convex open set such that $U \cap \tau = \emptyset$ whenever $\tau \neq \sigma \in \mathcal{P}^{<k}$ and $U \setminus \sigma = U_+ \sqcup U_-$ is a decomposition into connected components. Since $U$ is contractible, $H^1(\mathcal{H}^{<k,*}(U),d) = 0$, whence $\varPhi^{<k}|_{U}$ is gauge equivalent to $0$, that is, $e^{\text{ad}_{\facs}}  d e^{-\text{ad}_{\facs}} = d_{\varPhi^{<k}}$ on $U$, where $\facs$ satisfies $\facs_{\vert U_-} = 0$. We will use a homotopy operator $\hat{\mathsf{I}}$ acting on $\mathcal{H}^{<k,*}$ to solve for $\facs$. Assume that we are given a chain of affine subspaces $U_{\bullet}$ of $U$, as in Section \ref{sec:integral_lemma}, such that $v_{1}$ is transversal to $\sigma$ and $U_{1,+}$, the half space over which $v_{1}$ points inwards, contains $U_{+} \cup \sigma $. See Figure \ref{fig:Open_subset_relation}. Such a choice yields a homotopy operator $ I : \mathcal{W}^{0}_*(U) \rightarrow \mathcal{W}^{0}_{*-1}(U)$ by equation \eqref{eqn:generalized_integral_operator} which, by Lemma \ref{lem:integral_lemma_modified}, descends to $\mathcal{W}^0_*(U)/\mathcal{W}^{-1}_*(U)$. As in Definition \ref{pathspacehomotopy}, we then obtain a homotopy operator $\hat{\mathsf{I}}$, defined using (the $m$-independent) $I$ in place of $\mathsf{H}_m$, and operators $\hat{\mathsf{P}}$ and $\hat{\iota}$ on $\hat{\mathcal{H}}^*(U)$.

\begin{figure}[h]
	\centering
	\includegraphics[scale=0.6]{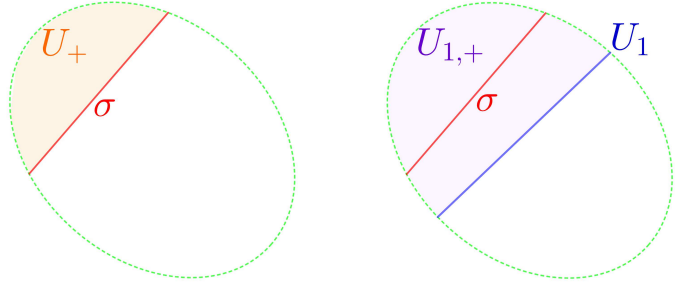}
	\caption{The sets $U_{1,+}$ and $U_+$.}
	\label{fig:Open_subset_relation}
\end{figure}

Arguments of \cite[\S 4]{kwchan-leung-ma} show that the unique gauge satisfying $\hat{\mathsf{P}}(\facs) = 0$ is given by $\facs = \varprojlim_k \facs^{<k}$, where $\facs^{<k} \in \mathcal{H}^{<k,0}$ is constructed inductively by
\[
%\label{eqn:vartheta_s_definition}
\facs^{<(k+1)} = -\hat{\mathsf{I}} \Big(\varPhi + \sum_{l\geq 0 }\frac{\ad_{\facs^{<k}}^l}{(l+1)!} d\facs^{<k} \Big).
\]
Using Lemmas \ref{lem:support_product} and \ref{lem:integral_lemma_modified}, we inductively obtain
\begin{align}
\facs^{<k} &\in  \Big(\frac{\mathcal{W}^{0}_{\overline{U}_+}(U) \cap \mathcal{W}^{0}_0(U) + \mathcal{W}^{-1}_0(U) }{ \mathcal{W}^{-1}_0(U)} \Big) \otimes_{\mathbb{C}} \mathfrak{h}^{<k}_{\sigma^\perp}, \\
%d\facs^{<k} &\in  \big(\mathcal{W}^{1}_{\sigma}(U) \cap \mathcal{W}^{1}_1(U) \big) \otimes_{\mathbb{C}} \mathfrak{h}^{<k}_{L_\sigma}\\
\frac{\ad_{\facs^{<s}}^l}{(l+1)!} d\facs^{<s} &\in \Big( \frac{\mathcal{W}^{1}_{\sigma}(U) \cap \mathcal{W}^{1}_1(U)+ \mathcal{W}^{0}_1(U)} { \mathcal{W}^{0}_1(U)} \Big) \otimes_{\mathbb{C}} \mathfrak{h}^{<k}_{\sigma^\perp} \label{eqn:non_tropical_solving_gauge}
\end{align}
for all $s \leq k$ and $l \geq 0$, where $\sigma^\perp $ is the subspace perpendicular to the tangents of $\sigma$.

Setting $l=0$ in equation \eqref{eqn:non_tropical_solving_gauge} gives $(d \facs^{<k})_{\vert U_+} = 0$. This suggests that $\lim_{\hp \rightarrow 0} \facs^{<k}|_{U_+}$ be treated as a (constant) element of $\mathfrak{h}^{<k}$. Denote this element by $\log(\Theta_\sigma^{<k})$. Note that $\log(\Theta_\sigma^{<k})$ is independent of $U$, as follows from the uniqueness of $\facs$ on the common intersection of two such open sets. 

\begin{remark}\label{rem:tropical_vs_non_tropical}
	When $\mathfrak{h}$ is tropical, we have $
	\varPhi_{\vert U} = \sum_{k \geq 1} \sum_{\substack{\ltr \in \ltree{k}\\ P_\ltr \cap U \neq \emptyset}} \frac{1}{|\text{Aut}(\ltr)|} \alpha_{\ltr} g_\ltr
	$ with $g_\ltr \in \mathfrak{h}_{m_\ltr,n_\ltr}$. This forces $[g_{\ltr_1},g_{\ltr_2} ]$ to vanish whenever $P_{\ltr_i} \cap U \neq \emptyset$, $i=1,2$, because $\dim_\real(P_{\ltr_i}) = r-1$ and $P_{\ltr_i} \cap U = \sigma \cap U$ by our choice of polyhedral decomposition $\mathcal{P}^{<k}$. The normals $n_{\ltr_1}$ and $n_{\ltr_2}$ to $P_{\ltr_1}$ and $P_{\ltr_2}$ are parallel while the vectors $m_{\ltr_i}$ are tangent to $P_{\ltr_i} \cap U = \sigma \cap U$, $i=1,2$. This gives $\langle m_{\ltr_j} ,n_{\ltr_i} \rangle = 0$ for $i,j =1,2$. By an induction argument, this implies $\frac{\ad_{\facs^{<s}}^l}{(l+1)!} d\facs^{<s} = 0 \in \mathcal{H}^*(U)$ for all $s,l$. If $\mathfrak{h}$ is not tropical, then $\frac{\ad_{\facs^{<s}}^l}{(l+1)!} d\facs^{<s}$ need not vanish and so contributes to the recursive construction of $\facs^{<k}$.
\end{remark}

\begin{definition}\label{def:non_tropical_diagram_from_solution}
		Let $\varPhi$ be as in Theorem \ref{thm:theorem_1_modified}. For each $k \in \inte_{>0}$, let $\mathscr{D}(\varPhi^{<k})$ be the scattering diagram with walls $\mathbf{w}_{\sigma} = (P_{\sigma} = \sigma,\Theta_{\sigma}^{<k})$ indexed by the maximal cells $\sigma \in \mathcal{P}^{<k,[r-1]}$.  
\end{definition}

Denote by $\mathscr{D}(\varPhi)$ the scattering diagram determined by $\{\mathscr{D}(\varPhi^{<k})\}_{k \in \inte_{>0}}$.

\begin{theorem}
\phantomsection
\label{thm:consistent_scattering_nontropical}

\begin{enumerate}
\item The diagram $\mathscr{D}(\varPhi)$ is consistent and the path ordered product $\Theta_{\lambda_{\vert [0,1]}, \mathscr{D}(\varPhi)}$ is equal to the product $g:=\Theta_1 \cdots \Theta_r$ of the wall-crossing factors of the initial walls.

\item The scattering diagram $\mathscr{D}(\varPhi)$ is equivalent to the scattering diagram (or $\mathfrak{h}$-complex) $\mathscr{D}(g)$ constructed in \cite[Lemma 3.2]{bridgeland2017scattering}. 
\end{enumerate}
\end{theorem}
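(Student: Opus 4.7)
The plan is to deduce part (1) from a global gauge-theoretic analysis of the Maurer--Cartan element $\varPhi$, and then derive part (2) from part (1) together with the uniqueness assertion in Bridgeland's construction.

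First I would prove consistency. Fix $k \in \inte_{>0}$ and work in $\mathcal{H}^{<k,*}$, which contains only finitely many walls. Given any embedded loop $\gamma$ in $N_\real \setminus \Joints(\mathscr{D}(\varPhi^{<k}))$ intersecting $\mathscr{D}(\varPhi^{<k})$ generically, choose a simply connected open tubular neighborhood $U \supset \gamma$ avoiding $\Joints(\mathscr{D}(\varPhi^{<k}))$. By the Poincar\'e lemma, $H^1(\mathcal{H}^{<k,*}(U), d) = 0$, so there exists a global gauge $\facs \in \mathcal{H}^{<k,0}(U)$ satisfying $e^{\ad_\facs} d e^{-\ad_\facs} = d_{\varPhi^{<k}}$. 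The analysis in Section \ref{sec:non_tropical_diagram} applied locally to each maximal cell $\sigma \in \mathcal{P}^{<k}$ which $\gamma$ crosses identifies $\lim_{\hp \to 0} \facs$ as a piecewise constant function on $U \setminus \Supp(\mathscr{D}(\varPhi^{<k}))$ whose jump across $\sigma$ equals $\log(\Theta_\sigma^{<k})$. Since $\facs$ is single-valued on $U$, the total jump accumulated along $\gamma$ vanishes, which translates by the standard wall-crossing argument into $\Theta_{\gamma, \mathscr{D}(\varPhi^{<k})} = \Id$. Passing to the inverse limit over $k$ gives consistency of $\mathscr{D}(\varPhi)$.

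Next I would compute the path-ordered product along $\lambda|_{[0,1]}$. By the construction of the directions $v^m$ preceding Lemma \ref{lem:hall_algebra_well_definedness_solution}, the line $\lambda$ is disjoint from every $P_\ltr$ arising from a labeled tree $\ltr$ with at least one internal vertex. Consequently the only walls of $\mathscr{D}(\varPhi)$ crossed by $\lambda$ lie on the initial hyperplanes $P_i = f_i^\perp$. Since the slopes $0 < a_1 < a_2 < \dots < a_r$ are algebraically independent, $\lambda(t) = (-1,\dots,-1) + t(a_1,\dots,a_r)$ meets each $P_i$ transversally at the distinct time $t_i = 1/a_i$, and these crossings occur in the order $P_r, P_{r-1}, \dots, P_1$. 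A local application of the gauge-theoretic argument near each $P_i$, using the fact that the leading contribution to $\varPhi$ near $P_i$ is precisely $\incoming^{(i)} = -\delta_{P_i} \log(\Theta_i)$, identifies the wall-crossing automorphism picked up at $P_i$ as $\Theta_i$. With the convention that later crossings appear on the left, the path-ordered product is thus $\Theta_1 \Theta_2 \cdots \Theta_r = g$.

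For part (2) I would invoke the uniqueness assertion in \cite[Lemma 3.2]{bridgeland2017scattering}: up to equivalence there is a unique consistent $\mathfrak{h}$-complex $\mathscr{D}(g)$ whose path-ordered product along $\lambda|_{[0,1]}$ equals $g = \Theta_1 \cdots \Theta_r$. Part (1) shows $\mathscr{D}(\varPhi)$ enjoys exactly this property, so it is equivalent to $\mathscr{D}(g)$. The hard part will be the asymptotic analysis needed to identify $\lim_{\hp \to 0} \facs$ as a piecewise constant function on the full neighborhood $U$ of $\gamma$ with jumps $\log(\Theta_\sigma^{<k})$ across each crossed wall $\sigma$. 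The formulas in equation \eqref{eqn:non_tropical_solving_gauge} establish this on the specific local neighborhoods used to define $\Theta_\sigma^{<k}$, but verifying compatibility across walls requires showing that any two gauges trivializing $\varPhi^{<k}$ on overlapping open sets differ by a constant element of $\hat{G}^{<k}$, so their jump data agree; this in turn reduces to the fact that any flat element of $\mathcal{H}^{<k,0}(U)$ which restricts to zero on some chamber of $U \setminus \Supp(\mathscr{D}(\varPhi^{<k}))$ is zero, which follows inductively from asymptotic support properties.
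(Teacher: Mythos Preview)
Your consistency argument has a genuine topological error: a tubular neighborhood of an embedded loop $\gamma$ is homotopy equivalent to $S^1$, hence is \emph{never} simply connected, so you cannot invoke $H^1(\mathcal{H}^{<k,*}(U),d)=0$ to produce a global gauge on such a $U$. Any simply connected open set containing $\gamma$ must contain a disk bounded by $\gamma$, and when $\gamma$ links a joint (which is exactly the case in which consistency has content) that disk meets $\Joints(\mathscr{D}(\varPhi^{<k}))$. The paper handles this by working not on a neighborhood of the loop but on a convex neighborhood of each joint $\mathfrak{j}$, decomposing $\varPhi_{\vert U}$ there as a sum of wall terms plus an error supported on $\mathfrak{j}$, and then invoking the local analysis of \cite{kwchan-leung-ma} exactly as in Proposition~\ref{prop:consistence_from_solution}. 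Your final paragraph gestures toward a patching argument, but that patching is precisely the joint analysis you are trying to avoid; there is no shortcut via a single global gauge on an annular region.

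There is a second gap in your computation of the wall-crossing factor at an initial wall $P_i$. In the non-tropical setting the gauge $\facs$ is built recursively, and the correction terms $\frac{\ad_{\facs^{<s}}^l}{(l+1)!}\,d\facs^{<s}$ from equation~\eqref{eqn:non_tropical_solving_gauge} need not vanish (see Remark~\ref{rem:tropical_vs_non_tropical}). Near $P_i$ the relevant commutators are $[g_{j_1 i}, g_{j_2 i}]$, and it is precisely the standing hypothesis $[g_{j_1 i}, g_{j_2 i}]=0$ that kills these corrections and forces the gauge-determined wall-crossing factor to equal $\Theta_i$; you never invoke this assumption. For part~(2) your idea is right, though the paper cites \cite[Proposition~3.3]{bridgeland2017scattering} (the uniqueness statement) rather than Lemma~3.2 and applies it level by level in $k$.
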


\begin{proof}
	The proof of Proposition \ref{prop:consistence_from_solution} carries over with minor changes to show that $\mathscr{D}(\varPhi)$ is consistent. As noted after Lemma \ref{lem:hall_algebra_well_definedness_solution}, the path $\lambda_{\vert [0,1]}$ does not intersect any walls of $\mathscr{D}(\varPhi)$ which are supported on $P_\lrtr$. By construction, $\lambda_{\vert [0,1]}$ crosses the initial walls $\mathbf{w}_r,\dots,\mathsf{w}_1$ consecutively. The assumption $[g_{j_1 i},g_{j_2 i}] = 0$ ensures that the wall-crossing factor $\Theta_i$ from Definition \ref{def:non_tropical_diagram_from_solution} agrees with the wall-crossing factor of $\mathbf{w}_i$ determined by the gauge $\facs$, as constructed above; see also Remark \ref{rem:tropical_vs_non_tropical}. The path ordered product is therefore as stated. The equivalence between $\mathscr{D}(\varPhi)$ and $\mathscr{D}(g)$ is achieved by using \cite[Proposition 3.3]{bridgeland2017scattering} to show that $\mathscr{D}(\varPhi^{<k})$ and $\mathscr{D}(g^{<k})$ are equivalent for each $k \in \inte_{>0}$.
\end{proof}

\begin{example}
If the quiver $\mathsf{Q}$ is acyclic or, more generally, the quiver with potential $(\mathsf{Q},W)$ is genteel in the sense of \cite[\S 11.5]{bridgeland2017scattering} (and the motivic Hall algebra is modified so as to include the potential), then the consistent completion $\mathscr{D}(\varPhi)$ of the initial scattering diagram $\mathscr{D}_{in}$, with $\Theta_i = [\mathcal{M}_{\langle i\rangle} \rightarrow \mathcal{M}]$, is the Hall algebra scattering diagram of \cite{bridgeland2017scattering}. In the non-genteel case, additional walls must be added to $\mathscr{D}_{in}$ so as to recover the Hall algebra scattering diagram.
\end{example}

\subsubsection{Non-tropical theta functions}\label{sec:hall_algebra_theta_function}

Following \cite{bridgeland2017scattering}, we consider a $M_{\sigma} \oplus N$-graded algebra $B =\bigoplus_{(m, n) \in M_\sigma \times N} B_{m,n}$ together with a $M_{\sigma}$-graded $\mathfrak{h}$-action by derivations so that $\mathfrak{h}_m \cdot B_{0, n} = 0$ whenever $\langle m,n \rangle = 0$. We assume that, for each $n\neq 0$, there is a distinguished element $z^{n} \in B_{0,n}$ which we use to identify $B_{0,n}$ with $\comp \cdot z^{n}$. As in Definition \ref{def:tropical_dga}, define a  (not-necessarily graded commutative) dg algebra $\mathcal{B}^*(U)$. The dg Lie algebra $\mathcal{H}^*(U)$ acts on $\mathcal{B}^*(U)$, so we can again talk about theta functions as elements in $\Ker(d_{\Phi})$. Define the flow $\tau^{m,n}$ by choosing $v^{m,n} \in (-\sigma^{\vee} \cap N) \setminus \bigcup_{\substack{m_1,m_2 \neq 0\\ m_1+m_2=m}} m_1^{\perp}$. This defines the propagator $\mathbf{H}_{m,n}$ on $\mathcal{B}^*_{m,n}$. 

Set $N_{\sigma}^+ = \{ n \in N \mid \langle m, n \rangle \geq 0 \; \forall \, m \in M_{\sigma}^+ \} \setminus \{0\}$. For each $n \in N_{\sigma}^+$, define $\theta_n$ by equation \eqref{eqn:theta_function_definition}, where an edge $e_j$ in the core $\mathfrak{c}_{\mtr}$ is labeled by a pair $(m_{e_j},n) \in M_{\sigma}^+ \times N_{\sigma}^+$ (instead of by $m_{e_j}$, as described after Definition \ref{def:weighted_tree_tropical}) and the incoming edge $\breve{e}$ of $\mtr$ is labeled by $n$. We argue that $\varPhi + \theta_n \in \hat{\mathcal{H}} \oplus \hat{\mathcal{B}}[-1]$ is a Maurer--Cartan element by showing that $\mathbf{P}_{L}[\varPhi_{L} + \theta_{n,L}, \varPhi_{L}+\theta_{n,L}] = 0$ on a compact subset $K\subset N_{\real}$ for sufficiently large $L$. Here $\varPhi_L+ \theta_{n,L}$ is defined using a cut-off propagator. It suffices to consider a labeled ribbon tree $\lrtr$ and a marked ribbon tree $\mrtr$ with $g_{\lrtr}  \cdot a_{\mrtr} \neq 0$. Join $\lrtr$ and $\mrtr$ to form $\hat{\mrtr}$. Then $v^{m_{\hat{\mrtr}},n}$ is not tangent to $P_{\lrtr} \supset P_{\lrtr} \cap P_{\mrtr}$ and hence the proof of Lemma \ref{lem:hall_algebra_well_definedness_solution} shows that $(\tau^{m_{\hat{\mrtr}},n}_{-L})^* (\alpha_{\lrtr} \wedge \alpha_{\mrtr}) = 0$ on $K$. It follows that $\theta_n \in \text{Ker}(d_{\varPhi})$. The argument from Theorem \ref{thm:wall_crossing_for_sections} then shows that $\theta_n$ satisfies the wall-crossing formula. 

\begin{prop}
	For any path $\gamma \subset N_\real \setminus \Joints(\mathscr{D}(\varPhi))$ from $Q$ to $Q^{\prime}$, the wall-crossing formula
	\begin{equation}\label{eq:HallThetaWCF}
	\theta_n(Q^{\prime}) = \Theta_{\gamma,\mathscr{D}(\varPhi)} (\theta_{n}(Q))
	\end{equation}
	holds.
		
	Moreover, if $\mathfrak{h} = \mathfrak{h}_{\mathsf{Q}}^{\Upsilon}$ is the motivic Hall--Lie algebra, then the Hall algebra theta function $\vartheta_{n,Q}$, as defined in \cite[\S 10.5]{bridgeland2017scattering}, is related to $\theta_n$ by the formula $\vartheta_{n,Q} = \theta_n(Q)$.
	\end{prop}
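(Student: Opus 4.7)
The plan is to establish the two claims in order, with the first setting up the tools for the second. For the wall-crossing identity \eqref{eq:HallThetaWCF}, we already know $\theta_n \in \Ker(d_\varPhi)$, and the diagram $\mathscr{D}(\varPhi)$ was constructed in Section \ref{sec:hall_algebra_MC_relation} by precisely the gauge-theoretic mechanism that underlies Theorem \ref{thm:wall_crossing_for_sections}. The proof of that theorem therefore carries over almost verbatim: for each maximal cell $\sigma \in \mathcal{P}^{<k,[r-1]}$ and a contractible neighborhood $U$ of $\Int_{re}(\sigma)$ split by $\sigma$ into $U_\pm$, one uses the homotopy operator $\hat{\mathsf{I}}$ to solve for the unique gauge $\facs$ satisfying $\facs|_{U_-}=0$ and $e^{\ad_\facs}\,d\,e^{-\ad_\facs} = d_\varPhi$; by construction $\facs|_{U_+} = \log(\Theta_\sigma^{<k})$. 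Flatness of $e^{-\ad_\facs}(\theta_n)$ on $U$ then gives $\theta_n(Q') = \Theta_\sigma(\theta_n(Q))$ for $Q, Q'$ on opposite sides of $\sigma$, and concatenating along $\gamma$ and passing to the inverse limit in $k$ yields \eqref{eq:HallThetaWCF}.

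For the identification $\vartheta_{n,Q} = \theta_n(Q)$, the strategy is to show that both functions agree on a common ``positive chamber'' and extend by wall-crossing. Bridgeland's $\vartheta_{n,Q}$ is characterized in \cite[\S 10.5]{bridgeland2017scattering} by a normalization $\vartheta_{n,Q_0} = z^n$ in a distinguished chamber $\mathfrak{C}_+$ together with propagation via the very wall-crossing formula \eqref{eq:HallThetaWCF}. The whole problem thus reduces to exhibiting a $Q_0 \in \mathfrak{C}_+$ with $\theta_n(Q_0) = z^n$. A natural choice is any $Q_0$ deep in $\Int(\sigma^\vee) \setminus \Supp(\mathscr{D}(\varPhi))$: since $v^{m_e,n} \in -\sigma^\vee$ for every edge of the core of a marked tree, the backward flow from $Q_0$ along the core stays in $\sigma^\vee$ and never meets an initial wall $P_i = f_i^\perp$. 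An analogous flow argument for the labeled subtrees attached at each vertex of the core then shows that no nontrivial marked tree $\mrtr$ (with $k \geq 2$) admits a tropical configuration ending at $Q_0$, so each integral $\alpha_\mrtr$ lies in $\mathcal{W}^{-\infty}_0$ near $Q_0$. Only the trivial $k=1$ contribution $z^n$ survives, giving $\theta_n(Q_0) = z^n$.

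The hard part will be to verify that the positive chamber in which $\theta_n$ trivializes coincides, or at least is connected by a controlled wall-crossing, with Bridgeland's reference chamber. This compatibility reduces to a careful comparison of the propagator directions $v^{m,n} \in -\sigma^\vee$ fixed in Section \ref{sec:hall_algebra_theta_function} with the asymptotic ``incoming direction'' implicit in Bridgeland's construction of stability-reduced Hall algebra integrals; once the two normalization chambers are identified, the wall-crossing formula from the first part propagates the equality $\vartheta_{n,Q} = \theta_n(Q)$ to all of $N_\real \setminus \Supp(\mathscr{D}(\varPhi))$.
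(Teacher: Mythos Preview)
Your proposal is correct and follows the paper's approach. The wall-crossing formula is handled identically (via the gauge argument of Theorem \ref{thm:wall_crossing_for_sections}, applied using the construction of $\mathscr{D}(\varPhi)$ in Section \ref{sec:hall_algebra_MC_relation}), and for the identification with $\vartheta_{n,Q}$ you use the same strategy: normalize in $\Int(\sigma^\vee)$ and propagate by wall-crossing.

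Two places where you overcomplicate. First, for showing $\theta_n(Q_0)=z^n$, the paper's argument is slightly cleaner than yours: \emph{all} walls (not only the initial ones) lie in hyperplanes $m^\perp$ with $m\in M_\sigma^+$, so $\Int(\sigma^\vee)\cup -\Int(\sigma^\vee)$ contains no wall at all. Since the core velocities $v^{m,n}$ lie in $-\sigma^\vee$, backward flow from $Q_0\in\Int(\sigma^\vee)$ stays in $\Int(\sigma^\vee)$, which forces each attaching vertex $\varsigma(v_i)$ (necessarily on the wall $P_{\ltr_i}$) to be impossible. No separate ``analogous flow argument for the labeled subtrees'' is needed. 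Second, your final paragraph anticipates a hard chamber-matching step; in fact the normalization $\vartheta_{n,Q}=z^n$ for $Q\in\Int(\sigma^\vee)$ \emph{is} Bridgeland's characterization, so once $\theta_n(Q_0)=z^n$ is established the identification is immediate.
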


\begin{proof}
	It remains to prove the final statement. Since $\theta_n$ and $\vartheta_{n,Q}$ satisfy the wall-crossing formula, it suffices to show that $\theta_{n}(Q) = z^n$ for $Q \in \text{int}(\sigma^\vee)$, since this condition characterizes $\vartheta_{n,Q}$. Note that there are no walls in $\text{int}(\sigma^{\vee}) \cup -\text{int}(\sigma^{\vee})$, as all walls lie in a hyperplane of the form $m^{\perp}$ for some $m \in \sigma$. Consider $\mtr \in \mtree{k}$ with core $\mathfrak{c}_{\mtr} = (e_0,\dots,e_l)$ and $v_1 = \partial_{in}(e_1),\dots,v_l = \partial_{in}(e_l)$. Let $\varsigma$ be a marked tropical disk with $\varsigma(v_{\text{out}}) = Q$. Then $\varsigma(v_i) \notin \text{int}(\sigma^{\vee})$ since $\varsigma(v_i)$ lies on a wall and $\varsigma^{\prime}$ lies in $-\sigma^{\vee}$ when restricted to $\mathfrak{c}_{\mtr}$, by the choice of $v^{m,n}$. Therefore we cannot have $\varsigma(v_{\text{out}}) = Q$ unless $\mtr^{[0]} = \emptyset$, which corresponds to the trivial marked disk with $a_{\mtr} = z^n$.
	\end{proof}

\subsubsection{Hall algebra scattering diagrams and theta functions for acyclic quivers}\label{sec:hall_algebra_acyclic_quiver}

\begin{comment}
As mentioned in \cite{mandel2015refined}, since we have $n_\mathbf{w} \in \overline{N} \setminus \{0\}$ in Definition \ref{def:non_tropical_walls} all the walls $\mathbf{w}$'s are invariant under the translation by $K_\real$. Therefore, the diagram $\mathscr{D}_{in}$ and $\mathcal{S}(\mathscr{D}_{in})$ can be thought of diagram in the quotient $\overline{M}_\real$ and hence in $\overline{N}_\real$ as well, using the isomorphism via $\sem$ and is denoted by $\sem(\mathscr{D}_{in})$ and $\sem(\mathcal{S}(\mathscr{D}_{in}))$ respectively. 
\end{comment}

In Sections \ref{sec:non_tropical_diagram} and \ref{sec:hall_algebra_theta_function}, there was no canonical choice of the vectors $v^{m}$ and $v^{m,n}$. In this section we take $\mathfrak{h} = \mathfrak{h}_{\mathsf{Q}}^{\Upsilon}$ for an {\em acyclic} quiver $\mathsf{Q}$, where canonical choices can be made. Let $\omega : M \times M \rightarrow \mathbb{Z}$ be the skew-symmetrized Euler form of $\mathsf{Q}$, so that $\omega(f_i,f_j) = a_{ji}-a_{ij}$ with $a_{ij}$ the number of arrows from $i$ to $j$. Relabeling if necessary, we can arrange that $a_{ji} =0$ whenever $i < j$. We will assume that $\omega$ is non-degenerate. Define $\sem: M \rightarrow N$ so that $\langle m', \sem(m) \rangle =\omega (m',m)$. We are therefore in the setting of \cite{gross2018canonical} (but without the tropical assumption). The following conditions hold:
\begin{enumerate}
	\item The inequality $\omega(f_i,f_j)\leq 0$ holds whenever $i<j$.
	\item If a subset $\mathtt{I} \subset \mathsf{Q}_0$ satisfies $\omega(f_i,f_j) = 0$ for any $i,j \in \mathtt{I}$, then $\mathfrak{h}_{\mathtt{I}}:= \bigoplus_{m \in \oplus_{i \in \mathtt{I}}\inte_{\geq 0 }f_i} \mathfrak{h}_m$ is an abelian Lie subalgebra of $\mathfrak{h}$.
	\end{enumerate}
	
	Fix $m = (m_1,\dots,m_r) \in M_{\sigma}^+$ and write $m = m^{\leq i} + m^{>i}$ with $m^{\leq i} = (m_1,\dots,m_i,0,\dots,0)$ and $m^{>i} = (0,\dots,0,m_{i+1},\dots,m_r)$. The above conditions imply that $\omega(m^{\leq i},m^{>i})\leq0$ and hence $\langle m^{> i}, -\sem(m) \rangle \leq 0$. Moreover, if $\langle m^{> i}, -\sem(m) \rangle  = 0$ for all $i$, then $\omega(f_i,f_j) = 0$ for any $i,j \in \mathtt{I}_m$, where $\mathtt{I}_m = \{ 1\leq i\leq r \ | \ m_i \neq 0  \}$.

	We can now make the canonical choice $v^{m}:= -\sem(m)$, leading to a canonically defined Maurer--Cartan element $\varPhi \in \hat{\mathcal{H}}^1$, and so canonically defined $\overline{\mathfrak{M}}_{\ltr}(N_\real,\mathscr{D}_{in})$, $P_{\ltr}$ and $\alpha_{\ltr}$. The proof of Lemma \ref{lem:hall_algebra_well_definedness_solution} is modified as follows. To begin, we prove by induction that $P_{\lrtr} \subset \{ x \in N_\real \mid \langle m^{>i}_{\lrtr},x \rangle \leq 0 \}$ for each $i=1,\dots,r-1$ and all $\lrtr$. The initial case is trivial. For the induction step, spilt $\lrtr$ into $\lrtr_1,\lrtr_2$. Then we have $P_{\lrtr_1} \cap P_{\lrtr_2} \subset \{ x \in N_\real \mid \langle m^{>i}_{\lrtr},x \rangle \leq 0 \}$ and the relation $\langle m^{>i}_{\lrtr},-\sem(m_{\lrtr}) \rangle \leq 0$ gives the desired inclusion. To conclude the proof, consider trees $\lrtr_1$, $\lrtr_2$ joining to give $\lrtr$. If $\langle m^{>i}_{\lrtr}, \sem(m_{\lrtr}) \rangle >0$ for some $i$, then by taking the hyperplane $(m^{>i}_{\lrtr})^{\perp} \cap m_{\lrtr}^{\perp}$ which separates $P_\lrtr$ and $\sem(m_\lrtr)$ in $m^{\perp}_\lrtr$, we can choose $L$ sufficiently large so that $\tau^{m_\lrtr}_{-L}(K) \cap P_\lrtr = \emptyset$ for any compact subset $K\subset N_\real$. Otherwise, the restriction of the Lie bracket to $\mathfrak{h}_{\mathtt{I}_{m_\lrtr}}$ is zero, which guarantees $[g_{\lrtr_1},g_{\lrtr_2}] = 0$. 

The constructions of Sections \ref{sec:non_tropical_diagram} and \ref{sec:hall_algebra_MC_relation} therefore produce a Maurer--Cartan solution $\varPhi$ and a consistent scattering diagram $\mathscr{D}(\varPhi)$. Let us show that the path ordered product along $\lambda$ is again $\Theta_1 \cdots \Theta_r$. It suffices to argue that $\lambda \cap P_{\lrtr} = \emptyset$ for any $\lrtr^{[0]} \neq \emptyset$, that is, $\lambda$ intersects only the initial walls. We have $P_{\lrtr} \subset m_{\lrtr}^\perp \cap \{ x \in N_\real \mid \langle m^{>i}_{\lrtr},x \rangle \leq 0 \}$ for each $i=1,\dots,r-1$ and all $\lrtr$. Let $b=(b_1,\dots,b_r) \in \lambda \cap m_{\lrtr}^{\perp}$. Then $b_1<\cdots<b_r$ and $b_i<0$ for the smallest $i$ such that $0 \neq m_{\lrtr}^{>i} \neq m_{\lrtr}$. Such an index $i$ exists because $m_{\lrtr}$ is not a multiple of a standard basis vector of $M$, as $P_{\lrtr}$ is not an initial wall. Therefore $\langle m_{\lrtr}^{>i}, b \rangle >0$ and hence $b \notin P_{\lrtr}$, as desired.

Motivated by the definition of Hall algebra broken lines \cite{cheung2016tropical}, define the flow $\tau^{m,n}$ using $v^{m,n} := -\sem(m) - n$. This defines $\mathbf{H}_{m,n}$, as in Section \ref{sec:hall_algebra_theta_function}, and so $\theta_n$ using equation \eqref{eqn:theta_function_definition}. We argue that $\varPhi + \theta_n$ is a Maurer--Cartan element of $\hat{\mathcal{H}} \oplus \hat{\mathcal{B}}[-1]$. As in Section \ref{sec:hall_algebra_theta_function}, we can show that $P_{\mrtr} \subset \{ x \in N_{\real} \ | \ \langle m^{>i}_{\mrtr}, x \rangle \leq 0\}$ for all marked ribbon trees $\mrtr$, since $\langle m_{\mrtr}^{>i} , -\sem(m_{\mrtr}) -n \rangle \leq 0$. Consider a labeled ribbon tree $\lrtr$ and a marked ribbon tree $\mrtr$ joining to give $\hat{\mrtr}$. If $\langle m_{\hat{\mrtr}}^{>i}, \sem(m_{\hat{\mrtr}})+ n \rangle =0$ for all $i$, then $g_{\lrtr} \cdot a_{\mrtr} = 0$, otherwise there exists an $i$ such that $\langle m_{\hat{\mrtr}}^{>i}, \sem(m_{\hat{\mrtr}})+ n \rangle >0$ and hence the hyperplane $(m_{\hat{\mrtr}}^{>i})^{\perp}$ would separate $P_{\lrtr} \cap P_{\mrtr}$ and $\sem(m_{\hat{\mrtr}})+ n$. We conclude that $\tau^{m_{\hat{\mrtr}},n}_{-L}(K) \cap (P_{\lrtr} \cap P_{\mrtr}) = \emptyset$ on a compact $K\subset N_\real$ for large enough $L$. This shows $\varPhi + \theta_n$ is a Maurer--Cartan solution.

\begin{theorem}
\phantomsection
\label{thm:non_tropical_WCF}
Let $\mathfrak{h} = \mathfrak{h}_{\mathsf{Q}}^{\Upsilon}$ for acyclic quiver $\mathsf{Q}$ with non-degenerate skew-symmetrized Euler form. The canonically constructed element $\varPhi + \theta_n$ has the following properties:
\begin{enumerate}
\item The Maurer--Cartan solution $\varPhi$ can be written as a sum over labeled trees, 
$$
\varPhi = \sum_{k \geq 1} \sum_{\substack{\ltr \in \ltree{k}\\ \mathfrak{M}_{\ltr}(N_\real,\mathscr{D}_{in}) \neq \emptyset}} \frac{1}{|\Aut(\ltr)|} \alpha_{\ltr} g_\ltr,
$$
with properties as in Theorem \ref{thm:theorem_1_modified}. 

\item The scattering diagram $\mathscr{D}(\varPhi)$ is consistent, and the path ordered product along $\lambda_{\vert [0,1]}$ is $g:= \Theta_1 \cdots \Theta_r$. Hence, $\mathscr{D}(\varPhi)$ is equivalent to the $\mathfrak{h}$-complex $\mathscr{D}(g)$ from \cite[Lemma 3.2]{bridgeland2017scattering}.

\item The section $\theta_n \in \Ker (d_{\varPhi})$ can be written as a sum over marked tropical trees, 
$$
\theta_n = \sum_{k \geq 1} \sum_{\substack{\mtr \in \mtree{k}\\ P_{\mtr} \neq \emptyset}} \frac{1}{|\Aut(J)|} \alpha_{\mtr} a_{\mtr},
$$
and is related to the Hall algebra theta function $\vartheta_{n,Q}$ by the formula $\vartheta_{n,Q} = \theta_n(Q)$.
\end{enumerate}
\end{theorem}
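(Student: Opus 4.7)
The plan is to assemble the theorem from the constructions already carried out in Sections \ref{sec:non_tropical_diagram}, \ref{sec:hall_algebra_MC_relation}, \ref{sec:hall_algebra_theta_function}, and the specialization to acyclic quivers presented immediately before the statement; most of the technical work is in place and what remains is to verify that the canonical choices of $v^m$ and $v^{m,n}$ make all the earlier arguments go through.

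For part \emph{(1)}, I would observe that with the canonical choice $v^m = -\sem(m)$ the operator $\mathbf{H}_m$ from equation \eqref{eqn:modified_homotopy_operator} is well-defined, and hence $\mathbf{L}_{k,\lrtr}(\incoming,\dots,\incoming) = \alpha_\lrtr g_\lrtr$ makes sense for every $\lrtr \in \lrtree{k}$. The analogue of Lemma \ref{lem:hall_algebra_well_definedness_solution} is exactly what is verified in the paragraph preceding the theorem: using the acyclicity and non-degeneracy of $\omega$ together with $\omega(f_i,f_j)\leq 0$ for $i<j$, one inductively shows $P_{\lrtr}\subset \{x : \langle m^{>i}_{\lrtr},x\rangle \leq 0\}$, which forces either $\tau^{m_\lrtr}_{-L}(K)\cap P_\lrtr=\emptyset$ for $L$ large or vanishing of the relevant Lie bracket in $\mathfrak{h}_{\mathtt{I}_{m_\lrtr}}$. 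Hence $\mathbf{P}_L[\varPhi_L,\varPhi_L]$ vanishes on any compact $K$, so $\varPhi$ is a Maurer--Cartan element, and the tree expansion together with the asymptotic properties of $\alpha_\ltr$ follow verbatim from Theorem \ref{thm:theorem_1_modified}.

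For part \emph{(2)}, I would invoke Theorem \ref{thm:consistent_scattering_nontropical}, whose proof carries over with the canonical propagator. The key additional input -- that $\lambda\cap P_\lrtr=\emptyset$ for every non-initial $\lrtr$ -- is precisely the calculation already in the text: for any $b=(b_1,\dots,b_r)\in \lambda\cap m_\lrtr^\perp$ one has $b_1<\cdots<b_r$ and $b_i<0$ for the smallest index with $0\neq m_\lrtr^{>i}\neq m_\lrtr$, giving $\langle m_\lrtr^{>i},b\rangle>0$ and contradicting $P_\lrtr\subset\{\langle m_\lrtr^{>i},\cdot\rangle\leq 0\}$. Consequently $\Theta_{\lambda_{\vert[0,1]},\mathscr{D}(\varPhi)}=\Theta_1\cdots\Theta_r = g$, and equivalence with $\mathscr{D}(g)$ then follows by applying \cite[Proposition 3.3]{bridgeland2017scattering} at each truncation $\mathscr{D}(\varPhi^{<k})$ and $\mathscr{D}(g^{<k})$.

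For part \emph{(3)}, I would show that with $v^{m,n}=-\sem(m)-n$ the analogue of the argument for part \emph{(1)} applies to $\varPhi+\theta_n\in \hat{\mathcal{H}}\oplus\hat{\mathcal{B}}[-1]$: the same inequality $\langle m^{>i}_{\mrtr},-\sem(m_{\mrtr})-n\rangle\leq 0$ inductively gives $P_{\mrtr}\subset\{x:\langle m^{>i}_{\mrtr},x\rangle\leq 0\}$, and joining a labeled ribbon tree $\lrtr$ to a marked ribbon tree $\mrtr$ to form $\hat{\mrtr}$, either $\langle m_{\hat{\mrtr}}^{>i},\sem(m_{\hat{\mrtr}})+n\rangle=0$ for all $i$ (which forces $g_\lrtr\cdot a_\mrtr=0$ by the abelian-subalgebra condition) or strict positivity for some $i$, allowing us to push $\tau^{m_{\hat{\mrtr}},n}_{-L}(K)$ off $P_\lrtr\cap P_\mrtr$ for large $L$. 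Hence $\theta_n\in\Ker(d_\varPhi)$, the tree expansion holds by the same reasoning as for equation \eqref{eqn:theta_function_definition}, and the wall-crossing formula \eqref{eq:HallThetaWCF} holds by the argument of Theorem \ref{thm:wall_crossing_for_sections}. To identify $\theta_n(Q)$ with $\vartheta_{n,Q}$, I would note that both satisfy the same wall-crossing formula along paths avoiding $\Joints(\mathscr{D}(\varPhi))$ and that $\vartheta_{n,Q}$ is characterized by the condition $\vartheta_{n,Q}=z^n$ for $Q\in\Int(\sigma^\vee)$. But for such $Q$, any marked disk $\varsigma$ contributing to $\theta_n(Q)$ would have $\varsigma'\in -\sigma^\vee-\mathbb{R}_{\geq 0}\cdot n\subset -\sigma^\vee$ along its core, so $\varsigma(v_i)\notin\Int(\sigma^\vee)$ for any internal vertex, forcing $\mtr^{[0]}=\emptyset$ and $a_\mtr=z^n$.

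The main obstacle will be checking that the two a priori different notions of multiplicity -- the Hall algebra attachment $a_{\mtr}$ from Definition \ref{def:marked_tree_core} and the broken line weight defining Bridgeland's $\vartheta_{n,Q}$ -- are correctly matched by the wall-crossing argument, because in the non-tropical setting there is no perturbation available to render walls generic; the uniqueness of solutions to the wall-crossing relation initialized at $\Int(\sigma^\vee)$ is what bridges this gap.
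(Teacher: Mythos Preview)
Your treatment of parts \emph{(1)} and \emph{(2)} matches the paper's: both are assembled from the constructions in Sections \ref{sec:non_tropical_diagram}--\ref{sec:hall_algebra_MC_relation} together with the half-space inclusion $P_{\lrtr}\subset\{\langle m_{\lrtr}^{>i},\cdot\rangle\leq 0\}$ and the $\lambda\cap P_\lrtr=\emptyset$ computation already spelled out in Section \ref{sec:hall_algebra_acyclic_quiver}.

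There is, however, a genuine gap in your final step of part \emph{(3)}. You argue that for $Q\in\Int(\sigma^\vee)$ a contributing marked disk must be trivial because ``$\varsigma'\in -\sigma^\vee-\mathbb{R}_{\geq 0}\cdot n\subset -\sigma^\vee$ along its core''. This is exactly the mechanism from Section \ref{sec:hall_algebra_theta_function}, where $v^{m,n}$ was \emph{chosen} in $-\sigma^\vee\cap N$. But here the canonical choice is $v^{m,n}=-\sem(m)-n$, and $-\sem(m)$ need not lie in $-\sigma^\vee$: for example, with $r=2$, $a_{12}=1$, $a_{21}=0$, one has $\sem(f_2)=-f_1^\vee$, so $-\sem(f_2)=f_1^\vee\in\sigma^\vee$. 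Hence the core direction of a marked disk can point into $\sigma^\vee$, and your convexity argument collapses.

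The paper's proof replaces this with the same half-space machinery used for $\varPhi$. Along the core one has $n_j:=(\varsigma_{\vert e_j})'=-\sem(m_j)-n$ and the inequalities $\langle m_j^{>i},n_j\rangle\leq 0$ for every $i$. The dichotomy is: either strict inequality holds for some $i$, in which case the hyperplane $(m_j^{>i})^\perp$ separates $\sigma^\vee$ (which sits in $\{\langle m_j^{>i},\cdot\rangle\geq 0\}$) from the flow direction $n_j$, preventing the disk from reaching $\Int(\sigma^\vee)$; or equality holds for all $i$, which forces $\langle f_i,n\rangle=0$ for every $i\in\mathtt{I}_{m_j}$, and hence $a_{e_j}=0$ by the vanishing of the $\mathfrak{h}$-action on $B_{0,n}$ along such directions. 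Either way the only disk with $\varsigma(v_{out})\in\Int(\sigma^\vee)$ and nonzero weight is the trivial one. Your proposal would be repaired by substituting this argument for the borrowed one from Section \ref{sec:hall_algebra_theta_function}.
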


\begin{proof}
It remains to prove the third statement. Again, it suffices to show that $\theta_{n}(Q) = z^n$ for $Q \in \text{int}(\sigma^\vee)$. Consider $\mtr \in \mtree{k}$ and $\varsigma \in \overline{\mathfrak{M}}_{\mtr}(N_{\real},\mathscr{D}_{in},n)$ with $\varsigma(v_{out}) =Q$. Let $\mathfrak{c}_{\mtr} = (e_0,\dots,e_l)$ be the core of $\mtr$ with associated labeled trees $\ltr_1,\dots,\ltr_l$. Let $m_0 = 0$, $m_{j} = m_{j-1} + m_{\ltr_j}$ inductively and $n_j = -\sem(m_j) -n$. We then have $ n_{j} = (\varsigma_{\vert e_j})^{\prime}$. Moreover, $\langle m_j^{>i} , -\sem(m_j) \rangle \leq 0$ and hence $\langle m_j^{>i}, n_j \rangle \leq 0$, since $\langle m_j^{>i} , -n \rangle \leq 0$. Observe that this inequality is strict for some $i$ unless $\langle f_i,n\rangle =0$ for all $ i \in \mathtt{I}_{m_j}$, which in turn forces $a_{e_j} = 0$ and hence $a_{\mtr} = 0$. Since $\sigma^\vee$ is a subset of $\{ x \in N_\real \mid \langle m^{>i}_j,x \rangle \geq 0 \}$, whenever $\langle m_j^{>i}, n_j \rangle < 0$ the hyperplane $m^{>i}_j$ will separate $\sigma^\vee$ and $-n_j$. We conclude that any marked tropical disk with $\varsigma(v_{out}) = Q \in \text{int}(\sigma^\vee)$ must have $a_{\mtr}= 0$. 
\end{proof}

It is natural to ask if Bridgeland's Hall algebra theta functions admit a combinatorial description in terms of Hall algebra broken lines, as defined by Cheung \cite{cheung2016tropical}. This question was recently answered negatively in \cite[\S 5.3]{cheung2019}, where it was shown that Hall algebra theta functions which are defined as a sum over Hall algebra broken lines do not, in general, satisfy the wall-crossing formula \eqref{eq:HallThetaWCF}. Theorem \ref{thm:non_tropical_WCF} can be seen as realizing an alternative combinatorial description of Bridgeland's Hall algebra theta functions for certain quivers.

\bibliographystyle{plain}
\bibliography{geometry}

%\newpage \listoftodos

\end{document}